\documentclass{amsart}
\usepackage{amsfonts,amscd}

\newtheorem{theorem}{Theorem}[section]
\newtheorem{lemma}[theorem]{Lemma}
\newtheorem{corollary}[theorem]{Corollary}
\newtheorem{proposition}[theorem]{Proposition}
\theoremstyle{remark}
\newtheorem{remark}[theorem]{Remark}
\theoremstyle{definition}
\newtheorem{definition}[theorem]{Definition}
\newtheorem{example}[theorem]{Example}
\newtheorem{notation}[theorem]{Notation}

\numberwithin{equation}{section}
\makeatother

\DeclareMathOperator{\spn}{span}

\DeclareMathOperator{\Kdb}{{\mathbb{K}}}
\DeclareMathOperator{\Cdb}{{\mathbb{C}}}
\DeclareMathOperator{\Zdb}{{\mathbb{Z}}}
\DeclareMathOperator{\Rdb}{{\mathbb{R}}}

\DeclareMathOperator{\Tdb}{{\mathbb{T}}}
\DeclareMathOperator{\Ndb}{{\mathbb{N}}}

\newcommand{\In}{\infty}

\newcommand{\Ball}{\operatorname{Ball}}
\newcommand{\Ran}{{\operatorname{Ran}}}
\renewcommand{\Re}{{\operatorname{Re}}}

\newcommand{\af}{\alpha}
\newcommand{\bt}{\beta}

\newcommand{\dt}{\delta}
\newcommand{\ep}{\varepsilon}
\newcommand{\zt}{\zeta}
\newcommand{\et}{\eta}
\newcommand{\ch}{\chi}

\newcommand{\ld}{\lambda}
\newcommand{\sm}{\sigma}
\newcommand{\kp}{\kappa}
\newcommand{\ph}{\varphi}
\newcommand{\ps}{\psi}
\newcommand{\rh}{\rho}
\newcommand{\om}{\omega}
\newcommand{\ta}{\tau}

\newcommand{\Ld}{\Lambda}

\newcommand{\andeqn}{\qquad {\mbox{and}} \qquad}

\newcommand{\diag}{{\operatorname{diag}}}

\newcommand{\ifo}{if and only if}

\newcommand{\SM}{\setminus}

\begin{document}

\title[$L^p$ operator algebras with approximate
 identities I]{$L^p$ operator algebras
 with approximate identities I} 

\author{David P.\  Blecher}
\address{Department of Mathematics, University of Houston,
 Houston, TX 77204-3008}
\email[David P.\  Blecher]{dblecher@math.uh.edu}

\author{N.~Christopher Phillips}
\address{Department of Mathematics, University of Oregon,
 Eugene, OR 97403-1222}

\subjclass[2010]{46H10, 46H99, 46E30, 47L10, 47L30 (primary),
 46H35, 47B38, 47B44, 47L75 (secondary)}
\keywords{$L^p$ operator algebra, accretive, approximate identity, Banach algebra, Kaplansky density, M-ideal, real positive, smooth Banach space,  strictly convex Banach space,state, unitization.} 

\thanks{This work is based on work supported by
the US National Science Foundation under
Grant DMS-1501144 (Phillips),
a Simons Foundation grant  527078 (Blecher),
and a minigrant from the Mathematics
Department of the University of Houston.
Material from this paper
and its sequel were presented at 2017 conferences in Houston
(August), the East Coast Operator Algebras Symposium,
and the SAMS congress.}

\begin{abstract}
We initiate an investigation into how much the existing theory
of (nonselfadjoint) operator algebras on a Hilbert space
generalizes to algebras acting on $L^p$~spaces.
In particular we investigate the applicability
of the theory of real positivity,
which has recently been useful in the study of
$L^2$-operator algebras and Banach algebras,
to algebras of bounded operators on $L^p$ spaces. 
In the process we answer some open questions on
real positivity in Banach algebras from
work of the first author and Ozawa.
\end{abstract}

\maketitle

\tableofcontents

\section{Introduction}\label{Sec_1}

In a series of recent papers
(see e.g.\  \cite{P1, P2, P3, P4})
the second author has
pointed out that the study
of algebras of bounded operators on $L^p$ spaces,
henceforth,
\emph{$L^p$-operator algebras}, has been somewhat overlooked, and
has initiated the study of these objects.
Subsequently others have followed him into this inquiry
(for example,
Gardella, Thiel, Lupini, and Viola;
see e.g.\  \cite{G2,G3,G4,GL, PV}).
However, as he has frequently stated,
these investigations have been very largely focused on examples;
one still lacks an abstract general theory in this setting.

Here and in a sequel in preparation we initiate an investigation into
how much the existing theory of
(nonselfadjoint) $L^2$-operator algebras
(see e.g.\   \cite{BLM}, \cite{BRI})
generalizes to the $L^p$ case. 
We restrict ourselves almost
exclusively to the
``isometric theory''; we may pursue the isomorphic theory
elsewhere.
In addition to establishing some general facts
about $L^p$ operator algebras,
the main goal of the present paper
is to investigate to what extent the first author's theory
of real positivity
(developed with Read, Neal, Ozawa, and others;
see e.g.\   \cite{BRI, BRII, BRord, BOZ}),
is applicable to
$L^p$-operator algebras,
particularly those which are approximately unital, that is,
have contractive approximate identities.
As an easy motivation,
notice that the canonical approximate identity
for
the compact operators $\Kdb (l^p)$ is real positive,
and the real positive elements span $B (L^p ([0, 1]))$
(as they do any unital Banach algebra).

The theory of real positivity was developed as a tool for
generalizing certain parts of $C^*$-algebra theory
to more general algebras.
In \cite{BOZ}
this was extended to Banach algebras (see also \cite{BSan}
for a survey and some additional results).
All this theory
of course therefore applies to $L^p$-operator algebras.
We refer to  \cite{BOZ} frequently,
although most of our paper may be read without
a deep familiarity with that paper.

Some parts of \cite{BOZ}
applied only to certain classes of Banach algebras defined there,
which were shown to behave in some respects
similarly to $L^2$-operator algebras.
For example,
a nonunital approximately unital Banach algebra $A$
was defined there to be
{\emph{scaled}} if the set of restrictions to $A$
of states on the multiplier unitization $A^1$
equals the quasistate space $Q (A)$ of $A$
(that is, the set of  $\lambda \varphi$ for $\lambda \in [0, 1]$
and $\varphi$ a norm~$1$ functional on $A$
that extends to a state on $A^1$).
All unital Banach algebras are scaled.
In \cite{BOZ},
there are several pretty equivalent conditions
for a Banach algebra to be scaled (see the start of our Section  \ref{Sec_6} for some of these),
and this class of Banach algebras
was shown to have several nice theoretical features,
such as a Kaplansky density type theorem.  
Thus it is natural to ask the following:
\begin{enumerate}
\item\label{7819_InOntro_a}
To which of the classes
defined in~\cite{BOZ} do $L^p$~operator algebras belong?
\item\label{7819_InOntro_b}
For those classes in~\cite{BOZ} to which they do not belong,
to what extent do the theorems for those classes from \cite{BOZ}
still hold for $L^p$-operator algebras?
\item\label{7819_InOntro_c}
To what extent do other parts of the theory of $L^2$-operator algebras
hold for $L^p$-operator algebras?
\end{enumerate}
We focus mostly here on
the parts of the theory of the first author
with Read, Neal, and others referred to above
that were not already extended to the general classes
considered in~\cite{BOZ}.
For example, one may ask if the material
in Section~4 in~\cite{BOZ},
and in particular the theory of hereditary subalgebras, improves
(that is, becomes closer to the $L^2$-operator case) for
$L^p$-operator algebras.
Similarly, one may ask about the
{\emph{noncommutative topology}}
(in the sense of Akemann, Pedersen, L.~G.\  Brown, and others) of
$L^p$-operator algebras.
In papers of the first author
with Read, Neal, and others referred to above, Akemann's noncommutative
topology of $C^*$-algebras
was fused with the classical theory of (generalized) peak sets of
function algebras to create
a \emph{relative} noncommutative topology for closed subalgebras of
$C^*$-algebras that has proved
to have many applications.
Examples given in \cite{BOZ} show that
not much of this will extend to general Banach algebras, and it is
natural to ask if $L^p$-operator algebras
are better in this regard.
Most of the present paper and the sequel in preparation is devoted to
answering these questions.
In the process we answer some open questions from \cite{BOZ}.

We admit from the outset that for $p \neq 2$,
and for some significant part of the theory,
the answer to question (\ref{7819_InOntro_b}) above
is so far in the negative.
This may
change somewhat in the future,
for example if we were able to solve
some of the open problems listed at the end of our paper. 
It should also be
admitted that for $p \neq 2$
the ``projection lattice'' of
$B (L^p(X, \mu))$ is problematic from the perspective of our paper (see
Example \ref{mp} and the sequel paper),
in contrast to the projection lattices of von Neumann algebras and
$L^2$-operator algebras.

Concerning question (\ref{7819_InOntro_a}),
several classes of Banach algebras introduced
and considered in  \cite{BOZ}
coincide for approximately unital $L^p$-operator algebras.
Indeed the classes of \emph{scaled}
and $M$-\emph{approximately unital} Banach algebras
defined in \cite{BOZ}
coincide for $L^p$-operator algebras, and these also turn out to be the
approximately unital $L^p$-operator algebras
which satisfy the aforementioned Kaplansky density property.
(We remark that the usual Kaplansky density theorem variants
for $C^*$-algebras can be shown
to follow easily from the weak* density of the subset of interest
in  $A$ within the matching set in $A^{**}$.
Our Kaplansky density theorems have the latter flavor.)
We show that some approximately unital $L^p$-operator algebras
are scaled and others are not.
This answers the questions from \cite{BOZ}
as to whether every approximately unital Banach algebra
is scaled, or has a Kaplansky density property.
Also, non-scaled approximately unital  $L^p$-operator algebras
may contain no real positive elements
(whereas it was shown in \cite{BOZ}
that if they are scaled
then there is an abundance of real positive elements,
e.g.\  every element in $A$ is a difference of two
real positive elements).

Concerning question (\ref{7819_InOntro_c}) above,
indeed some aspects of the theory improve.
For example, Section~4 of \cite{BOZ} improves drastically
in our setting, and indeed $L^p$-operator algebras do support a
basic theory of noncommutative topology and hereditary subalgebras,
unlike general Banach algebras.
This is worked out in the sequel paper in preparation, where the reader
will find many more positive results than in the present paper.
It is worth remarking that the methods used here
do not seem to extend far beyond the
class of $L^p$-operator algebras as we will discuss elsewhere.
However, most of our results for $L^p$-operator algebras
in Sections \ref{Sec_2} and~\ref{Sec_4} do generalize to the class
of {\emph{${\mathrm{SQ}}_p$-operator algebras}},
by which we mean closed
algebras of operators on an ${\mathrm{SQ}}_p$ space, that is,
a quotient of a subspace of an $L^p$ space.
(See e.g.\   \cite{LeM}.
We thank Eusebio Gardella for suggesting ${\mathrm{SQ}}_p$ spaces
after we
listed in a talk the properties needed for our
results to work.)

On the other hand, except cosmetically, not much to speak of
in Section~3 of \cite{BOZ} improves for $L^p$-operator algebras,
in the sense of becoming significantly more
like the $L^2$-operator algebra case.
However several of the concepts appearing throughout \cite{BOZ}
become much simpler in our setting.
For example as we said above,
three of the main classes of Banach algebras considered there coincide.
Also as we shall see the subscript and superscript ${\mathfrak{e}}$
which appear often in \cite{BOZ} may be erased in our setting,
since we are able to show that all $L^p$-operator algebras
are Hahn-Banach smooth.
Then of course the Arens regularity of $L^p$-operator algebras
means that many irritating features of the bidual
appearing in~\cite{BOZ}
disappear, such as mixed identities in $A^{**}$.

We now describe the contents of our paper.

We will be assuming that $p \in (1,  \infty) \setminus \{ 2 \}$ in all
results in the paper unless
stated to the contrary.
As usual $\frac{1}{p} + \frac{1}{q} = 1$.
In the remainder of Section~\ref{Sec_1}
we give some notation and basic definitions.
In Section~\ref{Sec_2} we discuss further notation and background.
We also collect a large number of useful general facts, many of which
are well known.
They concern topics
such as duals, bidual algebras, the multiplier unitization, states
and real positivity, hermitian elements, representations, etc.
We just mention one sample result from this section:
if $A$ is an
approximately unital $L^p$-operator algebra with $p \in (1, \infty)$,
then there exists a measure space $(X, \mu)$
and a unital isometric representation
$\theta \colon A^{**} \to B (L^p (X, \mu) )$
which is a weak* homeomorphism onto its range,
and such that $\theta(A)$ acts nondegenerately on $L^p (X, \mu)$.

In Section~\ref{Sec_3}
we list the main examples of $L^p$-operator algebras
which we use in this paper for counterexamples,
as well as some other basic examples not in the literature.
Some of these have real positive
approximate identities, and others do not.
We also expose some of the aforementioned bad properties of the
``projection lattice'' of $B (L^p(X, \mu))$.

Section~\ref{Sec_4} contains many miscellaneous results.
Here is a sample of these.
We show
that the quotient of an $L^p$-operator algebra
by an approximately unital
closed ideal satisfying a simple extra condition is again
(isometrically) an $L^p$ operator algebra.
An example is presented to prove that this can fail
if the ideal is only assumed to
be closed and approximately unital.
We show that 
an $L^p$-operator algebra $A$ need not have a unique
unitization, unlike in the case $p = 2$ (Meyer's unitization theorem).
However there is a unique
unitization if we restrict attention
to nondegenerately represented approximately
unital $L^p$-operator algebras.
The nonuniqueness above is related to the fact
that when $p \neq 2$ the Cayley transform
can take a real positive element of $A$
to an element of norm greater than 1.
We study support idempotents of elements of $A$ and their properties. 
We also give some important consequences
of the strict convexity of $L^p$ spaces.
For example, a state on a unital $L^p$-operator algebra
that takes the value zero at
a real positive idempotent~$e$
is zero on the left or right ideal generated by~$e$.
We also deduce that an $L^p$-operator algebra
is Hahn-Banach smooth in its multiplier unitization.
These results have several
significant applications in this paper and its sequel.
For example they yield in Section~\ref{Sec_4} several
foundational properties of states and state extensions.

In Section~\ref{Sec_5} and Section~\ref{Sec_6}
we discuss $M$-ideals and scaled Banach algebras.
Our main result here is that in the setting of
approximately unital $L^p$-operator algebras,
the classes of scaled algebras
and $M$-approximately unital algebras coincide.
These are also the
algebras which satisfy the aforementioned Kaplansky density property,
as we show in Section~\ref{Kaps}.
We will see for example that the algebra
$\Kdb (L^p (X, \mu))$ of compact operators is in this
class if and only if $\mu$ is purely atomic
(Proposition \ref{P_7X14_KEMAppU}).
The $L^p$-operator algebras with a
hermitian contractive identity are also in this class.
We also show for example in these sections
that every $M$-ideal $J$ in an approximately unital
$L^p$-operator algebra $A$ is
an approximately unital closed ideal.
Moreover, if in addition $A$ is scaled
then so is $J$ (this follows from Theorem
\ref{mth} (\ref{mth_Mapp_Down})).

At the end of the paper we provide an index listing
some of the main definitions in this paper
and where they may be found.

In the sequel paper in preparation
we show that the theory of one-sided ideals,
hereditary subalgebras,
open projections, etc.\  for $L^p$-operator algebras
is quite similar to the (nonselfadjoint) $L^2$-operator algebra case.
This is particularly so for certain large
classes of $L^p$-operator algebras.
We feel that this is important, since hereditary subalgebras play
a large role in modern $C^*$-algebra theory, and thus hopefully will be
important for $L^p$-operator algebras too.

We end our introduction with a few definitions and basic lemmas.

We set $\Rdb_{+} = [0, \infty)$.

\begin{notation}\label{N_7818_Ball}
Let $E$ be a normed vector space.
Then $\Ball (E)$ is the closed unit ball of~$E$,
that is,
\[
\Ball (E)
 = \big\{ \xi \in E \colon \| \xi \| \leq 1 \big\}.
\]
\end{notation}

\begin{notation}\label{N_7818_LpSum}
Let $p \in [1, \infty]$.
Let $E$ and $F$ be normed vector spaces.
We denote by $E \oplus^p F$ their $L^p$~direct sum,
that is,
the algebraic direct sum $E \oplus F$
with the norm given for $\xi \in E$ and $\et \in F$
by
$\| (\xi, \et) \| = (\| \xi \|^p + \| \et \|^p)^{1/p}$
if $p < \infty$
and $\| (\xi, \et) \| = \max ( \| \xi \|, \, \| \et \| )$
if $p = \infty$.
\end{notation}

Although many of our Banach algebras
have identities of norm greater than~$1$,
the adjectives ``unital'' or ``approximately unital'' for a Banach
algebra will carry a norm~$1$ requirement.

\begin{definition}\label{D_7618_UBA}
A {\emph{unital}} Banach algebra
is a Banach algebra with an identity $1$
such that $\| 1 \| = 1$.
\end{definition}

\begin{definition}\label{D_7618_cai}
A {\emph{cai}} in a Banach algebra
is a contractive approximate identity,
that is, an approximate identity $(e_{t})_{t \in \Ld}$
such that $\| e_{t} \| \leq 1$
for all $t \in \Ld$.
An {\emph{approximately unital}} Banach algebra
is a Banach algebra which has a cai.
\end{definition}

When we write $L^p$ or $L^p (X)$
we mean the $L^p$ space of some measure space $(X, \mu)$.

\begin{definition}\label{D_smoothconv}
Recall that a Banach space~$E$ is {\emph{strictly convex}}
if whenever $\xi, \et \in E \setminus \{ 0 \}$
satisfy
$\| \xi + \eta \| = \| \xi \| + \| \eta \|$,
then there is $\ld \in (0, \infty)$
such that $\xi = \ld \eta$,
and {\emph{smooth}}
if for given $\xi \in E$ with $\| \xi \| = 1$,
there is a unique $\eta \in \Ball (E^*)$
with $\langle \xi, \eta \rangle = 1$.
\end{definition}

If $1 < p < \infty$,
then $L^p (X)$ is strictly convex
(by the converse to Minkowski's inequality).
Moreover,
still assuming $1 < p < \infty$,
the space $L^p (X)$ is smooth,
with $\et$ above given by the function
\[
\eta (x) = \begin{cases}
  \overline{\xi (x)} |\xi (x) |^{p - 2} & \xi (x) \neq 0
         \\
    0                                   & \xi (x) = 0
\end{cases}
\]
in $L^q (X)$.
We will frequently use the fact that $L^p (X)$ is smooth
and strictly convex if  $1 < p < \infty$.

\begin{definition}\label{D_7618_LpOpAlg}
Let $p \in [1, \infty)$.
An {\emph{$L^p$-operator algebra}}
is a Banach algebra which is isometrically isomorphic
to a norm closed subalgebra of the algebra of bounded operators
on $L^p (X, \mu)$
for some measure space $(X, \mu)$.
When $p = 2$ we simply refer to an {\emph{operator algebra}}.
(See the beginning of Section~2.1 of~\cite{BLM},
except that we do not consider matrix norms
in the present paper.)
\end{definition}

\begin{definition}\label{D_7720_Unitization}
Let $A$ be an $L^p$-operator algebra
(not necessarily approximately unital).
We say that an $L^p$~operator algebra $B$
is an {\emph{$L^p$~operator unitization of~$A$}}
if either $A$ is unital and $B = A$,
or if $A$ is nonunital,
$B$ is unital (in particular, by our convention, $\| 1 \| = 1$),
and $A$ is a codimension one ideal in~$B$.
\end{definition}

\begin{definition}[(A.9) on p.\  364
   in~\cite{BLM}]\label{D_7702_Unitization}
Let $A$ be a nonunital approximately unital Banach algebra
(as in Definition~\ref{D_7618_cai}).
We define its {\emph{multiplier unitization}} $A^{1}$
to be the usual unitization $A + \Cdb \cdot 1$ with the norm
\[
\| a  + \lambda 1 \|_{A^{1}}
    = \sup \big( \big\{ \| a c + \lambda c \|
        \colon c \in \Ball (A) \big\} \big).
\]
for $a \in A$ and $\ld \in \Cdb$.
If $A$ is already unital then we
set $A^1 = A$.
\end{definition}

\begin{remark}\label{R_7702_UnitizationFacts}
We recall the following easy standard facts.
\begin{enumerate}
\item\label{R_7702_UnitizationFacts_Incl}
If $A$ is an approximately unital Banach algebra,
then the standard inclusion of $A$ in $A^{1}$ is isometric.
\item\label{R_7702_UnitizationFacts_AI}
Let $A$ be a Banach algebra,
and let $(e_{t})_{t \in \Ld}$ be any cai in~$A$.
Then
\[
\| a  + \lambda 1 \|_{A^{1}}
   = \lim_t \| a e_t + \lambda e_t \|
   = \sup_t \| a e_t + \lambda e_t \|.
\]
\item\label{R_7702_UnitizationFacts_Small}
If $A$ is any nonunital Banach algebra,
and $B$ is a unital Banach algebra which contains $A$ as
a codimension~$1$ subalgebra,
then the map $\chi_0 \colon B \to \Cdb$,
given by
$\chi_0 (a + \lambda 1_B) = \lambda$
for $a \in A$ and $\lambda \in \Cdb$,
is contractive.
\item\label{R_7702_UnitizationFacts_ToMult}
If $A$ is any nonunital Banach algebra with a cai,
and $B$ is a unital Banach algebra which contains $A$ as
a codimension~$1$ subalgebra,
then the map $\ps \colon B \to A^1$,
given by
$\ps (a + \lambda 1_B) = a + \lambda 1_{A^1}$
for $a \in A$ and $\lambda \in \Cdb$,
is a contractive homomorphism.
Thus $A^1$ has the smallest norm of
any unitization.
This follows e.g.\  by a small variant of the proof
of Lemma~\ref{L_7702_Incl} below.
\end{enumerate}
\end{remark}

\begin{lemma}\label{L_7702_Incl}
Suppose that $A$ is a closed subalgebra
of a nonunital approximately unital
Banach algebra~$B$,
and suppose that $A$ has a cai but is not unital.
Then for all $a \in A$ and $\lambda \in \Cdb$
we have
$\| a + \lambda 1 \|_{A^{1}} \leq \| a + \lambda 1 \|_{B^{1}}$.
\end{lemma}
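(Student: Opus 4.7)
The plan is to unwind the definition of the multiplier unitization norm from Definition~\ref{D_7702_Unitization} and exploit the obvious containment $\Ball(A) \subseteq \Ball(B)$, which holds because $A$ is a closed subalgebra of $B$ equipped with the inherited norm.

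More concretely, by definition
\[
\| a + \ld 1 \|_{A^{1}}
   = \sup \big\{ \| a c + \ld c \| \colon c \in \Ball (A) \big\}
\andeqn
\| a + \ld 1 \|_{B^{1}}
   = \sup \big\{ \| a b + \ld b \| \colon b \in \Ball (B) \big\}.
\]
For any $c \in \Ball (A)$ we have $c \in \Ball (B)$, and the element $ac + \ld c$ lies in $A$ (since $a, c \in A$), so its norm as an element of $B$ coincides with its norm as an element of $A$. Therefore the supremum on the left is taken over a subset of the configurations contributing to the supremum on the right, which gives the desired inequality $\| a + \ld 1 \|_{A^{1}} \leq \| a + \ld 1 \|_{B^{1}}$.

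There is essentially no obstacle here: the only thing to verify is that the formula in Definition~\ref{D_7702_Unitization} is legitimately applicable to both $A$ and $B$, which requires $A$ and $B$ to be nonunital and approximately unital. Both hypotheses are given in the statement. One could equivalently invoke Remark~\ref{R_7702_UnitizationFacts}(\ref{R_7702_UnitizationFacts_AI}) with a cai $(e_t)_{t \in \Ld}$ for $A$: since $e_t \in \Ball (A) \subseteq \Ball (B)$ for all $t$, one gets
\[
\| a + \ld 1 \|_{A^{1}}
   = \sup_{t} \| a e_t + \ld e_t \|
   \leq \sup \big\{ \| a b + \ld b \| \colon b \in \Ball (B) \big\}
   = \| a + \ld 1 \|_{B^{1}},
\]
yielding the same conclusion. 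The argument makes no use of any special $L^p$ structure and is purely formal; the lemma really just records the monotonicity of the multiplier unitization norm under isometric inclusion of approximately unital subalgebras.
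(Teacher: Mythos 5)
Your proof is correct and is essentially identical to the paper's: both simply observe that the supremum defining $\| a + \lambda 1 \|_{A^1}$ is taken over $\Ball(A) \subseteq \Ball(B)$, so it is dominated by the supremum defining $\| a + \lambda 1 \|_{B^1}$. The extra remarks about applicability of the definition and the cai reformulation are fine but not needed.
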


\begin{proof}
Clearly
\[
\sup \big( \big\{ \| a c  + \lambda c \| \colon
        c \in \Ball (A) \big\} \big)
\leq \sup \big( \big\{ \| a c + \lambda c \| \colon
   c \in \Ball (B) \big\} \big),
\]
as desired.
\end{proof}

It is easy to find examples showing that the homomorphism above need
not be isometric, for example, with notation as in
Example \ref{mp} (or Example \ref{cone}) below,
$\Cdb e_2 \otimes c_0 \subseteq M_2^p \otimes c_0$.
However we have the following result.

\begin{lemma}\label{repuni}
Let $A$ and $B$ be nonunital
approximately unital Banach algebras.
Let $\ph \colon A \to B$ be a contractive
(resp.\  isometric) homomorphism.
Suppose that there is a cai $(e_t)_{t \in \Ld}$ for $A$
such that
$(\ph (e_t))_{t \in \Ld}$ is a cai for~$B$.
Then the obvious unital homomorphism $A^{1} \to B^{1}$
between the multiplier unitizations is contractive (resp.\  isometric).
\end{lemma}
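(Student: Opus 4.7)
The plan is to read off both norms by plugging the given cai into the formula from Remark~\ref{R_7702_UnitizationFacts}(\ref{R_7702_UnitizationFacts_AI}), which says that for any cai $(f_s)$ in a Banach algebra $C$, the multiplier unitization norm is computed as $\|c + \mu 1\|_{C^1} = \lim_s \|c f_s + \mu f_s\|$. The hypothesis gives us one cai in $A$ and, simultaneously, a cai in $B$, namely its image under $\varphi$, so we can use the \emph{same} index $t$ on both sides and let the homomorphism property do the rest.

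Concretely, first I would fix $a \in A$ and $\lambda \in \Cdb$ and apply Remark~\ref{R_7702_UnitizationFacts}(\ref{R_7702_UnitizationFacts_AI}) to the cai $(e_t)$ in $A$:
\[
\| a + \lambda 1 \|_{A^{1}} = \lim_{t} \| a e_t + \lambda e_t \|.
\]
Next, since $(\varphi(e_t))$ is assumed to be a cai for $B$, the same remark applied to this cai in $B$ gives
\[
\| \varphi(a) + \lambda 1 \|_{B^{1}}
 = \lim_{t} \| \varphi(a) \varphi(e_t) + \lambda \varphi(e_t) \|
 = \lim_{t} \| \varphi( a e_t + \lambda e_t ) \|,
\]
where the second equality uses that $\varphi$ is a homomorphism (and $\Cdb$-linear). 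Now, if $\varphi$ is contractive, each term in the last limit is $\leq \|a e_t + \lambda e_t\|$; if $\varphi$ is isometric, each term equals $\|a e_t + \lambda e_t\|$. Passing to the limit gives the desired inequality (resp.\ equality) between $\|\varphi(a)+\lambda 1\|_{B^1}$ and $\|a+\lambda 1\|_{A^1}$, which is exactly the statement that the obvious unital extension $A^1 \to B^1$ sending $a + \lambda 1_{A^1}$ to $\varphi(a) + \lambda 1_{B^1}$ is contractive (resp.\ isometric).

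There is essentially no obstacle beyond verifying that the extension is well defined as a unital homomorphism, which is immediate from the codimension-one structure. The only subtlety worth noting is that one must use the \emph{specific} cai supplied by the hypothesis on both sides: a generic cai in $B$ need not be of the form $\varphi(e_t)$, so without the hypothesis the identity $\|\varphi(a)+\lambda 1\|_{B^1} = \lim_t \|\varphi(ae_t + \lambda e_t)\|$ would not be available. This is also why the lemma is not automatic from Lemma~\ref{L_7702_Incl}, whose weaker conclusion explains the remark about the inclusion $\Cdb e_2 \otimes c_0 \subseteq M_2^p \otimes c_0$ where the hypothesis of the present lemma fails.
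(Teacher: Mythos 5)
Your argument is correct and is essentially identical to the paper's own proof: both compare $\| \ph(a)\ph(e_t) + \lambda \ph(e_t) \|$ with $\| a e_t + \lambda e_t \|$ termwise using contractivity (resp.\ isometry) of $\ph$, and then pass to the limit via Remark~\ref{R_7702_UnitizationFacts}~(\ref{R_7702_UnitizationFacts_AI}) applied to the cai $(e_t)$ in $A$ and to its image $(\ph(e_t))$ in $B$. Your closing observation about why the specific cai hypothesis is needed is a sound and accurate gloss on the example mentioned after Lemma~\ref{L_7702_Incl}.
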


\begin{proof}
If $a \in A$ and $\lambda \in \Cdb$
then
\[
\| \pi(a) \pi (e_t) + \lambda \pi (e_t) \|
  \leq \| a e_t + \lambda e_t \|.
\]
In the isometric case this is an equality.
Taking limits over $t$ and using
Remark \ref{R_7702_UnitizationFacts}~(\ref{R_7702_UnitizationFacts_AI})
gives the result.
\end{proof}

We recall two further standard facts.
The first is that the relation
$\Kdb (L^2 (X))^{**} = B (L^2 (X))$ is true
with $2$ replaced by any $p \in (1, \infty)$.

\begin{definition}\label{D_Arens}
We recall that the bidual $A^{**}$ of a
Banach algebra has in general two canonical products,
called the left and right Arens products \cite[Definition 1.4.1]{Plm2}.
We say that $A$ is {\emph{Arens regular}}
if these two products coincide.
\end{definition}

\begin{theorem}\label{T_7920_KStarStar}
Let $p \in (1, \infty)$
and let $(X, \mu)$ be a measure space.
Let $q \in (1, \infty)$ satisfy $\frac{1}{p} + \frac{1}{q} = 1$.
Then:
\begin{enumerate}
\item\label{T_7920_KStarStar_Dual}
There is an isometric isomorphism
$\Kdb (L^p (X, \mu))^{*}
   \to L^q (X, \mu) \widehat{\otimes} L^p (X, \mu)$
(projective tensor product)
which for $\rh \in L^p (X, \mu)$
and $\et \in L^q (X, \mu)$
sends $\et \otimes \rh$ to the operator
$\xi \mapsto \langle \xi, \et \rangle \rh$.
\item\label{T_7920_KStarStar_2nd}
There is an isometric algebra isomorphism
from $\Kdb (L^p (X, \mu))^{**}$ (with either Arens product)
onto $B (L^p (X, \mu))$ which extends the
inclusion $\Kdb (L^p (X, \mu)) \subseteq B (L^p (X, \mu))$.
\end{enumerate}
\end{theorem}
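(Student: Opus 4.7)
The plan is to reduce both parts to standard Banach-space facts about the metric approximation property of $L^p$ and the trace duality between nuclear and compact operators.

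For part~(\ref{T_7920_KStarStar_Dual}), the main step is to invoke that $L^p(X,\mu)$ has the metric approximation property for $1 < p < \infty$. Consequently the natural map $\sum_i \eta_i \otimes \rho_i \mapsto \bigl(\xi \mapsto \sum_i \langle \xi, \eta_i\rangle \rho_i\bigr)$ is an isometric isomorphism of $L^q \widehat{\otimes} L^p$ onto the space $N(L^p)$ of nuclear operators. To identify $\Kdb(L^p)^*$ with $L^q \widehat{\otimes} L^p$, I would use Grothendieck's classical identification $\Kdb(E)^* = \mathcal{I}(E)$ with the integral operators on $E$, combined with the fact that $\mathcal{I}(L^p) = N(L^p)$ because $L^q = (L^p)^*$ is reflexive and hence has the Radon--Nikodym property. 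Under these identifications the duality pairing $\langle S, \eta \otimes \rho\rangle = \langle S\rho, \eta\rangle$ between a compact operator $S$ and an elementary tensor produces exactly the formula in the statement.

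For part~(\ref{T_7920_KStarStar_2nd}), I would take the dual of part~(\ref{T_7920_KStarStar_Dual}) and apply the universal property of the projective tensor product $(E \widehat{\otimes} F)^* = B(F, E^*)$ to get
\[
\Kdb(L^p)^{**} = (L^q \widehat{\otimes} L^p)^* = B\bigl(L^p, (L^q)^*\bigr) = B(L^p),
\]
since $(L^q)^* = L^p$. To recognize this isometric linear bijection as an algebra isomorphism for either Arens product, I would use that, because $L^p$ is reflexive, the natural action $\Kdb(L^p) \times L^p \to L^p$ extends uniquely, via weak$^*$-continuity in the first variable, to an action $\Kdb(L^p)^{**} \times L^p \to L^p$, and that this extended action corresponds exactly to letting the associated operator in $B(L^p)$ act in the usual way. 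Since both Arens products extend the original composition on $\Kdb(L^p)$ and respect this faithful module action, both must coincide with ordinary composition in $B(L^p)$; this simultaneously yields Arens regularity.

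The main technical point is the combination of the approximation property and the Radon--Nikodym argument needed to identify $\Kdb(L^p)^*$ with \emph{all} of $L^q \widehat{\otimes} L^p$ rather than merely an isometric subspace. Everything else is routine bookkeeping with Arens products and tensor-product dualities, and in a writeup I would cite these as standard results from the theory of tensor products of Banach spaces (e.g.\ from Ryan's monograph) rather than reprove them.
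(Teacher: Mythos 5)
Your proposal is correct and follows essentially the same route as the paper: both identify $\Kdb(L^p)^*$ via Grothendieck's trace duality using the approximation property of $L^p$ and the Radon--Nikodym property of $L^q$, then obtain $\Kdb(L^p)^{**} \cong (L^q \widehat{\otimes} L^p)^* \cong B(L^p)$ from the standard projective tensor product duality, with reflexivity yielding Arens regularity. The paper simply outsources these steps to Palmer's article on the bidual of the compact operators and to Ryan's monograph, whereas you spell out the chain of identifications; the substance is the same.
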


\begin{proof}
This follows from results of Grothendieck,
as described in the theorem on page 828 of~\cite{Palmer}
the discussion after that, and Theorems 1--3 there.
It is stated there that any Banach space $X$ such that $X$ and $X^*$
have the Radon-Nikodym property
and the approximation
property, satisfies Theorem 1 there
and the aforementioned theorem of Grothendieck,
giving~(\ref{T_7920_KStarStar_Dual}),
and also the case of~(\ref{T_7920_KStarStar_2nd})
for the first Arens product.
By Theorem 2 there
if $X$ is also reflexive then $\Kdb(X)$ is Arens regular,
so (\ref{T_7920_KStarStar_2nd}) holds as stated.  
See also the discussion on page 24, Corollary 4.13,
and Theorem 5.33 of \cite{Ryan} (and we thank M.~Mazowita for
this reference).
The explicit
formulas there are useful to check directly the Arens
product assertion.
One needs to know that $L^p (X, \mu)$ has the Radon-Nikodym property
and the approximation
property,
and this follows e.g.\  from
\cite[Example 4.5 and Corollary 5.45]{Ryan}.
\end{proof}

We remark that the last result and proof works
with $L^p$ replaced by any reflexive space with
the approximation property,
since reflexive spaces have the Radon-Nikodym property,
and indeed \cite[Corollary 4.7]{Ryan} implies that if $E$ is
reflexive and has the approximation property, then so does $E^*$.

By Theorem \ref{T_7920_KStarStar},
a net $(x_t)_{t \in \Lambda}$ in $B (L^p (X))$
converges weak* to~$x$ if and only if,
with $\frac{1}{p} + \frac{1}{q} = 1$,
\[
\sum_{k = 1}^{\infty} \, \langle x_t \xi_k, \eta_k  \rangle
 \to \sum_{k = 1}^{\infty} \, \langle x \xi_k, \eta_k \rangle
\]
for all $\xi_1, \xi_2, \ldots \in L^p (X)$
and $\eta_1, \eta_2, \ldots \in L^q (X)$
with $\sum_{k = 1}^{\infty} \, \| \xi_k \|_p \| \eta_k \|_q < \infty$
(or equivalently, by the usual trick,
with $\sum_{k = 1}^{\infty} \, \| \xi_k \|_p^p < \infty$
and $\sum_{k = 1}^{\infty} \, \| \eta_k \|_q^q < \infty$).
If $(x_t)_{t \in \Lambda}$ is bounded then by Banach duality principles
this is equivalent to $x_t \to x$ in the weak operator topology,
that is
$\langle x_t \xi, \eta  \rangle \to  \langle x \xi, \eta \rangle$
for all $\xi \in L^p (X)$ and $\eta \in L^q (X)$.
We will not use this here
but it is well known that essentially the usual $L^2$ operator proof
shows that the weak operator closure
of a convex set in $B (L^p ([0, 1]))$
equals the strong operator closure.
Indeed, for a Banach space $E$,
the strong operator continuous linear functionals on $B (E)$
are the same as those that are weak operator continuous.

The argument for the following well known lemma
will be reused
several times,
once in the form of an approximate identity bounded by~$M$
converging weak* to an identity in $A^{**}$ of norm at most~$M$.

\begin{lemma}\label{L_7917_AppIdConverge}
Let $A$ be an approximately unital Arens regular Banach algebra.
Then $A^{**}$ has an identity $1_{A^{**}}$ of norm~$1$,
and any cai for~$A$ converges weak* to~$1_{A^{**}}$.
\end{lemma}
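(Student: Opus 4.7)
The plan is to build the identity as a weak* cluster point of a given cai, and then use uniqueness to upgrade cluster point to weak* limit. Let $(e_t)_{t\in\Lambda}$ be a cai for $A$. Since $\|e_t\|\leq 1$, Alaoglu gives a subnet $(e_{t_s})$ converging weak* to some $E\in A^{**}$ with $\|E\|\leq 1$.

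The first key step is to check that $Ea = a = aE$ for every $a\in A$. Recall that for the left Arens product the map $F\mapsto FG$ is weak* continuous for each fixed $G\in A^{**}$, while for the right Arens product the map $G\mapsto FG$ is weak* continuous for each fixed $F\in A^{**}$. Under the Arens regularity hypothesis these two products coincide, so the single product on $A^{**}$ is separately weak* continuous in each variable (with the other held fixed). Therefore $e_{t_s}a \to Ea$ weak* in $A^{**}$; but since $(e_t)$ is a cai we also have $e_{t_s}a\to a$ in norm, and hence weak*, giving $Ea = a$. The same argument, using weak* continuity in the second variable, gives $aE = a$.

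Next I would extend this to all of $A^{**}$ via Goldstine. Given $F\in A^{**}$, choose a bounded net $(a_\alpha)$ in $A$ with $a_\alpha\to F$ weak*. Then
\[
EF \;=\; \mathrm{wk^*\text{-}lim}_\alpha\, Ea_\alpha \;=\; \mathrm{wk^*\text{-}lim}_\alpha\, a_\alpha \;=\; F,
\]
and symmetrically $FE = F$, using again the separate weak* continuity afforded by Arens regularity. Thus $E$ is a two-sided identity for $A^{**}$; in particular $E\neq 0$ and $E=E^{2}$, so $\|E\|\leq \|E\|^{2}$ forces $\|E\|\geq 1$, and combined with $\|E\|\leq 1$ we get $\|E\|=1$. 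The identity is obviously unique (if $E'$ is another, then $E=EE'=E'$).

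To finish, any weak* cluster point of the original net $(e_t)$ arises from some subnet and, by the argument above, must be a two-sided identity for $A^{**}$, hence must equal $E$. Since $(e_t)$ lies in the weak* compact set $\mathrm{Ball}(A^{**})$ and has a unique weak* cluster point, it converges weak* to $E$. The main care point in the proof is simply keeping the two one-sided weak* continuity statements of the Arens products straight and invoking Arens regularity precisely where both are simultaneously needed; everything else is a standard Goldstine-plus-Alaoglu argument.
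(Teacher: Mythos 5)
Your proof is correct and follows essentially the same route as the paper's: extract a weak* limit of a (sub)net of the cai via Alaoglu, use the separate weak* continuity of the Arens product (guaranteed by Arens regularity) first on $A$ and then, via weak* density of $A$ in $A^{**}$, on all of $A^{**}$, and finish with uniqueness of identities to upgrade cluster-point convergence to convergence of the whole net. The only cosmetic difference is that you make explicit the Goldstine step and the norm-one claim via $\|E\|=\|E^2\|\leq\|E\|^2$, which the paper leaves implicit.
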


\begin{proof}
The argument follows the proof of \cite[Proposition 2.5.8]{BLM}.
Since identities are unique if they exist,
it suffices to show that every subnet of any cai in~$A$
has in turn a subnet which converges to an identity for $A^{**}$.
Using Alaoglu's Theorem and since a subnet of a cai is a cai,
one sees that it is enough to show that if
$e \in A^{**}$ is the weak* limit of a cai,
then $e$ is an identity for $A^{**}$.
Multiplication on $A^{**}$ is separately weak* continuous
by \cite[2.5.3]{BLM},
so $e a = a e = a$ for all $a \in A$.
A second application of separate weak* continuity of multiplication
shows that this is true for all $a \in A^{**}$.
\end{proof}

\section{Notation, background, and general facts}\label{Sec_2}

\subsection{Dual and bidual algebras}\label{DuBi}

\begin{lemma}\label{L_7618_SecDual}
Let $p \in (1, \infty)$.
Let $A$ be an $L^p$-operator algebra
(resp.\  ${\mathrm{SQ}}_p$-operator algebra).
Then:
\begin{enumerate}
\item\label{L_7618_SecDual_AReg}
$A$ is Arens regular.
\item\label{L_7618_SecDual_WkSt}
Multiplication on $A^{**}$ is separately weak* continuous.
\item\label{L_7618_SecDual_Lp}
$A^{**}$ is an $L^p$-operator algebra
(resp.\  ${\mathrm{SQ}}_p$-operator algebra).
\end{enumerate}
\end{lemma}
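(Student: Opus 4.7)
The plan is to treat the three parts in order; (1) and (2) are classical, and (3) is the main content.

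For (1), I would invoke the classical result that every norm-closed subalgebra of $B(E)$ is Arens regular when $E$ is reflexive; this applies here since both $L^p$ and ${\mathrm{SQ}}_p$ spaces are reflexive for $p \in (1, \infty)$. The underlying mechanism is that reflexivity of $E$ makes bounded subsets weakly (sequentially) compact, which forces the two iterated limits in the standard Arens-regularity criterion to agree. Part (2) is then immediate from (1): on $A^{**}$ the first Arens product is always weak*-continuous in the first variable for fixed second, and the second Arens product in the second variable for fixed first, so once the two products coincide one has full separate weak* continuity.

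For (3), fix an isometric embedding $A \hookrightarrow B(E)$ with $E$ an $L^p$- or ${\mathrm{SQ}}_p$-space. My plan is to extend the left module action of $A$ on $E$ to one of $A^{**}$ via biduals, producing an isometric algebra embedding $A^{**} \hookrightarrow B(E)$, which will finish the proof. For each $x \in E$ set $\mu_x \colon A \to E$, $a \mapsto ax$; reflexivity of $E$ gives $\mu_x^{**} \colon A^{**} \to E^{**} = E$. Define $T_\Phi(x) := \mu_x^{**}(\Phi)$. From $\mu_{x+y} = \mu_x + \mu_y$ and $\|\mu_x\| \leq \|x\|$ one gets $T_\Phi \in B(E)$ with $\|T_\Phi\| \leq \|\Phi\|$, and $T_a = a$ for $a \in A$.

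Isometry of $\Phi \mapsto T_\Phi$ follows from a bipolar argument. The dual-pairing calculation $\langle T_\Phi(x), \eta \rangle = \Phi(\mu_x^*(\eta))$ gives $\|T_\Phi\| = \sup\{|\Phi(\phi)| : \phi \in F\}$, where $F := \{\mu_x^*(\eta) : x \in \Ball(E),\, \eta \in \Ball(E^*)\} \subseteq \Ball(A^*)$. The hypothesis that $A \hookrightarrow B(E)$ is isometric is precisely the statement that $F$ is norming for $A$, so by the bipolar theorem the weak*-closed absolutely convex hull of $F$ equals $\Ball(A^*)$; weak*-continuity of $\Phi$ on $A^*$ then yields $\|T_\Phi\| = \|\Phi\|$.

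For the homomorphism property $T_{\Phi\Psi} = T_\Phi T_\Psi$, equivalently $(\Phi\Psi) \cdot x = \Phi \cdot (\Psi \cdot x)$, the identity is trivial when $\Phi, \Psi \in A$; I would extend by weak*-density of $A$ in $A^{**}$ in two steps (first over $\Phi$ with $\Psi \in A$ fixed, then over $\Psi$ with $\Phi \in A^{**}$ fixed), using the weak*-to-weak continuity of the bidual maps $\mu_x^{**}$, the norm (hence weak) continuity of $T_\Phi$ on $E$, and the separate weak*-continuity of the Arens product from (2). This last verification is the main obstacle, and is exactly where Arens regularity from (1) is essential: without it the bidual extension of the module action would only be associative for one of the two Arens products, and the density/continuity argument would collapse on the other side.
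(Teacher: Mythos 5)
Your part (3) contains a fatal error, located exactly at the bipolar step. For $\Phi \in A^{**}$, the supremum of $|\Phi|$ over a set $F \subseteq \Ball (A^*)$ equals the supremum over the \emph{norm}-closed (equivalently, weakly closed) absolutely convex hull of $F$, not over its weak*-closed hull: a general $\Phi \in A^{**}$ is not weak*-continuous on $A^*$ (only the elements of $A$ itself are). So the fact that $F = \{ \mu_x^* (\eta) \}$ is norming for $A$ tells you nothing about $\| T_\Phi \|$ for $\Phi \notin A$. Indeed the map $\Phi \mapsto T_\Phi$ is just the canonical weak*-continuous extension of the identity representation, as in Lemma~\ref{L_7702_WkStarExt}, and in general it is neither isometric nor even injective: take $A = C ([0,1])$ acting on $L^p ([0,1])$ by multiplication. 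Then the range of $\Phi \mapsto T_\Phi$ lies in $L^{\infty} ([0,1])$, whereas $A^{**}$ is far larger (it contains, for example, minimal idempotents corresponding to points of $[0,1]$, all of which are sent to $0$). One genuinely cannot represent $A^{**}$ on the same space $E$; the paper instead uses the construction in \cite{Daws}, which embeds $B (E)^{**}$ isometrically into $B (F)$ for $F$ an ultrapower of $l^p (E)$ --- a strictly larger $L^p$ (resp.\ ${\mathrm{SQ}}_p$) space, using the fact that ultrapowers of such spaces are again such spaces --- and then observes that $A^{**} \subseteq B (E)^{**}$ once $B (E)$ is known to be Arens regular.

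Part (1) also rests on a false citation: it is \emph{not} true that reflexivity of $E$ forces $B (E)$ (hence every closed subalgebra of it) to be Arens regular. Young exhibited a reflexive space $E$ with $B (E)$ not Arens regular, and this is precisely why Theorem~1 of \cite{Daws} assumes \emph{super}reflexivity. Your conclusion survives because $L^p$ and ${\mathrm{SQ}}_p$ spaces for $p \in (1, \infty)$ are superreflexive (their ultrapowers are again spaces of the same type, hence reflexive), but the ``classical result'' you invoke, and the double-limit mechanism you sketch for it, do not hold at that level of generality; you must route the argument through superreflexivity. Part (2) is correct as stated.
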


\begin{proof}
We first recall (Theorem 3.3 (ii) of~\cite{He}, or \cite{LeM},
or the
remarks above Theorem 4.1 in \cite{Daws2})
that any ultrapower of $L^p$~spaces (resp.\  ${\mathrm{SQ}}_p$ spaces)
is again an $L^p$~space (resp.\  ${\mathrm{SQ}}_p$ space).
In the ${\mathrm{SQ}}_p$ space case this uses the well
known fact that ultrapowers behave well
with respect to subspaces and quotients
(this is obvious for subspaces,
for quotients see e.g.\  the proof of Proposition 6.5 in \cite{He}).
In particular,
such an ultrapower is reflexive,
so
every $L^p$~space (resp.\  ${\mathrm{SQ}}_p$ space) is superreflexive.
(See Proposition~1 of~\cite{Daws}.)

Now let $E$ be an $L^p$~space (resp.\  ${\mathrm{SQ}}_p$ space).
Theorem~1 of~\cite{Daws}
implies that $B ( E)$
is Arens regular.
The proof of Theorem~1 of~\cite{Daws}
embeds $B ( E )^{**}$ isometrically as a subalgebra
of $B (F)$
for a Banach space~$F$
obtained as an ultrapower of $l^r (E )$
for an arbitrarily chosen $r \in (1, \infty)$
(called $p$ in~\cite{Daws}).
We may choose $r = p$.
Then $l^r (E)$ is isometrically isomorphic
to an $L^p$~space (resp.\  ${\mathrm{SQ}}_p$ space).
Since ultrapowers of $L^p$~spaces
(resp.\  ${\mathrm{SQ}}_p$ spaces)
are $L^p$~spaces (resp.\  ${\mathrm{SQ}}_p$ spaces)
as we said at the start of this proof,
we have shown that $B ( E )^{**}$
is an $L^p$- (resp.\  ${\mathrm{SQ}}_p$-) operator algebra.

Now suppose that  $A \subseteq B (E )$
is a norm closed subalgebra.
Since $B (E )$ is Arens regular,
$A^{**}$ is a subalgebra of $B ( E  )^{**}$
and $A$ is Arens regular
by 2.5.2 in~\cite{BLM}.
It is now immediate that $A^{**}$ is an $L^p$-
(resp.\  ${\mathrm{SQ}}_p$-) operator algebra.
It also follows from 2.5.3 in~\cite{BLM}
that multiplication on $A^{**}$ is separately weak* continuous.
\end{proof}

It follows from \cite[Proposition 8]{Daws}
that $B ( L^1 (X, \mu) )$ is not Arens regular
unless $L^1 (X, \mu)$ is finite dimensional.

\begin{corollary}\label{C_7620_KLp}
Let $p \in (1, \infty)$
and let $(X, \mu)$ be a measure space.
Then multiplication on $B ( L^p (X, \mu) )$
is separately weak* continuous.
\end{corollary}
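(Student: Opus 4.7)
The plan is to deduce this directly from Theorem~\ref{T_7920_KStarStar} together with Lemma~\ref{L_7618_SecDual}. The algebra $\Kdb(L^p(X, \mu))$ is itself an $L^p$-operator algebra (it sits inside $B(L^p(X, \mu))$ as a norm-closed subalgebra), so Lemma~\ref{L_7618_SecDual}~(\ref{L_7618_SecDual_AReg},~\ref{L_7618_SecDual_WkSt}) applies to $A = \Kdb(L^p(X, \mu))$: $A$ is Arens regular, and multiplication on $A^{**}$ is separately weak* continuous.

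Next, I would invoke Theorem~\ref{T_7920_KStarStar}(\ref{T_7920_KStarStar_2nd}) to transfer this property. That theorem produces an isometric algebra isomorphism
\[
\Phi \colon \Kdb(L^p(X, \mu))^{**} \longrightarrow B(L^p(X, \mu))
\]
extending the canonical inclusion; since $A$ is Arens regular, there is no ambiguity about which Arens product is used. Crucially, $\Phi$ is a weak*-to-weak* homeomorphism: the weak* topology on $B(L^p(X, \mu))$ referred to in the corollary is the one induced by the predual $L^q(X, \mu) \potimes L^p(X, \mu)$ (cf.\ the paragraph following Theorem~\ref{T_7920_KStarStar}), and by Theorem~\ref{T_7920_KStarStar}(\ref{T_7920_KStarStar_Dual}) this same space is the predual of $\Kdb(L^p(X, \mu))^{**}$. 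Since $\Phi$ is an isometric linear bijection whose inverse is also bounded, and it implements the identification of these two preduals, it is automatically weak* bicontinuous.

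Combining these two steps: for fixed $x \in B(L^p(X, \mu))$, the map $y \mapsto xy$ on $B(L^p(X, \mu))$ is conjugate via $\Phi$ to left multiplication by $\Phi^{-1}(x)$ on $\Kdb(L^p(X, \mu))^{**}$, which is weak* continuous by Lemma~\ref{L_7618_SecDual}(\ref{L_7618_SecDual_WkSt}). Since $\Phi$ is a weak* homeomorphism, $y \mapsto xy$ is weak* continuous on $B(L^p(X, \mu))$. The same argument handles right multiplication.

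There is no substantial obstacle; the only point to verify carefully is the identification of the two weak* topologies, which reduces to the statement that $\Phi$ and $\Phi^{-1}$ send elementary duality pairings to elementary duality pairings, and this is immediate from the explicit formula in Theorem~\ref{T_7920_KStarStar}(\ref{T_7920_KStarStar_Dual}): an elementary tensor $\eta \otimes \rho$ pairs with any $T \in B(L^p(X, \mu))$ as $\langle T\rho, \eta \rangle$, and this pairing clearly agrees with the one it induces on $\Kdb(L^p(X, \mu))$ regarded as a subset of its own bidual.
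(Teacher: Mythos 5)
Your proof is correct and follows essentially the same route as the paper, whose entire argument is the one-line identification $\Kdb(L^p(X,\mu))^{**} \cong B(L^p(X,\mu))$ from Theorem~\ref{T_7920_KStarStar}~(\ref{T_7920_KStarStar_2nd}) combined with Lemma~\ref{L_7618_SecDual}. You have merely made explicit the (correct) verification that this isomorphism is a weak* homeomorphism, which the paper leaves implicit.
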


\begin{proof}
We have $\Kdb (L^p (X, \mu))^{**} \cong B (L^p (X, \mu))$
by Theorem \ref{T_7920_KStarStar}~(\ref{T_7920_KStarStar_2nd}).
\end{proof}

\begin{definition}\label{D_7620_DualLpOpAlg}
Let $p \in (1, \infty)$.
A {\emph{dual $L^p$-operator algebra}}
is a Banach algebra $A$ with a predual
such that there is a measure space $(X, \mu)$
and an isometric and weak* homeomorphic isomorphism
from $A$ to a weak* closed subalgebra of $B (L^p (X, \mu))$.
\end{definition}

By Corollary~\ref{C_7620_KLp}, the multiplication
on a dual $L^p$-operator algebra is separately weak* continuous.

\begin{corollary}\label{C_7620_2Dual}
Let $p \in (1, \infty)$
and let $A$ be an $L^p$-operator algebra.
Then $A^{**}$ is a dual $L^p$-operator algebra.
\end{corollary}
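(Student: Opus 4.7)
The plan is to revisit the proof of Lemma~\ref{L_7618_SecDual} and show that the isometric algebra embedding of $A^{**}$ produced there is in fact a weak* homeomorphism onto a weak* closed subalgebra of some $B(L^p(Y,\nu))$. By Lemma~\ref{L_7618_SecDual}, we already have that $A$ is Arens regular, that multiplication on $A^{**}$ is separately weak* continuous for the predual $A^*$, and that $A^{**}$ is an $L^p$-operator algebra. So only the preduality and weak* topology part of Definition~\ref{D_7620_DualLpOpAlg} still needs to be checked.

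Following that proof, I would write the isometric embedding $\theta : A^{**} \to B(F)$ as the composition of the bidual $\pi^{**} : A^{**} \to B(E)^{**}$ of an isometric representation $\pi : A \hookrightarrow B(E)$ on $E = L^p(X,\mu)$, with Daws' isometric algebra embedding $\Psi : B(E)^{**} \to B(F)$ from Theorem~1 of~\cite{Daws}, where $F$ is an ultrapower of $\ell^p(E)$ and hence an $L^p$~space $L^p(Y,\nu)$. The first map $\pi^{**}$ is automatically isometric, weak*-to-weak* continuous, and a weak* homeomorphism onto the weak*-closed subspace $\pi^{**}(A^{**}) \subseteq B(E)^{**}$, since it is the second adjoint of an isometric inclusion of Banach spaces.

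The main step, and the principal obstacle, is to verify that Daws' embedding $\Psi$ is weak*-to-weak* continuous when $B(F)$ carries the weak* topology from the predual supplied by Theorem~\ref{T_7920_KStarStar}. For $\xi \in F$ and $\eta \in F^*$ represented by classes $[(\xi_i)]_{\mathcal U}$ and $[(\eta_i)]_{\mathcal U}$ in the ultrapower, one reads off from the construction in \cite{Daws} an identity of the shape
\[
\langle \Psi(T)\xi, \eta \rangle \;=\; \lim_{i \to \mathcal U} \, T\bigl( \omega_{\xi_i, \eta_i} \bigr), \qquad T \in B(E)^{**},
\]
where $\omega_{\xi_i, \eta_i} \in B(E)^*$ is the functional $S \mapsto \langle S \xi_i, \eta_i \rangle$. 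Each $T \mapsto T\bigl(\omega_{\xi_i,\eta_i}\bigr)$ is weak* continuous on $B(E)^{**}$, and a uniform-boundedness plus ultrafilter-limit argument shows that the limiting functional is also evaluation at a single element of $B(E)^*$, making $\Psi$ weak* continuous.

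Granted weak* continuity of $\Psi$, the composition $\theta = \Psi \circ \pi^{**}$ is isometric and weak*-to-weak* continuous. Alaoglu's theorem makes $\theta(\Ball(A^{**}))$ weak* compact, the Krein--Smulian theorem then forces $\theta(A^{**})$ to be weak* closed in $B(F)$, and the standard continuous-bijection-of-compact-Hausdorff argument on balls (extended by linearity) shows that $\theta$ is a weak* homeomorphism onto its image. The image is a subalgebra by construction, so $A^{**}$ is a dual $L^p$-operator algebra. If tracing the weak* continuity through Daws' explicit formula proves awkward, a fallback is to construct the action of $A^{**}$ on an ultrapower of copies of $E$ directly, using the canonical weak*-to-weak extension of $\pi$ and reflexivity of $E$.
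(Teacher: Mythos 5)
Your overall architecture coincides with the paper's: factor the embedding as $\Psi \circ \pi^{**}$, establish weak* continuity of Daws' map $\Psi \colon B(E)^{**} \to B(F)$, and finish with Krein--Smulian; the treatment of $\pi^{**}$ and the endgame are fine. The gap is exactly at the step you flag as the main obstacle. The functional $\Phi(T) = \lim_{i \to \mathcal{U}} T(\omega_{\xi_i, \eta_i})$ is a pointwise limit, on $B(E)^{**}$, of a bounded net of weak* continuous functionals, and such a limit need \emph{not} be weak* continuous. Uniform boundedness plus the ultrafilter only give that $(\omega_{\xi_i, \eta_i})$ converges along $\mathcal{U}$ in $\sigma(B(E)^*, B(E))$ to some $\phi \in B(E)^*$; the identity $\Phi(T) = T(\phi)$ for all $T \in B(E)^{**}$ would require convergence in the strictly stronger topology $\sigma(B(E)^*, B(E)^{**})$, and this fails. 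A model computation: take $E = l^p$, $\xi_i = e_i$, $\eta_i = e_i^*$ the standard basis vectors, and let $P_m$ be the coordinate projections. Then $\Phi(P_m) = \lim_{i \to \mathcal{U}} \langle P_m e_i, e_i^* \rangle = 0$ for every $m$, while any weak* cluster point $T_0$ of $(P_m)$ in $B(l^p)^{**}$ satisfies $T_0(\omega_{e_i, e_i^*}) = \lim_{m} \langle P_m e_i, e_i^* \rangle = 1$ for each $i$, so $\Phi(T_0) = 1$. Hence a map satisfying your displayed identity literally (with $T$ applied \emph{before} the ultrafilter limit) cannot be weak* continuous, so either the formula misdescribes the embedding or the conclusion you draw from it is false; in either case the key step is not established.

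What makes the paper's one-line claim work is that the order of limits is the opposite of the one you wrote: the embedding is (or can be taken to be) the adjoint of the map $\Kdb(F)^* = F^* \widehat{\otimes} F \to B(E)^*$ sending $\eta \otimes \xi$ to the $\sigma(B(E)^*, B(E))$-limit along $\mathcal{U}$ of the functionals $\omega_{\xi_i, \eta_i}$. Equivalently, $\Psi$ is the canonical weak* continuous extension of the diagonal ultrapower representation $\rho$ of $B(E)$ on $F$ --- essentially your ``fallback''. Weak* continuity is then automatic, being built into the construction as an adjoint, and the genuine content of Daws' choice of index set is isometry: one needs $\rho^*$ to carry $\Ball(\Kdb(F)^*)$ \emph{onto} $\Ball(B(E)^*)$, which holds because $\Ball(\Kdb(E)^*)$ is $\sigma(B(E)^*, B(E))$-dense in $\Ball(B(E)^*)$ (Goldstine applied to $\Kdb(E)^*$ and its bidual $\Kdb(E)^{***} \cong B(E)^*$, using Theorem~\ref{T_7920_KStarStar}) and the index set is rich enough to realize all such approximating nets. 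With that correction, your Krein--Smulian argument and the reduction to the weak* closed subalgebra $\pi^{**}(A^{**}) = A^{\perp\perp}$ go through as in the paper.
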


\begin{proof}
The embedding of $B ( L^p (X, \mu) )^{**}$
in Lemma~\ref{L_7618_SecDual} coming from the proof from \cite{Daws}
is easily checked
to be weak* continuous,
hence a weak* homeomorphism onto its range
by the Krein-Smulian theorem.
Hence $B ( L^p (X, \mu) )^{**}$ is a dual $L^p$-operator algebra.
It easily follows
that $A^{**}$ is too.
\end{proof}

\begin{lemma}\label{L_7620_SubOfDual}
Let $p \in (1, \infty)$
and let $A$ be a dual $L^p$-operator algebra.
Then:
\begin{enumerate}
\item\label{L_7620_SubOfDual_SbD}
The weak* closure of any subalgebra of~$A$
is a dual $L^p$-operator algebra.
\item\label{L_7620_SubOfDualUnital}
If $A$ is approximately unital
then $A$ is unital.
\end{enumerate}
\end{lemma}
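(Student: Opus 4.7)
The plan is to prove the two parts separately, both of which follow by combining the separate weak* continuity of multiplication (Corollary \ref{C_7620_KLp}) with standard weak* compactness arguments.

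For part~(\ref{L_7620_SubOfDual_SbD}), fix the given weak* homeomorphic isometric embedding $\pi \colon A \to B(L^p(X, \mu))$ onto a weak* closed subalgebra. For a subalgebra $B \subseteq A$, I will show that the weak* closure $\overline{B}^{w*}$ (formed in $A$) is a dual $L^p$-operator algebra. Since $\pi$ is a weak* homeomorphism onto its image and $\pi(A)$ is weak* closed in $B(L^p(X, \mu))$, the image $\pi\bigl(\overline{B}^{w*}\bigr)$ coincides with the weak* closure of $\pi(B)$ inside $B(L^p(X, \mu))$. By Corollary \ref{C_7620_KLp}, multiplication on $B(L^p(X, \mu))$ is separately weak* continuous, so this weak* closure is a subalgebra, and hence a weak* closed subalgebra of $B(L^p(X, \mu))$. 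Finally, a weak* closed subspace $V$ of a dual Banach space $X^*$ is itself canonically a dual space: identifying $V$ with $(X/V_\perp)^*$ via the pre-annihilator $V_\perp \subseteq X$ gives a predual whose induced weak* topology agrees with the relative weak* topology from $X^*$. Applied to $V = \pi\bigl(\overline{B}^{w*}\bigr)$ with $X = L^q(X, \mu) \widehat{\otimes} L^p(X, \mu)$ (the predual from Theorem \ref{T_7920_KStarStar}~(\ref{T_7920_KStarStar_Dual})), this shows $\overline{B}^{w*}$ is a dual $L^p$-operator algebra.

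For part~(\ref{L_7620_SubOfDualUnital}), the plan is to use that $\Ball(A)$ is weak* compact because $A$ is a dual space. Let $(e_t)_{t \in \Ld}$ be a cai for $A$, and note that $e_t \in \Ball(A)$ for each $t$. By Alaoglu's theorem, some subnet $(e_{t_\al})_\al$ converges weak* to some $e \in \Ball(A)$. By Corollary~\ref{C_7620_KLp} and the weak* homeomorphism in the definition of dual $L^p$-operator algebra, multiplication on $A$ is separately weak* continuous, so for any $a \in A$,
\[
ea = \lim_\al e_{t_\al} a = a \andeqn ae = \lim_\al a e_{t_\al} = a.
\]
Hence $e$ is an identity for $A$. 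If $A \neq \{0\}$, then $\|e\| \geq 1$ (since $e = e^2$ forces $\|e\| \leq \|e\|^2$), while $\|e\| \leq 1$ by weak* lower semicontinuity of the norm, so $\|e\| = 1$ and $A$ is unital in the sense of Definition~\ref{D_7618_UBA}.

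There is no real obstacle here; the mild subtlety is that in part~(\ref{L_7620_SubOfDual_SbD}) one must check that the two natural weak* topologies on the weak* closure (the relative one from $B(L^p(X, \mu))$ and the intrinsic one from the quotient predual) coincide, but this is the standard fact about preduals of weak* closed subspaces invoked above. In part~(\ref{L_7620_SubOfDualUnital}), the key conceptual point is that passing from $A^{**}$-identities (as in Lemma~\ref{L_7917_AppIdConverge}) to identities in $A$ itself is precisely what the dual-space hypothesis affords via weak* compactness of $\Ball(A)$.
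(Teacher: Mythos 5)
Your proof is correct and is essentially the argument the paper has in mind: the paper simply cites the $p=2$ case (Proposition 2.7.4 of \cite{BLM}), whose proof is exactly your combination of separate weak* continuity of multiplication, the standard predual $(X/V_\perp)^*$ of a weak* closed subspace, and a weak* convergent subnet of the cai. You have merely written out that standard argument in the $L^p$ setting.
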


\begin{proof}
The proofs are essentially the same as in the case $p = 2$,
as done in the proof of Proposition 2.7.4 in~\cite{BLM}.
\end{proof}

\subsection{States, hermitian elements,
 and real positivity}\label{StaRp}

We take states to be as at the beginning of Section~2 of~\cite{BOZ}.

\begin{definition}\label{Dstate}
If $A$ is a unital Banach algebra,
then a {\emph{state}} on~$A$
is a linear functional $\om \colon A \to \Cdb$
such that $\| \om \| = \om (1) = 1$.
If $A$ is an approximately unital Banach algebra,
we define a {\emph{state}} on~$A$
to be a linear functional $\om \colon A \to \Cdb$
such that $\| \om \| = 1$
and $\om$ is the restriction to~$A$ of a state on the
multiplier unitization~$A^{1}$
(Definition~\ref{D_7702_Unitization}).

We denote by $S (A)$ the set of all states on~$A$, and write
$Q (A)$ for the quasistate space
(that is, the set of  $\lambda \varphi$ for $\lambda \in [0, 1]$
and $\varphi \in S (A)$).

If ${\mathfrak{e}} = (e_t)_{t \in \Ld}$ is
a cai for $A$, define
\[
S_{\mathfrak{e}} (A)
 = \big\{ \om \in \Ball (A^*) \colon \om (e_t) \to 1 \big\}
\]
and define
\[
Q_{\mathfrak{e}} (A)
 = \big\{ \lambda \varphi \colon
    {\mbox{$\lambda \in [0, 1]$
      and $\varphi \in S_{\mathfrak{e}} (A)$}} \big\}.
\]
\end{definition}

If $A$ is a $C^*$-algebra (unital or not),
this definition gives the usual states and quasistates on~$A$.

The first part of the following definition
is Definition 2.6.1 of~\cite{Plm2}.

\begin{definition}\label{D_7702_NumRange}
Let $A$ be a unital Banach algebra,
and let $a \in A$.
We define the {\emph{numerical range}} of~$a$ to be
$\{ \varphi (a) \colon \varphi \in S (A) \}$.

If $E$ is a Banach space and $a \in B (E)$,
we define the {\emph{spatial numerical range of~$a$}} to be
\[
\big\{ \langle a \xi, \eta \rangle \colon
  {\mbox{$\xi \in  \Ball  (E)$ and $\eta \in \Ball  (E^*)$
    with $\langle  \xi, \eta \rangle = 1$}}
   \big\}.
\]
\end{definition}

There are other definitions of the numerical range.
For our purposes, only the convex hull is important,
and by Theorem~14 of~\cite{Lum}
the convex hulls of the
numerical range and the spatial numerical range of
an element in $B (E)$ are always the same.

\begin{definition}[see Definition 2.6.5 of~\cite{Plm2} and the
  preceding discussion]\label{DHerm}
Let $A$ be a unital Banach algebra,
and let $a \in A$.
We say that $a$ is {\emph{hermitian}}
if $\| \exp (i \ld a) \| = 1$ for all $\ld \in \Rdb$.

If $A$ is approximately unital we define the hermitian elements of~$A$
to be the elements in $A$ which are
hermitian in the multiplier unitization $A^{1}$
(Definition~\ref{D_7702_Unitization}).
\end{definition}

\begin{lemma}[see Theorem 2.6.7 of~\cite{Plm2}]\label{L_7702_SAH}
Let $A$ be a unital Banach algebra,
and let $a \in A$.
Then $a$ is hermitian if and only if
$\varphi (a) \in \Rdb$ for all states $\varphi$ of~$A$.
\end{lemma}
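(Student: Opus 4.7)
The plan is to exploit the scalar function $\ld \mapsto \varphi(\exp(i\ld a))$ together with the norm-one condition defining hermitian elements and states.

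\textbf{Forward direction.} Assume $a$ is hermitian and fix $\varphi \in S(A)$. The function
\[
f(\ld) = \varphi(\exp(i\ld a)) = 1 + i\ld\varphi(a) + O(\ld^2)
\]
satisfies $|f(\ld)| \le \|\varphi\|\,\|\exp(i\ld a)\| = 1$ for every $\ld \in \Rdb$, with $f(0) = 1$. Writing $\varphi(a) = \af + i\bt$, the expansion
\[
|f(\ld)|^2 = (1 - \ld\bt)^2 + (\ld\af)^2 + O(\ld^2) = 1 - 2\ld\bt + O(\ld^2)
\]
remains $\le 1$ for all small real $\ld$ of both signs only if $\bt = 0$. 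Hence $\varphi(a) \in \Rdb$.

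\textbf{Converse direction.} The key analytic input is the Bohnenblust-Karlin formula
\[
\sm(b) := \lim_{t \downarrow 0} \frac{\|1 + tb\| - 1}{t} = \max \bigl\{ \Re \varphi(b) \colon \varphi \in S(A) \bigr\},
\]
valid for every element $b$ of a unital Banach algebra; see Theorem 2.6.4 of \cite{Plm2}. The hypothesis that $\varphi(a) \in \Rdb$ for all states $\varphi$ gives $\Re \varphi(\pm i\ld a) = \mp \ld \Im \varphi(a) = 0$, so $\sm(\pm i\ld a) = 0$ for every $\ld \in \Rdb$. Combining this with the Lie product formula $\exp(b) = \lim_n (1 + b/n)^n$ and the asymptotic $\|1 + b/n\| \le 1 + \sm(b)/n + o(1/n)$ yields $\|\exp(b)\| \le e^{\sm(b)}$. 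Applied to $b = \pm i\ld a$, this gives $\|\exp(\pm i\ld a)\| \le 1$; since the product of these two elements is $1$, both norms equal~$1$, so $a$ is hermitian.

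\textbf{Main obstacle.} The genuine content lies in the Bohnenblust-Karlin formula. If one prefers an in-line argument rather than a citation, it can be obtained in a few lines: for each $t > 0$ choose by Hahn-Banach a functional $\psi_t \in \Ball(A^*)$ with $\psi_t(1 + tb) = \|1 + tb\|$, and let $\psi$ be a weak* cluster point as $t \downarrow 0$. Then $\psi \in \Ball(A^*)$ and $\psi(1) = 1$, so $\psi$ is a state, and rearranging $\|1 + tb\| - 1 = t\Re\psi_t(b) + (\Re\psi_t(1) - 1)$ together with the reverse inequality $\|1 + tb\| \ge \Re \omega(1 + tb)$ for any state $\omega$ identifies $\psi(b)$ as a real value realizing the supremum.
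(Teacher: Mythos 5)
Your proof is correct. Note that the paper does not prove this lemma at all: it is stated with the citation ``see Theorem 2.6.7 of~\cite{Plm2}'' and no argument is given, so there is nothing internal to compare against. What you have written is essentially the standard Bonsall--Duncan/Palmer proof underlying that citation: differentiating $\ld \mapsto |\varphi(\exp(i\ld a))|^2$ at $0$ for the forward direction, and the Bohnenblust--Karlin identification of $\lim_{t\downarrow 0}(\|1+tb\|-1)/t$ with $\max\{\Re\varphi(b) \colon \varphi \in S(A)\}$ combined with $\|\exp(b)\| \leq e^{\sm(b)}$ for the converse. The one step worth stating carefully is the upper bound $\|1+b/n\| \leq 1 + \sm(b)/n + o(1/n)$: by convexity of $t \mapsto \|1+tb\|$ the difference quotients \emph{decrease} to $\sm(b)$ as $t \downarrow 0$, so the pointwise inequality one gets for free is the lower bound $\|1+tb\| \geq 1 + t\sm(b)$; the upper bound you use is only asymptotic and rests precisely on the existence and value of the limit, i.e.\ on the Bohnenblust--Karlin formula itself. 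Since you invoke (and correctly sketch a proof of) that formula, the argument is complete.
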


\begin{lemma}\label{L_7Z14_HerSubalg}
Let $A$ be an approximately unital Banach algebra,
and let $B \subseteq A$ be a closed subalgebra
which contains a cai for~$A$.
Let $a \in B$.
Then $a$ is hermitian as an element of~$B$
if and only if $a$ is hermitian as an element of~$A$.
\end{lemma}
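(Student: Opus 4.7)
The plan is to reduce the claim to the statement that the multiplier unitization norms of $A$ and $B$ agree on elements of $B$. Since $B$ contains a cai for $A$, that same cai $(e_t)_{t \in \Ld}$ lies in $B$ and is also a cai for~$B$, so $B$ is itself approximately unital. A short preliminary observation is that $B$ is unital if and only if $A$ is, and in that case they share the same identity: if $1_B$ exists, then $1_B e_t = e_t$ forces $e_t \to 1_B$ in norm (multiplying both sides in $A$ using the cai property), and then $1_B a = \lim_t e_t a = a$ for all $a \in A$. So we may reduce to the case where both $A$ and $B$ are nonunital, since the unital case is trivial ($B^1 = B$ sits isometrically in $A^1 = A$).

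In the nonunital case, I would apply Remark~\ref{R_7702_UnitizationFacts}(\ref{R_7702_UnitizationFacts_AI}) to both algebras, using the common cai $(e_t)$ in $B$: for $b \in B$ and $\ld \in \Cdb$,
\[
\| b + \ld 1 \|_{B^1}
 \;=\; \lim_t \| b e_t + \ld e_t \|
 \;=\; \| b + \ld 1 \|_{A^1},
\]
the middle quantity being computed in the common ambient norm. Thus the natural inclusion $B^1 \hookrightarrow A^1$ is isometric on elements whose ``$A$-part'' lies in~$B$.

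Now fix $a \in B$ and $\ld \in \Rdb$. Since $B$ is a closed subalgebra, the series $\exp(i\ld a) - 1 = \sum_{k \geq 1} (i\ld a)^k / k!$ converges in~$B$, so $\exp(i\ld a) = b + 1$ for some $b \in B$. Applying the equality of norms just established gives
\[
\| \exp(i\ld a) \|_{B^1} \;=\; \| \exp(i\ld a) \|_{A^1}
\]
for every $\ld \in \Rdb$. By Definition~\ref{DHerm}, $a$ is hermitian in~$B$ iff the left side equals~$1$ for all $\ld$, iff the right side equals~$1$ for all $\ld$, iff $a$ is hermitian in~$A$. No serious obstacle arises; the only small care is the unital/nonunital dichotomy and noting that the cai which certifies the multiplier norm of $A$ can be chosen inside~$B$.
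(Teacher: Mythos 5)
Your proof is correct, and it diverges from the paper's after the shared first step. Both arguments rest on the fact that $B^1$ sits isometrically inside $A^1$: the paper cites Lemma~\ref{repuni} for this, while you re-derive it in the special case at hand from Remark~\ref{R_7702_UnitizationFacts}~(\ref{R_7702_UnitizationFacts_AI}) applied to the common cai --- same content, and your computation is exactly the proof of that lemma. From there the routes differ. The paper passes to states: by Hahn--Banach, states on $B^1$ are precisely restrictions of states on $A^1$, and then Lemma~\ref{L_7702_SAH} (hermitian $\Leftrightarrow$ real numerical range) finishes. You instead work directly with the defining condition $\| \exp (i \ld a) \| = 1$, using that $\exp (i \ld a) - 1$ lies in the closed subalgebra $B$, so the isometric inclusion forces $\| \exp (i \ld a) \|_{B^1} = \| \exp (i \ld a) \|_{A^1}$. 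Your version is more elementary --- no Hahn--Banach, no numerical-range characterization --- and your explicit unital/nonunital dichotomy (including the observation that $B$ is unital iff $A$ is, with the same identity) is a point of care the paper leaves implicit. What the paper's state-based route buys is reuse: the very next result, Lemma~\ref{L_7Z14_RPosSubalg} on real positive elements, is proved by the identical argument with Definition~\ref{D_7702_RealPos} in place of Lemma~\ref{L_7702_SAH}, whereas real positivity has no exponential characterization and so your method would not transfer to it.
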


\begin{proof}
By definition,
we work in the multiplier unitizations.
By Lemma~\ref{repuni},
$B^1$ is isometrically a unital subalgebra of~$A^1$.
The Hahn-Banach Theorem now shows that states on~$B^1$
are exactly the restrictions of states on~$A^1$.
So the conclusion follows from Lemma~\ref{L_7702_SAH}.
\end{proof}

\begin{definition}\label{D_locmeasdecomp}
Let $(X, \mu)$ be a measure space that is not $\sigma$-finite.
Recall that a function $f \colon X \to \Cdb$
is locally measurable if $f^{-1} (E) \cap F$ is measurable
for all Borel sets $E \subseteq \Cdb$ and
all subsets $F \subseteq X$ of finite measure.
Two such functions are ``locally a.e.\  equal''
if they agree a.e.\  on any such set $F$.
We interpret $L^\infty(X, \mu)$ as
$L^{\infty}_{\mathrm{loc}} (X, \mu)$,
the Banach space of
essentially bounded locally measurable scalar functions
mod local a.e.\  equality.

Further recall that a measure space $(X, \mu)$ is decomposable
if $X$ may be partitioned into sets $X_i$
of finite measure for $i \in I$
such that a set $F$ in $X$ is measurable if and only if $F \cap X_i$ is
measurable for every $i \in I$, and then
$\mu (F) = \sum_{i \in I} \, \mu (F \cap X_i)$.
\end{definition}

By e.g.\  the corollary on p.\  136 in \cite{Lacey},
any abstract $L^p$ space ``is'' decomposable, indeed it is isometric
to a direct sum of $L^p$ space of finite measures.
Thus, we may assume that all measure spaces $(X, \mu)$ are decomposable.

The following result is in the literature
with extra hypotheses such as if
$\mu$ is $\sigma$-finite \cite[Lemma 5.2]{G4}.
(See also e.g.\  Theorem~4 and the
remark following it in \cite{Tam},
when in addition  $p$ is not an even integer.)
We are not aware of a reference for the general case,
but it is probably folklore.

\begin{proposition}\label{P_7702_HermIsMult}
Let $p \in [1, \infty) \setminus \{ 2 \}$.
Let $(X, \mu)$
be a decomposable measure space,
and let $a \in B (L^p (X, \mu))$
be hermitian.
Then there is a real valued function
$f \in L^{\infty} (X, \mu)$
such that $a$ is multiplication by~$f$,
and such that $| f (x) | \leq \| a \|$ for all $x \in X$.
\end{proposition}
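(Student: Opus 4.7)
My plan is to reduce the general decomposable case to the $\sigma$-finite case cited in the excerpt (\cite[Lemma~5.2]{G4}). Using decomposability, write $X = \bigsqcup_{i \in I} X_i$ with $\mu (X_i) < \infty$ for each $i$, so that $L^p (X, \mu) = \bigoplus_{i \in I}^p L^p (X_i, \mu_i)$ isometrically. The band projection $P_i$ onto $L^p (X_i)$ is itself hermitian: $\exp (i \lambda P_i)$ acts as multiplication by $e^{i \lambda}$ on $L^p (X_i)$ and trivially on the complementary summand, which is plainly an isometry of $L^p (X, \mu)$.

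The key step is to show that $a$ commutes with each $P_i$, i.e., that $a$ preserves every $L^p (X_i)$. For this I use that $U_\lambda := \exp (i \lambda a)$ is a surjective isometry of $L^p (X, \mu)$ for every $\lambda \in \Rdb$, and apply a Lamperti-type theorem for $p \neq 2$ (available in the decomposable setting; cf.\ the references cited with \cite{Tam}) to write $(U_\lambda f)(x) = h_\lambda (x) f (T_\lambda^{-1} (x))$ for a regular set isomorphism $T_\lambda$ of the measure algebra and some weight $h_\lambda$. If $T_\lambda (X_i) \not\subseteq X_i$ modulo null sets, then there is $E \subseteq X_i$ with $0 < \mu (E) < \infty$ and $T_\lambda (E) \cap X_i = \E$, so $\chi_E$ and $U_\lambda \chi_E$ have disjoint supports, whence
\[
\| U_\lambda \chi_E - \chi_E \|_p = 2^{1/p} \| \chi_E \|_p,
\]
forcing $\| U_\lambda - I \| \geq 2^{1/p} > 1$. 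Since $\| U_\lambda - I \| \to 0$ as $\lambda \to 0$, this cannot occur for sufficiently small~$\lambda$; so $T_\lambda$ preserves each~$X_i$ for all small~$\lambda$, and the group identity $U_\lambda = (U_{\lambda / n})^n$ propagates this to every~$\lambda$. Differentiating the inclusion $U_\lambda L^p (X_i) \subseteq L^p (X_i)$ at $\lambda = 0$ yields $a L^p (X_i) \subseteq L^p (X_i)$.

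The restriction $a_i := a |_{L^p (X_i)}$ is then hermitian on $L^p (X_i, \mu_i)$, since $\exp (i \lambda a_i)$ is just the restriction of the isometry $U_\lambda$. As $(X_i, \mu_i)$ is finite and hence $\sigma$-finite, the cited result of~\cite{G4} produces a real-valued $f_i \in L^\infty (X_i, \mu_i)$ with $| f_i | \leq \| a_i \| \leq \| a \|$ a.e., such that $a_i$ equals multiplication by~$f_i$. Define $f$ on $X$ by $f |_{X_i} = f_i$. For any measurable $F \subseteq X$ of finite measure, each $F \cap X_i$ is measurable, only countably many of these have positive measure (by decomposability), and $f |_{F \cap X_i} = f_i |_{F \cap X_i}$ is measurable; hence $f$ is locally measurable and lies in $L^\infty (X, \mu)$ in the sense of Definition~\ref{D_locmeasdecomp} with $| f | \leq \| a \|$ locally a.e., and a routine check on characteristic functions of finite-measure sets shows that $a$ is multiplication by~$f$.

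The main obstacle is the Lamperti step: securing the weighted-composition representation of $U_\lambda$ without an a priori $\sigma$-finiteness assumption, since this is what powers the disjointness-of-supports estimate that forces $a$ to commute with each~$P_i$. Once this representation is in hand, the one-parameter group structure together with norm continuity of $\lambda \mapsto U_\lambda$ at $0$ makes the remainder of the reduction routine.
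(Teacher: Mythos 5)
Your reduction to the finite-measure case of \cite[Lemma 5.2]{G4} via the decomposition $X = \coprod_{i \in I} X_i$, and the assembly of $f$ from the pieces $f_i$ at the end, match the paper's proof. Where you genuinely diverge is the key commutation step. The paper gets $a e_i = e_i a$ in one line by invoking Theorem~6 of~\cite{Paya}: a hermitian operator has real numerical range, and Pay\'a's structure theorem makes such operators commute with the $L^p$-projections $e_i$ (this is exactly the kind of $L^p$-structure statement that holds for $p \neq 2$ and fails for $p = 2$). You instead exponentiate, classify the surjective isometries $U_\lambda = \exp(i \lambda a)$ by a Lamperti-type theorem, and use the disjoint-supports estimate $\| U_\lambda \chi_E - \chi_E \|_p = 2^{1/p} \| \chi_E \|_p$ together with norm continuity at $\lambda = 0$ and the group law to force $T_\lambda$ to fix each $X_i$. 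That estimate and the differentiation step are correct, and the argument is appealing because it makes visible \emph{why} $p \neq 2$ matters (for $p = 2$ there is no Lamperti rigidity). What Pay\'a's theorem buys the paper is precisely that it applies to a single hermitian operator and an arbitrary $L^p$-summand with no measure-theoretic hypotheses, so the non-$\sigma$-finite issue never arises.

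The one point you rightly flag as the obstacle is real: you need the weighted-composition representation of a surjective isometry of $L^p(X,\mu)$ for a decomposable but not $\sigma$-finite $(X,\mu)$. The version of Lamperti's theorem cited in the paper (\cite[Theorem 3.2.5]{FJ}) is the finite-measure case, and the paper explicitly says it is not aware of a reference for the general measure-space version of the proposition itself; leaning on a fully general Lamperti theorem therefore shifts rather than removes the sourcing problem. Either supply a reference for Lamperti on decomposable (localizable) measure spaces, or replace this step by the numerical-range argument: since $e_i$ is an $L^p$-projection and $a$ is hermitian, Theorem~6 of~\cite{Paya} gives $a e_i = e_i a$ directly, after which your finish is the paper's.
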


\begin{proof} 
Let $X = \coprod_{i \in I} X_i$
be a partition of $X$ into sets of finite measure
as in the discussion of decomposability above.
For $i \in I$ let $e_i \in B (L^p (X, \mu))$
be multiplication by $\ch_{X_i}$.
Since hermitian elements have numerical range contained in~$\Rdb$,
we can apply Theorem~6
of~\cite{Paya}
(see the beginning of~\cite{Paya} for the definitions and notation),
to see that $a$ commutes with $e_i$ for all $i \in I$.
One easily checks that $h = e_i a e_i$ is a hermitian element
of $B (L^p (X_i, \mu))$.
By the finite measure case of our result (\cite[Lemma 5.2]{G4}),
there is a real valued function
$f_i \in L^{\infty} (X_i, \mu)$
such that $h$ is multiplication by~$f_i$.

We can clearly assume that
$f_i$ is bounded by $\| e_i a e_i \| \leq \| a \|$.
Now define $f \colon X \to \Rdb$
by $f (x) = f_i (x)$ when $i \in I$ and $x \in X_i$.
Then $f$ is bounded by $\| a \|$,
and is measurable by the choice of the partition of~$X$.
For $i \in I$ and $\xi \in L^p (X_i, \mu)$
we clearly have $a \xi = f \xi$.
It follows from density of the linear span
of the subspaces $L^p (X_i, \mu)$ that $a$ is multiplication by~$f$.
\end{proof}

The $\sigma$-finite case
is deduced in \cite{G4}
from the finite measure case of Lamperti's Theorem
\cite[Theorem 3.2.5]{FJ}
by considering the invertible isometries $e^{i t h}$ for $t \in [0, 1]$.
We mention another approach
when $p$ is not an even integer.
It is known
(\cite[Corollary 1.8]{DlJrPl};
we thank Gideon Schechtman for this reference)
that $l^p$ doesn't
contain a two dimensional Hilbert space, and so 
Theorem~4 of~\cite{Tam}
implies our conclusion.
The same reference also
proves the result in the case that $\mu$ has no atomic part in~$X_i$.

\begin{definition}\label{D_7702_RealPos}
Let $A$ be a unital Banach algebra.
Let $a \in A$.
We say that $a$ is {\emph{accretive}}
or {\emph{real positive}}
if the numerical range of~$a$
is contained in the closed right half plane.
That is,
${\operatorname{Re}} (\varphi (a)) \geq 0$
for all states $\varphi$ of $A$.

If instead $A$ is approximately unital,
we define the real positive elements of~$A$
to be the elements in $A$ which are
real positive in the multiplier unitization $A^{1}$.

In both cases,
we denote the set of real positive elements of~$A$
by ${\mathfrak{r}}_A$.

Following p.\  8 of~\cite{BOZ},
we further define
\[
{\mathfrak{c}}_{A^*}
 = \big\{ \varphi \in A^* \colon
  {\mbox{${\operatorname{Re}} (\varphi (a)) \geq 0$
      for all $a \in {\mathfrak{r}}_A$}} \big\}.
\]
The elements of ${\mathfrak{c}}_{A^*}$
are called {\emph{real positive functionals}} on~$A$.
\end{definition}

For other equivalent conditions for real positivity,
see for example \cite[Lemma 2.4 and Proposition 6.6]{BSan}.

We warn the reader that ${\mathfrak{r}}_{A^{**}}$ is
defined after Lemma 2.5 of~\cite{BOZ} to be a proper subset
of the real positive elements in $A^{**}$,
the set of
elements of $A^{**}$ which are real positive
with respect to $(A^{1})^{**}$.
One should be careful with this ambiguity; fortunately it only
pertains to second duals
and seldom arises.
(Also see Proposition~\ref{scas}.)

\begin{lemma}\label{L_7Z14_RPosSubalg}
Let $A$ be an approximately unital Banach algebra,
and let $B \subseteq A$ be a closed subalgebra
which contains a cai for~$A$.
Let $a \in B$.
Then $a$ is real positive as an element of~$B$
if and only if $a$ is real positive as an element of~$A$.
\end{lemma}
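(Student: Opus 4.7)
The plan is to mimic directly the proof of Lemma \ref{L_7Z14_HerSubalg}, since real positivity, like the hermitian property, is defined through the numerical range on the multiplier unitization, and the numerical range is controlled by states.

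First I would observe that $B$ is itself approximately unital: any cai $(e_t)$ for $A$ that lies in $B$ satisfies $e_t b \to b$ and $b e_t \to b$ for every $b \in B \subseteq A$, so $(e_t)$ is also a cai for $B$. In particular $\varphi(e_t) \to 1$ in $\Cdb$ need not be used; what matters is that we may apply Lemma \ref{repuni} to the inclusion $B \hookrightarrow A$ with the common cai, concluding that the induced unital homomorphism $B^1 \to A^1$ between the multiplier unitizations is isometric. (If $A$ or $B$ happens to be unital with the same identity, the corresponding statement $B^1 = B \hookrightarrow A = A^1$ is automatic.) Thus we may treat $B^1$ as an isometric unital subalgebra of $A^1$.

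Next I would invoke the Hahn-Banach theorem: since $1_{B^1} = 1_{A^1}$ and $\|\cdot\|_{B^1}$ is the restriction of $\|\cdot\|_{A^1}$, every state $\varphi$ on $B^1$ extends to a norm-$1$ functional on $A^1$ which still sends $1$ to $1$, i.e.\ to a state on $A^1$; conversely, any state $\psi$ on $A^1$ restricts to a functional of norm at most $1$ on $B^1$ with $\psi|_{B^1}(1)=1$, hence to a state on $B^1$. Therefore the set of values $\{\varphi(a) : \varphi \in S(B^1)\}$ equals $\{\psi(a) : \psi \in S(A^1)\}$ for every $a \in B$. In particular, the numerical range of $a$ with respect to $B^1$ coincides with its numerical range with respect to $A^1$.

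By Definition \ref{D_7702_RealPos}, $a$ is real positive in $B$ exactly when the $B^1$-numerical range of $a$ lies in the closed right half plane, and similarly for $A$ using $A^1$. Since these two numerical ranges agree, the two notions of real positivity agree. There is no real obstacle here; the only thing to be careful about is the correct application of Lemma \ref{repuni} to ensure that the unitization inclusion is genuinely isometric, so that the Hahn-Banach extension/restriction step produces exactly the state spaces of the two algebras with no slack.
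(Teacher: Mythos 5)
Your proposal is correct and follows essentially the same route as the paper: the paper's proof simply says it is the same as that of Lemma~\ref{L_7Z14_HerSubalg}, i.e.\ pass to the multiplier unitizations, use Lemma~\ref{repuni} to see that $B^1$ sits isometrically and unitally inside $A^1$, identify states on $B^1$ with restrictions of states on $A^1$ via Hahn--Banach, and conclude from the numerical-range definition of real positivity. Your extra remarks (that the common cai makes $B$ approximately unital, and the parenthetical about the unital case) are correct housekeeping that the paper leaves implicit.
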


\begin{proof}
The proof is the same as that of Lemma~\ref{L_7Z14_HerSubalg},
using Definition~\ref{D_7702_RealPos}
in place of Lemma~\ref{L_7702_SAH}.
\end{proof}

\begin{lemma}\label{L_7Z14_HerDiffRPos}
Let $p \in [1, \infty) \setminus \{ 2 \}$,
let $A$ be an approximately unital $L^p$~operator algebra,
and assume that the multiplier unitization $A^1$
is again an $L^p$~operator algebra.
Let $a \in A$ be hermitian.
Then there exist $b, c \in A$,
each of which is both hermitian and real positive,
such that
\begin{equation}\label{Eq_7Z14_HerDiffRPos}
a = b - c,
\qquad
b c = c b = 0,
\qquad
\| b \| \leq \| a \|,
\andeqn
\| c \| \leq \| a \|.
\end{equation}
\end{lemma}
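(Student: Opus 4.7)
The plan is to realize $a$ as a multiplication operator with real-valued symbol via Proposition~\ref{P_7702_HermIsMult}, and then split the symbol into its positive and negative parts. First I would fix an isometric unital embedding $A^{1} \hookrightarrow B(L^p(X, \mu))$ for some decomposable measure space $(X, \mu)$, which exists since $A^1$ is assumed to be an $L^p$-operator algebra. Because the embedding is isometric, $\|\exp(i\lambda a)\|_{B(L^p)} = \|\exp(i\lambda a)\|_{A^1} = 1$ for every $\lambda \in \Rdb$, so $a$ is hermitian in $B(L^p(X,\mu))$. Proposition~\ref{P_7702_HermIsMult} then supplies a real-valued $f \in L^\infty(X, \mu)$ with $|f| \leq \|a\|$ a.e.\ such that $a$ is multiplication by $f$.

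Writing $f_+ = \max(f, 0)$ and $f_- = \max(-f, 0)$, I would take $b$ and $c$ to be multiplication by $f_+$ and $f_-$ respectively, viewed as elements of $B(L^p(X,\mu))$. The identities $a = b - c$, $bc = cb = 0$, $\|b\| = \|f_+\|_\infty \leq \|a\|$ and $\|c\| \leq \|a\|$ are then immediate from $f = f_+ - f_-$, $f_+ f_- = 0$, and the fact that the operator norm of a multiplication operator on $L^p$ of a decomposable measure space equals the essential sup of its symbol. The one point that needs care is that $b$ and $c$ actually lie in $A$ itself (not merely in $A^1$ or in $B(L^p)$). For this, I would apply the Weierstrass theorem on $[-\|a\|, \|a\|]$ to approximate $t \mapsto t_+$ uniformly by polynomials $q_n$, and then shift to $p_n(t) := q_n(t) - q_n(0)$; since $(0)_+ = 0$, the sequence $(p_n)$ still converges uniformly to $t \mapsto t_+$ and additionally satisfies $p_n(0) = 0$. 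Thus $p_n(a) \in A$ (no constant term is needed), and $\|p_n(a) - b\| = \|p_n \circ f - f_+\|_\infty \to 0$ in $B(L^p)$, so $b \in A$ by norm closure; the same argument, applied to $t \mapsto (-t)_+$, gives $c \in A$.

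It remains to verify that $b$ and $c$ are hermitian and real positive in~$A$. Hermiticity is immediate from $\exp(i\lambda b) = M_{\exp(i\lambda f_+)}$, a multiplication operator by a unimodular function and hence an isometry of $L^p$. For real positivity I would exploit the smoothness of $L^p$ recorded after Definition~\ref{D_smoothconv}: for each $\xi \in L^p(X, \mu)$ with $\|\xi\|_p = 1$ the unique norming functional is $\eta(x) = \overline{\xi(x)}|\xi(x)|^{p-2}$, so
\[
\langle b\xi, \eta\rangle = \int_X f_+(x) \, |\xi(x)|^p \, d\mu(x) \geq 0.
\]
Thus the spatial numerical range of $b$ in $B(L^p)$ is contained in $[0, \infty)$; invoking Theorem~14 of~\cite{Lum} quoted after Definition~\ref{D_7702_NumRange}, the convex hull of the algebra numerical range of $b$ in $B(L^p)$ is also in $[0, \infty)$. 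Finally, since the embedding $A^1 \hookrightarrow B(L^p)$ is a unital isometry, Hahn-Banach extends every state on $A^1$ to a state on $B(L^p)$ agreeing with it on $b$, so $b$ is real positive in $A^1$, hence in~$A$ by Definition~\ref{D_7702_RealPos}; the same argument handles~$c$.

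The main obstacle is the approximation step that produces $b, c$ as norm limits of polynomials in $a$ with no constant term, so that they lie in the possibly-nonunital algebra $A$ rather than just in $A^1$; this is where the hypothesis that $A^1$ be an $L^p$-operator algebra enters, because it is the ambient $L^p$-representation of $A^1$ that justifies passing from uniform convergence of symbols to norm convergence of operators.
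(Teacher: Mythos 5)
Your proof is correct and follows essentially the same route as the paper's: realize $a$ as multiplication by a real-valued $f$ via Proposition~\ref{P_7702_HermIsMult}, split $f$ into its positive and negative parts, and pull $M_{f_+}$ and $M_{f_-}$ back into $A$ by uniform polynomial approximation with no constant term (the paper approximates $|\lambda|$ rather than $\lambda_+$ and then sets $b = \tfrac12(d+a)$, and it verifies hermiticity and real positivity by viewing $f_{\pm}$ as positive elements of the $C^*$-algebra $L^{\infty}(X,\mu)$ sitting unitally and isometrically inside $B(L^p)$ together with Lemmas \ref{L_7Z14_HerSubalg} and \ref{L_7Z14_RPosSubalg}, rather than by your direct spatial numerical range computation). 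One small caveat: your real-positivity step invokes smoothness of $L^p$ and uniqueness of the norming functional, which fails at $p = 1$ (a case the lemma permits); this is harmless because $b\xi = f_+ \xi$ vanishes wherever $\xi$ does, so every norming functional of $\xi$ yields the same nonnegative value, but the point deserves a sentence.
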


By Lemma~\ref{mund} below,
the hypothesis that $A^1$ be an $L^p$~operator algebra
is automatic for $p \neq 1$.

It seems unlikely that Lemma~\ref{L_7Z14_HerDiffRPos}
holds for a general Banach algebra.

\begin{proof}[Proof of Lemma~{\rm{\ref{L_7Z14_HerDiffRPos}}}]
We may assume (using e.g.\  the corollary on p.~136 in \cite{Lacey})
that $(X, \mu)$ is a decomposable measure space
and $A^1$ is a unital subalgebra of $B (L^p (X, \mu))$.
Since $a$ is hermitian in~$A^1$,
Lemma~\ref{L_7Z14_HerSubalg} implies that
$a$ is hermitian in $B (L^p (X, \mu))$.
Proposition~\ref{P_7702_HermIsMult}
provides $f \in L^{\infty} (X, \mu)$
such that $a$ is multiplication by~$f$,
and such that $| f (x) | \leq \| a \|$ for all $x \in X$.

Choose a sequence $(r_n)_{n \in \Ndb}$
of polynomials with real coefficients
such that $r_n (\ld) \to \ld^{1/4}$
uniformly on $[ 0, \| a \|^2 ]$.
Adjusting by constants and scaling,
we may assume that $r_n (0) = 0$
and $| r_n (\ld) | \leq \| a \|^{1/2}$
for $\ld \in [ 0, \| a \|^2 ]$.
Set $s_n (\ld) = r_n (\ld^2)^2$
for $\ld \in [ - \| a \|, \, \| a \| ]$.
Then $(s_n)_{n \in \Ndb}$
is a sequence of polynomials with real coefficients
such that $r_n (\ld) \to |\ld|$
uniformly on $[ - \| a \|, \, \| a \| ]$.
Moreover, for all $n \in \Ndb$
we have $s_n (0) = 0$
and $0 \leq s_n (\ld) \leq \| a \|$
for all $\ld \in [ - \| a \|, \, \| a \| ]$.
In particular,
$s_n \circ f \to | f |$ uniformly on~$X$.

For $n \in \Ndb$,
define $d_n = s_n (a)$,
which is the multiplication operator by the function $s_n \circ f$,
and let $d$ be the multiplication operator by~$| f |$.
Then $d_n \in A$ for all $n \in \Ndb$ and $\| d_n - d \| \to 0$,
so $d \in A$
and $\| d \| \leq \| a \|$.
Therefore also
\[
b = \tfrac{1}{2} (d + a)
\andeqn
c = \tfrac{1}{2} (d - a)
\]
are in~$A$.
The conditions~(\ref{Eq_7Z14_HerDiffRPos})
are clearly satisfied.

The multiplication operator map
from $L^{\infty} (X, \mu)$ to $B (L^p (X, \mu))$
is an isometric unital homomorphism.
(Recall the convention that
we are using $L^{\infty}_{\mathrm{loc}} (X, \mu)$ here.)
The functions $\tfrac{1}{2} (| f | + f)$
and $\tfrac{1}{2} (| f | - f)$
are nonnegative,
hence both hermitian and real positive in $L^{\infty} (X, \mu)$
(because $L^{\infty} (X, \mu)$ is a $C^*$-algebra).
Lemma~\ref{L_7Z14_HerSubalg} and Lemma~\ref{L_7Z14_RPosSubalg}
therefore imply that their multiplication operators $b$ and~$c$
are both hermitian and real positive in $B (L^p (X, \mu))$.
A second application of these lemmas
shows that the same holds in~$A^1$.
By definition, this is also true in~$A$.
\end{proof}

\begin{definition}\label{D_7702_FA}
Let $A$ be a unital or approximately unital Banach algebra.
Taking $1$ to be the identity of $A^{1}$
in the approximately unital case,
we define
${\mathfrak{F}}_A = \{ a \in A \colon \| 1 - a \| \leq 1 \}$.
\end{definition}

\begin{proposition}[Proposition 3.5 of~\cite{BOZ}]\label{P_7702_rAAndFA}
Let $A$ be a unital or approximately unital Banach algebra.
Then, in the notation of Definition~{\rm{\ref{D_7702_RealPos}}}
and Definition~{\rm{\ref{D_7702_FA}}},
we have ${\mathfrak{r}}_A = {\overline{\Rdb_+ {\mathfrak{F}}_A}}$.
\end{proposition}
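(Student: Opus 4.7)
The claim splits into two inclusions. For the easy direction $\overline{\mathbb{R}_+ \mathfrak{F}_A} \subseteq \mathfrak{r}_A$, I would observe that if $b \in \mathfrak{F}_A$ and $\varphi$ is any state on~$A^1$, then $|1 - \varphi(b)| = |\varphi(1-b)| \leq \|1-b\| \leq 1$, so $\varphi(b)$ lies in the closed disk of radius~$1$ centered at~$1$, which is contained in the closed right half-plane. Hence $\mathfrak{F}_A \subseteq \mathfrak{r}_A$, and since $\mathfrak{r}_A$ is evidently a convex cone that is norm closed (being an intersection of preimages of closed half-planes under the continuous state evaluations), we conclude $\overline{\mathbb{R}_+ \mathfrak{F}_A} \subseteq \mathfrak{r}_A$.

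For the reverse inclusion I would use the classical exponential characterization of accretivity in a unital Banach algebra~$B$: an element $x \in B$ is real positive if and only if $\|\exp(-tx)\| \leq 1$ for every $t \geq 0$. This is standard (see e.g.\ Theorem 3.4 and the numerical-range-tangent formula $\lim_{t \to 0^+} t^{-1}(\|1 + tx\| - 1) = \sup\{\operatorname{Re}\varphi(x) : \varphi \in S(B)\}$ in Section~2.6 of~\cite{Plm2}, combined with the spectral mapping for $e^{-tx}$). Applying this with $B = A^1$ to an element $a \in \mathfrak{r}_A$, for each $t > 0$ I set
\[
b_t = \frac{1 - e^{-ta}}{t} = \sum_{n=1}^{\infty} \frac{(-1)^{n-1} t^{n-1} a^n}{n!}.
\]
Since this power series has no constant term and $A$ is a closed subalgebra of~$A^1$, we have $b_t \in A$. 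Moreover, $\|1 - t b_t\|_{A^1} = \|e^{-ta}\|_{A^1} \leq 1$, so $t b_t \in \mathfrak{F}_A$ and hence $b_t \in \mathbb{R}_+ \mathfrak{F}_A$. A direct norm estimate on the tail of the series gives $\|b_t - a\| \leq \sum_{n \geq 2} t^{n-1}\|a\|^n/n! \to 0$ as $t \to 0^+$, so $a \in \overline{\mathbb{R}_+ \mathfrak{F}_A}$.

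The only nontrivial ingredient is the equivalence between accretivity of~$a$ and the contraction property of the semigroup $(e^{-ta})_{t \geq 0}$; this works uniformly in the unital and approximately unital cases because the definition of $\mathfrak{r}_A$ reduces to accretivity in~$A^1$ and the power series for $1 - e^{-ta}$ automatically lands back in~$A$. No properties specific to $L^p$-operator algebras are used, which is why the result holds for an arbitrary (approximately) unital Banach algebra as stated.
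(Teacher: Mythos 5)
Your argument is correct. Note that the paper gives no proof of this statement at all: it is imported verbatim as Proposition~3.5 of~\cite{BOZ}, so the only comparison available is with the argument there, which the present paper leans on indirectly (via Lemma~3.4 of~\cite{BOZ}, quoted in the subsection on the Cayley and ${\mathfrak{F}}$ transforms). That route goes through the resolvent-type transform ${\mathfrak{F}}(x) = x(1+x)^{-1}$: one shows ${\mathfrak{F}}$ maps ${\mathfrak{r}}_A$ into ${\mathfrak{F}}_A$ and then approximates $a$ in norm by $n {\mathfrak{F}}(a/n) = a(1 + a/n)^{-1} \in n{\mathfrak{F}}_A$. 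You instead use the semigroup approximants $b_t = t^{-1}(1 - e^{-ta})$, resting on the equivalence between accretivity and $\|e^{-ta}\|_{A^1} \leq 1$ for all $t \geq 0$. The two approaches are close cousins (both are instances of the Lumer--Phillips circle of ideas, and the underlying numerical-range facts are all in Section~2.6 of~\cite{Plm2}), but yours has the minor advantage that $1 - e^{-ta}$ lands in $A$ by inspection of the power series and no invertibility of $1 + ta$ or resolvent estimate needs to be checked; the cost is invoking the exponential formula $\lim_{t \to 0^+} t^{-1}(\|1+tx\|-1) = \sup \{\Re \varphi(x) : \varphi \in S(A^1)\}$, which is genuinely needed for the reverse implication and which you correctly flag as the one nontrivial input. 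Your easy direction (the disk $\{z : |1-z| \leq 1\}$ lies in the closed right half-plane, and ${\mathfrak{r}}_A$ is a norm-closed convex cone) is exactly the standard one. Both arguments work uniformly in the unital and approximately unital cases for the reason you give: everything is computed in $A^1$ and the constant-term-free series return you to~$A$.
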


We recall some facts about roots of elements of ${\mathfrak{r}}_A$.

\begin{definition}\label{N_7919_nthroot}
Let $A$ be a unital or approximately unital Banach algebra,
let $b \in {\mathfrak{r}}_A$, and let $t \in (0, 1)$.
If $A$ is unital,
we denote by $b^t$
the element $b_t$ constructed in \cite[Theorem 1.2]{LRS}.
If $A$ is approximately unital,
let $A^1$ be the multiplier unitization,
recall that $b \in {\mathfrak{r}}_{A^1}$ by definition,
and define $b^t$ to be as above but evaluated in~$A^1$.
\end{definition}

The conditions required in \cite[Theorem 1.2]{LRS}
are weaker than here,
but this case is all we need.
Such noninteger powers,
for the special case $\| b - 1 \| < 1$ and when $A$ is commutative,
seem to have first appeared in Definition~2.3 of~ \cite{Estl}.
A discussion relating this definitions to others,
and giving a number of properties,
is contained in~\cite{BOZ},
from Proposition~3.3 through Lemma~3.8 there.
In particular, $(b^{1/n})^n = b$ and $t \mapsto b^t$ is continuous.
For later use, we recall several of these properties
and state a few other facts not given explicitly in~\cite{BOZ}.

\begin{proposition}\label{L_7Z16_RootProp}
Let $A$ be a unital or approximately unital Banach algebra,
and let $a \in {\mathfrak{r}}_A$.
\begin{enumerate}
\item\label{L_7Z16_RootProp_Series}
If $t \in (0, 1)$ and $\| b - 1 \| \leq 1$
(that is, $b \in {\mathfrak{F}}_A$),
then
\[
b^t = 1 + \sum_{k = 1}^{\infty}
   \frac{t (t - 1) (t - 2) \cdots (t - k + 1)}{k!} (-1)^k (1 - b)^k,
\]
with absolute convergence.
\item\label{L_7Z16_RootProp_Scalar}
If $t \in (0, 1)$ and $\ld \in (0, \infty)$
then $(\ld x)^t = \ld^t x^t$.
\item\label{L_7Z16_RootProp_bound}
For all $t \in (0, 1)$,
$\| a^t \| \leq 2 \| a \|^t / (1 - t)$.
\item\label{L_7Z16_RootProp_PApp}
For all $t \in (0, 1)$,
$a^{t}$ is a norm limit of polynomials in~$a$ with no constant term.
\item\label{L_7Z16_RootProp_Comm}
For all $t \in (0, 1)$,
$a^{t} a = a a^{t}$.
\item\label{L_7Z16_RootProp_ApprI}
$\lim_{n \to 0} \| a^{1/n} a - a \|
 = \lim_{n \to 0} \| a a^{1/n} - a \|
 = 0$.
\item\label{L_7Z16_RootProp_Norm}
If $a \in {\mathfrak{F}}_A$ and $t \in (0, 1)$,
then $\| 1 - a^t \| \leq 1$.
\end{enumerate}
\end{proposition}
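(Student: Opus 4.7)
The plan is to derive all seven parts from the series expansion in~(\ref{L_7Z16_RootProp_Series}), which is essentially how $b^t$ is defined in~\cite[Theorem 1.2]{LRS}. Every statement reduces to the case $b \in {\mathfrak{F}}_A$: the general $a \in {\mathfrak{r}}_A = \overline{\Rdb_{+} {\mathfrak{F}}_A}$ (Proposition~\ref{P_7702_rAAndFA}) is then handled via the scaling identity~(\ref{L_7Z16_RootProp_Scalar}), which is immediate from the construction in~\cite{LRS}.

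For~(\ref{L_7Z16_RootProp_Series}), absolute convergence of $\sum_{k \geq 0} \binom{t}{k}(-1)^k (1-b)^k$ when $\|1 - b\| \leq 1$ follows from the Stirling estimate $|\binom{t}{k}| = O(k^{-1-t})$. Given the series, parts~(\ref{L_7Z16_RootProp_PApp}), (\ref{L_7Z16_RootProp_Comm}), and~(\ref{L_7Z16_RootProp_Norm}) all follow quickly. For~(\ref{L_7Z16_RootProp_PApp}) the partial sums $P_N(x) = \sum_{k=0}^{N} \binom{t}{k}(-1)^k (1-x)^k$ are polynomials in~$x$; their constant terms $P_N(0)$ are generally nonzero but tend to $(1-1)^t = 0$, so subtracting $P_N(0) \cdot 1$ yields polynomials in~$a$ with no constant term that still converge to $a^t$, showing in particular that $a^t \in A$ whenever $a \in A$. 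Part~(\ref{L_7Z16_RootProp_Comm}) is then immediate from~(\ref{L_7Z16_RootProp_PApp}). For~(\ref{L_7Z16_RootProp_Norm}) one checks that $\binom{t}{k}(-1)^k$ is negative for every $k \geq 1$ when $t \in (0,1)$, so
\[
1 - a^t \;=\; \sum_{k=1}^{\infty} \left| \tbinom{t}{k} \right| (1-a)^k ;
\]
combined with $\sum_{k=1}^{\infty} |\binom{t}{k}| = 1$ (since the full series sums to $0$ for $t > 0$) and $\|1-a\| \leq 1$, this yields $\|1 - a^t\| \leq 1$.

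Part~(\ref{L_7Z16_RootProp_bound}) for $a \in {\mathfrak{F}}_A$ is immediate from~(\ref{L_7Z16_RootProp_Norm}), giving $\|a^t\| \leq 2$; the stated bound $2\|a\|^t/(1-t)$ for general $a \in {\mathfrak{r}}_A$ follows by scaling via~(\ref{L_7Z16_RootProp_Scalar}) together with a tail estimate for $\sum_{k} |\binom{t}{k}| r^k$ with suitable $r > 1$, the factor $1/(1-t)$ arising from the geometric tail.

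The main obstacle is~(\ref{L_7Z16_RootProp_ApprI}), which I view as the genuinely new content. After reducing to $a \in {\mathfrak{F}}_A$, the key computation starts from
\[
a \cdot a^{1/n} - a \;=\; \sum_{k=1}^{\infty} \binom{1/n}{k}(-1)^k \, a (1-a)^k ,
\]
obtained by multiplying the series~(\ref{L_7Z16_RootProp_Series}) for $a^{1/n}$ by~$a$. Applying the telescoping identity $a(1-a)^k = (1-a)^k - (1-a)^{k+1}$, reindexing, and using Pascal's rule $\binom{1/n}{k-1} + \binom{1/n}{k} = \binom{1 + 1/n}{k}$, this rearranges to
\[
a \cdot a^{1/n} - a \;=\; -\tfrac{1}{n}(1-a) + \sum_{k=2}^{\infty} (-1)^k \tbinom{1 + 1/n}{k} (1-a)^k .
\]
For $k \geq 2$ the coefficient $\binom{1+1/n}{k}$ carries an explicit factor $1/n$; an elementary bound gives $\left|\binom{1+1/n}{k}\right| \leq 2/(n k(k-1))$, and summing against $\|1-a\|^k \leq 1$ via the telescoping sum $\sum_{k \geq 2} 1/(k(k-1)) = 1$ yields $\|a \cdot a^{1/n} - a\| = O(1/n)$. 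The symmetric statement for $a^{1/n} a - a$ follows identically, or from~(\ref{L_7Z16_RootProp_Comm}).
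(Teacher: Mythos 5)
Your route is genuinely different from the paper's: the paper proves this proposition almost entirely by citation (parts (1)--(4) and (6) to Propositions 3.3 and 3.5 and Lemmas 3.6 and 3.7 of \cite{BOZ} and to \cite{LRS}), whereas you rederive everything from the binomial series. Several pieces of your argument are correct and self-contained --- the subtraction of the constant terms $P_N(0) \to 0$ in (4), the sign-and-summation argument for (7) (which is exactly the paper's proof of that part), and especially the telescoping-plus-Pascal computation giving $\| a a^{1/n} - a \| \leq 3/n$ for $a \in {\mathfrak{F}}_A$, which is a nice explicit replacement for the citation to Lemma 3.7 of \cite{BOZ}.

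But there are three genuine gaps. First, (1) is not a definition: $b^t$ is defined (Definition~\ref{N_7919_nthroot}) by the construction of \cite[Theorem 1.2]{LRS}, which is valid on all of ${\mathfrak{r}}_A$, and the assertion that this agrees with the binomial series when $\| 1 - b \| \leq 1$ is itself something to prove. This matters because you need the \cite{LRS} construction when you call (2) ``immediate'' --- (2) cannot come from the series, since the series for $(\ld x)^t$ need not converge when $\ld \| 1 - x \| > 1$ --- but you use the series for everything else; without (1) these are two unreconciled definitions. Second, the blanket reduction of ${\mathfrak{r}}_A = \overline{\Rdb_+ {\mathfrak{F}}_A}$ to ${\mathfrak{F}}_A$ ``via the scaling identity'' only reaches $\Rdb_+ {\mathfrak{F}}_A$; passing to the closure needs continuity of $b \mapsto b^t$ (the paper invokes Corollary 1.3 of \cite{LRS} for precisely this step in its proof of (4)), and this is not cosmetic: approximants $\ld_j b_j \to a$ generate different subalgebras, so for instance (4) for $a$ does not follow formally from (4) for the $\ld_j b_j$ without a uniform modulus of continuity. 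Third, your derivation of (3) rests on ``a tail estimate for $\sum_k \big| \binom{t}{k} \big| r^k$ with suitable $r > 1$,'' but that series diverges for every $r > 1$, since $\big| \binom{t}{k} \big| \sim c_t k^{-1-t}$ and the radius of convergence is exactly $1$. A series-based proof of $\| a^t \| \leq 2 \| a \|^t / (1 - t)$ would instead need the estimate $\| 1 - (1 - a)^k \| \leq k \| a \|$ together with a split of the sum at $k \approx 1 / \| a \|$ (plus the scaling and closure steps above); the paper simply quotes the second estimate of Lemma 3.6 of \cite{BOZ}.
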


\begin{proof}
For part~(\ref{L_7Z16_RootProp_Series}),
see the proof of \cite[Proposition 3.3]{BOZ}
and the discussion in and before
the Remark before \cite[Lemma 3.6]{BOZ}.

For~(\ref{L_7Z16_RootProp_Scalar}),
see the discussion after \cite[Proposition 3.5]{BOZ}.

Part~(\ref{L_7Z16_RootProp_bound})
is a slight weakening of the second estimate in
Lemma~3.6 of~ \cite{BOZ}.

Part~(\ref{L_7Z16_RootProp_PApp}) holds for $a \in {\mathfrak{F}}_A$
by the proof of Proposition~3.3 of~\cite{BOZ}.
By~(\ref{L_7Z16_RootProp_Scalar}),
it holds for $a \in \Rdb_+ {\mathfrak{F}}_A$.
By continuity (Corollary~1.3 of~\cite{LRS}),
it holds for $a \in {\overline{\Rdb_+ {\mathfrak{F}}_A}}$.
Apply Proposition~\ref{P_7702_rAAndFA}.

Part~(\ref{L_7Z16_RootProp_Comm})
is immediate from Part~(\ref{L_7Z16_RootProp_PApp}).
Part~(\ref{L_7Z16_RootProp_ApprI})
is Lemma~3.7 of~ \cite{BOZ}.

For~(\ref{L_7Z16_RootProp_Norm}),
use~(\ref{L_7Z16_RootProp_Series}),
together with
\[
\frac{t (t - 1) (t - 2) \cdots (t - k + 1)}{n!} (-1)^k < 0
\]
for $k = 1, 2, \ldots$
and
\[
\sum_{k = 1}^{\infty}
  \frac{t (t - 1) (t - 2) \cdots (t - k + 1)}{k!} (-1)^k = - 1.
\]
This completes the proof.
\end{proof}

\begin{lemma}\label{ssu}
Suppose that $A$ is a closed subalgebra of an approximately unital
Banach algebra $B$, and suppose that $A$ has a cai.
Then ${\mathfrak{F}}_B \cap A \subseteq {\mathfrak{F}}_A$
and ${\mathfrak{r}}_B \cap A \subset {\mathfrak{r}}_A$.
\end{lemma}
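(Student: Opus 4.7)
Both inclusions can be reduced to Hahn--Banach extension via the following norm domination: the natural map $A^1 \to B^1$ sending $a + \ld 1_{A^1} \mapsto a + \ld 1_{B^1}$ satisfies $\|a + \ld 1_{A^1}\|_{A^1} \le \|a + \ld 1_{B^1}\|_{B^1}$, which is exactly Lemma~\ref{L_7702_Incl} in the generic case of both $A$ and $B$ nonunital. The remaining unital/nonunital combinations are handled by the same one-line supremum argument (with $B^1$ replaced by $B$ whenever $B$ is unital); when $A$ is unital with $1_A \ne 1_B$ one further notes that left multiplication by $1_A$ is a contractive projection on $B$ taking $a + \ld 1_B$ to $a + \ld 1_A$, which reduces that case to the generic one.

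Granted this domination, the first inclusion is immediate: for $a \in {\mathfrak{F}}_B \cap A$ one has $\|1 - a\|_{A^1} \le \|1 - a\|_{B^1} \le 1$, so $a \in {\mathfrak{F}}_A$.

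For the second inclusion, let $a \in {\mathfrak{r}}_B \cap A$ and fix a state $\varphi \in S(A^1)$; it suffices to show $\Re \varphi(a) \ge 0$. The norm domination forces $\varphi$, viewed as a functional on the subspace $A^1 \subseteq B^1$ equipped with the inherited $B^1$-norm, to have norm at most one. Hahn--Banach then extends $\varphi$ to $\psi \in (B^1)^*$ with $\|\psi\| \le 1$, and because $\psi(1_{B^1}) = \varphi(1_{A^1}) = 1$ this $\psi$ is automatically a state on $B^1$. The hypothesis $a \in {\mathfrak{r}}_B$ now yields $\Re \psi(a) \ge 0$, and $\psi$ agrees with $\varphi$ on $A$, so $\Re \varphi(a) \ge 0$ as required. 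I do not foresee any substantive obstacle; the whole proof is a Hahn--Banach extension once the norm domination is in hand, and the only genuine care needed is to track which unitization plays the role of ``$A^1$'' and ``$B^1$'' in the various unital/nonunital subcases.
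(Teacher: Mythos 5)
Your argument is correct. The first inclusion is proved exactly as in the paper, by the norm domination $\| \cdot \|_{A^1} \leq \| \cdot \|_{B^1}$ of Lemma~\ref{L_7702_Incl}. For the second inclusion you take a genuinely different route: the paper deduces ${\mathfrak{r}}_B \cap A \subseteq {\mathfrak{r}}_A$ from the first inclusion together with the identities ${\mathfrak{r}}_A = \overline{\Rdb_+ {\mathfrak{F}}_A}$ and ${\mathfrak{r}}_B = \overline{\Rdb_+ {\mathfrak{F}}_B}$ of Proposition~\ref{P_7702_rAAndFA} (a deduction which, spelled out, requires approximating a given $a \in {\mathfrak{r}}_B \cap A$ by elements of $\Rdb_+ ({\mathfrak{F}}_B \cap A)$ rather than merely of $\Rdb_+ {\mathfrak{F}}_B$; the elements $a (1 + t a)^{-1} = t^{-1} {\mathfrak{F}} (t a)$ for small $t > 0$ do the job), whereas you dualize: the norm domination makes every state on $A^1$ contractive for the $B^1$-norm, Hahn--Banach extends it to a state on $B^1$, and accretivity passes down. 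This is the same mechanism the paper uses for the isometric inclusion $A^1 \subseteq B^1$ in Lemma~\ref{L_7Z14_RPosSubalg} and Proposition~\ref{uninc}, and you correctly observe that only one of the two norm inequalities is needed for the extension to exist; what your route buys is independence from Proposition~\ref{P_7702_rAAndFA} and from any approximation argument, at the cost of working with functionals rather than staying inside the algebra. Your handling of the unital subcases (replacing $B^1$ by $B$ when $B$ is unital, and using left multiplication by $1_A$ as a contraction when $A$ is unital with $1_A \neq 1_B$) is sound and more explicit than the paper, which leaves those cases to the reader.
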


\begin{proof}
The first statement follows easily from Lemma~\ref{L_7702_Incl}.
The second follows from the first and the relations
${\mathfrak{r}}_A = \overline{\Rdb_+ {\mathfrak{F}}_A}$
and ${\mathfrak{r}}_B = \overline{\Rdb_+ {\mathfrak{F}}_B}$
(Proposition \ref{P_7702_rAAndFA}).
\end{proof}

\begin{proposition}\label{uninc} 
Let $B$ be a nonunital approximately unital Banach algebra,
and let $A \subseteq B$ be a
closed subalgebra
which contains a cai for~$B$.
Then:
\begin{enumerate}
\item\label{uninc_Incl}
$A^1 \subseteq B^1$ isometrically.
\item\label{uninc_RP}
${\mathfrak F}_A = {\mathfrak F}_B \cap A$
and ${\mathfrak{r}}_A = {\mathfrak{r}}_B \cap A$.
\item\label{uninc_ExtState}
Every state or quasistate
on $A$ may be extended to a state or quasistate on~$B$.
\end{enumerate}
\end{proposition}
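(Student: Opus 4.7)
My plan would be to handle part~(\ref{uninc_Incl}) first, since parts~(\ref{uninc_RP}) and~(\ref{uninc_ExtState}) both reduce to it. The hypothesis that $A$ contains a cai $(e_t)_{t \in \Ld}$ for $B$ is exactly what is needed: this net is automatically a cai for $A$ as well (since $\|e_t\| \le 1$ and $e_t a \to a$ already for $a \in A$). Consequently, Remark~\ref{R_7702_UnitizationFacts}~(\ref{R_7702_UnitizationFacts_AI}) can be applied in \emph{both} $A^1$ and $B^1$ using the \emph{same} cai, giving
$\|a + \ld 1\|_{A^1} = \lim_t \|a e_t + \ld e_t\| = \|a + \ld 1\|_{B^1}$
for all $a \in A$ and $\ld \in \Cdb$. (One inequality is of course already Lemma~\ref{L_7702_Incl}.) This establishes the isometric inclusion $A^1 \subseteq B^1$.

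With part~(\ref{uninc_Incl}) in hand, part~(\ref{uninc_RP}) becomes essentially bookkeeping. Lemma~\ref{ssu} supplies the inclusions ${\mathfrak{F}}_B \cap A \subseteq {\mathfrak{F}}_A$ and ${\mathfrak{r}}_B \cap A \subseteq {\mathfrak{r}}_A$. For the reverse on the ${\mathfrak{F}}$-sets, the isometric identification just established gives $\|1 - a\|_{A^1} = \|1 - a\|_{B^1}$ for every $a \in A$, so $a \in {\mathfrak{F}}_A$ iff $a \in {\mathfrak{F}}_B \cap A$. The corresponding equality for real positive elements then follows by applying Proposition~\ref{P_7702_rAAndFA} in both $A$ and $B$; since $A$ is norm closed in $B$ and the norms agree on $A$, the $A$-closure of $\Rdb_+ {\mathfrak{F}}_A = \Rdb_+ ({\mathfrak{F}}_B \cap A)$ coincides with the $B$-closure of $\Rdb_+ {\mathfrak{F}}_B$ intersected with $A$, i.e.\ with ${\mathfrak{r}}_B \cap A$.

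For part~(\ref{uninc_ExtState}), I would start from $\om \in S(A)$ and its defining extension to a state $\tilde\om \in S(A^1)$, then use the isometric inclusion from part~(\ref{uninc_Incl}) together with the Hahn--Banach theorem to extend $\tilde\om$ to a norm-one linear functional $\ps$ on $B^1$. The identity $\ps(1_{B^1}) = \tilde\om(1_{A^1}) = 1 = \|\ps\|$ shows that $\ps$ is a state on $B^1$. Its restriction $\ps|_B$ has norm at most one, but since $\ps|_A = \om$ and $\Ball(A) \subseteq \Ball(B)$, the restriction $\ps|_B$ has norm exactly one and hence is a state on $B$ extending~$\om$. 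The quasistate claim follows by scaling by $\ld \in [0, 1]$. The only non-routine step in the whole proposition is the isometric identification in part~(\ref{uninc_Incl}), and that is essentially forced by Remark~\ref{R_7702_UnitizationFacts}~(\ref{R_7702_UnitizationFacts_AI}) once one notices that the cai for $B$ lies in $A$; I do not foresee any serious obstacle.
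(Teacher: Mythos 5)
Your argument is correct and is essentially the paper's: part~(\ref{uninc_Incl}) is exactly Lemma~\ref{repuni} applied to the inclusion $A \hookrightarrow B$ (your computation with the shared cai and Remark~\ref{R_7702_UnitizationFacts}~(\ref{R_7702_UnitizationFacts_AI}) is precisely the proof of that lemma), and parts (\ref{uninc_RP}) and~(\ref{uninc_ExtState}) then follow from Proposition~\ref{P_7702_rAAndFA} and the Hahn--Banach theorem just as in the paper. The only slight wobble is the suggestion in part~(\ref{uninc_RP}) that the closure identity for the ${\mathfrak{r}}$-sets follows from topology alone---the inclusion ${\mathfrak{r}}_B \cap A \subseteq {\mathfrak{r}}_A$ genuinely requires Lemma~\ref{ssu} (or, equivalently, the isometric identification of the unitizations), but since you cite that lemma nothing is actually missing.
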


\begin{proof}
Part~(\ref{uninc_Incl}) is Lemma~\ref{repuni}.
That ${\mathfrak F}_A = {\mathfrak F}_B \cap A$
is immediate from~(\ref{uninc_Incl}),
and now ${\mathfrak{r}}_A = {\mathfrak{r}}_B \cap A$
by e.g.\  Proposition \ref{P_7702_rAAndFA}.
Part~(\ref{uninc_ExtState}) is obvious from~(\ref{uninc_Incl}),
Definition~\ref{Dstate}, and the Hahn-Banach Theorem.
\end{proof}

\begin{lemma}\label{havec}
Suppose that an Arens regular
Banach algebra $A$ has a cai
and also has a real positive approximate identity.
Then $A$ has a cai in ${\mathfrak{F}}_A$.
If in addition $A$ has a countable bounded approximate identity,
then $A$ has a cai in ${\mathfrak{F}}_A$
which is a sequence.
\end{lemma}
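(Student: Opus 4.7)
Plan: The strategy is to combine the real positive AI with Arens regularity and the density $\mathfrak{r}_A = \overline{\Rdb_{+} \mathfrak{F}_A}$ from Proposition~\ref{P_7702_rAAndFA} to produce a cai in $\mathfrak{F}_A$. First, since $A$ has a cai and is Arens regular, Lemma~\ref{L_7917_AppIdConverge} gives a norm-one identity $1_{A^{**}} \in A^{**}$ to which every cai converges weak-*. Applying the uniform boundedness principle to the left-multiplication operators $L_{f_t} \colon A \to A$ (using the cai to recover $\| f_t \|$ as $\lim_\lambda \| f_t e_\lambda \| \le \| L_{f_t} \|$), the real positive AI $(f_t)$ is uniformly bounded; rerunning the argument of Lemma~\ref{L_7917_AppIdConverge} with this bound in place of~$1$---as the remark preceding that lemma anticipates---shows $f_t \to 1_{A^{**}}$ weak-*.

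Second, by Proposition~\ref{P_7702_rAAndFA}, for each $t$ and each $n \in \Ndb$ we choose $\alpha_{t,n} \ge 0$ and $g_{t,n} \in \mathfrak{F}_A$ with $\| f_t - \alpha_{t,n} g_{t,n} \| < 1/n$. Indexing pairs $(t,n)$ in the product order, the resulting net $(\alpha_\beta g_\beta)_\beta$ is a bounded approximate identity in $A$ converging weak-* to $1_{A^{**}}$, with each $g_\beta$ in the convex set $\mathfrak{F}_A$. A Mazur-type convex-combinations argument carried out within $\mathfrak{F}_A$, combined with the weak-* convergence to $1_{A^{**}}$, yields a net in $\mathfrak{F}_A$ that is a norm approximate identity for $A$.

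I expect the main technical obstacle to be the contractivity requirement $\| e \| \le 1$, since a priori an element of $\mathfrak{F}_A$ can have norm up to~$2$. To resolve this, one exploits weak-* lower semicontinuity of the norm together with the fact that the weak-* limit $1_{A^{**}}$ has norm exactly~$1$: this forces any sufficiently refined convex combinations that converge weak-* to $1_{A^{**}}$ to have norms tending to~$1$. Since $0 \in \mathfrak{F}_A$, averaging with $0$ provides additional room to contract norms without leaving $\mathfrak{F}_A$, and passing to a subnet of the convex combinations of norm at most~$1$ extracts a genuine cai in $\mathfrak{F}_A$.

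For the sequential strengthening, if $A$ has a countable bounded approximate identity, then the index set in the construction above admits a cofinal countable subset (coming from the countable AI combined with the countable tolerances $1/n$), and a standard diagonal extraction produces a cai in $\mathfrak{F}_A$ which is a sequence.
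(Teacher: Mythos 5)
Your proposal has the right overall shape (produce a bounded net tied to $\mathfrak{F}_A$, pass to the weak* limit $1_{A^{**}}$, take convex combinations), but both load\mbox{-}bearing steps have genuine gaps, and the paper proceeds differently at each. First, the passage from a real positive approximate identity to one lying in $\mathfrak{F}_A$ does not follow from $\mathfrak{r}_A = \overline{\Rdb_+ {\mathfrak{F}}_A}$: in your construction it is the products $\alpha_\beta g_\beta$, not the elements $g_\beta \in \mathfrak{F}_A$, that have the approximate identity property, and the scalars $\alpha_\beta$ are uncontrolled from above (they are large whenever $\| g_\beta \|$ is small). A convex combination $\sum_i \lambda_i \alpha_i g_i$ lies in $c \, \mathfrak{F}_A$ with $c = \sum_i \lambda_i \alpha_i$, which can exceed $1$, so ``convex combinations within $\mathfrak{F}_A$'' is not available, and the net $(g_\beta)$ by itself need not be an approximate identity. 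The paper instead quotes Corollary~3.9 of \cite{BOZ}, whose proof applies the $\mathfrak{F}$-transform $x \mapsto n x (1 + n x)^{-1}$ to the real positive elements; this lands in $\mathfrak{F}_A$ by construction and also makes your uniform-boundedness preamble unnecessary. (That preamble is itself shaky: a net indexed by a general directed set can converge without being bounded, so pointwise boundedness of the operators $L_{f_t}$ is not automatic.)

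The more serious gap is the contractivity step. Weak* lower semicontinuity of the norm gives $\liminf_\gamma \| w_\gamma \| \geq \| 1_{A^{**}} \| = 1$, a \emph{lower} bound; it cannot force the norms of the Mazur combinations to tend to $1$ from above. Those combinations are chosen only to make $\| w a - a \|$ and $\| a w - a \|$ small for finitely many $a$, which gives no upper control on $\| w \|$ beyond the bound $2$ valid on all of $\mathfrak{F}_A$; and averaging with $0$ replaces $w$ by $\lambda w$, which destroys the approximate identity property unless $\lambda \to 1$, that is, unless $\| w \| \to 1$ already --- the very point at issue. What is actually needed is a single net in $\mathfrak{F}_A \cap \Ball (A)$ converging weak* to $1_{A^{**}}$: knowing that $1_{A^{**}}$ lies in the weak* closures of $\mathfrak{F}_A$ and of $\Ball (A)$ separately does not place it in the weak* closure of the intersection without a further argument (a Hahn--Banach/support-function computation, or local reflexivity). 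This is exactly the content the paper imports from the second paragraph of the proof of \cite[Proposition 6.13]{BSan}. Your sequential refinement is closer to correct in spirit, since one can test the extracted sequence against the countable bounded approximate identity, but the index set of the constructed cai (finite subsets of $A$ times tolerances) has no countable cofinal subset when $A$ is nonseparable, so the reduction to countably many test elements must be made explicit; the paper does this via Cohen factorization, writing $A = \overline{xA} = \overline{Ay}$ and testing only against $x$ and~$y$.
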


\begin{proof}
Corollary 3.9 of~\cite{BOZ} implies that
$A$ has an approximate identity in ${\mathfrak{F}}_A$.
Since ${\mathfrak{F}}_A$ is bounded,
one may then use the argument in the second paragraph
of the proof of \cite[Proposition 6.13]{BSan}
to see that $A$ has a cai $(e_t)_{t \in \Ld}$ in ${\mathfrak{F}}_A$.
If in addition $A$ has a countable bounded approximate identity,
then one can use Corollary 32.24 of~\cite{HwRs2}
and its analog on the right
(see also Theorem 4.4 in \cite{BOZ}) to find
$x, y \in A$ with $A = \overline{xA} = \overline{Ay}$.
Choose $t_1, t_2, \ldots \in \Ld$
with $t_1 < t_2 < \cdots$ and
$\| f_{t_k} x - x \| + \| y f_{t_k} - y \| < 2^{-k}$; then
$(f_{t_k})$ is a countable cai in ${\mathfrak{F}}_A$.
\end{proof}

\begin{corollary}\label{bigmove}
Suppose that $A$ is an approximately unital Arens regular
Banach algebra. 
If  $1_{A^{**}}$ is a weak* limit
of 
a bounded net of real positive elements in $A$,
then $A$ has a real positive cai.
\end{corollary}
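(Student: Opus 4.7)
The plan is to reduce to Lemma~\ref{havec} by extracting a real positive (though not necessarily contractive) approximate identity from the given hypothesis, and then invoking that lemma to upgrade it to a cai sitting inside ${\mathfrak{F}}_A$, hence inside ${\mathfrak{r}}_A$.

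First, I would exploit Arens regularity. Let $(x_t)_{t \in \Lambda}$ be the bounded net of real positive elements with $x_t \to 1_{A^{**}}$ weak*. By Lemma~\ref{L_7618_SecDual} and the discussion around Lemma~\ref{L_7917_AppIdConverge}, multiplication on $A^{**}$ is separately weak* continuous, so for every fixed $a \in A$,
\[
a x_t \longrightarrow a \cdot 1_{A^{**}} = a
\andeqn
x_t a \longrightarrow 1_{A^{**}} \cdot a = a
\]
weak* in $A^{**}$. Since $a x_t, x_t a, a$ all lie in $A$, and the weak* topology on $A^{**}$ restricts to the weak topology on $A$, this gives $a x_t \to a$ and $x_t a \to a$ weakly in $A$.

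Second, I would apply a Mazur-type convex combinations argument to produce a real positive approximate identity. For a finite set $F \subseteq A$ and $\ep > 0$, consider in the Banach space $\bigoplus_{a \in F} (A \oplus A)$ the set
\[
S_F = \big\{ (a x_t - a, \, x_t a - a)_{a \in F} \colon t \in \Ld \big\}.
\]
By the previous paragraph, $0$ lies in the weak closure of $S_F$, so by Mazur's Theorem $0$ lies in the norm closure of the convex hull of $S_F$. This yields a convex combination $e_{F, \ep} = \sum_i \ld_i x_{t_i}$ (with $\ld_i \geq 0$ summing to~$1$) satisfying $\| a e_{F, \ep} - a \| < \ep$ and $\| e_{F, \ep} a - a \| < \ep$ for all $a \in F$. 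Since the real positive elements form a convex cone (being the preimage of the closed right half-plane under the states), each $e_{F, \ep}$ is real positive, and bounded by $\sup_t \| x_t \|$. Indexing by pairs $(F, \ep)$ in the obvious way produces a bounded real positive approximate identity for~$A$.

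Third, I would combine this with the cai that $A$ already possesses by hypothesis (being approximately unital) and apply Lemma~\ref{havec}: since $A$ is Arens regular, has a cai, and has a real positive approximate identity, it has a cai in ${\mathfrak{F}}_A$. Because ${\mathfrak{F}}_A \subseteq {\mathfrak{r}}_A$ (immediate from Proposition~\ref{P_7702_rAAndFA}), this is the desired real positive cai. The only subtle step is the Mazur argument, and its only delicacy is making sure the convex combinations are taken simultaneously over all $a \in F$; the Banach-space-valued trick above handles this cleanly.
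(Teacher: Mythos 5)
Your proof is correct and follows essentially the same route as the paper: the paper's proof consists of exactly your steps, citing ``a standard convexity argument'' (which you spell out via Mazur) to obtain a real positive bounded approximate identity, and then invoking Lemma~\ref{havec} to get a cai in ${\mathfrak{F}}_A \subseteq {\mathfrak{r}}_A$.
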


\begin{proof} 
By a standard convexity argument,
or e.g.\  \cite[Lemma 2.1]{BOZ},
$A$ has a real positive bounded approximate
identity.
It follows from Lemma \ref{havec}
that $A$ has a cai in ${\mathfrak{F}}_A$.
\end{proof}

The hypothesis in the last result about $1_{A^{**}}$
being a weak* limit holds
if $A$ has one of the Kaplansky density type properties,
e.g.\  properties  (\ref{Item_Kap_rposA})--(\ref{Item_KpD_FA})
in Proposition \ref{L_7618_KapScal}.
See also the proof of Proposition 6.4 in \cite{BOZ}.

\subsection{More on the multiplier unitization}\label{Sec_2m}

The multiplier unitization was defined in
Definition \ref{D_7702_Unitization}.

\begin{lemma}\label{mund}
Let $E$ be a Banach space.
Suppose that $A$ is
a nonunital closed approximately unital subalgebra of $B (E)$
which acts nondegenerately on $E$.
Then the multiplier unitization of
$A$ is isometrically isomorphic to $A + \Cdb 1_E$,
where $1_E$ is the identity operator on $E$.
\end{lemma}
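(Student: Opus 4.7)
The plan is to define the obvious unital homomorphism
$\psi \colon A^1 \to A + \Cdb 1_E \subseteq B(E)$
by $\psi(a + \lambda 1_{A^1}) = a + \lambda 1_E$ and show that it is an isometric algebra isomorphism. That $\psi$ is a well-defined linear map onto $A + \Cdb 1_E$ is clear; once it is shown to be isometric it is automatically injective (and so gives a genuine direct-sum decomposition $A + \Cdb 1_E = A \oplus \Cdb 1_E$, which is consistent with nonunitality of $A$: if $1_E$ lay in $A$, then $A$ would be unital, contradicting our hypothesis). The homomorphism property is immediate from the definitions.

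For the easy inequality $\|a + \lambda 1_{A^1}\|_{A^1} \leq \|a + \lambda 1_E\|_{B(E)}$, I would just observe that for every $c \in \Ball(A)$,
\[
\|ac + \lambda c\| = \|(a + \lambda 1_E)c\|_{B(E)} \leq \|a + \lambda 1_E\|_{B(E)},
\]
and then take the supremum over $c$ using Definition~\ref{D_7702_Unitization}.

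The nontrivial direction uses nondegeneracy. Let $(e_t)_{t \in \Ld}$ be any cai for $A$. The key technical step is that nondegeneracy of the action combined with $\| e_t \| \leq 1$ forces $e_t \xi \to \xi$ in norm for every $\xi \in E$: for $\xi$ of the form $b \eta$ with $b \in A$ one has $e_t b\eta \to b\eta$ because $e_t b \to b$ in norm; linearity extends this to $\spn(A \cdot E)$; and an $\ep/3$ argument, exploiting $\| e_t \| \leq 1$, pushes the convergence to the norm closure, which is all of $E$ by nondegeneracy. Granted this, for any $\xi \in \Ball(E)$ one has
\[
(a + \lambda 1_E)\xi
 = \lim_t (a + \lambda 1_E) e_t \xi
 = \lim_t (a e_t + \lambda e_t) \xi,
\]
and since $a e_t + \lambda e_t \in A$ with norm at most $\|a + \lambda 1_{A^1}\|_{A^1}$ by Remark~\ref{R_7702_UnitizationFacts}\,(\ref{R_7702_UnitizationFacts_AI}), we conclude
$\|(a + \lambda 1_E)\xi\| \leq \|a + \lambda 1_{A^1}\|_{A^1}$.
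Taking the supremum over $\xi$ yields the reverse inequality.

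The only real obstacle is the norm-convergence $e_t \xi \to \xi$ on all of $E$; everything else is bookkeeping. This step is a standard consequence of nondegeneracy together with the uniform bound $\|e_t\| \leq 1$, but it is the place where the hypothesis of nondegenerate action is essential — without it one could only recover convergence on $\overline{A \cdot E}$, which may be a proper subspace of $E$, and the second inequality would fail.
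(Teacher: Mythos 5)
Your proposal is correct and follows essentially the same route as the paper: the easy inequality via the supremum definition of the $A^1$ norm, and the reverse inequality from the fact that $a e_t + \lambda e_t = (a + \lambda 1_E) e_t \to a + \lambda 1_E$ in the strong operator topology, which rests on exactly the nondegeneracy argument ($e_t \xi \to \xi$ for all $\xi \in E$) that you spell out. The paper simply states this SOT convergence without the $\ep/3$ details you supply.
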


\begin{proof}
For $a, c \in A$ and $\lambda \in \Cdb$,
we clearly have
\[
\| a c + \lambda c \|
  = \| (a + \lambda 1_E) c \|
  \leq \| a + \lambda 1_E \| \| c \|.
\]
So $\| a  + \lambda 1 \|_{A^1} \leq
\| a + \lambda  1_E \|$.
The reverse inequality
follows from the fact that if $(e_t)_{t \in \Ld}$
is a cai for~$A$,
then $a e_t + \lambda e_t \to a + \lambda 1_E$
in the strong operator topology on $B (E)$.
\end{proof}

\begin{lemma}\label{bicr}
Suppose that $A$ is an approximately unital
Arens regular Banach algebra,
and let ${\mathfrak{e}} = (e_t)_{t \in \Ld}$ be a cai for $A$.
Then:
\begin{enumerate}
\item\label{bicr_1}
The multiplier unitization of
$A$ is isometrically isomorphic to $A + \Cdb 1_{A^{**}}$ in $A^{**}$.
\item\label{bicr_2}
With $S_{\mathfrak{e}} (A)$ as defined
in Definition $\mathrm{\ref{Dstate}}$,
and identifying $A^*$
with the weak* continuous functionals on $A^{**}$,
we have
\[
S_{\mathfrak{e}} (A)
 = \big\{ \om \in S (A^{**}) \colon
  {\mbox{$\om$ is weak* continuous}} \big\}
\]
(the normal state space of $A^{**}$).
\item \label{bicr_2b}
$S_{\mathfrak{e}} (A)$ and $S (A)$ both span $A^*$,
and both separate the points of $A$.
\item\label{bicr_3}
In the notation found before Lemma ${\mathrm{2.6}}$
of \textrm{\cite{BOZ}}
and in Definition~{\rm{\ref{D_7702_RealPos}}},
we have
\[
{\mathfrak{r}}^{\mathfrak{e}}_A = {\mathfrak{r}}_A
\qquad \mbox{and} \qquad
{\mathfrak{c}}^{\mathfrak{e}}_{A^*} = {\mathfrak{c}}_{A^*}.
\]
\item\label{bicr_4}
If $A$ is also nonunital
then  $\{ \varphi |_A \colon \varphi \in S (A^1) \}$ is the
weak* closure in $A^*$ of any one of the following
sets in Definition $\mathrm{\ref{Dstate}}$:
$S (A)$, $S_{\mathfrak{e}} (A)$, $Q (A)$, and $Q_{\mathfrak{e}} (A)$.
\end{enumerate}
\end{lemma}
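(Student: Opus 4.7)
The plan is to prove the parts in the order (1), (2), (5), (4), (3), since (5) is the main technical step from which (4) and the density parts of (3) follow easily. Throughout I identify $A^*$ with the weak*-continuous functionals on $A^{**}$.

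For (1), combine Remark \ref{R_7702_UnitizationFacts}\,(\ref{R_7702_UnitizationFacts_AI}), which gives $\| a + \ld 1 \|_{A^1} = \lim_t \| a e_t + \ld e_t \|$, with the fact that in $A^{**}$ one has $a e_t + \ld e_t = (a + \ld 1_{A^{**}}) e_t$. By Lemma \ref{L_7917_AppIdConverge} and the separate weak* continuity of multiplication in $A^{**}$ (Lemma \ref{L_7618_SecDual}\,(\ref{L_7618_SecDual_WkSt})), $(a + \ld 1_{A^{**}}) e_t \to a + \ld 1_{A^{**}}$ weak*, and weak* lower semicontinuity of the norm yields $\| a + \ld 1_{A^{**}} \| \le \liminf_t \| (a + \ld 1_{A^{**}}) e_t \|$; the reverse inequality follows from $\| e_t \| \le 1$. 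For (2), weak* continuity of $\om \in A^*$ combined with $e_t \to 1_{A^{**}}$ weak* gives $\om(e_t) \to \om(1_{A^{**}})$, so $S_{\mathfrak{e}}(A) = \{ \om \in \Ball(A^*) : \om(1_{A^{**}}) = 1 \}$, which is precisely the normal state space of $A^{**}$.

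The main step is (5). First, $\{ \ph|_A : \ph \in S(A^1) \}$ is weak* closed in $A^*$ as the image of the weak*-compact set $S(A^1)$ under the weak*-to-weak* continuous restriction map. It contains each of $S(A), S_{\mathfrak{e}}(A), Q(A), Q_{\mathfrak{e}}(A)$: for $Q(A)$, form the convex combination $\ld \tilde\ph + (1-\ld) \ch_0$ where $\ch_0$ is the contractive character from Remark \ref{R_7702_UnitizationFacts}\,(\ref{R_7702_UnitizationFacts_Small}) and $\tilde\ph \in S(A^1)$ extends $\ph$; for $S_{\mathfrak{e}}(A)$, parts (1) and (2) exhibit a normal state of $A^{**}$ as the restriction of a state of $A^1$. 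Thus the weak* closure of each of the four sets lies in the right-hand side. For the reverse inclusion I would use Hahn--Banach separation in $(A^*, \mathrm{weak}^*)$ (whose continuous dual is $A$); it is enough to verify the scalar identity
\[
\sup_{\ph \in S_{\mathfrak{e}}(A)} \Re \, \ph(a) \;=\; \sup_{\ps \in S(A^1)} \Re \, \ps(a)
\]
for every $a \in A$. Both sides equal $\lim_{t \to 0^+} t^{-1} \big( \| 1_{A^{**}} + t a \|_{A^{**}} - 1 \big)$ by the standard formula expressing the maximum real part of the numerical range as a one-sided derivative of the norm: for the right side this uses (1), so that $A^1$-norms and $A^{**}$-norms agree on $A + \Cdb 1_{A^{**}}$; for the left side one writes $\| 1_{A^{**}} + ta \|_{A^{**}} = \sup_{\om \in \Ball(A^*)} | \om(1_{A^{**}}) + t \om(a) |$ and observes that near-maximizers for small $t$ may be taken with $\om(1_{A^{**}})$ arbitrarily close to $1$, i.e., essentially in $S_{\mathfrak{e}}(A)$.

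Part (4) is then immediate: the real positivity conditions with respect to $S_{\mathfrak{e}}(A)$ and with respect to $\{ \ph|_A : \ph \in S(A^1) \}$ agree, since the former's weak* closure is the latter, so ${\mathfrak{r}}^{\mathfrak{e}}_A = {\mathfrak{r}}_A$ and hence ${\mathfrak{c}}^{\mathfrak{e}}_{A^*} = {\mathfrak{c}}_{A^*}$. For (3), point separation on $A$ by $S_{\mathfrak{e}}(A)$ follows from (5) together with the fact that $S(A^1)$ separates $A^1 \supseteq A$; for the spanning, the Bohnenblust--Karlin theorem applied to $A^1$ gives $(A^1)^* = \spn_{\Cdb} S(A^1)$, and restriction to $A$ (which is onto by Hahn--Banach) yields $\spn_{\Cdb} S(A) = A^*$, while $\spn_{\Cdb} S_{\mathfrak{e}}(A)$ is at least weak* dense by (5). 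The chief obstacle is the Bohnenblust--Karlin type identity for normal states in (5): one must verify that in computing $\lim_{t \to 0^+} t^{-1}(\| 1_{A^{**}} + t a \|_{A^{**}} - 1)$, the near-maximizing $\om \in \Ball(A^*)$ may always be chosen with $\om(1_{A^{**}})$ arbitrarily close to $1$, so that the derivative is captured by $S_{\mathfrak{e}}(A)$ rather than by the full state space of $A^{**}$.
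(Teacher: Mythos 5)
Your parts (\ref{bicr_1}) and (\ref{bicr_2}) are fine and essentially identical to the paper's argument. The overall architecture --- deduce (\ref{bicr_2b}), (\ref{bicr_3}) and (\ref{bicr_4}) from the weak*-closure identity, which you attack by Hahn--Banach separation plus the derivative formula for the numerical range --- is structurally sound. But there is a genuine gap at exactly the point you flag as ``the chief obstacle,'' and it is not a technicality: it is the whole content of the external result the paper leans on, namely Theorem~2.2 of \cite{Mag}, which says that the normal states of $A^{**}$ are weak* dense in $S(A^{**})$, span $A^*$, and separate the points of $A$. The paper's proof of (\ref{bicr_2b}), (\ref{bicr_3}) and (\ref{bicr_4}) consists of extending a given state to $S(A^{**})$ by Hahn--Banach and then approximating it by normal states using Magajna's theorem; you are in effect trying to reprove that theorem, and the sketch does not close.

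Concretely: from $\| 1_{A^{**}} + t a \| = \sup_{\om \in \Ball (A^*)} | \om (1_{A^{**}}) + t \, \om (a) |$ you can indeed arrange near-maximizers $\om_t \in \Ball (A^*)$ with $\Re \, \om_t (1_{A^{**}}) \geq 1 - O(t)$ and $\Re \, \om_t (a)$ at least the one-sided derivative. But ``$\om (1_{A^{**}})$ close to $1$'' does not put $\om$ close to $S_{\mathfrak{e}} (A)$, and there is no given mechanism to perturb such an $\om$ into a genuine normal state without losing control of $\Re \, \om (a)$ (the naive normalization $\om / \om (1_{A^{**}})$ leaves $\Ball (A^*)$). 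The standard way to finish --- take a weak* cluster point of the $\om_t$ --- is exactly what fails here: the cluster point is taken in $\Ball (A^{***})$ and is a state on $A^{**}$ that need not be weak* continuous, so it lands in $S (A^{**})$ rather than in $S_{\mathfrak{e}} (A)$, and you have proved nothing about the left-hand supremum. A secondary consequence of the same gap: in (\ref{bicr_2b}) you only obtain that $\operatorname{span}_{\Cdb} S_{\mathfrak{e}} (A)$ is weak* dense in $A^*$, whereas the lemma asserts that $S_{\mathfrak{e}} (A)$ actually spans $A^*$; the paper gets genuine spanning (and point separation on $A$) directly from \cite{Mag}. Either cite Magajna's theorem, as the paper does, or supply a proof of the density of normal states; as written the argument assumes what it needs to prove.
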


\begin{proof}
The proof of~(\ref{bicr_1})
is essentially the same as the proof of Lemma~\ref{mund}:
for $a, c \in A$ and $\lambda \in \Cdb$,
clearly
\[
\| ac + \lambda c  \| = \| (a + \lambda 1_{A^{**}}) c \|
\leq \| a + \lambda 1_{A^{**}} \| \| c \|.
\]
So $\| a  + \lambda 1 \|_{A^1} \leq \| a + \lambda 1_{A^{**}} \|$.
The reverse inequality
follows from the fact that
if $(e_t)_{t \in \Ld}$ is a cai,
then Lemma \ref{L_7917_AppIdConverge}
implies that $a e_t + \lambda e_t \to a + \lambda 1_{A^{**}}$ weak*.

For~(\ref{bicr_2}),
since $e_t \to 1$ weak* in $A^{**}$
by Lemma \ref{L_7917_AppIdConverge},
it is clear that weak* continuous states on $A^{**}$
restrict to elements of $S_{\mathfrak{e}} (A)$.
For the reverse inclusion,
let $\om \in S_{\mathfrak{e}} (A)$.
Then $\om^{**}$ is a weak* continuous functional on $A^{**}$
and $\| \om^{**} \| = 1$.
That $\om^{**} (1) = 1$
follows from weak* continuity of $\om^{**}$
and the weak* convergence $e_t \to 1$.

The assertion about $S_{\mathfrak{e}} (A)$ in
(\ref{bicr_2b})
follows from part~(\ref{bicr_2})
and Theorem~2.2 of~\cite{Mag},
according to which the normal state space of $A^{**}$
spans $A^*$
and separates the points of~$A$.
The second assertion in~(\ref{bicr_2b})
follows from the first assertion
and the inclusion $S_{\mathfrak{e}} (A) \subseteq S (A)$,
which is in Lemma~2.2 of~\cite{BOZ}.

We prove~(\ref{bicr_3}).
We need only prove
${\mathfrak{r}}^{\mathfrak{e}}_A \subseteq {\mathfrak{r}}_A$,
since the reverse inclusion holds by definition,
and equality implies
${\mathfrak{c}}_{A^*} = {\mathfrak{c}}^{\mathfrak{e}}_{A^*}$
by definition.
So let
$a \in {\mathfrak{r}}^{\mathfrak{e}}_A$
and let $\om \in S (A)$.
By definition,
$\om$ extends to a state $\om^1$ on~$A^1$.
By part~(\ref{bicr_1})
we have $A^1 \subseteq A^{**}$,
so the Hahn-Banach Theorem provides an extension of $\om^1$
to a state $\ph$ on $A^{**}$.
Use weak* density of the normal states in $S (A^{**})$
(which follows from Theorem~2.2 of~\cite{Mag})
to find a net $(\ph_t)_{t \in \Ld}$
in the normal state space of~$A^{**}$
which converges weak* to~$\ph$.
Now
$\Re ( \om (a) ) = \lim_t \Re ( \ph_t (a) ) \geq 0$.
So $a \in {\mathfrak{r}}_A$.

Finally, we prove~(\ref{bicr_4}).

It follows from  \cite[Lemma 2.6]{BOZ} that, with overlines
denoting weak* closures, we have
\[
\overline{S (A)} = \overline{Q (A)}
 \subseteq \{ \varphi |_A \colon \varphi \in S (A^1) \}.
\]
Also, $\{ \varphi |_A \colon \varphi \in S (A^1) \}$
is shown to be weak* closed in the proof of that
lemma.

Now suppose that $\varphi \in S (A^1)$
and set $\psi = \varphi |_A$.
Use the Hahn-Banach Theorem to extend $\varphi$
to a state $\rho$ on $A^{**}$.
Use again weak* density of the normal states in $S (A^{**})$
to find a net $(\psi_t)_{t \in \Ld}$
in the normal state space of~$A^{**}$
which converges weak* to~$\rho$.
Set $\varphi_t = \psi_t |_A$ for $t \in \Ld$.
For $a \in A$ we then have
\[
\varphi_t (a) = \psi_t (a) \to \psi(a) = \varphi (a).
\]
By part~(\ref{bicr_2}),
this shows that $\psi$ is in the weak* closure
of $S_{\mathfrak{e}} (A)$.
Since $S_{\mathfrak{e}} (A) \subseteq S (A) \subseteq Q (A)$
and $S_{\mathfrak{e}} (A) \subseteq Q_{\mathfrak{e}} (A)$,
the assertion follows.
\end{proof}

The set ${\mathfrak{r}}_{A^{**}}$, as
defined on p.\  11 of \textrm{\cite{BOZ}},
may be a proper subset
of the accretive elements in $A^{**}$, even for
approximately unital $L^p$-operator algebras.
In fact,
the identity $e$ of $A^{**}$ is certainly accretive in $A^{**}$,
but need not be accretive in $(A^1)^{**}$.
(Equivalently,
by Lemma \ref{L_7719_BicId}~(\ref{L_7719_BicId_RealPos}),
we need not have $\| 1 - e \| \leq 1$.)
This happens for $A = \Kdb (L^p ([0, 1]))$,
by Proposition \ref{knsca}. 
However, it follows from the later result Proposition~\ref{scas}
(and Proposition~\ref{2extension} (\ref{2extension_2}))
that ${\mathfrak{r}}_{A^{**}}$, as
defined on p.\  11 of \textrm{\cite{BOZ}},
equals the accretive elements in $A^{**}$
if $A$ is a scaled approximately unital $L^p$-operator algebra.

\begin{remark}\label{sconv}
The sets  $S_{\mathfrak{e}} (A)$ and $Q_{\mathfrak{e}} (A)$
are easily seen to be convex in $A^*$.
We do not know whether $S (A)$ and $Q (A)$ are necessarily convex
if $A$ is a general approximately unital Arens regular Banach algebra,
since convex combinations
of norm $1$ functionals may have norm strictly less than~$1$.
However they are convex if $A$ is
an approximately unital $L^p$-operator algebra,
since Corollary \ref{HBsm}~(\ref{HBsm_EqState}) below
implies convexity of $S (A)$, and this implies convexity of $Q (A)$.
\end{remark}

\begin{proposition}\label{muau}
Let $p \in (1, \infty)$.
The multiplier unitization of an approximately unital
$L^p$-operator algebra
is an $L^p$-operator algebra.
\end{proposition}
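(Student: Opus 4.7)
The plan is to reduce to results already established earlier in Section~2, in particular realizing $A^1$ isometrically as a closed subalgebra of $A^{**}$, which is already known to be an $L^p$-operator algebra.

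First, the unital case is trivial: by the convention in Definition~\ref{D_7702_Unitization}, if $A$ is unital then $A^1 = A$ is already an $L^p$-operator algebra by hypothesis. So I may assume $A$ is nonunital and approximately unital.

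Second, I would invoke Lemma~\ref{L_7618_SecDual} to see that $A$ is Arens regular and that $A^{**}$ is an $L^p$-operator algebra. Since $A$ is approximately unital and Arens regular, Lemma~\ref{L_7917_AppIdConverge} guarantees that $A^{**}$ has an identity $1_{A^{**}}$ of norm~$1$, and that any cai of $A$ converges weak* to it.

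Third, I would apply Lemma~\ref{bicr}(\ref{bicr_1}), which identifies the multiplier unitization $A^1$ isometrically with the subalgebra $A + \Cdb\, 1_{A^{**}}$ of $A^{**}$. Since $A$ embeds isometrically as a closed subspace of $A^{**}$ and $1_{A^{**}} \notin A$ (as $A$ is nonunital), the sum $A + \Cdb\, 1_{A^{**}}$ is a closed subspace of $A^{**}$; it is also manifestly a subalgebra. Any norm-closed subalgebra of an $L^p$-operator algebra is again an $L^p$-operator algebra, so $A + \Cdb\, 1_{A^{**}}$, and hence $A^1$, is an $L^p$-operator algebra.

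There is no real obstacle here: all the heavy lifting has been done in Lemma~\ref{L_7618_SecDual} (second duals of $L^p$-operator algebras are $L^p$-operator algebras, via the result from~\cite{Daws} that ultrapowers of $L^p$-spaces remain $L^p$-spaces) and in Lemma~\ref{bicr}(\ref{bicr_1}) (which fits $A^1$ isometrically inside $A^{**}$ using weak* convergence of the cai to $1_{A^{**}}$). The present proposition is simply the combination of these two ingredients. I note that the same argument would work verbatim for ${\mathrm{SQ}}_p$-operator algebras, since Lemma~\ref{L_7618_SecDual} covers that case as well.
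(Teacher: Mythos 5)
Your proposal is correct and is essentially the paper's own proof: the paper derives the result from Lemma~\ref{bicr}~(\ref{bicr_1}) together with Lemma~\ref{L_7618_SecDual}~(\ref{L_7618_SecDual_Lp}), exactly as you do (it also notes an alternative route via Lemmas~\ref{mund} and~\ref{2deg}). Your added remarks on the unital case and on ${\mathrm{SQ}}_p$-operator algebras match the conventions and the comment the paper makes immediately after the proposition.
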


\begin{proof}
This follows from Lemma \ref{bicr}~(\ref{bicr_1}) and the fact
(Lemma \ref{L_7618_SecDual}~(\ref{L_7618_SecDual_Lp}))
that
biduals of $L^p$-operator algebras are $L^p$-operator algebras
(or from Lemmas \ref{mund} and \ref{2deg}).
\end{proof}

Similarly, for $p \in (1, \infty)$
the multiplier unitization of an approximately unital
${\mathrm{SQ}}_p$-operator algebra
is an ${\mathrm{SQ}}_p$-operator algebra.

The multiplier algebra ${\operatorname{M}} (A)$,
and the left and right multiplier algebras
${\operatorname{LM}} (A)$ and ${\operatorname{RM}} (A)$,
of an approximately $L^p$-operator algebra
may be defined to be subsets of $A^{**}$
just as in the operator algebra case.
Then the multiplier unitization $A^1$
is contained in ${\operatorname{M}} (A)$
isometrically and unitally.
If $A$ is represented isometrically and nondegenerately on $L^p (X)$
then, just as in the operator algebra case,
${\operatorname{M}} (A)$,  ${\operatorname{LM}} (A)$,
and ${\operatorname{RM}} (A)$ may be identified isometrically
as Banach algebras with the usual subalgebras of $B (L^p (X))$.
See Theorem 3.19 in \cite{G4}, and the discussion in that paper.
One can also, for example,
copy the proof of Theorem 2.6.2 of~\cite{BLM}
for ${\operatorname{LM}} (A)$,
and later results in Section~2.6 of~\cite{BLM}
for ${\operatorname{RM}} (A)$ and ${\operatorname{M}} (A)$.

In particular,
${\operatorname{M}} (A)$,  ${\operatorname{LM}} (A)$,
and ${\operatorname{RM}} (A)$
are all unital  $L^p$-operator algebras.
Similarly, ${\operatorname{LM}} (A)$
can be identified with the
algebra of bounded right $A$-module endomorphisms of~$A$,
as usual.
One may also check that
the useful principle in \cite[Proposition 2.6.12]{BLM}
holds for approximately $L^p$-operator algebras,
with the same proof.
(Also see Theorem 3.17 in \cite{G4}.)

\subsection{Idempotents}\label{Idems}

\begin{definition}\label{D_bicontrinvisom}
We recall that if $A$ is a unital Banach algebra,
then an idempotent $e \in A$
is called {\emph{bicontractive}}
if $\| e \| \leq 1$ and  $\| 1 - e \| \leq 1$.
We collect some standard facts related to bicontractive idempotents.
We say that an element $s$ of a unital Banach algebra~$A$
is an {\emph{invertible isometry}}
if $s$ is invertible, $\| s \| = 1$, and $\| s^{-1} \| = 1$.
\end{definition}

\begin{lemma}\label{L_7719_BicId}
\begin{enumerate}
\item\label{L_7719_BicId_HToIso}
Let $A$ be a unital Banach algebra
and let $e \in A$ be a hermitian idempotent.
Then $1 - 2 e$ is an invertible isometry of order~$2$.
\item\label{L_7719_BicId_HToBicr}
Let $A$ be a unital Banach algebra.
Then every hermitian idempotent in $A$ is bicontractive.
\item\label{L_7719_BicId_BicrOnLp}
Let $p \in [1, \In)$,
let $(X, \mu)$ be a measure space,
and let $e \in B (L^p (X, \mu))$
be an idempotent.
Then $e$ is bicontractive if and only if
$1 - 2 e$ is an invertible isometry.
\item\label{L_7719_BicId_RealPos}
Let $A$ be a unital Banach algebra
and let $e \in A$ be an idempotent.
Then $e$ is real positive if and only if $1 - e$
is contractive ($\| 1 - e \| \leq 1$).
\end{enumerate}
\end{lemma}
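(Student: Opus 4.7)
The plan is to prove the four parts in order: (1) is the computational core, (2) is an immediate corollary, (3) requires classical $L^p$ structure theorems, and (4) follows cleanly from the Lumer--Phillips criterion.

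For part~(1), I would exploit that $e^n = e$ for all $n \geq 1$ to collapse the exponential series:
\[
\exp(i\lambda e) = 1 + \sum_{n \geq 1} \frac{(i\lambda)^n}{n!}\, e = 1 + (e^{i\lambda} - 1)\, e,
\]
for every $\lambda \in \Rdb$. Hermiticity of~$e$ then forces $\| 1 + (e^{i\lambda}-1)e \| = 1$ for all $\lambda$; specializing to $\lambda = \pi$ gives $\| 1 - 2e \| = 1$, while the identity $(1-2e)^2 = 1 - 4e + 4e^2 = 1$ shows that $1-2e$ is its own inverse. Hence $1-2e$ is an invertible isometry of order two. Part~(2) then follows immediately from the convex combinations $e = \tfrac{1}{2}(1 - (1-2e))$ and $1 - e = \tfrac{1}{2}(1 + (1-2e))$, both of which have norm at most~$1$.

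For part~(3), the direction ``$1-2e$ invertible isometry $\Rightarrow$ $e$ bicontractive'' is the same convex-combination argument as in~(2). For the converse, since $(1-2e)^2 = 1$ automatically, $1-2e$ is always its own inverse; thus $\| 1-2e \| \geq 1$ for free, and one only needs $\| 1-2e \| \leq 1$. Here I would invoke the classical structure theorem for bicontractive projections on $L^p$: for $p \neq 2$, such an idempotent must be a band projection on $L^p(X,\mu)$, i.e., multiplication by $\chi_E$ for some measurable $E \subseteq X$ (a Lamperti-type rigidity result, available in~\cite{FJ}). Then $1-2e$ is multiplication by $1 - 2\chi_E$, a $\{\pm 1\}$-valued function, and is therefore an isometry on $L^p$ for every~$p$. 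The excluded case $p=2$ is settled by the elementary fact that a contractive idempotent on Hilbert space is self-adjoint, so $1-2e$ is a self-adjoint unitary.

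For part~(4), one direction is immediate: $\| 1-e \| \leq 1$ puts $e$ in ${\mathfrak{F}}_A \subseteq {\mathfrak{r}}_A$ by Definition~\ref{D_7702_FA} and Proposition~\ref{P_7702_rAAndFA}. For the converse, suppose $e \in {\mathfrak{r}}_A$. By the Lumer--Phillips characterization of accretive elements of a unital Banach algebra (see Section~2.6 of~\cite{Plm2}), this is equivalent to $\| \exp(-\lambda e) \| \leq 1$ for every $\lambda \geq 0$. The same series collapse as in~(1) yields $\exp(-\lambda e) = 1 + (e^{-\lambda} - 1)\, e$, which converges in norm to $1 - e$ as $\lambda \to \infty$; passing the norm bound to the limit gives $\| 1-e \| \leq 1$. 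The main obstacle in the whole lemma is therefore the converse direction of~(3): unlike the other three implications it is not a pure numerical-range or series manipulation, but genuinely uses Lamperti-type rigidity of bicontractive projections on $L^p$ spaces.
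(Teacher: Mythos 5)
Parts (1), (2), and (4) of your proposal are correct. Your proof of (1) is the same series-collapse argument the paper uses. For (2) and (4) you give short self-contained arguments (the convex combinations $e = \tfrac{1}{2}\bigl(1 - (1-2e)\bigr)$ and $1-e = \tfrac{1}{2}\bigl(1 + (1-2e)\bigr)$; and the Lumer--Phillips characterization of accretivity together with $\exp(-\lambda e) = 1 + (e^{-\lambda}-1)e \to 1-e$ in norm as $\lambda \to \infty$) where the paper simply cites Lemma 6.6 of~\cite{PV} and Lemma 3.12 of~\cite{BOZ}. Both of your arguments check out and are arguably more transparent than bare citations.

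The hard direction of (3) contains a genuine error. You claim that for $p \neq 2$ a bicontractive idempotent on $L^p(X,\mu)$ must be a band projection, i.e.\ multiplication by $\chi_E$ for a measurable set $E$. That is false: the idempotent $e_2 \in M_2^p$ of Example~\ref{mp} (the $2 \times 2$ matrix with all entries equal to $\tfrac{1}{2}$) is bicontractive for every $p \in [1,\In)$ but is not a multiplication operator on $l^p_2$ --- equivalently, it is not hermitian, whereas multiplication by a real characteristic function always is. The correct rigidity statement, and the one the paper actually invokes, is the Bernau--Lacey theorem \cite[Theorem 2.1]{BLac} (see also the corollary on p.~11 of~\cite{ByrneS}): a bicontractive projection on $L^p$ is of the form $\tfrac{1}{2}(1+U)$ with $U$ an involutive isometry. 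That statement is exactly the implication you need, so replacing your false intermediate claim by this citation repairs the proof; but as written, the chain ``bicontractive $\Rightarrow$ multiplication by $\chi_E$ $\Rightarrow$ $1-2e$ is multiplication by a $\{\pm 1\}$-valued function'' breaks at the first arrow. (Your separate treatment of $p=2$ via self-adjointness of contractive Hilbert space idempotents is fine.)
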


The converse of (\ref{L_7719_BicId_HToBicr})
is false,
even in $L^p$~operator algebras.
See Lemma 6.11 of~\cite{PV},
which is just the idempotent $e_2$ of Example~\ref{mp}
for $p \neq 2$.

Part~(\ref{L_7719_BicId_BicrOnLp})
fails in general unital Banach algebras.
This failure is well known,
and our Example~\ref{mp2}
contains an explicit counterexample.

\begin{proof}[Proof of Lemma~{\rm{\ref{L_7719_BicId}}}]
For~(\ref{L_7719_BicId_HToIso}),
by definition we have
\[
\big\| 1 + [\exp (i \lambda) - 1] e \big\|
 = \| \exp (i \lambda e) \|
 \leq 1
\]
for all $\lambda \in \Rdb$.
Setting
$\lambda = \pi$ gives $\| 1 - 2 e \| \leq 1$.
One checks immediately that $(1 - 2 e)^2 = 1$,
so in fact $\| 1 - 2 e \| = 1$.
The rest of~(\ref{L_7719_BicId_HToIso}) follows easily.

Part~(\ref{L_7719_BicId_HToBicr})
follows from Lemma 6.6 of~\cite{PV}.

We prove~(\ref{L_7719_BicId_BicrOnLp}).
The forward direction
follows from \cite[Theorem 2.1]{BLac}
(or, when $\mu (X) = 1$,
from the corollary on page~11 of~\cite{ByrneS}).
Conversely,
if $\| 1 - 2 e \| \leq 1$
then
\[
\| e \|
 = \big\| \tfrac{1}{2} [ 1 - (1 - 2 e) ] \big\|
 \leq \tfrac{1}{2} ( \| 1 \| + \| 1 - 2 e \| )
 \leq 1,
\]
and the proof that $\| 1 - e \| \leq 1$ is similar.

Part~(\ref{L_7719_BicId_RealPos})
is \cite[Lemma 3.12]{BOZ}.
\end{proof}

\begin{definition}\label{Dorder}
We define two order relations on idempotents $e, f$
in a Banach algebra~$A$.
We write $e \leq_{\mathrm{r}} f$ if $f e = e$
and $e \leq f$ if $e f = f e = e$.
\end{definition}

If $A$ is a subalgebra of $B (E)$ then,
viewing these idempotents as operators on $E$,
then $e \leq_{\mathrm{r}} f$ simply says that
$\Ran (e) \subseteq \Ran (f)$.
The second relation is the ordering
considered in e.g.\  \cite[Section 6]{PV}.

Clearly $e \leq f$ and $f \leq e$ imply $e = f$.
This isn't true for the relation $\leq_{\mathrm{r}}$.

\begin{lemma}\label{orderidemp}
Let $p \in (1, \infty)$,
and let $A$ be an
approximately unital $L^p$-operator algebra.
Let $e, f \in A$ be idempotents.
Assume that $e$ and $f$ are both contractive
or both real positive.
Then:
\begin{enumerate}
\item\label{orderidemp_RevProd}
$f e = e$ if and only if $e f = e$.
\item\label{orderidemp_RtoNone}
$e \leq_{\mathrm{r}} f$ if and only if $e \leq f$.
\end{enumerate}
\end{lemma}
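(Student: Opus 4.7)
The plan is to make the easy reductions first and then concentrate on a single geometric argument. Part~(\ref{orderidemp_RtoNone}) follows immediately from part~(\ref{orderidemp_RevProd}): $e \leq f$ trivially gives $e \leq_{\mathrm{r}} f$, while conversely $fe = e$ combined with the forward implication of~(\ref{orderidemp_RevProd}) produces $ef = fe = e$. In~(\ref{orderidemp_RevProd}) the real positive case reduces to the contractive case by passing to the multiplier unitization: if $e, f \in A$ are real positive, then $1 - e$ and $1 - f$ lie in $A^1$ and are contractive idempotents (by Lemma~\ref{L_7719_BicId}~(\ref{L_7719_BicId_RealPos})) in an $L^p$-operator algebra (by Proposition~\ref{muau}), and one checks $fe = e \iff (1-e)(1-f) = 1 - f$ and $ef = e \iff (1-f)(1-e) = 1 - f$. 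Moreover, the two directions of~(\ref{orderidemp_RevProd}) for contractive idempotents are interchanged by taking Banach-space adjoints, since $e^*, f^* \in B(L^q(X, \mu))$ are again contractive idempotents with $(ef)^* = f^* e^*$. So it will suffice to prove the implication $fe = e \Rightarrow ef = e$ for contractive idempotents $e, f$ viewed as operators on some $L^p(X, \mu)$.

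To do this I will set $g = ef - e$. Using $fe = e$, $e^2 = e$, $f^2 = f$ (and the resulting identities $efe = e(fe) = e$ and $efef = e(fe)f = ef$), a short computation yields
\[
g^2 = 0, \quad eg = g, \quad ge = 0, \quad fg = g, \quad gf = 0,
\]
so that $h := f - ef = f - e - g$ is an idempotent with $eh = he = gh = hg = 0$ and $f = e + h + g$. Assume for contradiction that $g \neq 0$, choose $\xi \in L^p(X, \mu)$ with $g\xi \neq 0$, and set $\eta = \xi - e\xi - h\xi$. The relations $eh = he = 0$ give $e\eta = h\eta = 0$, while $ge = gh = 0$ give $g\eta = g\xi \neq 0$. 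Since $g^2 = 0$, the vectors $\eta$ and $g\eta$ are linearly independent.

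The heart of the argument is to consider the two-dimensional subspace $Y = \Cdb \eta + \Cdb g\eta$ of $L^p(X, \mu)$. Invariance under $e$ and $f$ follows from $e\eta = 0$, $e(g\eta) = (eg)\eta = g\eta$, $f\eta = (e + h + g)\eta = g\eta$, and $f(g\eta) = (eg + hg + g^2)\eta = g\eta$; in the basis $(\eta, g\eta)$ the restrictions are
\[
e|_Y = \begin{pmatrix} 0 & 0 \\ 0 & 1 \end{pmatrix},
\qquad
f|_Y = \begin{pmatrix} 0 & 0 \\ 1 & 1 \end{pmatrix},
\]
so $e|_Y$ is the rank-one projection onto $\Cdb g\eta$ along $\Cdb \eta$ and $f|_Y$ is the rank-one projection onto $\Cdb g\eta$ along $\Cdb(\eta - g\eta)$. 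Both are compressions of contractions, hence contractive; and by the elementary fact that a rank-one projection on a two-dimensional Banach space with range $\Cdb v$ and kernel $\Cdb w$ is contractive iff $v$ is Birkhoff-James orthogonal to $w$, this translates to $g\eta \perp_{\mathrm{BJ}} \eta$ and $g\eta \perp_{\mathrm{BJ}} (\eta - g\eta)$ in~$Y$. Since $L^p(X, \mu)$ is smooth (Definition~\ref{D_smoothconv} and the discussion following it) and smoothness descends to any subspace by Hahn-Banach uniqueness of norming extensions, $Y$ is smooth, so both orthogonalities become $\phi(\eta) = 0 = \phi(\eta - g\eta)$ for the unique norming functional $\phi \in Y^*$ of $g\eta$. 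Subtracting forces $\phi(g\eta) = 0$, contradicting $\phi(g\eta) = \|g\eta\| > 0$. Hence $g = 0$.

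The hard part will be exactly this geometric step --- identifying the right two-dimensional invariant subspace~$Y$ and funnelling the contractivity of $e|_Y$ and $f|_Y$ through two Birkhoff-James orthogonalities whose difference, once smoothness of $L^p$ is used, forces the unique norming functional of $g\eta$ to vanish on $g\eta$. The remaining ingredients --- the algebra of $g$ and $h$, the reduction from real positive to contractive via $a \mapsto 1 - a$, and the adjoint duality interchanging the two directions of~(\ref{orderidemp_RevProd}) --- are routine.
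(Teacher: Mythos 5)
Your proposal is correct, and the reductions (deducing (\ref{orderidemp_RtoNone}) from (\ref{orderidemp_RevProd}), passing from the real positive case to the contractive case via $a \mapsto 1-a$ in $A^1$, and using Banach-space adjoints on $L^q$ to interchange the two implications) are exactly the ones in the paper. Where you genuinely diverge is the core step $f e = e \Rightarrow e f = e$ for contractive idempotents. The paper observes that $e f$ is a contractive idempotent, checks $\Ran(e f) = \Ran(e)$ using $e f e = e$, and then quotes \cite[Theorem 6]{CS}, the Cohen--Sullivan theorem that a contractive idempotent on a smooth space is determined by its range. You instead give a self-contained argument: the algebra of $g = e f - e$ and $h = f - e f$ checks out ($g^2 = 0$, $e g = f g = g$, $g e = g f = 0$, etc.), the two-dimensional subspace $Y = \Cdb \eta + \Cdb g\eta$ is indeed invariant with the matrices you write, and the contractivity of the two rank-one projections on $Y$ translates correctly into two Birkhoff--James orthogonality relations whose difference kills the unique norming functional of $g\eta$ on $g\eta$ once smoothness (which does pass to subspaces by Hahn--Banach uniqueness, as you say) is invoked. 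In effect you have reproved the relevant instance of Cohen--Sullivan from scratch; the paper's route is shorter because it outsources precisely this geometry to the citation, while yours is longer but elementary and makes visible exactly where smoothness of $L^p$ enters. One trivial bookkeeping slip: expanding $(1-e)(1-f) = 1 - e - f + e f$ shows that $(1-e)(1-f) = 1-f$ is equivalent to $e f = e$, not $f e = e$, so your two displayed equivalences in the reduction are transposed; since the statement being proved is a symmetric ``if and only if,'' this does not affect the argument.
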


\begin{proof}
Part (\ref{orderidemp_RtoNone})
is immediate from part~(\ref{orderidemp_RevProd}),
so we just prove part~(\ref{orderidemp_RevProd}).

By definition
(see Definition~\ref{D_7702_RealPos}),
we may work in the multiplier unitization $A^{1}$,
which is a unital $L^p$-operator algebra
by Proposition~\ref{muau}.
So we can assume that there is a measure space $(X, \mu)$
such that $A$ is a unital subalgebra of $B (L^p (X, \mu))$.

First suppose that $e$ and $f$ are contractive.
Assume that $f e = e$.
Then $e f$ is necessarily an idempotent,
and is clearly contractive.
Clearly $\Ran (e f) \subseteq \Ran (e)$.
Since $e f e = e^2 = e$,
we have $\Ran (e) \subseteq \Ran (e f)$.
By \cite[Theorem 6]{CS},
the range of
a contractive idempotent on a smooth space determines
the idempotent.
So $e f = e$, as desired.

Next assume that $e f = e$.
Let $q \in (1, \In)$
satisfy $\frac{1}{p} + \frac{1}{q} = 1$.
Then $e^*, f^* \in B (L^q (X, \mu))$ are contractive idempotents
such that $f^* e^* = e^*$.
The case already considered implies $e^* f^* = e^*$,
whence $f e = e$.

Now suppose that $e$ and $f$ are real positive.
Then $1 - e$ and $1 - f$ are contractive idempotents
by Lemma \ref{L_7719_BicId}~(\ref{L_7719_BicId_RealPos}).
So $(1 - e) (1 - f)  = 1 - f$ if and only if
$(1 - f) (1 - e)  = 1 - f$
by the contractive case.
Expanding and rearranging,
we get $f e = e$ if and only if $e f = e$.
\end{proof}

\subsection{Representations}\label{Reps}
We say a few words on representations.

\begin{lemma}\label{L_7702_WkStarExt}
Let $p \in (1, \infty)$,
let $A$ be an $L^p$-operator algebra,
let $X$ be a measure space,
let $M$
be a weak* closed subalgebra of $B (L^p (X))$,
and let $\pi \colon A \to M$
be a contractive homomorphism.
Then there exists a
unique weak* continuous contractive homomorphism
$\widetilde{\pi} \colon A^{\ast\ast} \to M$
which extends~$\pi$.
\end{lemma}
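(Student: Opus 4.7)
The plan is to construct $\widetilde{\pi}$ as the adjoint of a suitable predual map, then use Arens regularity to upgrade it to a homomorphism, and finally use the weak* closure of $M$ to confine its range.

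First, I would use that $B(L^p(X))$ is itself a dual space: by Theorem~\ref{T_7920_KStarStar}, its canonical predual is $F := L^q(X) \widehat{\otimes} L^p(X)$. Regarding $\pi$ as a contractive map $A \to B(L^p(X))$, define $\pi_* \colon F \to A^*$ by $\pi_*(\varphi) = \varphi \circ \pi$, where we view $\varphi \in F$ as a weak* continuous functional on $B(L^p(X))$. This is linear and contractive, so its adjoint $\widetilde{\pi} := (\pi_*)^* \colon A^{**} \to B(L^p(X))$ is weak* continuous and contractive. A direct computation with the duality pairings shows $\widetilde{\pi}|_A = \pi$ (via the canonical embedding $A \hookrightarrow A^{**}$).

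Next, I would verify that $\widetilde{\pi}(A^{**}) \subseteq M$. By Goldstine's theorem, every $a \in A^{**}$ is a weak* limit of a bounded net $(a_i) \subseteq A$. Weak* continuity of $\widetilde{\pi}$ then gives $\widetilde{\pi}(a_i) = \pi(a_i) \to \widetilde{\pi}(a)$ weak* in $B(L^p(X))$. Since each $\pi(a_i)$ lies in $M$ and $M$ is weak* closed in $B(L^p(X))$, we conclude $\widetilde{\pi}(a) \in M$.

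The main work is showing $\widetilde{\pi}$ is multiplicative. For this I would use the standard two-step approximation: fix $a \in A$ and let $b \in A^{**}$ be the weak* limit of a bounded net $(b_j) \subseteq A$. By Lemma~\ref{L_7618_SecDual} (Arens regularity and separate weak* continuity of the product on $A^{**}$), we have $a b_j \to a b$ weak* in $A^{**}$, and by Corollary~\ref{C_7620_KLp} (separate weak* continuity of the product on $B(L^p(X))$) together with weak* continuity of $\widetilde{\pi}$, we get
\[
\widetilde{\pi}(a b) = \lim_j \widetilde{\pi}(a b_j) = \lim_j \pi(a) \pi(b_j) = \widetilde{\pi}(a) \widetilde{\pi}(b)
\]
in the weak* topology. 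Repeating this argument approximating the left factor $a \in A^{**}$ by a bounded net in~$A$ then gives $\widetilde{\pi}(ab) = \widetilde{\pi}(a) \widetilde{\pi}(b)$ for all $a, b \in A^{**}$. Finally, uniqueness is immediate: any two weak* continuous extensions of $\pi$ must agree on $A$, which is weak* dense in $A^{**}$, hence must coincide.

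The main obstacle is arranging for the weak* continuity arguments to work on both sides of the multiplication simultaneously; this is precisely what requires $A$ to be Arens regular and $B(L^p(X))$ to have separately weak* continuous multiplication, and both of these ingredients are already established in the preceding lemmas.
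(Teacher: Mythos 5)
Your proof is correct and is essentially the paper's argument: the paper simply cites the standard operator-algebra proof (2.5.5 of \cite{BLM}) together with Lemma~\ref{L_7618_SecDual}, and what you have written is precisely that standard argument spelled out — construct $\widetilde{\pi}$ via the predual of $B(L^p(X))$, confine the range using Goldstine and weak* closedness of $M$, and obtain multiplicativity from the two-step bounded-net approximation using Arens regularity of $A$ and separate weak* continuity of the product on $B(L^p(X))$.
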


\begin{proof}
The proof is the same as for the operator algebra
case (2.5.5 in \cite{BLM}, but without the matrix norms)
and using
Lemma \ref{L_7618_SecDual}.
\end{proof}

Let $\pi \colon A \to B (L^p (X))$
be a contractive representation
of an approximately unital $L^p$-operator algebra.
Then $E = {\overline{\spn}} \big( \pi (A) (L^p (X)) \big)$
may not be an $L^p$-space
on a subset of~$X$.
Indeed, in Example \ref{mp} below,
$\Ran (e_2)$ is not an $L^p$-space on a subset.
However it is isometric to an $L^p$ space,
as we will see next.

Some of the following
follows from \cite[Proposition 1.8]{Joh}
(we thank Eusebio Gardella for this reference) and
\cite[Theorem 3.12, Corollary 3.13]{G4}
(see also \cite[Section 2]{PV}),
but for completeness we give a self-contained proof.

\begin{lemma}\label{2deg}
Let $p \in (1, \In)$,
let $A$ be an approximately unital Banach algebra,
and let  $\pi \colon A \to B (L^p (X))$
be a contractive representation.
Set
$E = {\overline{\spn}} \big( \pi (A) (L^p (X)) \big)$.
Then there exists a unique contractive idempotent $f \in B (L^p (X))$
whose range is~$E$.
Moreover, $E$ and $f$ have the following properties.
\begin{enumerate}
\item\label{2deg_AI}
For every cai $(e_t)_{t \in \Ld}$ for $A$,
the net $(\pi (e_t))_{t \in \Ld}$ converges to $f$
in both the weak* topology and the strong operator topology
on $B (L^p (X))$.
\item\label{2deg_faf}
For all $a \in A$ we have $\pi (a) = f \pi (a) f$.
\item\label{2deg_Compr}
The compression of $\pi$ to $E$
is a contractive representation,
which is isometric if $\pi$ is isometric.
\item\label{2deg_Nondeg}
The compression of $\pi$ to $E$
is nondegenerate.
\item\label{2deg_Lp}
$E$ is linearly isometric to an $L^p$ space.
\end{enumerate}
\end{lemma}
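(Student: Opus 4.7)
The plan is to locate $f$ as a weak$^*$ cluster point of the bounded net $(\pi(e_t))_{t \in \Ld}$ for any chosen cai $(e_t)$ in~$A$. Alaoglu's theorem gives a weak$^*$-convergent subnet with some limit $f \in \Ball(B(L^p(X)))$. The crucial step is to promote this to the algebraic identity
\[
f \pi(a) = \pi(a) f = \pi(a) \quad \text{for every } a \in A.
\]
For this I would combine two ingredients: first, the cai property gives norm convergences $\pi(e_t) \pi(a) = \pi(e_t a) \to \pi(a)$ and $\pi(a) \pi(e_t) = \pi(ae_t) \to \pi(a)$ in $B(L^p(X))$; second, separate weak$^*$ continuity of multiplication on $B(L^p(X))$ from Corollary~\ref{C_7620_KLp} forces the weak$^*$ limits $f\pi(a)$ and $\pi(a) f$ to agree with these norm limits. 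This identity immediately gives part~(\ref{2deg_faf}), shows $f$ acts as the identity on the norm-dense subspace $\pi(A) L^p(X) \subseteq E$ (hence on~$E$), and shows $\pi(a)$ annihilates $\Ran(1-f)$. Since each $\pi(e_{t_\alpha}) \xi$ lies in~$E$ and $E$ is weakly closed, $\Ran(f) \subseteq E$, so together $\Ran(f) = E$ and $f^2 = f$.

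Next I would pin down uniqueness and the two convergence statements in~(\ref{2deg_AI}). A contractive idempotent on $L^p(X)$ is determined by its range because $L^p(X)$ is smooth---this is \cite[Theorem 6]{CS}, already used in the proof of Lemma~\ref{orderidemp}. Consequently every weak$^*$ cluster point of the bounded net $(\pi(e_t))$ coincides with~$f$, so the full net converges weak$^*$ to~$f$. For SOT convergence at an arbitrary $\xi \in L^p(X)$, decompose $\xi = f\xi + (1-f)\xi$. A standard $3\ep$-argument, using the norm convergence $\pi(e_t) \pi(a) \eta \to \pi(a)\eta$ together with $\|\pi(e_t)\| \leq 1$ and density of $\pi(A) L^p(X)$ in~$E$, yields $\pi(e_t) f\xi \to f\xi$ in norm. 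On the other summand, $\pi(a)(1-f) = 0$ forces $\pi(e_t)(1-f)\xi = 0$ for every~$t$, so this piece contributes nothing.

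The remaining parts are then bookkeeping. For~(\ref{2deg_Compr}), the identity $\pi(a) = f\pi(a) f$ gives $\| \pi(a) \xi \| \leq \|\pi(a)|_E\| \, \|f\xi\| \leq \|\pi(a)|_E\| \, \|\xi\|$, so the operator norm of $\pi(a)$ equals that of its compression to~$E$; contractivity and isometry therefore transfer. Part~(\ref{2deg_Nondeg}) is immediate from $\overline{\spn}\, \pi(A) E = \overline{\spn}\, \pi(A) f L^p(X) = \overline{\spn}\, \pi(A) L^p(X) = E$. Finally,~(\ref{2deg_Lp}) follows from the classical theorem of Douglas (for $p=1$) and Ando/Tzafriri (for general $p \in [1,\infty)$) that the range of a contractive linear projection on an $L^p$-space is linearly isometric to an $L^p$-space.

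The main obstacle in this argument is the passage from a weak$^*$ subnet limit to the two-sided algebraic identity $\pi(a) = f\pi(a) = \pi(a) f$; everything else unwinds from there. The leverage comes from combining the automatic norm approximation $e_t a, a e_t \to a$ in~$A$ with the nontrivial input that multiplication on $B(L^p(X))$ is separately weak$^*$ continuous, which is exactly where the $L^p$-operator hypothesis (as opposed to a general Banach-algebra hypothesis) is used. Once this identity is in hand, smoothness of $L^p(X)$ upgrades the subnet statement to a full-net statement and the rest is routine.
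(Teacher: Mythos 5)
Your proposal is correct and follows essentially the same route as the paper's proof: obtain $f$ as a weak* (cluster) limit of $\pi(e_t)$, derive $f\pi(a)=\pi(a)f=\pi(a)$ from the cai property together with separate weak* continuity of multiplication on $B(L^p(X))$, use smoothness of $L^p$ and \cite[Theorem 6]{CS} to get uniqueness and hence full-net convergence, and invoke the Ando--Tzafriri/Lacey result that ranges of contractive idempotents on $L^p$ spaces are isometrically $L^p$ spaces. The only cosmetic differences are that the paper verifies $\Ran(f)\subseteq E$ by pairing against $E^{\perp}$ rather than by weak closedness of $E$, and derives the SOT convergence on $E$ from nondegeneracy rather than by an explicit $3\varepsilon$ argument; these are equivalent.
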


\begin{proof}
Let $q \in (1, \In)$
satisfy $\frac{1}{p} + \frac{1}{q} = 1$.

We claim that if $(e_t)_{t \in \Ld}$ is a cai in~$A$
such that $( \pi (e_{t}))_{t \in \Ld}$
converges weak* to some $f \in B (L^p (X))$,
then $f$ is a contractive idempotent whose range is~$E$.
Assume the claim has been proved.
Since $L^p (X)$ is a smooth space,
such an idempotent is unique by \cite[Theorem 6]{CS}.
The argument of Lemma \ref{L_7917_AppIdConverge},
with this uniqueness statement in place of
uniqueness of the identity in $A^{**}$,
shows that such an idempotent $f$ exists
and that for any cai $(e_t)_{t \in \Ld}$ in~$A$,
we have $\pi (e_t) \to f$ weak*.

We prove the claim.
We have $\| f \| \leq 1$ and
$\langle f \pi (a) \xi, \, \eta \rangle
 = \langle \pi (a) \xi, \, \eta \rangle$
for all $a \in A$, $\xi \in L^p (X)$,
and $\et \in L^q (X)$.
It follows that $f \xi = \xi$ for all $\xi \in E$.
So $E \subseteq {\operatorname{Ran}} (f)$.
Also,
if $\eta \in E^{\perp} \subseteq L^q (X)$,
then
$\langle f \xi, \eta \rangle
 = \lim_{t} \langle \pi (e_{t}) \xi, \, \eta \rangle
 = 0$.
Thus $E^{\perp} \subseteq {\operatorname{Ran}} (f)^{\perp}$,
whence ${\operatorname{Ran}} (f) \subseteq E$.
The claim is proved.
We now have the main statement,
and weak* convergence in~(\ref{2deg_AI}).

Part~(\ref{2deg_Lp}) follows from the fact
(Theorem~3 in Section~17 of~\cite{Lacey};
see also Theorem~4 of \cite{Ando})
that the range of a contractive idempotent on an $L^p$~space
is isometrically isomorphic to an $L^p$ space.

We prove~(\ref{2deg_faf}).
We know that $f \pi (a) = \pi (a)$
for all $a \in A$,
so we prove that $\pi (a) f = \pi (a)$.
For $\xi \in L^p (X)$ and $\eta \in L^q (X)$,
we have
\[
\langle \pi (a) f  \xi, \eta \rangle
 = \langle f  \xi, \, \pi (a)^* \eta \rangle
 = \lim_t \langle  \pi (e_{t} ) \xi, \, \pi (a)^* \eta \rangle
 = \lim_t \langle \pi (a e_{t} ) \xi, \, \eta \rangle
 = \langle \pi (a)  \xi, \eta \rangle.
\]
Thus $\pi (a) f = \pi (a)$.

Part~(\ref{2deg_Compr}) is now immediate,
as is~(\ref{2deg_Nondeg}) since
$\pi(e_t) \pi(a) \xi \to \pi(a) \xi$ for $a \in A, \xi \in L^p (X)$.

We prove strong operator convergence in~(\ref{2deg_AI}).
It suffices to prove that
$\pi (e_t) \xi \to f \xi$ for $\xi \in f L^p (X)$
and for $\xi \in (1 - f) L^p (X)$.
The first of these follows from (\ref{2deg_Nondeg}).
The second case is trivial:
$\pi (e_t) \xi = 0$ by~(\ref{2deg_faf}),
and $f \xi = 0$.
\end{proof}

\begin{remark}\label{sqprep}
The last result also holds with $L^p$-spaces
replaced by the ${\mathrm{SQ}}_p$ spaces mentioned in the introduction,
although (\ref{2deg_Lp}) would then say that
$E$ is an ${\mathrm{SQ}}_p$ space.
The proof is essentially the same, except that
(\ref{2deg_Lp}) becomes trivial.
We also need to use the fact
that ${\mathrm{SQ}}_p$ spaces are smooth for $p \in (1, \infty)$.
In fact, they are also strictly convex.
To see this,
first observe that reflexivity of $L^p$~spaces
implies reflexivity of ${\mathrm{SQ}}_p$~spaces.
Next,
$L^p$~spaces are both smooth and strictly convex,
so their subspaces are as well.
So the duals of subspaces are both strictly convex and smooth.
By reflexivity,
the quotient of a subspace is the dual of a subspace of the dual,
so both smooth and strictly convex.
\end{remark}

If $A$ is unital as a Banach algebra and
also is an $L^p$-operator algebra
then it follows that $A$ may be viewed as a subalgebra of $B (L^p (X))$
containing the identity operator on $L^p (X)$, for some
measure space $X$.
This was proved first in Section 2 of~\cite{PV}.

\begin{corollary}\label{C_7702_DualRepn}
Let $p \in (1, \In)$.
Let $A$ be a dual unital $L^p$~operator algebra
(Definition $\mathrm{\ref{D_7620_DualLpOpAlg}}$).
Then $A$ has an isometric
unital representation on an $L^p$~space
which is a weak* homeomorphism onto its range.
\end{corollary}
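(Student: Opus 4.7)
The plan is to take the representation of $A$ as a weak*-closed subalgebra of some $B(L^p(X, \mu))$ provided by the definition, and then compress onto the range of the idempotent $\pi(1_A)$, checking that the compression preserves all four of the required properties (unital, isometric, algebra homomorphism, weak*-homeomorphism onto its image).

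First I would use Definition~\ref{D_7620_DualLpOpAlg} to fix an isometric weak*-homeomorphic algebra isomorphism $\pi \colon A \to M$ onto a weak*-closed subalgebra $M \subseteq B(L^p(X, \mu))$ for some measure space $(X, \mu)$. Since $A$ is unital with $\| 1_A \| = 1$, the element $e = \pi(1_A)$ is a contractive idempotent in $B(L^p(X, \mu))$. I would then apply Lemma~\ref{2deg} to $\pi$ (with the trivial cai $(1_A)$) to conclude that $E = e L^p(X, \mu)$ is linearly isometric to $L^p(Y, \nu)$ for some $(Y, \nu)$ (via the Ando--Tzafriri theorem cited there), that $\pi(a) = e \pi(a) e$ for all $a \in A$, and that the compression $\rho \colon A \to B(E)$ defined by $\rho(a) = \pi(a)|_E$ is a unital isometric homomorphism.

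The hard part will be upgrading $\rho$ to a weak*-homeomorphism onto its image. My plan here is to identify $B(E)$ isometrically and as a dual Banach space with the corner $e B(L^p(X, \mu)) e$. Factor $e = i e'$, where $i \colon E \hookrightarrow L^p(X, \mu)$ is the inclusion and $e' \colon L^p(X, \mu) \to E$ the projection. Then $j \colon B(E) \to B(L^p(X, \mu))$ defined by $j(S) = i S e'$ is easily seen (using $e' i = 1_E$) to be an isometric algebra isomorphism onto $e B(L^p(X, \mu)) e$, with inverse $T \mapsto e' T i$. To see $j$ is weak*-weak* continuous, I would exhibit it as the Banach-space adjoint of the bounded linear map $L^p(X, \mu) \widehat{\otimes} L^q(X, \mu) \to E \widehat{\otimes} E^*$ sending $\xi \otimes \eta \mapsto e' \xi \otimes i^* \eta$, via the pairing identity
\[
\langle j(S) \xi, \, \eta \rangle = \langle S (e' \xi), \, i^* \eta \rangle
\]
for $\xi \in L^p(X, \mu)$, $\eta \in L^q(X, \mu)$, and $S \in B(E)$. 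Since $e B(L^p(X, \mu)) e$ is the fixed-point set of the weak*-continuous idempotent $T \mapsto e T e$ on $B(L^p(X, \mu))$ (weak* continuity by Corollary~\ref{C_7620_KLp}), it is weak*-closed in $B(L^p(X, \mu))$. A standard pre-adjoint and open-mapping argument (the pre-adjoint of $j$ is a bounded bijection, since $j$ is a bounded bijection onto the weak*-closed set $e B(L^p(X, \mu)) e$) then shows that $j$ is a weak*-weak* homeomorphism onto its image.

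To conclude, $M \subseteq e B(L^p(X, \mu)) e$ is weak*-closed in $B(L^p(X, \mu))$ and $\pi$ is a weak*-homeomorphism onto $M$, so the composition $\rho = j^{-1} \circ \pi$ is an isometric unital algebra isomorphism from $A$ onto the subalgebra $j^{-1}(M) \subseteq B(E)$ which is a weak*-homeomorphism onto its image; transporting via the linear isometry $E \cong L^p(Y, \nu)$ gives the desired representation on an $L^p$-space.
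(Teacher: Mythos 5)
Your proposal is correct and follows essentially the same route as the paper: compress to the range of the idempotent $\pi(1_A)$, invoke Lemma~\ref{2deg} to get a unital isometric representation on a space isometric to an $L^p$~space, and then check via Krein--Smulian that the corner embedding $B(E) \to B(L^p(X, \mu))$ is a weak* homeomorphism onto its image. The only cosmetic difference is that you verify weak* continuity of that embedding by exhibiting it as the adjoint of an explicit map on the projective tensor product preduals, whereas the paper obtains it as the second dual of the inclusion $\Kdb(E) \hookrightarrow \Kdb(L^p(X, \mu))$ via Theorem~\ref{T_7920_KStarStar}; both rest on the same identification of the predual of $B$ of an $L^p$~space.
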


\begin{proof}
Let $\pi \colon A \to B (L^p (X))$ be an isometric representation
which is a weak* homeomorphism onto its range.
As in Lemma~\ref{2deg},
let $E = {\overline{\spn}} (\pi (A) (L^p (X) )$,
and let $f$ be as there.
Clearly $f = \pi (1_A)$.
Define $\sm \colon A \to B (E) = f B (L^p (X)) f$
by $\sm (a) = f \pi (a) f$ for $a \in A$.
Lemma~\ref{2deg} implies that
$\sm$ is an isometric unital representation on an $L^p$~space.
In light of the Krein-Smulian theorem, all we need to show is that
the weak* topology on $B (E)$
is the same as the restriction to $f B (L^p (X)) f$
of the weak* topology on $B (L^p (X))$.
The inclusion of $E$ in $L^p (X)$
as a complemented subspace
gives an inclusion of $\Kdb (E)$ in $\Kdb (L^p (X))$,
and by Theorem \ref{T_7920_KStarStar}~(\ref{T_7920_KStarStar_2nd})
the second dual of this inclusion
is $B (E) \hookrightarrow B (L^p (X))$,
which is therefore a weak* homeomorphism onto its image.
\end{proof}

In particular, applying this principle
to the bidual of an approximately unital $L^p$-operator algebra~$A$,
we obtain
a faithful normal isometric representation of $A^{**}$
that can to some extent play the role of the enveloping von
Neumann algebra
coming from the universal representation of a $C^*$-algebra.

\begin{corollary}\label{ddag}
Let $p \in (1, \infty)$,
and let $A$ be an
approximately unital $L^p$-operator algebra.
Then
there exists a measure space $(X, \mu)$
and a unital isometric representation
$\theta \colon A^{**} \to B (L^p (X, \mu) )$
such that:
\begin{enumerate}
\item\label{ddag_WeakStarHomeo}
$\theta$ is a weak* homeomorphism onto its range.
\item\label{ddag_Nondeg}
$\theta |_A$ acts nondegenerately on $L^p (X, \mu)$.
\item\label{ddag_cai}
For any cai $(e_t)_{t \in \Ld}$ in $A$,
we have
$\theta (e_t) \to 1$ in the strong operator topology
on $B (L^p (X, \mu))$.
\end{enumerate}
\end{corollary}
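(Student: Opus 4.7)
The plan is to obtain the desired representation by applying Corollary~\ref{C_7702_DualRepn} to the bidual algebra $A^{**}$, and then to observe that the nondegeneracy condition on $A$ is automatic. By Corollary~\ref{C_7620_2Dual}, the bidual $A^{**}$ is a dual $L^p$-operator algebra. By Lemma~\ref{L_7917_AppIdConverge}, $A^{**}$ also has an identity $1_{A^{**}}$ of norm~$1$, so it is in particular a dual unital $L^p$-operator algebra, and Corollary~\ref{C_7702_DualRepn} applies to produce an isometric unital representation $\theta \colon A^{**} \to B(L^p(X, \mu))$ for some measure space $(X, \mu)$ which is a weak* homeomorphism onto its range. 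This immediately gives property~(\ref{ddag_WeakStarHomeo}).

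Next, I would verify~(\ref{ddag_Nondeg}) and~(\ref{ddag_cai}) simultaneously using the structure theorem for contractive representations from Lemma~\ref{2deg}. Fix any cai $(e_t)_{t \in \Ld}$ for $A$. On the one hand, since $\theta$ is unital and weak* continuous and $e_t \to 1_{A^{**}}$ weak* in $A^{**}$ by Lemma~\ref{L_7917_AppIdConverge}, we get $\theta(e_t) \to \theta(1_{A^{**}}) = 1_{L^p(X, \mu)}$ weak* in $B(L^p(X, \mu))$. On the other hand, applying Lemma~\ref{2deg} to the contractive representation $\theta|_A \colon A \to B(L^p(X, \mu))$ yields a contractive idempotent $f \in B(L^p(X, \mu))$ whose range equals $E = \overline{\spn}(\theta(A)(L^p(X, \mu)))$, and such that $\theta(e_t) \to f$ both weak* and in the strong operator topology.

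Uniqueness of weak* limits then forces $f = 1_{L^p(X, \mu)}$, which by Lemma~\ref{2deg} means $E = L^p(X, \mu)$; this is exactly nondegeneracy of $\theta|_A$, giving~(\ref{ddag_Nondeg}). The strong operator convergence $\theta(e_t) \to f = 1_{L^p(X, \mu)}$ from Lemma~\ref{2deg}~(\ref{2deg_AI}) then gives~(\ref{ddag_cai}).

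The main potential obstacle is really just the identification $f = 1_{L^p(X, \mu)}$, i.e.\ seeing that the essential subspace for $\theta|_A$ fills $L^p(X, \mu)$; once this is done, everything else is a direct invocation of previously established results. This identification is clean because it rests on two facts from earlier in the paper that combine perfectly: weak* continuity of $\theta$ together with the weak* convergence of cais to $1_{A^{**}}$ (Lemma~\ref{L_7917_AppIdConverge}), and the uniqueness of the contractive idempotent with prescribed range on the smooth space $L^p(X, \mu)$ used in the proof of Lemma~\ref{2deg}.
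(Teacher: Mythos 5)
Your proposal is correct and follows essentially the same route as the paper, whose proof simply cites Corollary~\ref{C_7702_DualRepn} and Lemma~\ref{2deg}; you have correctly filled in the intended details (applying Corollary~\ref{C_7702_DualRepn} to the unital dual algebra $A^{**}$, then identifying the idempotent $f$ of Lemma~\ref{2deg} with the identity via uniqueness of weak* limits).
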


\begin{proof}
This is clear from Corollary~\ref{C_7702_DualRepn}
and Lemma \ref{2deg}.
\end{proof}

\section{Examples}\label{Sec_3}

As we mentioned in the introduction,
so far the study of $L^p$-operator algebra
has been very largely example driven.
Thus there is a wealth of examples in the literature,
or in preprint form.
(See the works of the second author,
Viola, Gardella and Thiel, and others referred to earlier.)
In this section we discuss the main examples which we have used,
or which seem useful but are not in the literature.
We recall again that, as always,
in this section $p \in (1, \infty) \setminus \{ 2 \}$
unless stated to the contrary.

\begin{notation}\label{N_7719_Mnp}
As in for example~\cite[Lemma 6.11]{PV},
for $n \in \Ndb$ and $p \in [1, \In]$
we write $l^p_n$ for $L^p$ of an $n$~point space with counting measure,
and define $M_n^p = B (l^p_n)$.
\end{notation}

\begin{example}\label{mp}
Let $p \in [1, \In)$.
Let $e_n \in M_n^p$ be the $n \times n$ matrix whose
entries are all~$\frac{1}{n}$.
We will use $e_n$ several times in this paper
and so the calculations that follow
will be important for us.
If $p = 2$ then $e_n$ is a rank one projection.
For the rest of this example, assume $p \neq 2$,
and let $q \in (1, \infty]$
satisfy $\frac{1}{p} + \frac{1}{q} = 1$.

Suppose $n = 2$.
We have
\[
1 - 2 e_2
 = \left[
  \begin{array}{ccl} 0  &  -1  \\ -1 &  0 \end{array} \right],
\]
which is an invertible isometry.
So $\| e_2 \| = \| 1 - e_2 \| = 1$
by Lemma \ref{L_7719_BicId}~(\ref{L_7719_BicId_BicrOnLp}),
and $e_2$ is real positive
by Lemma \ref{L_7719_BicId}~(\ref{L_7719_BicId_RealPos}).
However,
$e_2$ is not hermitian, by Proposition~\ref{P_7702_HermIsMult},
or by Lemma 6.11 of~\cite{PV}.

For the rest of this example, assume $n \geq 3$
(as well as $p \neq 2$).
We claim that $\| e_n \| = 1$
but $\| 1 - e_n \| > 1$,
so that $e_n$ is not bicontractive.
Then Lemma \ref{L_7719_BicId}~(\ref{L_7719_BicId_RealPos})
implies that $e_n$ is not real positive.

To see that $e_n$ is contractive,
set
\[
\et = ( 1, 1, \ldots, 1 ) \in l^p_n
\andeqn
\mu = \tfrac{1}{n} ( 1, 1, \ldots, 1 ) \in l^q_n.
\]
Then one easily checks that for all $\xi \in l^p_n$
we have $e_n \xi = \langle \mu, \xi \rangle \et$,
so $\| e_n \| \leq \| \mu \|_q \| \et \|_p = 1$.

To show that $\| 1 - e_n \| > 1$,
by Lemma \ref{L_7719_BicId}~(\ref{L_7719_BicId_BicrOnLp})
it is enough to prove that $1 - 2 e_n$ is not isometric.
As pointed out to us by Eusebio Gardella,
Lamperti's Theorem \cite{FJ} 
implies that that the only
matrices which are isometries in the $L^p$ operator norm are the
complex permutation matrices,
and clearly $1-2 e_n$ is not
of this form.
However, we can give a direct proof.

Define $g \colon [1, \In) \to [0, \In)$
by
\[
g (p)
 = \big\| (1 - 2 e_n) (1, 0, 0, \ldots, 0 ) \big\|_p^p.
\]
We have
\[
(1 - 2 e_n) (1, 0, 0, \ldots, 0 )
 = \left( 1 - \frac{2}{n}, \, - \frac{2}{n}, \, - \frac{2}{n},
    \, \ldots, \, - \frac{2}{n} \right),
\]
so
\[
g (p)
  = \left( 1 - \frac{2}{n} \right)^p
     + (n - 1) \left( \frac{2}{n} \right)^p
\]
for $p \in [1, \In)$.
One further has
$g (2) = 1$ and
\[
g' (p) = \left( 1 - \frac{2}{n} \right)^p
         \log \left( 1 - \frac{2}{n} \right)
     + (n - 1) \left( \frac{2}{n} \right)^p
          \log \left( \frac{2}{n} \right)
\]
for all $p \in [1, \In)$.
Both the logarithm terms are strictly negative,
so $g' (p) < 0$.
Therefore
$\big\| (1 - 2 e_n) (1, 0, 0, \ldots, 0 ) \big\|_p \neq 1$
for all $p \in [1, \In) \setminus 2$.
Thus $\| 1 - e_n \| > 1$.

One can see easily that $\| 1 - e_n \| < 2$
(this follows for example from a lemma in the sequel paper),
but we will not use this here.

Lemma \ref{L_7719_BicId}~(\ref{L_7719_BicId_RealPos})
implies that $1 - e_n$ is real positive.
It follows also that
the ``support idempotent'' $s (1 - e_n)$ of $1 - e_n$
(see Definition~\ref{D_7Z17_SuppId})
is not contractive,
unlike support idempotents for real positive Hilbert space operators
(see e.g.\ Corollary 3.4 in \cite{BRII}).
In turn this shows that,
unlike the Hilbert space operator case,
the limit
$\lim_{m \to \In} \| x^{1 / m} \|$ need not equal $1$
for real positive elements in an $L^p$ operator algebra  $A$
(or even for elements of ${\mathfrak{F}}_{A}$).
We are using the $m$-th root in
Definition~\ref{N_7919_nthroot} and the discussions after it.
We also see that,
unlike the Hilbert space operator case in Proposition 2.3 of \cite{BRI},
$\frac{1}{2} {\mathfrak{F}}_{A}$ is not closed under $n$-th roots.
Indeed,
\[
\tfrac{1}{2} (1 - e_n) \in \tfrac{1}{2} {\mathfrak{F}}_{A}
 \subseteq \Ball (A)
\]
but
\[
\lim_{m \to \In} \big( \tfrac{1}{2} (1 - e_n) \big)^{1 / m}
 = s (1 - e_n)
 = 1 - e_n
 \notin \Ball (A).
\]
Nonetheless it is true that
${\mathfrak{F}}_{A}$ is closed under $n$-th roots,
by Proposition \ref{L_7Z16_RootProp}~(\ref{L_7Z16_RootProp_Norm}).
\end{example}

Another example of bicontractive idempotents,
related to the case of $M_2^p$ discussed above,
appears in the
group $L^p$ operator algebra of a discrete group containing elements
of order~2.
(See e.g.\  \cite{P2,G2,G3}.)
These elements give projections in the group $C^*$-algebra,
which are actually in the purely algebraic group algebra.
The
corresponding idempotents in the group $L^p$ operator algebra are
bicontractive, and ``look like'' the bicontractive
idempotents in $M_2^p$.
Since we make little use of group $L^p$ operator algebras
in this paper, we omit the details.
As described below,
however, they motivate Example~\ref{E_7721_eAndfAndef}.

Let $E$ be a Banach space of the form $L^p (X, \mu)$
for some measure space $(X, \mu)$ and some $p \in (1, \In)$.
Let $e, f \in B (E)$ be commuting contractive idempotents.
It is very tempting to conjecture that,
as in the Hilbert space operator case,
$e + f - e f$,
which is an idempotent with range
${\operatorname{Ran}} (e) + {\operatorname{Ran}} (f)$,
is also contractive.
This conjecture is false,
as we will see in Example~\ref{E_7721_eAndfAndef} below,
even if $e$ and $f$ are bicontractive.
Thus, the lattice theoretic properties
of (even commuting) bicontractive idempotents
on $L^p$ spaces are deficient.
Indeed we shall see that there is
a disappointing comparison between the structure of the
lattice of idempotents in $B (L^p(X))$
and the beautiful and fundamental behavior
of projections in von Neumann algebras.
Our example also does two other things.
It shows that the product
of two commuting real positive idempotents need not be real positive.
And it shows that on $L^p$,
there are commuting accretive operators
whose geometric mean exists but is not accretive.
This shows that \cite[Lemma 5.8]{BlcWng} fails
with Hilbert spaces replaced by $L^p$ spaces.

The construction of the example
is motivated as follows.
Fix $p \in (1, \In) \SM \{ 2 \}$.
By Lemma \ref{L_7719_BicId}~(\ref{L_7719_BicId_BicrOnLp}),
commuting pairs of bicontractive idempotents
in $B (L^p (X, \mu))$
are in one to one correspondence with pairs of
commuting invertible isometries of order~$2$ in $B (L^p (X, \mu))$,
and therefore with representations
of $( \Zdb / 2 \Zdb)^2$ on $L^p (X, \mu)$ via isometries.
In particular,
the conjecture in the previous paragraph holds
for all $(X, \mu)$ (for our given value of~$p$)
\ifo{} it holds for the pair of bicontractive idempotents
coming from the universal isometric $L^p$~representation
of $( \Zdb / 2 \Zdb)^2$.
Since $( \Zdb / 2 \Zdb)^2$ is amenable,
this will be true \ifo{}
it holds for the left regular representation
of $( \Zdb / 2 \Zdb)^2$ on $l^p ( ( \Zdb / 2 \Zdb)^2 ) \cong l^p_4$.

\begin{example}\label{E_7721_eAndfAndef}
Fix $p \in (1, \In) \SM \{ 2 \}$.
There is a finite dimensional unital $L^p$~operator algebra
(specifically $M_4^p$) which contains the following:
\begin{enumerate}
\item\label{E_7721_eAndfAndef_Bicrt}
Two commuting bicontractive idempotents
whose product is not even contractive.
\item\label{E_7721_eAndfAndef_RPos}
Two commuting real positive idempotents
whose product is not real positive.
\item\label{E_7721_eAndfAndef_Accr}
Two commuting accretive opertors
whose geometric mean exists but is not accretive.
\end{enumerate}

We work throughout in $M_4^p$.
Define
\[
s =  \left[
  \begin{array}{ccccl}
  0     &  1     &  0     &  0        \\
  1     &  0     &  0     &  0        \\
  0     &  0     &  0     &  1        \\
  0     &  0     &  1     &  0
\end{array} \right]
\in M^p_4
\andeqn
t = \left[ \begin{array}{ccccl}
  0     &  0     &  1     &  0        \\
  0     &  0     &  0     &  1        \\
  1     &  0     &  0     &  0        \\
  0     &  1     &  0     &  0
\end{array} \right]
\in M^p_4.
\]
One checks that these are commuting isometries of order~$2$.
Next, define
\[
e = \tfrac{1}{2} (1 + s)
\andeqn
f = \tfrac{1}{2} (1 + t).
\]
These are commuting idempotents,
and they are bicontractive
by Lemma \ref{L_7719_BicId}~(\ref{L_7719_BicId_BicrOnLp}).
Then one checks that $e f$
is the idempotent $e_4$ of Example~\ref{mp},
and that
$e + f - e f$ is an idempotent.
We claim that it is not contractive.
First,
we look at $1 - (e + f - e f)$,
getting
\[
1 - (e + f - e f)
 = \frac{1}{4}  \left[ \begin{array}{ccccl}
  1     & -1     & -1     &  1        \\
 -1     &  1     &  1     & -1        \\
 -1     &  1     &  1     & -1        \\
  1     & -1     & -1     &  1
\end{array} \right]  .
\]
Define $w = \diag (1, \, -1, \, -1, \, 1)$,
which is an invertible isometry in $M_4^p$.
Then one checks that
$w [1 - (e + f - e f)] w^{-1} = e_4$
in the language of Example~\ref{mp}.
In that example
we showed that this idempotent is contractive,
and also showed
that $1 - w [1 - (e + f - e f)] w^{-1}$ is not contractive.
Therefore also
\[
e + f - e f
 = w^{-1} \big( 1 - w [1 - (e + f - e f)] w^{-1} \big) w
\]
is not contractive.
This is~(\ref{E_7721_eAndfAndef_Bicrt}).

Now define $e_0 = 1 - e$ and $f_0 = 1 - f$.
We have seen that $e$ and $f$ are contractive,
so $e_0$ and $f_0$ are real positive
by Lemma \ref{L_7719_BicId}~(\ref{L_7719_BicId_RealPos}).
However, $1 - e_0 f_0 = e + f - e f$
is not contractive,
so $e_0 f_0$ is not real positive,
again by Lemma \ref{L_7719_BicId}~(\ref{L_7719_BicId_RealPos}).
This is~(\ref{E_7721_eAndfAndef_RPos}).

We turn to~(\ref{E_7721_eAndfAndef_Accr}).
We want invertible elements.
Neither $e$ nor $f$ is invertible, but this is
easily fixed by adding $\varepsilon 1$ to each of them, which does
not change the fact that they commute.
We recall the well known
Ando et al list of properties that a ``good'' geometric
mean should possess (see e.g.\  p.\ 306 of~\cite{ALM}).
One of these is that the geometric mean of $a$ and $b$
should be
$a^{1 / 2} \, b^{1 / 2}$
(as in Definition~\ref{N_7919_nthroot})
whenever $a$ and $b$ commute.
One also needs to assume in our
case that these principal square roots exist.

Suppose that
$(\varepsilon 1 + e)^{1 / 2} (\varepsilon 1 + f)^{1 / 2}$
is accretive for all $\varepsilon > 0$.
Using the Macaev-Palant formula
$\big\| a^{1 / 2} - b^{1 / 2} \big\| \leq K \| a - b \|^{1 / 2}$
(see Lemma~2.4 of~\cite{BlcWng},
the preceding discussion, and the reference given there),
letting $\varepsilon \to 0$ implies that
$e^{1/2} f^{1/2}$ is accretive.
We have $e^{1/2} = e$ and $f^{1/2} = f$
by e.g.\  
Proposition \ref{L_7Z16_RootProp}~(\ref{L_7Z16_RootProp_Series}).
So $e f$ is accretive, a contradiction.
\end{example}

\begin{example}\label{uta}
Let $p \in [1, \In)$.
Given a closed linear subspace $E \subseteq B (L^p (X))$,
define
${\mathcal{U}} (E) \subseteq B \big( L^p (X) \oplus^p L^p (X) \big)$
to be the set of operators which have the $2 \times 2$ matrix form
\begin{equation}\label{Eq_7721_UE}
\left[
   \begin{array}{ccl} \lambda  &  x  \\ 0 &  \mu \end{array} \right]
\end{equation}
with $\lambda, \mu \in \Cdb$ and $x \in E$.
Then ${\mathcal{U}} (E)$ is a unital $L^p$-operator algebra.
Moreover,
if $F \subseteq L^p (Y)$ and $u \colon E \to F$ is linear,
then the map
${\mathcal{U}} (u) \colon {\mathcal{U}} (E) \to {\mathcal{U}} (F)$,
defined by
\[
{\mathcal{U}} (u) \left( \left[
    \begin{array}{ccl} \lambda  &  x  \\ 0 &  \mu \end{array} \right]
            \right)
   =
    \left[ \begin{array}{ccl} \lambda  &  u (x)  \\ 0 &  \mu \end{array}
      \right]
\]
for $\lambda, \mu \in \Cdb$ and $x \in E$,
is a unital homomorphism.
We will show that if $u$ is contractive or isometric,
then so is ${\mathcal{U}} (u)$.

To begin,
we claim that if $\lambda, \mu \in \Cdb$ and $x \in B (L^p (X))$,
then
\begin{equation}\label{Eq_7721_NormComp}
\left\| \left[
   \begin{array}{ccl} \lambda  &  x  \\ 0 &  \mu \end{array}
          \right] \right\|
  = \left\| \left[
  \begin{array}{ccl} |\lambda|  &  \| x \| \\ 0 &  | \mu | \end{array}
      \right] \right\|,
\end{equation}
with the norm on the right hand side being taken in~$M_2^p$.
Hence the norm on ${\mathcal{U}} (E)$ only depends on the norms
of elements in~$E$,
not the elements themselves.

We prove the claim.
Let $\lambda, \mu \in \Cdb$ and let $x \in B (L^p (X))$.
Define
\[
a = \left[
   \begin{array}{ccl} \lambda  &  x  \\ 0 &  \mu \end{array} \right]
  \in B \big( L^p (X) \oplus^p L^p (X) \big)
\andeqn
c = \left[
  \begin{array}{ccl} |\lambda|  &  \| x \| \\ 0 &  | \mu | \end{array}
      \right]
  \in M_2^p.
\]
We have
\begin{equation}\label{Eq_7802_NormFormula}
\| a \|
  = \sup \Big( \Big\{ \big(
  \| \lambda \eta + x \xi \|^p_p + \| \mu \xi \|_p^p \big)^{1 / p}
       \colon {\mbox{$\eta, \xi \in L^p (X)$ satisfy
       $\| \eta \|_p^p + \| \xi \|_p^p \leq 1$}}  \Big\} \Big).
\end{equation}
The quantity inside the supremum is dominated by
\[
\big[ \big( |\lambda| \| \eta \|_p  + \| x \| \| \xi \|_p \big)^p
      + \big( |\mu| \|  \xi \|_p \big)^p \big]^{1 / p}
  = \big\| c ( \| \et \|_p, \, \| \xi \|_p ) \big\|_p
  \leq \| c \|.
\]
So $\| a \| \leq \| c \|$.
To see the other direction we may assume that $x \neq 0$.
Choose scalars $\alpha, \beta$ with
$| \alpha |^p + | \beta |^p \leq 1$
such that the norm of $c$ is achieved at $(\alpha, \beta)$.
Multiplying $\alpha$ and $\beta$
by a complex number of absolute value~$1$,
we may assume that $\beta \geq 0$.
Since
$c \big( \af, \bt) = ( \af | \ld | + \bt \| x \|, \, \bt | \mu | \big)$,
we see that
$\| c \big( \af, \bt) \|_p \leq \| c \big( |\af|, \bt) \|_p$,
so we may also assume that $\af \geq 0$.
If $\bt = 0$ then
\[
\| c \|
 = \| c ( \af, \bt) \|_p
 = | \af \ld |
 \leq | \ld |
 \leq \| a \|.
\]
Otherwise,
let $\ep > 0$.
Choose $\dt > 0$ such that
\[
\dt < \bt \| x \|
\andeqn
\big( \big| | \lambda | \alpha + \bt \| x \| \big| - \dt \big)^p
  > \big| | \lambda | \alpha + \bt \| x \| \big|^p - \ep.
\]
Choose $\xi \in L^p (X)$ of
norm $\beta$ so that $\big| \| x \xi \|_p - \bt \| x \| \big| < \dt$.
Then $x \xi \neq 0$.
Choose $\zt \in \Cdb$ such that $| \zt | = 1$ and $\zt \ld = | \ld |$.
Define $\eta = \zt \alpha \| x \xi \|_p^{-1} x \xi \in L^p (X)$.
Then $\eta$ has
norm $\alpha$,
so that $\| \eta \|_p^p + \| \xi \|_p^p \leq 1$.
Now
\begin{align*}
\| a (\eta, \xi) \|^p
 & = \| \lambda \eta + x \xi \|^p_p + \| \mu  \xi \|_p^p
   = \left( \left| \frac{\lambda \zt \alpha}{\| x \xi \|_p} + 1 \right|
       \| x \xi \|_p \right)^p
     + |\mu \beta|^p
\\
  & = \big| | \lambda | \alpha + \| x \xi \|_p \big|^p + |\mu \beta|^p
    > \big( \big| | \lambda | \alpha + \bt \| x \| \big| - \dt \big)^p
      + |\mu \beta|^p
\\
  & > \big| | \lambda | \alpha + \bt \| x \| \big|^p - \ep
          + |\mu \beta|^p
    = \| c (\af, \bt) \|_p^p - \ep
    = \| c \|^p - \ep.
\end{align*}
Since $\ep > 0$ is arbitrary,
the claim follows.

It follows that if $u \colon E \to F$ as above
is isometric, then so is ${\mathcal{U}} (u)$.

We claim that
if $u \colon E \to F$ is a linear contraction,
then ${\mathcal{U}} (u)$ is also contractive.
By the previous claim,
it suffices to prove that if $\ld, \mu, \rh, \sm \in [0, \infty)$
and $\rh \leq \sm$,
then
\begin{equation}\label{Eq_7802_MatNorm}
\left\| \left[
   \begin{array}{ccl} \lambda  &  \rh \\ 0 &  \mu \end{array}
    \right] \right\|
  \leq \left\|  \left[
   \begin{array}{ccl} \lambda  &  \sm \\ 0 &  \mu \end{array}
     \right] \right\|.
\end{equation}
We apply~(\ref{Eq_7802_NormFormula})
to these matrices.
For $\af, \bt \in \Cdb$
we have $\| (| \af |, | \bt | ) \|_p = \| (\af, \bt ) \|_p$.
Since $\ld, \rh \geq 0$,
the expression $| \ld \af + \rh \bt |^p + | \mu \bt |^p$
becomes no smaller if $\af$ and $\bt$
are replaced by $| \af |$ and $| \bt |$,
and similarly with $\sm$ in place of~$\rh$.
Therefore the norms of the matrices in~(\ref{Eq_7802_MatNorm})
are $N (\rh)$ and $N (\sm)$, with $N$ given by
\[
N (\ta) = \sup \Big( \Big\{ \big(
( \lambda \af + \ta \bt )^p + (\mu \bt )^p \big)^{1 / p}
       \colon {\mbox{$\af, \bt \in [0, \infty)$ satisfy
       $\af^p + \bt^p \leq 1$}}  \Big\} \Big)
\]
for $\ta \in [0, \infty)$.
Since all the variables are nonnegative,
clearly $\rh \leq \sm$ imples $N (\rh) \leq N (\sm)$.
This yields~(\ref{Eq_7802_MatNorm}).
The claim is proved.
\end{example}

Example~\ref{uta} is often useful for counterexamples
because it can convert a
bad linear subspace of $B (L^p (X))$
into a suitably badly behaved  $L^p$-operator algebra.
Note that if $E$ is weak* closed in $B (L^p (X))$
then ${\mathcal{U}} (E)$ is a dual $L^p$-operator algebra
in the sense of Definition \ref{D_7620_DualLpOpAlg}.
This follows just as in Lemma 2.7.7 (1) in \cite{BLM},
but using the characterization
of weak* convergent nets in $B (L^p (X))$ given after
Corollary \ref{C_7620_KLp}.

\begin{example}\label{cone}
Let $p \in [1, \In)$.
The set of continuous functions $f \colon [0, 1] \to M_2^p$
is a unital $L^p$-operator algebra.
We may view
this as the canonical copy of $C( [0, 1]) \otimes M_2^p$
inside the bounded operators
on
\[
L^p ( [0, 1]) \otimes l^p_2
 \cong l^p_2 (L^p ( [0, 1]))
 \cong L^p ( [0, 1]) \oplus^p L^p ([0, 1]).
\]
The subalgebra consisting of functions with
$f (1)$ diagonal is also a unital $L^p$-operator algebra.
The subalgebras consisting of functions $f$ with
$f (0) = 0$, or with $f (0) = 0$ and $f (1)$ diagonal,
are approximately unital $L^p$-operator algebras.
Indeed,
if $(e_n)_{n \in \Ndb}$ is a cai for $C_0 ( (0, 1])$,
then, using tensor notation,
$(e_n \otimes 1_2)_{n \in \Ndb}$ is a cai for these algebras.
\end{example}

\begin{example}\label{Sepran}
Let $p \in (1, \In)$.
Let $(X, \mu)$
be a measure space,
and, to avoid trivialities,
assume that $L^p (X, \mu)$ is not separable.
Let $A$ in $B (L^p (X, \mu))$ be the ideal of operators on $L^p (X)$
with separable
range, which is known to be a closed ideal.
We claim that $A$ is an $L^p$-operator algebra with a cai,
and, if $X$ is
a discrete space with counting measure,
even a cai consisting of hermitian and real positive idempotents.

We prove the first part of the claim.
If $E \subseteq L^p (X)$ is any separable subspace,
it follows by Theorem 6 in Section 16 on p.~146 of~\cite{Lacey}
and Lemma 2 in Section 17 on p.~153 of \cite{Lacey} (see also
Proposition 1.25 in \cite{P2}),
that $E$ is contained in the range of a contractive idempotent
with separable range.
(Spaces are assumed to be real in~\cite[Section 16]{Lacey},
however the complex case is no doubt well known to Banach space experts.
Indeed by the just
cited results or their proofs a separable subspace of $L^p(X)$ is
contained in a separable closed sublattice $F$.
Since the norm on $F$ is $p$-additive, $F$ is an abstract $L_p$ space
(see p.\  131 of~\cite{Lacey} for definitions of these terms),
so by Theorem 3 in both
Sections 15 and 17 of \cite{Lacey}, $F$ is contractively complemented.)

Also, it is well known and an exercise
that an operator $x$ on a reflexive space
has separable range if and only if $x^*$ has separable range.
Taking $q \in (1, \In)$
to satisfy $\frac{1}{p} + \frac{1}{q} = 1$,
we see that
$A^*$ is the collection of operators on $L^q (X, \mu)$
with separable range.
For any $x_1, x_2, \ldots, x_n  \in A$, the closure of the
linear span of their ranges is separable by standard arguments.
It follows that there exist contractive idempotents $e$ and $f$
with separable ranges such that
$x_k = e x_k = x_k f$ for $k = 1, 2, \ldots, n$. 
Thus $A$ has a cai $(e_t)_{t \in \Ld}$,
indeed a cai consisting of contractive idempotents
and such that for any finite set $F \subseteq A$
there is $t \in \Ld$
such that $e_t x = x e_t = x$ for all $x \in F$.
Indeed take $\Ld$ to be the collection
of such finite subsets of $A$.

Now take $X$ to be a set~$I$ with counting measure,
so $L^p (X) = l^p (I)$.
For any $J \subseteq I$ let $e_J$ be the
canonical hermitian (diagonal) projection $e_J$
onto the image of $l^p (J)$ in
$l^p (I)$.
Suppose $x_1, x_2, \ldots, x_n \in B (l^p (I))$
have separable ranges.
Then, as above, the closure $E$ of the
joint span of their ranges is separable.
So there exists a countable subset $J$ of $I$
(the union of the supports of elements
in a countable dense set in $E$) such that
all elements of $E$ are
supported on $J$.
As in the last paragraph, the net $(e_J)$, indexed by the
countable subsets $J$ of $I$ ordered by inclusion,
is a real positive hermitian cai
consisting of bicontractive idempotents
(since $1 - e_J = e_{I \setminus J}$ is contractive).
\end{example}

\begin{example}\label{E_7802_L1G}
Let $G$ be a locally compact group which is not discrete,
with Haar measure~$\mu$.
Then $L^1 (G)$ is approximately unital,
and by Wendel's theorem
its multiplier algebra is $M (G)$,
the measure algebra on $G$.
In particular,
$M (G)$ in an $L^1$~operator algebra.
The identity of $M (G)$ is $\delta_1$,
the Dirac measure at $1_G$.
Hence the multiplier unitization of
$L^1 (G)$ is
$L^1 (G) + \Cdb \delta_1 \subseteq M (G) \subseteq B (L^1 (G))$.
If $f \in L^1 (G)$ and $\lambda \in \Cdb$ then
\[
\| f + \lambda \delta_1 \|
=  \sup \left(
    \left\{ \left| \int_G \, f g \, d \mu + \lambda g (1) \right|
           \colon g \in \Ball (C_0 (G)) \right\} \right).
\]

We claim that the multiplier unitization of
$L^1 (G)$ is $L^1 (G) \oplus^1 \Cdb$.
Fix $f \in L^1 (G)$ and $\lambda \in \Cdb$;
it is enough to prove that
$\| f + \lambda \dt_1 \|_{M(G)} \geq \| f \|_1 + | \ld |$.
Given $\varepsilon > 0$, choose $h \in \Ball  (C_0 (G))$
with $\left| \int_G \, f h \, d \mu \right| > \| f \|_1 - \varepsilon$.
Replacing $h$ by $e^{i \bt} h$ for suitable $\bt \in \Rdb$,
we may assume that $\int_G \, f h \, d \mu \geq 0$.
We have $\mu (\{ 1 \}) = 0$ since $G$ is not discrete.
Choose by regularity a neighborhood
$U$ of $1$ such that
$\int_U \, |f | \, d \mu < \varepsilon$.
By Urysohn's lemma
there is a continuous function $k_1 \colon G \to [0, 1]$
with compact support $K$ contained in $U$
and taking the value $1$ at~$1_G$.
There is also a continuous function $k_2 \colon G \to [0,1]$
which is $1$ on $G \SM U$ and is zero on $K$.
Choose $\theta \in \Rdb$ such that $e^{i \theta} \lambda = |\lambda|$,
and let $g = h k_2 + e^{i \theta} k_1$.
Thus we have $g \in \Ball  (C_0 (G))$
with $\lambda g (1) = |\lambda|$, and $g = h$ on $G \SM U$.
Thus
\[
\left| \int_G \, f g \, d \mu - \int_G \, f h \, d \mu \right|
 \leq 2 \int_U \, |f| \, d \mu < 2 \varepsilon.
\]
Using $\int_G \, f h \, d \mu \geq 0$
and $\lambda g (1) = |\lambda| \geq 0$,
we have
\begin{align*}
\| f + \lambda \dt_1 \|
& \geq \left| \int_G \, f g \, d \mu + \lambda g (1) \right|
  > \left| \int_G \, f h \, d \mu + \lambda g (1) \right|
         - 2 \varepsilon
\\
& = \int_G \, f h \, d \mu + |\lambda| - 2 \varepsilon
  > \| f \|_1 + |\lambda|  - 3 \varepsilon.
\end{align*}
Since $\varepsilon > 0$ is arbitrary,
the claim is proved.

It follows (see Definition~\ref{D_7702_FA}
and Proposition~\ref{P_7702_rAAndFA})
that
${\mathfrak{F}}_{L^1 (G)} = {\mathfrak{r}}_{L^1 (G)} = \{ 0 \}$.
By Lemma~\ref{L_7Z14_HerDiffRPos},
$L^1 (G)$ also has no nonzero hermitian elements.
In particular, $L^1 (G)$ has no
hermitian or real positive cai.
\end{example}

\begin{example}\label{disk0}
A good example of an $L^p$-operator algebra with a real positive
cai but no hermitian cai is the set $A$
of functions in the disk algebra vanishing at~$1$, represented
on $L^p$ of the circle as multiplication operators.
The disk algebra contains no nontrivial
hermitian elements, since the latter would be real valued functions.
However, $A$ is approximately unital.
One way to see this is to combine
Example I.1.4 (b) of~\cite{MIBS}
(after Lemma I.1.5 there)
with Theorem 4.8.5 (1) of~\cite{BLM},
realizing the disk algebra as an operator
algebra by representing it
on $L^2$ of the circle
(instead of $L^p$) as multiplication operators.
\end{example}

\begin{example}\label{kl01}
Let $p \in [1, \In) \SM \{ 2 \}$.
We consider the algebras $\Kdb (L^p (X, \mu))$
for $X = \Ndb$ with counting measure and
$X = [0, 1]$ with Lebesgue measure.
The first has a cai consisting of real positive,
in fact, hermitian, idempotents.
The second has a cai,
but contains no nonzero real positive elements,
and in particular no nonzero hermitian elements.

A hermitian element in $B (L^p (X, \mu))$ is
``multiplication by an essentially bounded real valued
locally measurable function'' (Proposition \ref{P_7702_HermIsMult}).
Thus the hermitian elements in $B (l^p)$
are the infinite diagonal matrices
with bounded real entries.
Therefore the canonical approximate identity in
$\Kdb (l^p)$ is a cai consisting
of real positive and hermitian elements.
(Also see the discussion in \cite[Section 6]{PV}.)

Abbreviate $A = \Kdb (L^p ([0, 1]))$.
This algebra is approximately unital
by e.g.\ Theorem~2 of~\cite{Palmer}.
We can in fact give a formula for cai $(e_n)_{n = 0, 1, \ldots}$
consisting of contractive finite rank idempotents
which is increasing in the order $\leq$
in Definition~\ref{Dorder}.
For $n = 0, 1, \ldots$,
for
\[
\xi \in L^p ([0, 1]),
\qquad
k = 1, 2, \ldots, 2^n,
\andeqn
x \in \left[ \frac{k - 1}{2^n}, \,  \frac{k}{2^n} \right),
\]
define
\[
(e_n \xi) (x)
 = 2^n \int_{(k - 1) / 2^n}^{k/ 2^n} \xi (t) \, d t.
\]
One easily checks that
$(e_n)_{n = 1, 2, \ldots}$
has the properties claimed for it.

Assume now that $p \in (1, \In) \SM \{ 2 \}$.
It is known (see Theorem~2 of~\cite{BenL})
there is no nonzero $a \in A$ with
$\| 1 - a \| \leq 1$.
It follows from Proposition \ref{P_7702_rAAndFA}
that ${\mathfrak{r}}_A = \{ 0 \}$.
That is, for $p \in (1, \In) \SM \{ 2 \}$,
there are no nonzero real positive elements
in $A$ in the main sense of \cite{BOZ}.
Hence by Lemma \ref{bicr}~(\ref{bicr_3})
and Lemma \ref{L_7618_SecDual}~(\ref{L_7618_SecDual_AReg}),
for every cai~${\mathfrak{e}}$, we have
${\mathfrak{r}}^{\mathfrak{e}}_A = \{ 0 \}$.
(This set was defined before Lemma 2.6 in \cite{BOZ}.
In our present case,
by Lemma \ref{bicr}~(\ref{bicr_3}) and Definition~\ref{D_7702_RealPos},
${\mathfrak{r}}^{\mathfrak{e}}_A$ is the set of elements $x \in A$
with $\Re (\varphi (x)) \geq 0$ for all $\varphi \in S (A)$.)
In particular, for $p \in (1, \In) \SM \{ 2 \}$,
$A$ has no real positive cai.
So, by Lemma~\ref{L_7Z14_HerDiffRPos}
and Proposition~\ref{muau},
$A$ has no hermitian cai.
\end{example}

It is easy to see directly that
$\Kdb (L^p ([0, 1]))$ has no nonzero hermitian elements.
Indeed, Proposition \ref{P_7702_HermIsMult} implies that a
hermitian element in $B (L^p ([0, 1]))$
is the multiplication operator $M_f$
by a bounded measurable real valued function $f$.
If the range of such an operator $M_f$
is nonzero then it contains $L^p (E)$
for some non-null $E \subseteq [0, 1]$. 
Indeed there is $\varepsilon > 0$ such that
$E = \big\{ x \in [0, 1] \colon | f (x) | > \varepsilon \big\}$
has strictly positive measure.
So  $L^p (E)$ is contained in the
range of multiplication by $f$.
Since the measure has no atoms, $L^p (E)$ is infinite dimensional.
This cannot be if $M_f$ is compact,
since in that case its restriction to $L^p(E)$ is
compact and bounded below.

\begin{proposition}\label{knsca}
Let $p \in (1, \In) \SM \{ 2 \}$.
Set $A = \Kdb (L^p ([0, 1]))$.
If $e$ is the identity of $A^{**}$, viewed
as an element of $(A^1)^{**}$, then $\| 1 - e \| > 1$.
\end{proposition}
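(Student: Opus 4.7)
The plan is to argue by contradiction: assuming $\|1-e\|_{(A^1)^{**}} \leq 1$, we shall derive that $A$ must admit a real positive cai, contradicting Example~\ref{kl01}. First, $e$ is an idempotent in $(A^1)^{**}$ (inherited from $A^{**}$ under the weak*-continuous, isometric homomorphism $A^{**} \hookrightarrow (A^1)^{**}$ induced by the inclusion $A \hookrightarrow A^1$, using Arens regularity from Lemma~\ref{L_7618_SecDual}(\ref{L_7618_SecDual_AReg})). Since $(A^1)^{**}$ is a unital Banach algebra, Lemma~\ref{L_7719_BicId}(\ref{L_7719_BicId_RealPos}) applied there shows that the hypothesis is equivalent to $e \in {\mathfrak{F}}_{(A^1)^{**}}$, i.e., $e$ is real positive in $(A^1)^{**}$.

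By Goldstine's theorem applied to a translate of $\Ball (A^1)$, the set ${\mathfrak{F}}_{A^1}$ is weak* dense in ${\mathfrak{F}}_{(A^1)^{**}}$, so there is a net $(y_\alpha) \subseteq {\mathfrak{F}}_{A^1}$ converging weak* to $e$. Writing $y_\alpha = a_\alpha + \lambda_\alpha 1$ with $a_\alpha \in A$ and $\lambda_\alpha \in \Cdb$, I pair with the character $\chi_0$ on $A^1$, whose canonical extension to $(A^1)^{**}$ sends $e$ to $\lim \chi_0(f_n) = 0$ for any cai $(f_n)$ of $A$; this forces $\lambda_\alpha \to 0$. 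Pairing with functionals in $A^*$ extended by zero on $\Cdb \cdot 1$ similarly gives $a_\alpha \to e$ weak* in $A^{**}$. The condition $y_\alpha \in {\mathfrak{F}}_{A^1}$ becomes $\|(1 - \lambda_\alpha)1 - a_\alpha\|_{A^1} \leq 1$, which by Lemma~\ref{mund} is the operator norm on $L^p([0,1])$, and a further application of $\chi_0$ here gives the sharper constraint $|1 - \lambda_\alpha| \leq 1$.

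The goal is now to extract from this data a bounded net of genuinely real positive elements of $A$ still converging weak* to $e$; Corollary~\ref{bigmove} would then produce a real positive cai for $A$, contradicting Example~\ref{kl01}, which gives ${\mathfrak{F}}_A = \{0\}$ and hence ${\mathfrak{r}}_A = \{0\}$ via Proposition~\ref{P_7702_rAAndFA}. In the favorable boundary regime $|1 - \lambda_\alpha| = 1$, dividing $(1 - \lambda_\alpha)1 - a_\alpha$ through by $1 - \lambda_\alpha$ places $a_\alpha/(1 - \lambda_\alpha)$ in ${\mathfrak{F}}_A = \{0\}$, forcing $a_\alpha = 0$, in immediate contradiction with $a_\alpha \to e \neq 0$ weak*. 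The main obstacle is the general case $|1 - \lambda_\alpha| < 1$, where the direct division only gives $\|1 - a_\alpha/(1 - \lambda_\alpha)\| \leq 1/|1 - \lambda_\alpha| > 1$. I expect to handle this by refining the Goldstine approximation---for instance, by convex averaging against the element $1 \in {\mathfrak{F}}_{A^1}$, or by replacing $y_\alpha$ with $y_\alpha/(1 + \varepsilon_\alpha)$ for suitable $\varepsilon_\alpha > 0$ (which preserves membership in ${\mathfrak{F}}_{A^1}$ while perturbing $\lambda_\alpha$ so that $|1 - \lambda_\alpha|$ can be driven to the boundary asymptotically)---so that the estimates reduce to the favorable case, completing the contradiction.
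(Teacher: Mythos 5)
Your setup is sound and runs parallel to the paper's own proof for a while: Goldstine's theorem produces a net $a_\alpha + \lambda_\alpha 1 \in {\mathfrak{F}}_{A^1}$ converging weak* to $e$, the character $\chi_0$ forces $\lambda_\alpha \to 0$ and $|1-\lambda_\alpha| \leq 1$, and $a_\alpha \to e$ weak* in $A^{**}$. The gap is exactly where you flag it, and neither of your proposed repairs closes it. Convex averaging with $1$ replaces $1-\lambda_\alpha$ by $(1-s)(1-\lambda_\alpha)$, which moves $|1-\lambda_\alpha|$ \emph{away} from the boundary, and after renormalizing you recover the same element $a_\alpha/(1-\lambda_\alpha)$ with the same deficient bound. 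The rescaling $y_\alpha \mapsto y_\alpha/(1+\varepsilon_\alpha)$ does stay in ${\mathfrak{F}}_{A^1}$, but $\bigl|1 - \lambda_\alpha/(1+\varepsilon)\bigr| = 1$ is solvable with $\varepsilon \geq 0$ only when $|\lambda_\alpha|^2 \geq 2\Re(\lambda_\alpha)$, which is precisely the case where $|1-\lambda_\alpha| \geq 1$ already and no help is needed; in the problematic regime the required $\varepsilon$ is negative. More fundamentally, the qualitative fact ${\mathfrak{F}}_A = \{0\}$ from Example~\ref{kl01} cannot suffice: weak* approximation only ever yields elements $b_\alpha = a_\alpha/(1-\lambda_\alpha)$ with $\| 1 - b_\alpha \| \leq 1/|1-\lambda_\alpha| \to 1$, i.e., elements of a shrinking neighborhood of ${\mathfrak{F}}_A$, and every Banach algebra contains a profusion of nonzero elements $b$ with $\|1-b\|$ slightly larger than $1$. (Your fallback through Corollary~\ref{bigmove} is also not available: it would require a bounded net of real positive elements of $A$ converging weak* to $1_{A^{**}}$, and deducing the existence of such a net from $\|1-e\| \leq 1$ is exactly the open conjecture stated after Lemma~\ref{havec2}.)

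The missing ingredient, which the paper uses, is the \emph{quantitative} form of the Benyamini--Lin theorem (Theorem~2 of~\cite{BenL}): there is a uniform $\delta > 0$ such that every $b \in A$ with $\| b \| \geq \tfrac{1}{2}$ satisfies $\| 1 - b \| > 1 + \delta$. With this in hand your net finishes the job without any normalization to the boundary: since $a_\alpha \to e$ weak* and $\| e \| \geq 1$, weak* lower semicontinuity of the norm gives cofinally many $\alpha$ with $\| a_\alpha \| > \tfrac{1}{2}$, hence $\| 1 - a_\alpha \| > 1 + \delta$; but
\[
\| 1 - a_\alpha \| \leq \| (1-\lambda_\alpha) 1 - a_\alpha \| + | \lambda_\alpha | \leq 1 + | \lambda_\alpha | < 1 + \tfrac{\delta}{2}
\]
eventually, because $\lambda_\alpha \to 0$. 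Choosing $\alpha$ satisfying both conditions gives the contradiction. This is, up to the cosmetic translation between approximating $e$ and approximating $1-e$, the paper's argument; the uniform gap $\delta$ is what converts the approximate information supplied by Goldstine into a genuine contradiction.
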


\begin{proof}
Suppose that $\| 1 - e \| \leq 1$.
Then by Goldstine's Theorem there are nets
$(a_t)_{t \in \Ld}$ in~$A$ and $(\lambda_t)_{t \in \Ld}$ in~$\Cdb$
such that $\| \lambda_t 1 + a_t \| \leq 1$ for all $t \in \Ld$
and
$\lambda_t 1 + a_t \to 1 - e$ weak*.
Applying the character annihilating $A$ we see that $\lambda_t \to 1$.
Hence $a_t \to - e$ weak*.
Theorem~2 of~\cite{BenL} provides $\dt > 0$
such that whenever $b \in A$ satisfies $\| b \| \geq \frac{1}{2}$
then $\| 1 - b \| > 1 + \dt$.
Choose $t_0 \in \Ld$ such that $| 1 - \ld_t | < \frac{\dt}{2}$
for $t \in \Ld$ with $t \geq t_0$.
There is $t_1 \in \Ld$ such that $t_1 \geq t_0$ and
$\| a_{t_1} \| > \| - e \| - \frac{1}{2}$
(for otherwise $\| a_t \| \leq  \| - e \| - \frac{1}{2}$
for $t  \geq t_0$,
giving the contradiction $\| -e \| \leq  \| - e \| - \frac{1}{2}$).
Clearly $\| - e \| \geq 1$.
So $\| a_{t_1} \| > \frac{1}{2}$,
whence $\| 1 + a_{t_1} \| > 1 + \dt$.
But
\[
\| 1 + a_{t_1} \|
 \leq | 1 - \ld_{t_1} | + \| \ld_{t_1} + a_{t_1} \|
 < \frac{\dt}{2} + 1.
\]
This contradiction shows that $\| 1 - e \| \leq 1$ is impossible.
\end{proof}

\section{Miscellaneous results on $L^p$-operator algebras}\label{Sec_4}

\subsection{Quotients and bi-approximately unital algebras}\label{quot}

\begin{definition}\label{biau_Lp}
Let $A$ be an $L^p$-operator algebra.
and let $J \subseteq A$ be a closed ideal.
We say that $J$ is a {\emph{bi-approximately unital ideal}} in $A$
(or is {\emph{bi-approximately unital in~$A$}})
if $J$ is approximately unital
and if there is an $L^p$~operator unitization $B$ of~$A$
(as in Definition~\ref{D_7720_Unitization})
such that identity $e$ of the bidual $J^{**}$
is a bicontractive idempotent in $B^{**}$.
\end{definition}

\begin{definition}\label{biau_AReg}
Let $A$ be an approximately unital Arens regular Banach algebra.
We say that $A$ is
{\emph{bi-approximately unital}} if
in the bidual $(A^1)^{**}$
of its multiplier unitization~$A^1$
the identity $e$ of $A^{**}$
is a bicontractive idempotent.
\end{definition}

The next lemma shows that the terminology is consistent.

\begin{lemma}\label{L_7718_Bicontr}
Let $A$ be an approximately unital $L^p$~operator algebra.
Then $A$ is bi-approximately unital
in the sense of Definition~ $\mathrm{\ref{biau_AReg}}$
if and only if
$A$ is bi-approximately unital as an ideal in itself
in the sense of Definition~ $\mathrm{\ref{biau_Lp}}$.
\end{lemma}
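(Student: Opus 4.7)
The plan is to handle the unital and nonunital cases separately and to reduce the equivalence to the universal property of the multiplier unitization. The unital case is trivial: if $A$ is unital then Definition~\ref{D_7702_Unitization} gives $A^1 = A$, the only $L^p$-operator unitization of $A$ is $A$ itself, and the relevant element $e$ coincides with $1_{A}$, which is tautologically bicontractive. So I assume throughout that $A$ is nonunital.

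For the forward direction, assume $A$ satisfies Definition~\ref{biau_AReg}. By Proposition~\ref{muau}, the multiplier unitization $A^1$ is an $L^p$-operator algebra, and by construction it is unital with $A$ a codimension-one closed ideal; hence $A^1$ is an $L^p$-operator unitization of $A$ in the sense of Definition~\ref{D_7720_Unitization}. The hypothesis that the identity $e$ of $A^{**}$ is bicontractive in $(A^1)^{**}$ is then precisely the condition required in Definition~\ref{biau_Lp} with this choice of $B$.

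For the backward direction, suppose that there exists some $L^p$-operator unitization $B$ of $A$ such that the identity $e$ of $A^{**}$ is bicontractive in $B^{**}$. By Remark~\ref{R_7702_UnitizationFacts}~(\ref{R_7702_UnitizationFacts_ToMult}), the formula $\psi(a + \lambda 1_B) = a + \lambda 1_{A^1}$ defines a contractive unital homomorphism $\psi \colon B \to A^1$ which is the identity on $A$. Taking the bitranspose yields a weak*-continuous, contractive, unital homomorphism $\psi^{**} \colon B^{**} \to (A^1)^{**}$. By Lemma~\ref{L_7618_SecDual}, $A$ is Arens regular, so $A^{**}$ embeds canonically as a weak*-closed subalgebra of both $B^{**}$ and $(A^1)^{**}$, and since $\psi|_A = \mathrm{id}_A$, separate weak*-continuity of multiplication forces $\psi^{**}|_{A^{**}} = \mathrm{id}_{A^{**}}$; in particular $\psi^{**}(e) = e$. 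Combining this with $\psi^{**}(1_B) = 1_{A^1}$ gives $\psi^{**}(1_B - e) = 1_{A^1} - e$, so contractivity of $\psi^{**}$ yields
\[
\|e\|_{(A^1)^{**}} \leq \|e\|_{B^{**}} \leq 1
\qquad \text{and} \qquad
\|1_{A^1} - e\|_{(A^1)^{**}} \leq \|1_B - e\|_{B^{**}} \leq 1,
\]
which is the required bicontractivity in $(A^1)^{**}$.

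There is no serious obstacle; the only point worth checking carefully is that the two copies of $A^{**}$ (inside $B^{**}$ and inside $(A^1)^{**}$, respectively) are correctly identified by $\psi^{**}$, which follows from the identity $\psi|_A = \mathrm{id}_A$ together with weak* density of $A$ in $A^{**}$ and weak*-continuity of $\psi^{**}$. Everything else is the routine transfer of norm inequalities through a contractive unital bidual map.
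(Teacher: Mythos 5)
Your proof is correct and follows essentially the same route as the paper's: take $B = A^1$ (via Proposition~\ref{muau}) for the forward direction, and for the converse push the bicontractivity of $e$ through the bitranspose of the contractive homomorphism $B \to A^1$ from Remark~\ref{R_7702_UnitizationFacts}~(\ref{R_7702_UnitizationFacts_ToMult}). The only differences are cosmetic: you spell out the trivial unital case and the identification $\psi^{**}(e) = e$, which the paper asserts more briefly.
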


Recall from Lemma~\ref{L_7618_SecDual}~(\ref{L_7618_SecDual_AReg})
that $L^p$~operator algebras
are automatically Arens regular.

\begin{proof}[Proof of Lemma~{\rm{\ref{L_7718_Bicontr}}}]
If $A$ is bi-approximately unital
in the sense of Definition~\ref{biau_AReg},
we can take the $L^p$~operator unitization
required in Definition~\ref{biau_Lp}
to be $A^1$,
recalling from Proposition~\ref{muau} that $A^1$
is an $L^p$~operator algebra.
If $A$ is bi-approximately unital as an ideal in itself,
let $B$ be an $L^p$~operator unitization
as required in Definition~\ref{biau_Lp},
and let $e$ be as there.
The obvious homomorphism
$\ph \colon B \to A^1$
is contractive,
by Remark~\ref{R_7702_UnitizationFacts}
(\ref{R_7702_UnitizationFacts_ToMult}),
so $\ph^{**} \colon B^{**} \to (A^1)^{**}$
is contractive.
Thus $\| \ph^{**} (e) \| \leq \| e \| \leq 1$
and $\| 1 - \ph^{**} (e) \| \leq \| 1 - e \| \leq 1$.
Since $\ph^{**} (e)$ is the
identity of $A^{**}$ as in Definition~\ref{biau_AReg},
we have shown that $A$ is bi-approximately unital.
\end{proof}

The algebra $\Kdb (L^p ([0, 1]))$
is an approximately unital $L^p$~operator algebra
which is not bi-approximately unital.
See Example~\ref{kl01} and Proposition \ref{knsca}.

By Lemma \ref{L_7719_BicId}~(\ref{L_7719_BicId_BicrOnLp}),
$A$ is bi-approximately unital
if and only if $A$ is a $u$-ideal in $A^1$
as defined at the beginning of Section~3 of \cite{GKS},
that is, that $\| 1 - 2 e \| \leq 1$
where $e$ is the identity of $A^{\perp \perp}$ in $(A^1)^{**}$.

\begin{lemma}\label{havec2}
Let $A$ be an approximately unital Arens regular Banach algebra.
If $A$ has a real positive bounded approximate identity,
then $A$ is bi-approximately unital in the sense of Definition
$\mathrm{\ref{biau_AReg}}$.
\end{lemma}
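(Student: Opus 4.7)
The plan is to produce a cai for $A$ that lies inside ${\mathfrak{F}}_A$, and then pass to the weak* limit of this cai in $(A^1)^{**}$. By hypothesis, $A$ is approximately unital and has a real positive bounded approximate identity, so Lemma~\ref{havec} (which requires exactly Arens regularity plus these two assumptions) provides a cai $(e_t)_{t \in \Ld} \subseteq {\mathfrak{F}}_A$. By Definition~\ref{D_7702_FA} this means $\| 1_{A^1} - e_t \|_{A^1} \leq 1$ for every $t$, and of course $\| e_t \|_{A^1} \leq 1$ as well, by the isometric inclusion of Remark~\ref{R_7702_UnitizationFacts}~(\ref{R_7702_UnitizationFacts_Incl}).

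Since $A$ is Arens regular and approximately unital, Lemma~\ref{L_7917_AppIdConverge} guarantees that $A^{**}$ has an identity~$e$ and that $e_t \to e$ in the weak* topology of $A^{**}$. The canonical inclusion $A \hookrightarrow A^1$ induces a bidual map $A^{**} \to (A^1)^{**}$ which is isometric and weak*-to-weak* continuous, since it is the adjoint of the bounded restriction map $(A^1)^* \to A^*$. Therefore $e_t \to e$ weak* in $(A^1)^{**}$, and hence $1 - e_t \to 1 - e$ weak* in $(A^1)^{**}$ as well, where $1$ denotes the identity of $(A^1)^{**}$.

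The last step is weak* lower semicontinuity of the norm on $(A^1)^{**}$: by Alaoglu's theorem $\Ball\bigl((A^1)^{**}\bigr)$ is weak* closed. Since each $1 - e_t$ lies in $\Ball(A^1) \subseteq \Ball\bigl((A^1)^{**}\bigr)$, so does the weak* limit $1 - e$, giving $\| 1 - e \|_{(A^1)^{**}} \leq 1$. The same argument applied to $(e_t)_{t \in \Ld}$ gives $\| e \|_{(A^1)^{**}} \leq 1$. Hence $e$ is a bicontractive idempotent in $(A^1)^{**}$, which is precisely the content of Definition~\ref{biau_AReg}. There is no serious obstacle; the whole argument hinges on having Lemma~\ref{havec} at hand to supply a cai inside ${\mathfrak{F}}_A$, after which Alaoglu's theorem converts boundedness of $(1 - e_t)_{t \in \Ld}$ in $A^1$ into the desired estimate on the weak* limit.
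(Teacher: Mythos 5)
Your proposal is correct and follows essentially the same route as the paper's own proof: invoke Lemma~\ref{havec} to get a cai in ${\mathfrak{F}}_A$, let it converge weak* to the identity $e$ of $A^{**}$ via Lemma~\ref{L_7917_AppIdConverge}, and use weak* closedness of norm-closed balls to conclude $\| e \| \leq 1$ and $\| 1 - e \| \leq 1$. The only difference is that you spell out the (correct) point that the isometric weak*-continuous embedding $A^{**} \to (A^1)^{**}$ carries the convergence over, which the paper leaves implicit.
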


\begin{proof}
Lemma \ref{havec} implies that
$A$ has a cai in ${\mathfrak{F}}_A$.
This cai converges weak* to the identity $e$ of $A^{**}$
by Lemma \ref{L_7917_AppIdConverge}.
Since norm closed balls are weak* closed,
we get $\| e \| \leq 1$ and $\| 1 - e \| \leq 1$.
Hence $e$ is bicontractive.
\end{proof}

We conjecture that the converse
of Lemma \ref{havec2}
is always true for $L^p$-operator algebras,
namely that a bi-approximately unital $L^p$-operator algebra $A$
has a real positive cai.
Corollary \ref{bigmove} may be helpful
for this question.

In \cite{GT} it is shown that
quotients of $L^p$-operator algebras by closed ideals need not
be $L^p$-operator algebras,
giving a negative solution to Problem 3.8 in \cite{LeM}.
Things are better if the
ideal is approximately unital.

\begin{lemma}\label{quo}
Let  $p \in (1, \infty)$,
let $A$ be an $L^p$~operator algebra,
and let $J \subseteq A$ be a closed ideal.
\begin{enumerate}
\item\label{quo_biappu} 
If $J$ is a bi-approximately unital ideal in $A$
then $A / J$ is an $L^p$~operator algebra.
\item\label{quo_appu}
If $J$ is approximately unital
then there is a continuous bijective homomorphism
from $A / J$ to an $L^p$~operator algebra
whose inverse is also continuous.
\end{enumerate}
\end{lemma}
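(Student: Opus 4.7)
The plan is to realize $A/J$ (isometrically in case~(\ref{quo_biappu}), as a topological isomorphism in case~(\ref{quo_appu})) as a closed subalgebra of~$A^{**}$, which is itself an $L^p$-operator algebra by Lemma~\ref{L_7618_SecDual}~(\ref{L_7618_SecDual_Lp}). Because $J$ is an approximately unital $L^p$-operator algebra it is Arens regular (Lemma~\ref{L_7618_SecDual}~(\ref{L_7618_SecDual_AReg})), so by Lemma~\ref{L_7917_AppIdConverge}, $J^{**}$ has an identity $e$ with $\|e\|\le 1$; identifying $J^{**}$ with $J^{\perp\perp}\subseteq A^{**}$ in the standard way, $J^{**}$ is a two-sided ideal of~$A^{**}$ with identity~$e$. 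The key construction is the ``compression'' map
\[
\pi \colon A \to A^{**},
\qquad
\pi(a) \;=\; a - ae - ea + eae,
\]
which is the formal expansion of $(1-e)a(1-e)$ even when $1-e$ need not literally lie in~$A^{**}$.

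I would carry out the argument in three main steps. Step one: check that $\pi$ is a homomorphism with $\ker\pi = J$. The identity $\pi(a)\pi(a') = \pi(aa')$ reduces, after expansion, to $(1-e)(aea')(1-e) = 0$; this holds because $aea' \in J^{**}$ (ideal property) and for every $j \in J^{**}$ one has $(1-e)j(1-e) = j - ej - je + eje = 0$ (since $ej = je = eje = j$). For the kernel, $\pi(a) = 0$ forces $a = ae + ea - eae \in J^{\perp\perp}$, hence $a \in A \cap J^{\perp\perp} = J$ because $J$ is norm closed; the same computation shows $\pi|_J = 0$. Step two: the fundamental norm inequality $\|[a]\|_{A/J} \le \|\pi(a)\|$. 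Since $\pi$ vanishes on $J^{\perp\perp}$, taking $y = ae + ea - eae \in J^{\perp\perp}$ gives $a - y = \pi(a)$; using the isometric identification $A/J \hookrightarrow (A/J)^{**} \cong A^{**}/J^{\perp\perp}$,
\[
\|[a]\|_{A/J} \;=\; \|a + J^{\perp\perp}\|_{A^{**}/J^{\perp\perp}} \;\le\; \|a - y\| \;=\; \|\pi(a)\|.
\]

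Step three is the reverse norm estimate, which differs between the two parts. For part~(\ref{quo_biappu}), pass to the hypothesized $L^p$-operator unitization~$B$ and embed $A^{**}$ isometrically in $B^{**}$ (an $L^p$-operator algebra by Lemma~\ref{L_7618_SecDual}~(\ref{L_7618_SecDual_Lp})). There $\pi(a) = (1-e)a(1-e)$ literally, so $\pi(y) = 0$ for every $y \in J^{\perp\perp}$ yields $\|\pi(a)\| \le \|1-e\|^2\,\|a - y\| \le \|a - y\|$ by bicontractivity; hence $\|\pi(a)\| \le \|[a]\|_{A/J}$, and combined with step two, $\bar\pi \colon A/J \hookrightarrow B^{**}$ is isometric. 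For part~(\ref{quo_appu}) we have only $\|e\|\le 1$, and the crude bound $\|\pi(x)\| \le (1+\|e\|)^2\|x\| \le 4\|x\|$ applied to $x = a - y$ yields $\|\pi(a)\| \le 4\|[a]\|_{A/J}$, so $\bar\pi \colon A/J \to A^{**}$ is a topological isomorphism onto its image. In either case the image is closed (as the iso image of a complete space) and is a subalgebra of an $L^p$-operator algebra, hence itself an $L^p$-operator algebra. The step I expect to be the main obstacle is the homomorphism verification in step one: collapsing the sixteen-term expansion of $\pi(a)\pi(a')$ requires careful bookkeeping, exploiting simultaneously that $J^{**}$ is a two-sided ideal of~$A^{**}$ and that $e$ is its identity; everything else is standard duality together with an application of the open mapping theorem.
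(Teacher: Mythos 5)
Your proposal is correct and takes essentially the same route as the paper: both realize $A/J$ (isometrically in case~(\ref{quo_biappu}), up to topological isomorphism in case~(\ref{quo_appu})) as the corner of the bidual cut out by $1-e$, where $e$ is the weak* limit of a cai for~$J$, with the same two norm estimates driven by $\|e\|\leq 1$ and, in case~(\ref{quo_biappu}), $\|1-e\|\leq 1$. The only cosmetic difference is that the paper first proves $e$ is central in $A^{**}$ and then uses the one-sided identification $A^{**}/eA^{**}\cong (1-e)A^{**}$, whereas your two-sided compression $a\mapsto a-ae-ea+eae$ lets you bypass the centrality argument, at the cost of the longer homomorphism verification you flag.
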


\begin{proof}
We may suppose that
$A$ is unital with identity~$1$.
Recall from Lemma \ref{L_7618_SecDual}~(\ref{L_7618_SecDual_WkSt})
that multiplication on $A^{**}$ is separately weak* continuous.
Also,
the weak* closure of $J$ in $A^{**}$ is $J^{\perp \perp}$.

Let $(e_t)_{t \in \Ld}$ be a cai for~$J$.
Since $J$ is Arens regular
(Lemma \ref{L_7618_SecDual}~(\ref{L_7618_SecDual_WkSt})),
Lemma~\ref{L_7917_AppIdConverge}
shows that there is $e \in J^{**}$ which is an identity for $J^{**}$
and such that $(e_t)_{t \in \Ld}$ converges weak* to~$e$.
Clearly $\| e \| \leq 1$.

We claim that $e A^{**} = J^{\perp \perp}$
and $A^{**} e = J^{\perp \perp}$.
The proofs are the same,
so we do only the first.
We have $J^{\perp \perp} \subseteq e A^{**}$
since $e$ is an identity for $J^{\perp \perp}$.
Also, if $a \in A$ then $e_t a \in J$ for all $t \in \Ld$,
and $e_t a \to e a$ weak*, so $e a$ is in the weak* closure
of $J$ in $A^{**}$, which is $J^{\perp \perp}$.
Thus $e A \subseteq J^{\perp \perp}$.
Since multiplication on $A^{**}$ is separately weak* continuous,
it follows that $e A^{**} \subseteq J^{\perp \perp}$.
The claim is proved.

For $a \in A^{**}$, since $a e, e a \in J^{\perp \perp}$
and $e$ is an identity for $J^{\perp \perp}$,
we get $(e a) e = e a$ and $e (a e) = a e$.
So $e$ is central in $A^{**}$.

Since $e$ is an idempotent,
we have an algebra homeomorphism
(not necessarily isometric)
$A^{**} / e A^{**} \cong (1 - e) A^{**}$.
If $J$ is bi-approximately unital,
then $\| 1 - e \| = 1$,
and this isomorphism is isometric.
Therefore we have algebras homomorphisms
\[
A / J \hookrightarrow A^{**} / J^{\perp \perp}
      = A^{**} / e A^{**}
      \rightarrow (1 - e) A^{**}
      \hookrightarrow A^{**}.
\]
All maps are isometric except possibly the third,
which is a homeomorphism in general
and is isometric if $J$ is bi-approximately unital.
Since $A^{**}$ is an $L^p$~operator algebra
by Lemma \ref{L_7618_SecDual}~(\ref{L_7618_SecDual_WkSt}),
we are done.
\end{proof}

\begin{remark}\label{qreadetc}
\begin{enumerate}
\item\label{qreadetc_Read}
Using an ultrapower argument,
Charles Read showed in an unfinished personal communication
that the quotient  $B (l^p)/\Kdb (l^p)$
is isometrically an $L^p$-operator algebra.
This fact is also contained in Theorem~2.1 and Remark~2
of~\cite{BdjJhn},
combined with the fact
(Theorem 3.3 (ii) of~\cite{He})
that ultrapowers of $L^p$~spaces are $L^p$~spaces.
This result
also follows from Lemma \ref{quo}~(\ref{quo_biappu}),
since the canonical cai for $\Kdb (l^p)$ is bicontractive
and hence so is its weak* limit.

Read was also working on whether
$B (L^p) / \Kdb (L^p)$ is an $L^p$-operator algebra.
The results of~\cite{BdjJhn} quoted above
show that it is at least isomorphic to one,
a fact which also follows from Lemma \ref{quo}~(\ref{quo_appu}).
We are studying Read's unfinished proof of the latter
in hopes of answering this question.

\item\label{qreadetc_Toeplitz}
We remind the reader of an example from \cite{GT}:
the $p$ variant of the Toeplitz algebra quotiented
by $\Kdb (l^p)$ is isomorphic to $F^p (\Zdb)$,
the norm closed subalgebra of $B (l^l (\Zdb))$ generated by
the bilateral shift and its inverse.
(This is the full group $L^p$-operator algebra
of the two element group as
defined in \cite{P2};
see Definition~3.3 and the discussion before Proposition 3.14 there.)
In particular, it is not $C (\Tdb)$.
\end{enumerate}
\end{remark}

\begin{example}\label{mp2}
We exhibit $p \in (1, \infty) \setminus \{ 2 \}$
and an $L^p$~operator algebra $A$
with a closed approximately unital ideal $J$
such that $A / J$ is not isometrically isomorphic to
an $L^p$~operator algebra.
This shows that Lemma \ref{quo}~(\ref{quo_appu})
can't be improved.
In our example,
$A$ is commutative and three dimensional,
and $J$ has an identity $e$ which is central in $A$
and with $\| e \| = 1$
(but $\| 1 - e \| > 1$).

Fix $n \in \{ 2, 3, \ldots \}$.
(We will later take $n = 3$.)
Let $e_n$ be as in Example~\ref{mp}.
Define $\zt = e^{ 2 \pi i / n }$
and $s = \diag \big( 1, \zt, \zt^{2}, \ldots, \zt^{n - 1} \big)$.
For $k = 0, 1, \ldots, n - 1$ set
$f_k = s^k e_n s^{-k}$.
We claim that:
\begin{enumerate}
\item\label{I_7z11_mp2_fk}
$f_k$ is contractive idempotent for $k = 0, 1, \ldots, n - 1$.
\item\label{I_7z11_mp2_orth}
$f_0, f_1, \ldots, f_{n - 1}$ are orthogonal,
that is, $f_j f_k = 0$ if $j \neq k$.
\item\label{I_7z11_mp2_Sum1}
$\sum_{k = 0}^{n - 1} f_k = 1_{M_n}$, the $n \times n$ identity matrix.
\end{enumerate}
For~(\ref{I_7z11_mp2_fk}),
recall from Example~\ref{mp} that $\| e_n \| = 1$,
and use $\| f_k \| \leq \| s \|^k \| e_n \| \| s^{-1} \|^k$.
For (\ref{I_7z11_mp2_orth}) and~(\ref{I_7z11_mp2_Sum1}),
let $u \in M_n$ be the matrix whose $k$-th column
(starting the count at $0$ instead of~$1$) is
\[
\frac{1}{\sqrt{n}} s^k \big( 1, 1, \ldots, 1 \big)
 = \frac{1}{\sqrt{n}}
    \big( 1, \zt^k, \zt^{2k}, \ldots, \zt^{(n - 1)k} \big).
\]
Computations show that $u$ is unitary (in the $p = 2$ sense),
and that
\[
u^* e_n u = \diag \big( 1, 0, 0, \ldots, 0 \big)
\andeqn
u^* s u
 = \left[
   \begin{array}{ccccl}
  0     &  0     & \cdots &  0     &  1      \\
  1     &  0     & \cdots & \cdots & 0         \\
  0     &  1     & \ddots & \ddots & \vdots    \\
 \vdots & \ddots & \ddots & \ddots & \vdots    \\
  0     & \cdots &  0     &  1     &  0
\end{array}
    \right].
\]
For $k = 0, 1, \ldots, n - 1$,
it follows that
$u^* f_k u = (u^* s u) (u^* e_n u) (u^* s u)^{-k}$
is the orthogonal projection
(in the $p = 2$ sense)
to the span of the $k$-th standard basis vector
(starting the count at $0$ instead of~$1$).
Parts (\ref{I_7z11_mp2_orth}) and~(\ref{I_7z11_mp2_Sum1})
of the claim now follow immediately.

Set $n = 3$ and let $A$ be the subalgebra of $M_3^p$
spanned by $f_0$, $f_1$, and $f_2$.
This contains $1_{M_3}$.
Let $J = \Cdb e_3$, an ideal in $A$ with an identity of norm $1$.
We claim that
if
\begin{equation}\label{Eq_7Z11_pLarge}
p > \frac{\log (4)}{\log (4) - \log (3)}
\end{equation}
then
$A / J$ is not isometric to an $L^p$-operator algebra.
(This is presumably true
for all $p \in [1, \infty) \setminus \{ 2 \}$.)
Indeed, the image  $f$ of $f_1$ in $A / J$
is a contractive idempotent.
It is actually bicontractive since
\[
\| 1 - f \|
 = \inf \big\{ \| 1 - f_1 + \lambda f_0 \|
       \colon \lambda \in \Cdb \big\}
 \leq \| 1 - f_1 - f_0  \| = \| f_2  \| \leq 1 .
\]
We claim that if $p$ is as in~(\ref{Eq_7Z11_pLarge})
then $\| 1 - 2 f \| > 1$.
If we can prove this claim then
$A / J$ cannot be an $L^p$-operator algebra
by Lemma \ref{L_7719_BicId}~(\ref{L_7719_BicId_BicrOnLp}).

To prove the last claim note first that
since
\[
1 - 2 f_1 + \lambda f_0
 = s_1 \big( 1 - 2 e_3 + \lambda s_1^{-1} f_0 s_1 \big) s_1^{-1},
\]
we have
\[
\| 1 - 2 f \|
 = \inf \big\{ \| 1 - 2 f_1 + \lambda f_0 \| \colon
    \lambda \in \Cdb \big\}
 = \inf \big\{ \| 1 - 2 e_3
   + \lambda s_1^{-1} f_0 s_1 \| \colon \lambda \in \Cdb \big\}.
\]
With $\frac{1}{p} + \frac{1}{q} = 1$,
the norm of $1 - 2 e_3 + \lambda s_1^{-1} f_0 s_1$
dominates the $q$-norm of
the first row of $1 - 2 e_3 + \lambda s_1^{-1} f_0 s_1$.
This first row is
\begin{equation}\label{Eq_7Z11_qNormRow}
\left( \frac{1}{3} \,, \, - \frac{2}{3} \,, \, - \frac{2}{3} \right)
 - \frac{1}{3} \lambda \, \big( 1 \,, \,  \zt \,, \,  \zt^2 \big)
 =
\frac{1}{3} \, \big( 1 - \lambda \,, \, - 2 - \lambda \zt \,,
     \, - 2 - \lambda \zt^2 \big).
\end{equation}

We estimate the minimum of
\[
| 1 - \lambda |^q + | 2 + \lambda \zt |^q + | 2 + \lambda \zt^2 |^q
 = | 1 - \lambda |^q + \big| 2 {\overline{\zt}} + \lambda \big|^q
     + \big| 2 {\overline{\zt}}^2 + \lambda \big|^q.
\]
Write $\lambda = x + i y$ for real $x$ and~$y$.
Then
\[
2 {\overline{\zt}} + \lambda = - 1 + x + i \big( y - \sqrt{3} \big)
\andeqn
2 {\overline{\zt}}^2 + \lambda = - 1 + x + i \big( y + \sqrt{3} \big).
\]
Thus we are minimizing
\[
G (x, y)
  = \big( (1 - x)^2 + y^2 \big)^{q / 2}
     + \big( (1 - x)^2 + \big( y - \sqrt{3} \big)^2 \big)^{q / 2}
     + \big( (1 - x)^2 + \big( y + \sqrt{3} \big)^2 \big)^{q / 2}.
\]
Clearly $G (x, y) \geq G (1, y)$
for all $x, y \in \Rdb$.
So we must minimize the function
\[
g_c (y) = | y |^q + | y - c |^q + | y + c |^q
\]
for $c = \sqrt{3}$.
For any $c > 0$,
this function is continuous, even, and clearly strictly increasing
on $[c, \infty)$.
For $y \in (0, c)$ we have
\[
g_c' (y) = q \big( y^{q - 1} + (c + y)^{q - 1} - (c - y)^{q - 1} \big).
\]
Since $q - 1 \geq 0$ and $c + y > c - y > 0$,
it follows that $g_c' (y) > 0$.
By symmetry,
the minimum value of $g_c$ occurs at $y = 0$.
So,
for all $x, y \in \Rdb$,
we have $G (x, y) \geq G (1, 0) = 2 \cdot 3^{q / 2}$.

Applying this estimate to the $q$-norm of the right hand side
of~(\ref{Eq_7Z11_qNormRow}),
we get
\[
\| 1 - 2 f \|^q
 \geq \frac{2 \cdot 3^{q / 2}}{3^q}
 = 2 \cdot 3^{- q / 2}.
\]
If $q < 2 \log (2) / \log (3)$,
this quantity is greater than~$1$,
and this happens exactly when (\ref{Eq_7Z11_pLarge}) holds.
The claim is proved.
\end{example}

\subsection{Unitization of nonunital $L^p$-operator algebras}
Unfortunately Meyer's beautiful unitization theorem
(see \cite[Corollary 2.1.15]{BLM})
for operator algebras on Hilbert spaces
fails badly for $L^p$-operator algebras.
That is, unitizations of nonunital $L^p$-operator algebras
are not unique isometrically
(Example~\ref{E_7Z11_UnitzeC}
and Example~\ref{E_7Z11_Unitzec0PlusC} below).
However if two approximately unital $L^p$-operator algebras
$A_1$ and $A_2$
are isometrically isomorphic
and they each act nondegenerately
on the $L^p$ spaces on which they act,
then $A_1 + \Cdb 1$ is isometrically isomorphic to $A_2 + \Cdb 1$.
Indeed, for $j = 1, 2$, the algebra $A_j + \Cdb 1$
is isometrically isomorphic to the
multiplier unitization of $A_j$ by Lemma \ref{mund}.

We now illustrate the failure of Meyer's theorem,
even in the case of approximately unital $L^p$-operator algebras.
We give two versions.
In the first, the algebras are finite dimensional and already unital,
but degenerately represented.
In the second,
the algebras are genuinely nonunital.

\begin{example}\label{E_7Z11_UnitzeC}
Let $M_2^p = B (l^p_2)$ be as in Notation~\ref{N_7719_Mnp}.
Let $e = e_2$ be as in Example~\ref{mp},
and let $f = e_{1, 1}$,
the $(1, 1)$ standard matrix unit.
Let $1 = 1_{M_2}$ be the $2 \times 2$ identity matrix.
Then $\Cdb e \cong \Cdb f$ isometrically.
We claim that $\Cdb e + \Cdb 1$ is
not isometric to $\Cdb f + \Cdb 1$,
so that Meyer's unitization theorem fails.
The idempotents in $\Cdb f + \Cdb 1$ are $0$, $f$, $1 - f$, and $1$,
all of which are clearly hermitian.
By Example~\ref{mp}, however,
$e$ is a non-hermitian idempotent in $\Cdb e + \Cdb 1$.
The claim follows.
\end{example}

\begin{example}\label{E_7Z11_Unitzec0PlusC}
We continue with the notation in Example~\ref{E_7Z11_UnitzeC},
to produce
a nonunital example where Meyer's unitization theorem fails.
Set $A = c_0 \oplus \Cdb e$
and $B = c_0 \oplus \Cdb f$,
both viewed as subalgebras of $B \big(l ^p (\Ndb) \oplus^p l^p_2 \big)$.
These are isometrically isomorphic $L^p$-operator algebras,
which are approximately unital.
Indeed,
they have obvious increasing approximate identities consisting of
hermitian idempotents.
Write $1$ for the identity of $B \big(l ^p (\Ndb) \oplus^p l^p_2 \big)$.
We claim that
$A + \Cdb 1$ is not isometrically isomorphic to $B + \Cdb 1$.
To see this,
first observe that all elements of $B + \Cdb 1$
are multiplication operators on
$l^p (\Ndb) \oplus^p l^p_2 = l^p (\Ndb \amalg \{0, 1 \})$.
It is immediate that all idempotents in this algebra are hermitian.
On the other hand,
there is a canonical restriction homomorphism
$\rho \colon A + \Cdb 1 \to B (l^p_2)$,
which is a unital contractive surjection to $\Cdb e + \Cdb 1_{M_2}$,
namely
``compression" to the subspace $l^p_2$ of $l ^p (\Ndb) \oplus^p l^p_2$.
As we said in Example~\ref{E_7Z11_UnitzeC},
$e \in \Cdb e + \Cdb 1_{M_2}$
is a non-hermitian idempotent.
However, $g = (0, e) \in A \subseteq A + \Cdb 1$
is an idempotent such that $\rh (g) = e$.
If $g$ were hermitian,
then $e$ would be too, by Lemma 6.7 of~\cite{PV}.
So $A + \Cdb 1$ contains a non-hermitian idempotent.
\end{example}

In Example~\ref{E_7Z11_Unitzec0PlusC},
one can show that the algebra $B + \Cdb 1$
is a spatial $L^p$~AF algebra in the sense of
Definition~9.1 of~\cite{PV},
while $A + \Cdb 1$ isn't.

We remark that \cite[Proposition 9.9]{PV} gives conditions
which force uniqueness of the unitization
of an $L^p$-operator algebra.
The fact that Meyer's theorem fails for $\Cdb e$ and $\Cdb f$
in Example~\ref{E_7Z11_UnitzeC} shows,
by Meyer's proof (see 2.1.14 in \cite{BLM}), that,
even in $M_2^p = B (l^p_2)$,
some of the basic properties
of the Cayley transform for Hilbert space operators
must fail for $p \neq 2$.
We turn to this next.

\subsection{The Cayley and ${\mathfrak{F}}$ transforms}\label{Cayl}
The Cayley transform $\kappa (x) = (x - 1) (x + 1)^{-1}$
is an important tool for operator algebras on a Hilbert space,
as is the fact that in that setting $\kappa(x)$
is a contraction on accretive~$x$.
In \cite{BRII, BRord} the associated transform
\[
{\mathfrak{F}} (x) = x (x + 1)^{-1} = \frac{1}{2} (1 + \kappa (x))
\]
is used.
For $L^2$-operator algebras it takes ${\mathfrak{r}}_A$
onto the strict contractions in $\frac{1}{2} {\mathfrak{F}}_A$.
This all fails in full generality for $L^p$-operator algebras,
which means that many of the general results in \cite{BOZ}
do not improve for $L^p$-operator algebras.

Here are two things which do work.
First,
if $A$ is an approximately unital $L^p$-operator algebra
then the ${\mathfrak{F}}$ transform
does map  ${\mathfrak{r}}_A$ into  ${\mathfrak{F}}_A$.
(By Lemma~3.4 of~\cite{BOZ},
this is true for arbitrary approximately unital Banach algebras.)
Second, if $A$ is any unital Banach algebra
and $x \in {\mathfrak{F}}_A$,
then $\| \kappa (x) \| = \| 1 - 2 {\mathfrak{F}} (x) \| \leq 1$.
Indeed, with $y = x - 1$,
we have $\| y \| \leq 1$, so that
\[
\| \kappa (x) \|
  = \big\| \left( 1 + \tfrac{1}{2} y \right)^{-1}
                  \left( \tfrac{1}{2} y \right)  \big\|
  \leq \left\| \tfrac{1}{2} y  \right\| \sum_{k = 0}^\infty
            \left\| \tfrac{1}{2} y  \right\|^k
  \leq 1 .
\]

\begin{example}\label{E_7Z21_NotContr}
We prove the existence of $\dt > 0$
such that for all $p \in [1, 1+\dt)$
there is a unital finite dimensional $L^p$-operator algebra
containing a real positive element $x$
for which $\| \kp (x) \| > 1$.
Presumably this happens for all $p \in [1, \infty) \setminus \{ 2 \}$,
but proving this may require more work.

Indeed in $M^p_2$  (Notation~\ref{N_7719_Mnp}) consider
\[ x = \left[
   \begin{array}{ccl} 1 - i  &  1  \\ 1 &  1 - i \end{array} \right]
\andeqn
\kp (x) = \frac{1}{5} \left[
   \begin{array}{ccl} 1 - 3 i  &  1 + 2 i
     \\ 1 + 2 i &  1 - 3 i \end{array} \right].
\]
Since $x = 2 e_2 - i 1_{M_2}$ in the notation of Example \ref{mp},
it follows from considerations
in that example that $x$ is real positive in $M^p_2$.
However $\kappa (x)$ applied to the unit vector $(1, 0)$
has $p$-norm $\frac{1}{5} (10^{p / 2} + 5^{p / 2})^{1 / p}$, which
exceeds $1$ for $p \in [1, \dt)$, for some fixed $\dt > 0$.

One may also arrive at this same example
by modifying the $L^1$-operator algebra
example given in Example 3.14 in \cite{BOZ}.
It was stated there that the
convolution algebra
$l^1 (\Zdb_2)$ contains real positive elements $x$
for which $\| \kp (x) \| > 1$.
An explicit example of such an element
was not given there though.
Let $F^p_{\mathrm{r}} (\Zdb_2)$ be
the reduced group $L^p$-operator algebra of the two element group
(as defined in \cite{P2}).
This is isometric,
via the regular representation of $\Zdb_2$ on $l^p (\Zdb_2)$,
to the unital
subalgebra of $M_2^p$
generated by the idempotent
\[
e = \frac{1}{2} \left[
  \begin{array}{ccl} 1  &  1  \\ 1 &  1 \end{array} \right]
\]
(called $e_2$ in Example~\ref{mp}).
This latter algebra contains our element $x$ above.
The regular representation of $\Zdb_2$ on $l^p (\Zdb_2)$
sends the nontrivial group element to
\[
s = \left[
  \begin{array}{ccl} 0  &  1  \\ 1 &  0 \end{array} \right],
\]
and we have the relations $e = \frac{1}{2} (s + 1)$
and $s = 2 e - 1$.

Moreover, $F^1_{\mathrm{r}} (\Zdb_2) \cong l^1 (\Zdb_2)$
isometrically.
Via these considerations, a real positive element $w$ in
Example 3.14 in \cite{BOZ}
corresponds to a real positive element $a$
in $F^1_{\mathrm{r}} (\Zdb_2)$
and a real positive matrix $x$ in $M^1_2$.
Moreover, $\| \kp (w) \| > 1$ if and only if $\| \kp (a) \| > 1$,
in turn if and only if $\| \kp (x) \| > 1$.
Since
the map $F^1_{\mathrm{r}} (\Zdb_2) \to F^p_{\mathrm{r}} (\Zdb_2)$
is unital and contractive for $p \in [1, \infty)$,
it follows easily that $a$ (resp.~$x$) is also real positive in
$F^p_{\mathrm{r}} (\Zdb_2)$  (resp.\  $M^p_2$).
By ``continuity in $p$'', the Cayley transform of $x$ in $M^p_2$ is not
contractive for $p$ close to $1$.
A specific example of this of course is the matrix $x$
in the second paragraph of the present example.
\end{example}

\subsection{Support idempotents}\label{SupP}
There is some improvement over~\cite{BOZ}
in the theory of support idempotents.

\begin{proposition}\label{L_7Z17_ExistSupp}
Let $A$ be an approximately unital Arens regular Banach algebra,
and let $x \in {\mathfrak{r}}_A$.
Then, using the notation of Definition~{\rm{\ref{N_7919_nthroot}}},
the sequence $(x^{1/n})_{n \in \Ndb}$
has a weak* limit $s (x) \in A^{**}$.
Moreover:
\begin{enumerate}
\item\label{L_7Z17_ExistSupp_Idmpt}
$s (x)$ is an idempotent.
\item\label{L_7Z17_ExistSupp_Idnty}
$s (x)$ is an identity for the second dual
of the closed subalgebra of~$A$ generated by~$x$.
\item\label{L_7Z17_ExistSupp_IdOnx}
$s (x) x = x s (x) = x$.
\item\label{L_7Z17_ExistSupp_F}
With ${\mathfrak{F}}$ as in Subsection {\rm{\ref{Cayl}}},
we have
$s ({\mathfrak{F}} (x)) = s (x)$.
\item\label{L_7Z17_ExistSupp_Norm}
$\| 1 - s (x) \| \leq 1$.
\item\label{L_7Z17_ExistSupp_RPos}
$s (x)$ is a real positive idempotent in $A^{**}$.
\end{enumerate}
\end{proposition}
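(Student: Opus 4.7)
The plan is to identify $s(x)$ with the identity of the bidual of the closed commutative subalgebra $B \subseteq A$ generated by~$x$. By Proposition \ref{L_7Z16_RootProp}~(\ref{L_7Z16_RootProp_PApp}) each $x^{1/n}$ lies in $B$, and since $A$ is Arens regular so is $B$, with $B^{**}$ embedded as a weak* closed subalgebra of $A^{**}$; the Arens product on $A^{**}$ is separately weak* continuous. Proposition \ref{L_7Z16_RootProp}~(\ref{L_7Z16_RootProp_bound}) gives $\|x^{1/n}\| \leq 2\|x\|^{1/n}/(1-1/n)$, so for $n \geq 2$ the sequence lies in a weak* compact ball in $A^{**}$.

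Let $e$ be any weak* cluster point of $(x^{1/n})$, realized as the limit of a subnet $(x^{1/n_\lambda})$. Proposition \ref{L_7Z16_RootProp}~(\ref{L_7Z16_RootProp_ApprI}) gives $x^{1/n_\lambda} x \to x$ and $x\, x^{1/n_\lambda} \to x$ in norm; combined with separate weak* continuity of the Arens product this yields $ex = xe = x$. Iterating gives $e x^k = x^k e = x^k$ for $k \geq 1$, so by norm continuity of $y \mapsto ey$ and $y \mapsto ye$ we obtain $eb = be = b$ for every $b \in B$; a further application of Goldstine together with separate weak* continuity extends this to all of $B^{**}$. Hence $e$ is the (necessarily unique) identity of $B^{**}$; and in a compact Hausdorff space a net with a unique cluster point converges to it, so $(x^{1/n})$ converges weak* to this identity, which we call $s(x)$. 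Parts~(\ref{L_7Z17_ExistSupp_Idmpt}), (\ref{L_7Z17_ExistSupp_Idnty}), and~(\ref{L_7Z17_ExistSupp_IdOnx}) are then immediate.

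For~(\ref{L_7Z17_ExistSupp_F}), I would apply what has already been proved to ${\mathfrak F}(x)$: property~(\ref{L_7Z17_ExistSupp_IdOnx}) gives $s({\mathfrak F}(x))\, {\mathfrak F}(x) = {\mathfrak F}(x)\, s({\mathfrak F}(x)) = {\mathfrak F}(x)$, and multiplying by $(1+x) \in A^1$ inside $(A^1)^{**}$ then yields $s({\mathfrak F}(x))\, x = x\, s({\mathfrak F}(x)) = x$. By the characterization from the previous paragraph, any $e \in A^{**}$ with $ex = xe = x$ is the identity of $B^{**}$, so $s({\mathfrak F}(x)) = s(x)$. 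For~(\ref{L_7Z17_ExistSupp_Norm}), when $x \in {\mathfrak F}_A$ Proposition \ref{L_7Z16_RootProp}~(\ref{L_7Z16_RootProp_Norm}) gives $\|1 - x^{1/n}\| \leq 1$ in $A^1$, and this bound passes to the weak* limit because the closed unit ball about $1$ in $A^1$ is weak* closed in $(A^1)^{**}$. The general case $x \in {\mathfrak r}_A$ reduces to this one via~(\ref{L_7Z17_ExistSupp_F}) and the fact noted in Subsection~\ref{Cayl} that ${\mathfrak F}$ maps ${\mathfrak r}_A$ into ${\mathfrak F}_A$. Finally~(\ref{L_7Z17_ExistSupp_RPos}) combines~(\ref{L_7Z17_ExistSupp_Idmpt}) and~(\ref{L_7Z17_ExistSupp_Norm}) with Lemma \ref{L_7719_BicId}~(\ref{L_7719_BicId_RealPos}) applied in $(A^1)^{**}$.

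The main obstacle I expect is the passage from uniqueness of the weak* cluster point to weak* convergence of the whole sequence: the weak* ball of $A^{**}$ is not metrizable in general, so one cannot simply extract a convergent subsequence and must instead invoke the fact that a net in a compact Hausdorff space with a unique cluster point converges to that point.
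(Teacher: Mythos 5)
Your overall construction --- realizing $s(x)$ as the identity of $B^{**}$ for $B$ the closed subalgebra generated by~$x$, obtaining it as the unique weak* cluster point of the bounded sequence $(x^{1/n})$, and invoking the fact that a net in a compact Hausdorff space with a unique cluster point converges --- is exactly the argument the paper has in mind: the paper defers to the discussion after Proposition 3.17 of \cite{BOZ} and records precisely your observation that Arens regularity forces the weak* limit to exist and to equal the identity of $B^{**}$. Parts (\ref{L_7Z17_ExistSupp_Idmpt}), (\ref{L_7Z17_ExistSupp_Idnty}), (\ref{L_7Z17_ExistSupp_IdOnx}), and the deduction of (\ref{L_7Z17_ExistSupp_RPos}) from (\ref{L_7Z17_ExistSupp_Norm}) are fine as written.

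There is, however, a genuine error in your step for~(\ref{L_7Z17_ExistSupp_F}). You claim that ``any $e \in A^{**}$ with $ex = xe = x$ is the identity of $B^{**}$.'' That is false: if $A^{**}$ is unital and $x$ is a nontrivial real positive idempotent (so $x^{1/n} = x$ and $s(x) = x$), then $e = 1_{A^{**}}$ satisfies $ex = xe = x$ but $e \neq s(x)$. What your first paragraph actually establishes is that an element $e$ \emph{of $B^{**}$} with $ex = xe = x$ is the identity of $B^{**}$; membership in $B^{**}$ is what allows the passage from ``$e$ is a unit for polynomials in~$x$'' to ``$e$ is a unit for all of $B^{**}$'' before appealing to uniqueness of identities. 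So to conclude $s ({\mathfrak{F}} (x)) = s (x)$ you must also check that $s ({\mathfrak{F}} (x)) \in B^{**}$. This does hold: since $x$ is accretive, its spectrum in $A^1$ lies in the closed right half-plane and so misses $-1$; hence ${\mathfrak{F}} (x) = f (x)$ for $f (z) = z / (1 + z)$, which is holomorphic on a neighborhood of the spectrum together with $0$ and satisfies $f (0) = 0$, so ${\mathfrak{F}} (x) \in B$ by the nonunital holomorphic functional calculus. The inverse transform $y \mapsto y (1 - y)^{-1}$ shows conversely that $x$ lies in the closed subalgebra generated by ${\mathfrak{F}} (x)$, so the two subalgebras coincide and $s ({\mathfrak{F}} (x))$ and $s (x)$ are both the identity of $B^{**}$. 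Note that your proof of~(\ref{L_7Z17_ExistSupp_Norm}) for general $x \in {\mathfrak{r}}_A$ routes through~(\ref{L_7Z17_ExistSupp_F}), so it inherits this repair.
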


\begin{definition}\label{D_7Z17_SuppId}
Let $A$ and $x \in A$ be as in Proposition~\ref{L_7Z17_ExistSupp}.
We call $s (x)$ the {\emph{support idempotent}} of~$x$.
\end{definition}

Proposition~\ref{L_7Z17_ExistSupp} is proved in the discussion
after
Proposition 3.17 of~\cite{BOZ}
(see also the discussion
after Corollary 6.20 in \cite{BSan}).
Our advantage here over the situation in
those papers is that the weak* limit of $x^{1 / n}$ exists
(it equals the identity for the second dual
in (\ref{L_7Z17_ExistSupp_Idnty}) above),
and so the support idempotent $s (x)$ is unique.

The support idempotent of~$x$
is minimal in several senses
related to the orderings in Definition~\ref{Dorder}.

\begin{corollary}\label{L_7Z17_MoreSupp}
Under the hypotheses of
Proposition~{\rm{\ref{L_7Z17_ExistSupp}}},
we furthermore have:
\begin{enumerate}
\item\label{L_7Z17_ExistSupp_MinL}
If $f \in A^{**}$ is any idempotent with $f x = x$,
then $f s (x) = s (x)$,
that is, $s (x) \leq_{\mathrm{r}} f$
in the sense of Definition~{\rm{\ref{Dorder}}}.
\item\label{L_7Z17_ExistSupp_RMin}
If $f \in A^{**}$ is any idempotent with $x f = x$,
then $s (x) f = s (x)$.
\item\label{L_7Z17_ExistSupp_RPosMin}
If $f \in A^{**}$ is any 
idempotent
with $f x = x$ 
and $x f = x$,
then $s (x) \leq f$
in the sense of Definition~{\rm{\ref{Dorder}}}.
\end{enumerate}
\end{corollary}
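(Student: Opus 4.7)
My strategy is to reduce all three parts to a density argument exploiting the Arens regularity of $A$: since multiplication on $A^{**}$ is separately weak* continuous, both $g \mapsto fg$ and $g \mapsto gf$ are weak* continuous (and, a fortiori, norm continuous) on $A^{**}$. So to show that $f$ absorbs $s(x)$ from a prescribed side, it will suffice to verify the corresponding absorption identity on a subset of $A^{**}$ whose norm-then-weak* closure contains $s(x)$. Proposition~\ref{L_7Z16_RootProp}~(\ref{L_7Z16_RootProp_PApp}) supplies exactly such a set: each $x^{1/n}$ lies in the norm closure of the polynomials in $x$ with no constant term, and Proposition~\ref{L_7Z17_ExistSupp} identifies $s(x)$ as the weak* limit of $(x^{1/n})_{n \in \Ndb}$.

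For part~(\ref{L_7Z17_ExistSupp_MinL}), I would first note that the hypothesis $fx = x$ gives $f x^k = x^k$ for every $k \geq 1$ by an immediate induction, hence $f p(x) = p(x)$ for any polynomial $p$ with $p(0) = 0$. Passing to the norm limit and then to the weak* limit, using the two continuity properties of left multiplication by $f$ cited above, I would conclude $f s(x) = s(x)$, which is precisely $s(x) \leq_{\mathrm{r}} f$. The argument for part~(\ref{L_7Z17_ExistSupp_RMin}) is entirely symmetric: $xf = x$ propagates to $x^k f = x^k$ for $k \geq 1$, then to $p(x) f = p(x)$ for polynomials $p$ with $p(0) = 0$, then to $x^{1/n} f = x^{1/n}$ by norm continuity of right multiplication by $f$, and finally to $s(x) f = s(x)$ by weak* continuity of the same map. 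Part~(\ref{L_7Z17_ExistSupp_RPosMin}) will then be immediate from the first two, since the combined hypotheses supply exactly the input needed for each.

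No genuine obstacle appears; the argument is at heart a three-step continuity sandwich (induction on powers, then norm closure, then weak* closure), and the only point requiring care is to invoke separate weak* continuity on the side matching the relation being propagated, which is precisely what Arens regularity of $A$ delivers.
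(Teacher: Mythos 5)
Your proposal is correct and follows essentially the same route as the paper: both arguments deduce $f x^{1/n} = x^{1/n}$ (resp.\ $x^{1/n} f = x^{1/n}$) from Proposition~\ref{L_7Z16_RootProp}~(\ref{L_7Z16_RootProp_PApp}), then pass to the weak* limit $x^{1/n} \to s(x)$ using separate weak* continuity of multiplication on $A^{**}$, with part~(\ref{L_7Z17_ExistSupp_RPosMin}) immediate from the first two. The only cosmetic remark is that left and right multiplication by a fixed element are norm continuous because they are bounded operators, not ``a fortiori'' from weak* continuity; this does not affect the argument.
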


\begin{proof}
By Proposition \ref{L_7Z16_RootProp}~(\ref{L_7Z16_RootProp_PApp}), in
part (\ref{L_7Z17_ExistSupp_MinL}) we have $f x^{1/n} = x^{1/n}$.
Hence  (\ref{L_7Z17_ExistSupp_MinL})
follows from $x^{1/n} \to s (x)$ weak*
and separate weak* continuity of multiplication (\cite[2.5.3]{BLM}).
Similarly
for (\ref{L_7Z17_ExistSupp_RMin}).
Part~(\ref{L_7Z17_ExistSupp_RPosMin}) is now obvious.
\end{proof}

Thus $s(x)$ is the
smallest idempotent in $A^{**}$ with $f x = x$,
in the ordering $\leq_{\mathrm{r}}$ (or with
$f x = x$ and $x f = x$, in the ordering $\leq$).
Recall from Corollary~\ref{C_7620_2Dual}
that if $A$ is an $L^p$-operator algebra
then so is $A^{**}$,
and so by Lemma \ref{orderidemp}~(\ref{orderidemp_RtoNone})
we see that  $\leq$ coincides with $\leq_{\mathrm{r}}$
on real positive idempotents in $A^{**}$.
Hence in this case $s(x)$ is the
smallest idempotent in $A^{**}$ with $f x = x$
(or with $xf = x$), in the ordering $\leq$.

In the case of a subalgebra of $B (L^p (X))$,
we also get a support idempotent acting on $L^p (X)$.

\begin{proposition}\label{L_7Z17_SuppOnSpace}
Let $p \in (1, \infty)$,
let $A \subseteq B (L^p (X))$ be an approximately unital
closed subalgebra,
and let $x \in {\mathfrak{r}}_A$.
Let $s (x)$ be as in Proposition~{\rm{\ref{L_7Z17_ExistSupp}}}.
Let $\ph \colon A^{**} \to B (L^p (X))$ be the contractive
homomorphism obtained from the identity representation of~$A$
as in Lemma~{\rm{\ref{L_7702_WkStarExt}}},
and set $e = \varphi (s (x))$.
Then:
\begin{enumerate}
\item\label{L_7Z17_SuppOnSpace_Idem}
$e$ is an idempotent with range ${\overline{x L^p (X)}}$,
and $e$ is real positive if $A$ is nondegenerate.
\item\label{L_7Z17_SuppOnSpace_Idty}
$e x = x e = x$.
\item\label{L_7Z17_SuppOnSpace_Comp}
${\overline{x L^p (X)}}$ is a complemented subspace
of $L^p (X)$.
\item\label{L_7Z17_SuppOnSpace_SOT}
Using the notation of Definition~{\rm{\ref{N_7919_nthroot}}},
$x^{1/n} \to e$ in the strong operator topology on $B (L^p (X))$.
\item\label{L_7Z17_SuppOnSpace_Min}
If $A$ is nondegenerate
and $f \in B (L^p (X))$ is any real positive idempotent
with $f x = x$ or $x f = x$,
then $e \leq f$
in the sense of Definition~{\rm{\ref{Dorder}}}.
\end{enumerate}
\end{proposition}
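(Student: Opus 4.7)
The plan is to transport the picture in $A^{**}$ supplied by Proposition~\ref{L_7Z17_ExistSupp} to $B(L^p(X))$ via the weak* continuous contractive homomorphism $\varphi$. Since $x \in {\mathfrak{r}}_A$, Proposition~\ref{L_7Z16_RootProp}~(\ref{L_7Z16_RootProp_PApp}) tells me each $x^{1/n}$ is already in $A$, so $\varphi(x^{1/n}) = x^{1/n}$; Proposition~\ref{L_7Z16_RootProp}~(\ref{L_7Z16_RootProp_bound}) gives a uniform bound on $\|x^{1/n}\|$; and weak* continuity combined with the description of weak* convergence in $B(L^p(X))$ given after Corollary~\ref{C_7620_KLp} upgrades $x^{1/n} \to s(x)$ weak* in $A^{**}$ to $x^{1/n} \to e$ in the weak operator topology on $B(L^p(X))$. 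With this, (1)--(3) fall out quickly: $e^2 = e$ from $s(x)^2 = s(x)$; $ex = xe = x$ from Proposition~\ref{L_7Z17_ExistSupp}~(\ref{L_7Z17_ExistSupp_IdOnx}); hence $\overline{xL^p(X)} \subseteq \Ran(e)$; and conversely each $x^{1/n}\xi$ sits in $\overline{xL^p(X)}$ (again by Proposition~\ref{L_7Z16_RootProp}~(\ref{L_7Z16_RootProp_PApp})), so $e\xi$ lies in the weak closure of the convex set $\overline{xL^p(X)}$, giving equality of ranges and hence complementation by boundedness of the idempotent $e$.

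For the real positivity part of (1), nondegeneracy is essential. Using Lemma~\ref{mund}, $A^1$ sits isometrically and unitally inside $B(L^p(X))$, so Lemma~\ref{L_7702_WkStarExt} produces a weak* continuous unital contractive homomorphism $\tilde\varphi \colon (A^1)^{**} \to B(L^p(X))$ which restricts to $\varphi$ on $A^{**}$. Because $\|1 - s(x)\| \leq 1$ in $(A^1)^{**}$ by Proposition~\ref{L_7Z17_ExistSupp}~(\ref{L_7Z17_ExistSupp_Norm}), applying $\tilde\varphi$ yields $\|1_{B(L^p(X))} - e\| \leq 1$, and Lemma~\ref{L_7719_BicId}~(\ref{L_7719_BicId_RealPos}) declares $e$ real positive.

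Part (4) is where I expect to work hardest, and it is the step I view as the main obstacle. I will decompose $L^p(X) = \Ran(e) \oplus \ker(e)$ coming from the bounded idempotent $e$. On $\Ran(e) = \overline{xL^p(X)}$, I approximate a vector $\xi$ by $x\eta$, bound $\|x^{1/n}\xi - e\xi\|$ by $C\|\xi - x\eta\| + \|(x^{1/n}x - x)\eta\| + \|\xi - x\eta\|$, and push the middle term to zero using Proposition~\ref{L_7Z16_RootProp}~(\ref{L_7Z16_RootProp_ApprI}), the outer terms being controlled by choice of $\eta$. The delicate part is the kernel, since $\zeta \in \ker(e)$ is not killed by $A$ a priori. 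The trick is that $x(1-e) = x - xe = 0$, so $x$ annihilates $\ker(e)$; consequently every polynomial in $x$ without constant term kills $\ker(e)$, and taking norm limits via Proposition~\ref{L_7Z16_RootProp}~(\ref{L_7Z16_RootProp_PApp}) gives $x^{1/n}\zeta = 0 = e\zeta$.

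Finally, for (5), suppose first $fx = x$. Polynomial approximation of $x^{1/n}$ (Proposition~\ref{L_7Z16_RootProp}~(\ref{L_7Z16_RootProp_PApp})) gives $fx^{1/n} = x^{1/n}$, and applying $f$ to the strong operator limit from (4) yields $fe = e$; the case $xf = x$ symmetrically delivers $ef = e$. With $e$ real positive by (1) and $f$ real positive by hypothesis, Lemma~\ref{orderidemp}~(\ref{orderidemp_RevProd}) applied in the unital $L^p$-operator algebra $B(L^p(X))$ converts the one-sided identity into a two-sided one, finishing $e \leq f$.
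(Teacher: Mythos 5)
Your proof is correct and follows essentially the same route as the paper's: you transport $s(x)$ through the weak* continuous extension $\varphi$, identify $\Ran(e)$ with $\overline{x L^p(X)}$ via weak density of $x L^p(X)$ in $e L^p(X)$, split $L^p(X) = \Ran(e) \oplus \ker(e)$ (using $x(1-e)=0$ on the kernel and an $\varepsilon/3$ argument with the bounded sequence $(x^{1/n})$ on the range) for the strong operator convergence, and invoke Lemma~\ref{orderidemp} to pass from the one-sided to the two-sided relation in the minimality statement. The only cosmetic difference is that you obtain $\| 1 - e \| \leq 1$ by extending to a unital weak* continuous map on $(A^1)^{**}$ via Lemma~\ref{mund}, whereas the paper works directly with $\varphi(1_{A^{**}}) = f$ and the idempotent $f$ of Lemma~\ref{2deg}; both are sound.
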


Nondegeneracy is probably needed for real positivity
in~(\ref{L_7Z17_SuppOnSpace_Idem})
and for~(\ref{L_7Z17_SuppOnSpace_Min}).
Otherwise,
letting $f$ be as in Lemma~\ref{2deg},
our proof below only yields a real positive idempotent
in $B (f L^p (X) )$.

\begin{proof}[Proof of Proposition~{\rm{\ref{L_7Z17_SuppOnSpace}}}]
Let $E \subseteq L^p (X)$ and the idempotent $f \in B (L^p (X) )$
be as in Lemma~\ref{2deg}.
We first claim that $\varphi (1) = f$.
(Indeed this is always the case in the situation of
Lemma~\ref{2deg} provided that $A$ is Arens regular,
by the following simple argument.)
Let $(e_t)_{t \in \Ld}$ be a cai for~$A$.
Then $e_t \to 1$ weak* in $A^{**}$
by Lemma~\ref{L_7917_AppIdConverge}.
Therefore $e_t \to \varphi (1)$ weak* in $B (L^p (X))$
by weak* to weak* continuity of~$\varphi$.
Also $e_t \to f$ weak* in $B (L^p (X))$
by Lemma \ref{2deg}~(\ref{2deg_AI}).
The claim is proved.

We have $x^{1/n} \to e$ weak* in $B (L^p (X))$.
Since $\varphi$ is a homomorphism, $e$ is
an idempotent satisfying $e x = x e = x$,
which is~(\ref{L_7Z17_SuppOnSpace_Idty}).
Using Proposition \ref{L_7Z17_ExistSupp}~(\ref{L_7Z17_ExistSupp_Norm})
and $\varphi (1) = f$,
we get $\| f - e \| \leq \| \varphi \| \| f - s (x) \| \leq 1$.
If $A$ is nondegerate, then $f = 1$,
so $e$ is real positive
by Lemma \ref{L_7719_BicId}~(\ref{L_7719_BicId_RealPos}).
Since $e x = x$,
we clearly have $x L^p (X) \subseteq e L^p (X)$.
So ${\overline{x L^p (X)}} \subseteq e L^p (X)$.
Since $x^{1 / n} \eta \to e \eta$ weakly for $\eta \in L^p (X)$,
it follows that $x L^p (X)$ is weakly, hence norm, dense in
$e L^p (X)$. 
Thus
$e L^p (X) = {\overline{x L^p (X)}}$ and we now
have all of~(\ref{L_7Z17_SuppOnSpace_Idem}),
as well as~(\ref{L_7Z17_SuppOnSpace_Comp}).

For $\et \in L^p (X)$
we have $x^{1/n} x \et \to x \et$ in norm. 
Since $(x^{1/n})_{n \in \Ndb}$
is a bounded sequence
(use Proposition \ref{L_7Z16_RootProp}~(\ref{L_7Z16_RootProp_bound})), 
it follows that $x^{1/n} e \xi \to e \xi$
for all $\xi \in L^p (X)$.
Clearly $x^{1/n} (1-e) \xi = 0 \to e (1-e) \xi$ also,
using (\ref{L_7Z17_SuppOnSpace_Idty})
and Proposition \ref{L_7Z16_RootProp}~(\ref{L_7Z16_RootProp_PApp}).
Thus we have~(\ref{L_7Z17_SuppOnSpace_SOT}).

For (5), note that  $f x = x$ if and only if $f e = e$
as in the proof of Corollary~\ref{L_7Z17_MoreSupp},
and similarly $xf = x$ if and only if $ef = e$.
Since $f e = e$ if and only if $ef = e$
by Lemma \ref{orderidemp}~(\ref{orderidemp_RevProd}),
the proof of~(\ref{L_7Z17_SuppOnSpace_Min})
is clear (as in the proof
of Corollary~\ref{L_7Z17_MoreSupp}).
\end{proof}

It is shown in
\cite[Corollary 3.19]{BOZ} that
if $x, y \in {\mathfrak{r}}_A$
then $\overline{x A} \subseteq \overline{y A}$
if and only if $s (y) s (x) = s (x)$.

\begin{lemma}\label{fref}
Let  $p \in (1, \infty)$.
Let $A$ be an approximately
unital $L^p$-operator algebra,
and let $x, y \in {\mathfrak{r}}_A$.
Then ${\overline{x A}} = {\overline{y A}}$
if and only if $s (x) = s (y)$.
\end{lemma}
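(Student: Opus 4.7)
The plan is to reduce this to the one-sided inclusion result already available, namely Corollary 3.19 of \cite{BOZ}, which states that for $x, y \in {\mathfrak{r}}_A$ we have $\overline{xA} \subseteq \overline{yA}$ if and only if $s(y) s(x) = s(x)$. Applying this in both directions, the equality ${\overline{xA}} = {\overline{yA}}$ is equivalent to the simultaneous conditions
\[
s(y) s(x) = s(x)
\andeqn
s(x) s(y) = s(y).
\]
One direction of the lemma is then trivial: if $s(x) = s(y)$, both equalities hold.

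For the other direction, suppose both equalities hold; the goal is to upgrade them to $s(x) = s(y)$. The first equality says $s(x) \leq_{\mathrm{r}} s(y)$ in the sense of Definition~\ref{Dorder}, and I would like to conclude that actually $s(x) \leq s(y)$, so that $s(x) s(y) = s(x)$. Combining this with $s(x) s(y) = s(y)$ immediately yields $s(x) = s(y)$ and we are done.

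The step where one has to be careful is the promotion of $\leq_{\mathrm{r}}$ to $\leq$. This is exactly what Lemma~\ref{orderidemp}~(\ref{orderidemp_RtoNone}) does, but applied in the bidual $A^{**}$. To legitimately invoke that lemma in $A^{**}$ I need to check that its hypotheses hold there: first, $A^{**}$ is an $L^p$-operator algebra, which is Corollary~\ref{C_7620_2Dual}; second, it is (approximately) unital, which holds since Lemma~\ref{L_7917_AppIdConverge} gives it a genuine identity $1_{A^{**}}$; and third, the idempotents $s(x)$ and $s(y)$ must be real positive in $A^{**}$ itself (not merely in $(A^1)^{**}$, a distinction flagged in the remark after Definition~\ref{D_7702_RealPos}). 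The last point follows from Proposition~\ref{L_7Z17_ExistSupp}~(\ref{L_7Z17_ExistSupp_Norm}), which gives $\|1_{A^{**}} - s(x)\| \leq 1$ and likewise for $s(y)$, combined with Lemma~\ref{L_7719_BicId}~(\ref{L_7719_BicId_RealPos}) applied in the unital algebra~$A^{**}$.

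The only potentially delicate point in the plan is this real-positivity verification: one must use the characterization $\|1 - e\| \leq 1$ to sidestep the ambiguity in how ${\mathfrak{r}}_{A^{**}}$ could be interpreted, since what Lemma~\ref{orderidemp} needs is accretivity with respect to the identity of $A^{**}$, not with respect to $1_{(A^1)^{**}}$. Once this is in hand, the argument is the short symmetric deduction outlined above.
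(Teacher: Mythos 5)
Your proof is correct and follows essentially the same route as the paper's: both reduce the equality of $\overline{xA}$ and $\overline{yA}$ to the pair of relations $s(y)s(x)=s(x)$ and $s(x)s(y)=s(y)$ via the cited corollary of \cite{BOZ}, and then use Proposition~\ref{L_7Z17_ExistSupp}~(\ref{L_7Z17_ExistSupp_RPos}) together with Lemma~\ref{orderidemp} in the unital $L^p$-operator algebra $A^{**}$ to flip one relation and conclude $s(x)=s(y)$. Your explicit verification that $s(x)$ and $s(y)$ are real positive with respect to the identity of $A^{**}$ itself is exactly the point the paper handles by citing part~(\ref{L_7Z17_ExistSupp_RPos}) directly.
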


\begin{proof}
If $s (x) = s (y)$ then  $\overline{xA}  = \overline{yA}$
by \cite[Corollary 3.18]{BOZ}.
Conversely,
if $\overline{xA}  = \overline{yA}$
then by
\cite[Corollary 3.18]{BOZ} we have $s (x) A^{**} = s (y) A^{**}$.
It follows that $s (x) s (y) = s (y)$ and $s (y) s (x) = s (x)$.
By Proposition \ref{L_7Z17_ExistSupp}~(\ref{L_7Z17_ExistSupp_RPos})
and Lemma \ref{orderidemp}~(\ref{orderidemp_RtoNone}),
the second equation implies $s (x) s (y) = s (x)$.
So $s (x) = s (y)$.
\end{proof}

Unlike the $L^2$-operator algebra case
(see e.g.\ \cite[Lemma 2.5]{BRI}),
if $x \in \frac{1}{2} {\mathfrak{F}}_A$
(that is, if $\| 1 - 2 x \| \leq 1$),
then $s (x)$ need not be contractive.
An example is
$x = \frac{1}{2} (1 - e_3)$, for $e_3$ as in Example \ref{mp}.

\subsection{Some consequences of strict convexity of
 $L^p$~spaces}\label{SomeC}

\begin{lemma}\label{scp}
Let $E$ be a strictly convex Banach space,
and let $f \in B (E)$ be a contractive idempotent.
Let $\xi \in E$ satisfy $\| f \xi \| = \| \xi \|$.
Then $f \xi = \xi$.
\end{lemma}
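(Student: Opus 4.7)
The plan is to reduce the problem to an application of strict convexity via an averaging trick. The key observation is that $f\xi$ is a fixed point of $f$, since $f(f\xi) = f^2 \xi = f\xi$, and this lets us produce two vectors whose norms sum to the norm of their sum.

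Concretely, I would first dispose of the trivial case $\xi = 0$ and assume $\xi \neq 0$ (so that $f\xi \neq 0$ as well, since $\| f\xi \| = \| \xi \|$). Then I would set $\zeta = \tfrac{1}{2}(\xi + f\xi)$ and compute $f\zeta = \tfrac{1}{2}(f\xi + f^2 \xi) = f\xi$. Using $\|f\| \leq 1$, this gives the chain of inequalities
\[
\| \xi \| = \| f\xi \| = \| f \zeta \| \leq \| \zeta \| \leq \tfrac{1}{2}\bigl( \| \xi \| + \| f \xi \| \bigr) = \| \xi \|,
\]
so all inequalities are equalities. In particular $\| \xi + f\xi \| = \| \xi \| + \| f\xi \|$.

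Since $\xi$ and $f\xi$ are both nonzero, strict convexity of $E$ (Definition \ref{D_smoothconv}) yields $\lambda \in (0, \infty)$ with $\xi = \lambda f\xi$. Taking norms gives $\| \xi \| = \lambda \| f\xi \| = \lambda \| \xi \|$, so $\lambda = 1$, and therefore $f \xi = \xi$. There is no serious obstacle here; the only delicate point is ensuring neither vector in the strict convexity hypothesis vanishes, which is immediate from $\| f\xi \| = \| \xi \| \neq 0$.
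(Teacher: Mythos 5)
Your proof is correct and uses essentially the same idea as the paper: both arguments hinge on the midpoint $\tfrac{1}{2}(\xi + f\xi)$, the identity $f\bigl(\tfrac{1}{2}(\xi + f\xi)\bigr) = f\xi$, and contractivity of~$f$. The only difference is presentational — the paper argues by contradiction via the ``midpoint of distinct equal-norm vectors has strictly smaller norm'' form of strict convexity, while you run the inequalities directly and invoke the definition as stated in Definition~\ref{D_smoothconv}; both are fine.
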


\begin{proof}
This is well known.
Suppose that $\xi \neq f \xi$.
Set $\et = \frac{1}{2} (\xi + f \xi)$.
Then $\| \et \| < \| f \xi \|$, giving the contradiction
$\| f \xi \| = \| f \et \| \leq \| \et \| < \| f \xi \|$.
\end{proof}

\begin{lemma}\label{da}
Let $p \in (1, \infty)$,
let $E$ and $F$ be Banach spaces,
and let $S \subseteq B (E, F)$ be a linear subspace.
Define matrix norms on $B (E, F)$
by interpreting elements of $M_n (B (E, F))$
as linear maps from the $l^p$~direct sum of $n$ copies of~$E$
to the $l^p$~direct sum of $n$ copies of~$F$.
Then any $\varphi \in \Ball (S^*)$
is $p$-completely contractive in the sense of \cite{PisCb}.
\end{lemma}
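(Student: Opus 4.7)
The plan is to reduce immediately to the case where $\varphi$ is defined on all of $B(E,F)$ and then estimate $\|\varphi_n(T)\|_{M_n^p}$ for $T \in M_n(B(E,F))$ by straight duality between $l^p_n$ and $l^q_n$.

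First, by the Hahn-Banach Theorem, extend $\varphi \in \Ball(S^*)$ to some $\widetilde{\varphi} \in \Ball(B(E,F)^*)$. Since $\varphi_n$ is literally the restriction of $\widetilde{\varphi}_n$ to $M_n(S) \subseteq M_n(B(E,F)) = B(l^p_n(E), l^p_n(F))$ (this last identification being precisely the definition of the matrix norms in the statement), it suffices to show that $\widetilde{\varphi}_n$ is contractive from $B(l^p_n(E), l^p_n(F))$ to $M^p_n$.

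So fix $T = [T_{ij}] \in B(l^p_n(E), l^p_n(F))$ with $\|T\| \le 1$, and fix $\xi = (\xi_1, \ldots, \xi_n) \in l^p_n$ with $\|\xi\|_p \le 1$. By the usual duality between $l^p_n$ and $l^q_n$,
\[
\big\|\widetilde{\varphi}_n (T) \xi \big\|_p
 = \sup_{\mu \in \Ball(l^q_n)}
    \bigg| \sum_{i,j} \mu_i \xi_j \, \widetilde{\varphi}(T_{ij}) \bigg|
 = \sup_{\mu \in \Ball(l^q_n)}
   \bigg| \widetilde{\varphi} \bigg( \sum_{i,j} \mu_i \xi_j T_{ij} \bigg) \bigg|,
\]
so the problem reduces to showing that for any such $\mu$ the operator $R_{\mu,\xi} := \sum_{i,j} \mu_i \xi_j T_{ij} \in B(E,F)$ has norm at most $1$; then $|\widetilde{\varphi}(R_{\mu,\xi})| \le \|\widetilde\varphi\| \le 1$ and we are done.

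The key step is to recognize $R_{\mu,\xi}$ as a composition of three contractions. Given $\eta \in \Ball(E)$, define the ``column'' $\xi \boxtimes \eta := (\xi_1 \eta, \ldots, \xi_n \eta) \in l^p_n(E)$; a direct computation gives $\|\xi \boxtimes \eta\|_{l^p_n(E)} = \|\xi\|_p \|\eta\|_E \le 1$. Next, define the ``row'' map $\Psi_\mu \colon l^p_n(F) \to F$ by $\Psi_\mu(\zeta_1, \ldots, \zeta_n) = \sum_i \mu_i \zeta_i$; an application of H\"older's inequality applied to $\eta^* \circ \Psi_\mu$ for $\eta^* \in F^*$ shows $\|\Psi_\mu\| \le \|\mu\|_q \le 1$. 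Then $R_{\mu,\xi}(\eta) = \Psi_\mu \big( T(\xi \boxtimes \eta) \big)$, so $\|R_{\mu,\xi}\| \le \|\Psi_\mu\| \, \|T\| \le 1$, as required.

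The main (and essentially only) obstacle is the bookkeeping in identifying $R_{\mu,\xi}$ as the above composition; once that is seen the argument reduces to two applications of H\"older's inequality. No hypothesis on $E$ or $F$ beyond being Banach spaces is used, and the role of $p \in (1,\infty)$ is purely through the duality $(l^p_n)^* = l^q_n$.
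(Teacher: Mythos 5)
Your proposal is correct and is essentially the paper's own argument: both reduce, via the duality $(l^p_n)^* = l^q_n$, to showing that $\big\| \sum_{i,j} \mu_i \xi_j T_{ij} \big\|_{B(E,F)} \leq \| [T_{ij}] \|_{B(l^p_n(E), l^p_n(F))}$ for $\mu \in \Ball(l^q_n)$ and $\xi \in \Ball(l^p_n)$, and both establish this by pairing the matrix against the rank-one column $(\xi_j \eta)_j$ and the rank-one row functional built from $\mu$ (your factorization $R_{\mu,\xi} = \Psi_\mu \circ T \circ (\eta \mapsto \xi \boxtimes \eta)$ is exactly the paper's specialization of its sup formula to these tensors). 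The Hahn--Banach extension at the start is harmless but not needed, since $R_{\mu,\xi}$ already lies in $S$ when the entries do.
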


\begin{proof}
This follows by essentially the argument
in the $L^2$-operator space case,
and no doubt this is well known.
By the usual argument
(see e.g.\  the proof of \cite[Lemma 4.2]{Daws2}),
we have to show that
\[
\Bigg\| \sum_{j, k = 1}^n \, \beta_j x_{j, k} \alpha_k \Bigg\| \leq
\sup \Bigg( \Bigg\{ \Bigg( \sum_{j = 1}^n
      \Bigg\| \sum_{k = 1}^n x_{j, k} \xi_k \Bigg\|^p \Bigg)^{1/p}
       \colon \sum_{k = 1}^n \, \| \xi_k \|^p \leq 1 \Bigg\} \Bigg),
\]
where $n \in \Ndb$,
$\xi_1, \xi_2, \ldots, \xi_n \in E$,
$\beta = (\beta_1, \beta_2, \ldots, \beta_n) \in \Ball (l^q_n)$,
$\alpha  = (\alpha_1, \alpha_2, \ldots, \alpha_n) \in \Ball  (l^p_n)$,
and $x_{j, k} \in S$ for $j, k = 1, 2, \ldots, n$.
However the latter supremum may be written as
\[
\sup \Bigg( \Bigg\{ \Bigg| \sum_{j = 1}^n \,
   \psi_j \Bigg( \sum_{k = 1}^n \, x_{j, k} \xi_k \Bigg)  \Bigg| \colon
\sum_{k} \, \| \xi_k \|^p \leq 1 , \,\,
\sum_{j = 1}^n \| \psi_j \|^q \leq 1 \Bigg\} \Bigg),
\]
where $\xi_1, \xi_2, \ldots , \xi_n \in E$
and $\psi_1, \psi_2, \ldots, \psi_n \in F^*$.
This supremum clearly dominates
\[
\sup \Bigg( \Bigg\{ \Bigg|
   \sum_{j, k = 1}^n \,
      \beta_j \psi \big(x_{j, k} \alpha_k \xi \big) \Bigg|
\colon \psi  \in \Ball  (F^*), \,\, \xi \in \Ball  (E) \Bigg\} \Bigg),
\]
since $\sum_{j = 1}^n \, \| \beta_j \psi \|^q \leq 1$
and $\sum_{k = 1}^n \, \| \alpha_k \xi \|^p \leq 1$.
This last supremum is equal to
$\big\| \sum_{j, k = 1}^n \, \beta_j x_{j, k} \alpha_k \big\|$.
\end{proof}

Both the following lemmas apply in particular to hermitian idempotents,
by parts (\ref{L_7719_BicId_HToBicr})
and~(\ref{L_7719_BicId_RealPos})
of Lemma \ref{L_7719_BicId}.

\begin{lemma}\label{L_7Z18_BEVanishingState}
Let $E$ be a Banach space,
let $\omega \in \Ball  (E^*)$, and let $\xi \in \Ball  (E)$.
Let $\varphi$ be the vector state on $B (E)$
given by $\varphi (a) = \langle \omega, a \xi \rangle$
for all $a \in B (E)$.
Let $e \in B (E)$ be a real positive idempotent,
and suppose $\varphi (e) = 0$.
\begin{enumerate}
\item\label{L_7Z18_BEVanishingState_E}
If $E$ is strictly convex then
$\varphi (a e) = 0$ for all $a \in A$.
\item\label{L_7Z18_BEVanishingState_EStar}
If $E^*$ is strictly convex then
$\varphi (e a) = 0$ for all $a \in A$.
\end{enumerate}
\end{lemma}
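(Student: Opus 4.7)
Both assertions will reduce to showing $e\xi = 0$ for (1) and $e^*\omega = 0$ for (2), since then immediately $\varphi(ae) = \langle \omega, a(e\xi)\rangle = 0$ and $\varphi(ea) = \langle e^*\omega, a\xi\rangle = 0$. The plan is to prove each of these by a single application of Lemma \ref{scp}, applied to the contractive idempotent $1-e$ on $E$ in case (1) and to its adjoint $1-e^* \in B(E^*)$ in case (2). The key preliminary fact is that, by Lemma \ref{L_7719_BicId}(\ref{L_7719_BicId_RealPos}), real positivity of $e$ is exactly $\|1-e\| \leq 1$; dualizing gives $\|1-e^*\| \leq 1$, so $1-e$ and $1-e^*$ are contractive idempotents on $E$ and $E^*$ respectively.

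First I would exploit the assumption that $\varphi$ is a state on the unital algebra $B(E)$: this gives $\varphi(1) = \langle \omega, \xi\rangle = 1$, which together with $\omega \in \Ball(E^*)$ and $\xi \in \Ball(E)$ forces $\|\omega\| = \|\xi\| = 1$. Combining $\varphi(e)=0$ with $\varphi(1)=1$ then gives $\langle \omega, (1-e)\xi\rangle = 1$, and the chain
\[
1 = |\langle \omega, (1-e)\xi\rangle| \leq \|\omega\|\,\|(1-e)\xi\| \leq \|(1-e)\xi\| \leq \|\xi\| = 1
\]
forces $\|(1-e)\xi\| = \|\xi\|$. For (1), strict convexity of $E$ together with Lemma \ref{scp} applied to the contractive idempotent $1-e$ yields $(1-e)\xi = \xi$, hence $e\xi = 0$, and (1) follows.

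For (2) I would run the same argument in the dual. Rewriting $\varphi(e) = \langle e^*\omega, \xi\rangle = 0$ and using $\varphi(1)=1$ gives $\langle (1-e^*)\omega, \xi\rangle = 1$, and the analogous chain of inequalities forces $\|(1-e^*)\omega\| = \|\omega\| = 1$. Since $E^*$ is strictly convex and $1-e^*$ is a contractive idempotent on $E^*$, a second application of Lemma \ref{scp} gives $(1-e^*)\omega = \omega$, i.e.\ $e^*\omega = 0$, from which (2) follows.

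There is no substantive obstacle: the proof is just the classical principle that a contractive idempotent on a strictly convex space must fix any element where its operator norm is attained, applied once to $E$ and once to $E^*$. The only point requiring care is tracking that real positivity of $e$ supplies contractivity of both $1-e$ and $(1-e)^* = 1-e^*$, so that Lemma \ref{scp} is available on whichever side the strict convexity hypothesis is imposed.
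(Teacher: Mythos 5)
Your proof is correct and follows essentially the same route as the paper's: extract $\|1-e\|\leq 1$ from real positivity via Lemma \ref{L_7719_BicId}~(\ref{L_7719_BicId_RealPos}), use $\varphi(1-e)=1$ to force the norm equality $\|(1-e)\xi\|=\|\xi\|$ (resp.\ $\|(1-e)^*\omega\|=\|\omega\|$), and then apply Lemma \ref{scp} on $E$ (resp.\ on $E^*$) to conclude $e\xi=0$ (resp.\ $e^*\omega=0$). The only difference is that you spell out explicitly the normalization $\langle\omega,\xi\rangle=1$ forcing $\|\omega\|=\|\xi\|=1$, which the paper leaves implicit in the word ``state.''
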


\begin{proof}
{}From $\ph (e) = 0$ we get $\ph (1 - e) = 1$.
Also $\| 1 - e \| \leq 1$
by Lemma \ref{L_7719_BicId}~(\ref{L_7719_BicId_RealPos}).

Suppose $E$ is strictly convex.
We have
\[
\| (1 - e) \xi \| \geq | \varphi (1 - e) | = 1 = \| \xi \|.
\]
So $\xi = (1 - e) \xi$ by Lemma \ref{scp}.
For $a \in B (E)$
we then have $\ph (a e) = \langle \omega, a e \xi \rangle = 0$.

Now suppose $E^*$ is strictly convex.
We have $\| (1 - e)^* \| = \| 1 - e \| \leq 1$,
and $\ph (a) = \langle a^* \omega, \xi \rangle$
for all $a \in B (E)$,
so
\[
\| (1 - e)^* \om \| \geq | \varphi (1 - e) | = 1 = \| \om \|.
\]
So $\om = (1 - e)^* \om$ by Lemma \ref{scp}.
For $a \in B (E)$
we then have $\ph (e a) = \langle e^* \omega, a \xi \rangle = 0$.
\end{proof}

\begin{lemma}\label{L_7Z18_LpVanishingState}
Let  $p \in (1, \infty)$,
and let $A$ be a unital $L^p$-operator algebra.
Let $\varphi$ be a state on $A$
and let $e \in A$ be a real positive idempotent.
If $\varphi (e) = 0$ then
$\varphi (a e) = \varphi (e a) = 0$ for all $a \in A$.
\end{lemma}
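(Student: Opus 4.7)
The plan is to reduce to Lemma~\ref{L_7Z18_BEVanishingState}, which handles vector states, by realizing the general state $\varphi$ as a vector state on an $L^p$-space (possibly after passing to an ultrapower). First, I represent $A$ isometrically and unitally as a subalgebra of $B(L^p(X, \mu))$ for some measure space $(X, \mu)$, as in the discussion after Remark~\ref{sqprep}. The state $\varphi$ extends by Hahn-Banach to a state $\widetilde{\varphi}$ on $B(L^p(X, \mu))$, and Lemma~\ref{L_7Z14_RPosSubalg} applied to the unital inclusion ensures that $e$ remains a real positive idempotent in $B(L^p(X, \mu))$.

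Second, I realize $\widetilde{\varphi}$ as a vector state on an ultrapower $L^p$-space. By Theorem~\ref{T_7920_KStarStar}, the predual $\Kdb(L^p(X, \mu))^{*} = L^p(X, \mu) \widehat{\otimes} L^q(X, \mu)$ of $B(L^p(X, \mu))$ consists of tensor functionals, and any such $a \mapsto \sum_i \langle a \xi_i, \eta_i \rangle$ with $\sum_i \|\xi_i\|_p \|\eta_i\|_q < 1 + \epsilon$ may be realized, after rescaling each pair by Young's inequality, as a vector functional $a \mapsto \langle \iota(a) \Xi, H \rangle$ on the $L^p$-space $l^p(L^p(X, \mu))$, where $\iota$ is the diagonal representation and $\Xi = (\xi_i)$, $H = (\eta_i)$ have $l^p$- and $l^q$-norms bounded by $(1+\epsilon)^{1/p}$ and $(1+\epsilon)^{1/q}$ respectively. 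By Goldstine's theorem $\widetilde{\varphi}$ lies in the weak*-closure of such tensor functionals, and taking an ultrafilter limit (following the ultrapower construction in the proof of Theorem~1 of~\cite{Daws}, invoked in Lemma~\ref{L_7618_SecDual}) yields an isometric unital homomorphism $\pi \colon B(L^p(X, \mu)) \to B(L^p(Y))$, where $L^p(Y)$ is an ultrapower of $l^p(L^p(X, \mu))$ (an $L^p$-space by Theorem~3.3~(ii) of~\cite{He}), together with $\Xi \in \Ball(L^p(Y))$ and $H \in \Ball(L^q(Y))$ satisfying $\widetilde{\varphi}(a) = \langle \pi(a) \Xi, H \rangle$ for all $a \in B(L^p(X, \mu))$.

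Finally, I apply Lemma~\ref{L_7Z18_BEVanishingState} on $B(L^p(Y))$. Since $\pi$ is isometric and unital, $\pi(e)$ is a real positive idempotent, and the vector state $\psi(b) = \langle b \Xi, H \rangle$ on $B(L^p(Y))$ satisfies $\psi(\pi(e)) = \widetilde{\varphi}(e) = \varphi(e) = 0$. Both $L^p(Y)$ and its dual $L^q(Y)$ are strictly convex, so both parts of Lemma~\ref{L_7Z18_BEVanishingState} yield $\psi(\pi(a) \pi(e)) = \psi(\pi(e) \pi(a)) = 0$ for all $a \in A$. Since $\pi$ is a homomorphism and $\widetilde{\varphi}$ extends $\varphi$, these translate to $\varphi(ae) = \varphi(ea) = 0$. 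The main obstacle is this ultrafilter construction realizing the general state as a vector state on a larger $L^p$-space; a more direct but notationally heavier alternative is a quantitative argument using the modulus of convexity of $L^p$, showing that in any convex combination of vector states approximating $\widetilde{\varphi}$, the components whose value on $1 - e$ is close to $1$ must have their defining vector $\xi$ satisfy $e \xi \approx 0$.
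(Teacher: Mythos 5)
Your proposal is correct, and it reaches the key intermediate step --- realizing the state as a genuine vector state $\langle \pi(\cdot)\,\Xi, H\rangle$ for a unital isometric homomorphism $\pi$ into $B(E)$ with $E$ smooth and strictly convex, then invoking Lemma~\ref{L_7Z18_BEVanishingState} --- by a genuinely different route from the paper. The paper first observes (Lemma~\ref{da}) that any contractive functional on a subspace of $B(L^p(X))$ is $p$-completely contractive, and then applies Pisier's factorization theorem (Theorem~2.1 of~\cite{PisCb}) to produce the vector-state representation on an ${\mathrm{SQ}}_p$-space in one stroke. You instead exploit the concrete duality $B(L^p(X,\mu)) \cong \Kdb(L^p(X,\mu))^{**}$ from Theorem~\ref{T_7920_KStarStar}: Goldstine realizes the Hahn--Banach extension $\widetilde{\varphi}$ as a weak* limit of tensor functionals, the standard rescaling turns each of these into a vector functional for the diagonal representation on $l^p(L^p(X,\mu))$, and an ultrafilter limit (with the ultrapower again an $L^p$-space by Theorem~3.3~(ii) of~\cite{He}, and with the ultrapower of $L^q$ pairing contractively with the ultrapower of $L^p$) assembles these into a single vector state on an honest $L^p$-space. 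Your argument is more self-contained relative to the rest of the paper, avoiding Pisier's theorem and the ${\mathrm{SQ}}_p$ machinery of Remark~\ref{sqprep} entirely, and it lands in a genuine $L^p$-space rather than an ${\mathrm{SQ}}_p$-space. What the paper's route buys in exchange is brevity and portability: the Pisier factorization applies verbatim to unital ${\mathrm{SQ}}_p$-operator algebras (as exploited in Remark~\ref{tsres}), whereas your construction leans on the specific identification of $B(L^p)$ as the bidual of the compacts and would need reworking in that broader setting. Two small points to tidy: the rescaling you attribute to Young's inequality is really the elementary substitution $\xi_i \mapsto t_i^{1/p}\xi_i/\|\xi_i\|_p$, $\eta_i \mapsto t_i^{1/q}\eta_i/\|\eta_i\|_q$ with $t_i = \|\xi_i\|_p\|\eta_i\|_q$ (the ``usual trick'' mentioned after Theorem~\ref{T_7920_KStarStar}); and you should note explicitly that $\langle \Xi, H\rangle = \widetilde{\varphi}(1) = 1$, which is what makes $\psi$ a vector state so that the hypothesis $\psi(1-\pi(e)) = 1$ needed in the proof of Lemma~\ref{L_7Z18_BEVanishingState} is available.
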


\begin{proof}
We may assume that $A$ is a unital subalgebra of $B (L^p (X))$
for some~$X$.
By Lemma \ref{da},
$\varphi$ is $p$-completely contractive in the
sense of \cite{PisCb}.
So by Theorem 2.1 of that paper and the remark
after it,
and using the fact that ultraproducts of $L^p$ spaces are $L^p$ spaces
(Theorem 3.3 (ii) of~\cite{He}),
there exist an ${\mathrm{SQ}}_p$-space $E$,
$\xi \in \Ball (E)$,
$\omega \in \Ball (E^*)$,
and a $p$-completely contractive map $\pi \colon A \to B (E)$
such that $\varphi (a) = \langle \omega, \pi (a) \xi \rangle$
for all $a \in A$.
It is easy to see
and no doubt well known that $\pi$ may be taken to be a
unital homomorphism.
Then $\pi (e)$ is an
idempotent, and $\| 1 - \pi (e) \| \leq 1$.
As explained in Remark~\ref{sqprep},
${\mathrm{SQ}}_p$-spaces are both smooth and strictly convex.
So their duals are also strictly convex.
We may therefore apply Lemma~\ref{L_7Z18_BEVanishingState}
to the vector state $\langle \omega, \, \cdot \, \xi \rangle$
on $B (E)$.
Thus for all $a \in E$ we have
\[
\varphi (a e) = \langle \omega, \, \pi (a) \pi (e) \xi \rangle = 0
\andeqn
\varphi (e a) = \langle \omega, \, \pi (e) \pi (a) \xi \rangle = 0.
\]
This completes the proof.
\end{proof}

\begin{remark}\label{tsres}

\begin{enumerate}
\item\label{tsres_1}
Lemma~\ref{L_7Z18_LpVanishingState} holds if $A$ is a unital
${\mathrm{SQ}}_p$-operator algebra.
The proof is the same too,
but with $L^p$ replaced by ${\mathrm{SQ}}_p$ throughout.
For
further details on the construction of $\pi$ in this
case see e.g.\  \cite[Theorem 4.1]{Daws2} and references therein.
\item\label{tsres_2}
Lemma~\ref{L_7Z18_LpVanishingState} holds
for an approximately unital
$L^p$- (or ${\mathrm{SQ}}_p$-) operator algebra $A$,
and indeed holds for restrictions
of states on any $L^p$- (or ${\mathrm{SQ}}_p$-) operator algebra
unitization of $A$.
This follows by applying the unital case to the
extending state on the unitization of $A$.
\end{enumerate}
\end{remark}

\begin{corollary}\label{supp1}
Let $p \in (1, \infty)$.
Let $A$ be an approximately unital $L^p$-operator algebra.
If $x \in {\mathfrak{r}}_A$
and $\varphi \in S (A)$ with $\varphi (s (x)) = 0$,
then $\varphi (x) = 0$.
Conversely, if further $x \in {\mathfrak{F}}_A$
and $\varphi (x) = 0$ then $\varphi (s (x)) = 0$.
\end{corollary}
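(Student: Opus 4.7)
My plan is to prove the two implications separately. For the forward direction, I would extend $\varphi$ to a state $\psi$ on the multiplier unitization $A^1$ (possible by Definition~\ref{Dstate}) and then consider its canonical weak*-continuous image $\hat\psi \in (A^1)^{***}$. Since $\|\psi\| = \psi(1_{A^1}) = 1$, this $\hat\psi$ is a state on $(A^1)^{**}$, which is a unital $L^p$-operator algebra by Proposition~\ref{muau} and Lemma~\ref{L_7618_SecDual}(\ref{L_7618_SecDual_Lp}). Under the natural isometric embedding $A^{**} \hookrightarrow (A^1)^{**}$ coming from the restriction $(A^1)^* \twoheadrightarrow A^*$, the pairing gives $\hat\psi(s(x)) = s(x)(\psi|_A) = \varphi(s(x)) = 0$. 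Since $s(x)$ is a real positive idempotent in $(A^1)^{**}$ by Proposition~\ref{L_7Z17_ExistSupp} together with Lemma~\ref{L_7719_BicId}(\ref{L_7719_BicId_RealPos}), I would then invoke Lemma~\ref{L_7Z18_LpVanishingState} to conclude $\hat\psi(s(x) b) = 0$ for every $b \in (A^1)^{**}$. Taking $b = x$ and using $s(x) x = x$ from Proposition~\ref{L_7Z17_ExistSupp}(\ref{L_7Z17_ExistSupp_IdOnx}) yields $\varphi(x) = \psi(x) = \hat\psi(x) = 0$.

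For the converse, I assume further that $x \in {\mathfrak{F}}_A$ and $\varphi(x) = 0$, and I reuse the vector-state representation from the proof of Lemma~\ref{L_7Z18_LpVanishingState}: by Lemma~\ref{da} and Pisier's theorem there exist an ${\mathrm{SQ}}_p$-space $E$, a unital homomorphism $\pi : A^1 \to B(E)$, and $\xi \in \Ball(E)$, $\omega \in \Ball(E^*)$ with $\psi(a) = \langle \omega, \pi(a)\xi\rangle$ on $A^1$. Since $\|1-x\| \leq 1$ forces $\|1-\pi(x)\| \leq 1$, and $\psi(1-x) = 1 = \|\omega\|\,\|\xi\|$, the pairing saturates and $\|(1-\pi(x))\xi\| = 1$. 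Strict convexity of $E$ (Remark~\ref{sqprep}) guarantees that the unit vector in $E$ paired by $\omega$ to give $1$ is unique, forcing $(1-\pi(x))\xi = \xi$, i.e., $\pi(x)\xi = 0$. Proposition~\ref{L_7Z16_RootProp}(\ref{L_7Z16_RootProp_PApp}) then gives $\pi(x)^{1/n}\xi = 0$ for every $n$, so that $\varphi(x^{1/n}) = \langle \omega, \pi(x)^{1/n}\xi\rangle = 0$. Since $x^{1/n} \to s(x)$ weak* in $A^{**}$ (Proposition~\ref{L_7Z17_ExistSupp}), one concludes $\varphi(s(x)) = \lim_n \varphi(x^{1/n}) = 0$.

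The key delicate step is in the forward direction: identifying $\hat\psi|_{A^{**}}$ with the weak*-continuous extension $\hat\varphi$ of $\varphi$. This holds automatically because weak*-continuous functionals on $A^{**}$ correspond bijectively to elements of $A^*$, and both $\hat\psi|_{A^{**}}$ and $\hat\varphi$ extend $\varphi$ on $A$. The converse is a fairly standard strict-convexity argument once the representation theorem is in hand; the only point to verify is that it suffices for $E$ to be an ${\mathrm{SQ}}_p$-space rather than an $L^p$-space, which is fine since ${\mathrm{SQ}}_p$-spaces are strictly convex (Remark~\ref{sqprep}).
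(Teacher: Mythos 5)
Your proof is correct and follows essentially the same route as the paper: the forward direction reduces to Lemma~\ref{L_7Z18_LpVanishingState} applied to the real positive idempotent $s(x)$ together with $s(x)x = x$, and the converse is the identical strict-convexity argument on an ${\mathrm{SQ}}_p$-space representation giving $\pi(x)\xi = 0$, hence $\varphi(x^{1/n}) = 0$ and $\varphi(s(x)) = 0$ in the weak* limit. Your explicit passage to $(A^1)^{**}$ in the forward direction is a welcome clarification of a step the paper leaves implicit, since $s(x)$ lives in the bidual rather than in $A^1$ itself.
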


\begin{proof}
We may work in~$A^1$ by extending $\varphi$ to a state there,
and we may thus assume that $A$ is unital.

The idempotent $s (x)$ is real positive
by Proposition \ref{L_7Z17_ExistSupp}~(\ref{L_7Z17_ExistSupp_RPos}).
Using
Proposition \ref{L_7Z17_ExistSupp}~(\ref{L_7Z17_ExistSupp_IdOnx}),
Lemma~\ref{L_7Z18_LpVanishingState},
and $\ph (s (x)) = 0$, we get $\ph (x) = 0$.

On the other hand,
if $x \in {\mathfrak{F}}_A$
and $\varphi (x) = 0$ then $\varphi (1 - x) = 1$.
As in the proof of
Lemma~\ref{L_7Z18_LpVanishingState},
there are an ${\mathrm{SQ}}_p$-space $F$,
a contractive unital homomorphism $\pi \colon A^1 \to B (F)$,
$\xi \in \Ball (F)$, and  $\eta \in \Ball  (F^*)$,
such that $\varphi = \langle \pi (\cdot) \xi, \eta \rangle$
for all $a \in A^1$.
Then
\[
1 = \varphi (1 - x)
 = \langle \omega , (1 - \pi (x)) \xi \rangle
 \leq \| \pi (1 - x) \xi  \|
 \leq 1.
\]
Therefore,
with $\frac{1}{p} + \frac{1}{q} = 1$,
both $\xi$ and $(1 - \pi (x)) \xi$
define norm one linear functionals on $L^q (X)$
which take $\om$ to~$1$.
Strict convexity of $L^q (X)$
implies $(1 - \pi (x)) \xi = \xi$.
So $\pi (x) \xi = 0$.
Proposition \ref{L_7Z16_RootProp}~(\ref{L_7Z16_RootProp_PApp})
now implies that $\pi (x^{1 / n}) \xi = 0$
for all $n \in \Ndb$.
Hence  $\varphi (x^{1 / n}) = 0$.
In the limit $\varphi (s (x)) = 0$.
\end{proof}

The next lemma is a generalization of
part of \cite[Lemma 2.10]{BRI},
with a similar proof
but using Corollary \ref{supp1}.

\begin{lemma}\label{infa}
Let  $p \in (1, \infty)$.
Let $A$ be an approximately unital
$L^p$-operator algebra, and let $x \in {\mathfrak{F}}_A$.
The following are equivalent:
\begin{enumerate}
\item\label{Linfa1}
$s (x) = 1$.
\item\label{Linfa2}
$\varphi (x) \neq 0$ for all $\varphi \in S (A)$.
\item\label{Linfa3}
$\Re ( \varphi (x) ) > 0$ for all $\varphi \in S (A)$.
\end{enumerate}
If $x \in {\mathfrak{r}}_A$ then {\rm{(\ref{Linfa3})}}
implies~{\rm{(\ref{Linfa2})}}
and  {\rm{(\ref{Linfa2})}} implies~ {\rm{(\ref{Linfa1})}}.
\end{lemma}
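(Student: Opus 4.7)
The plan is to establish the chain $(3)\Rightarrow(2)\Rightarrow(1)$ under the weaker hypothesis $x\in{\mathfrak{r}}_A$, and then close the loop with $(1)\Rightarrow(3)$ using $x\in{\mathfrak{F}}_A$; the implication $(3)\Rightarrow(2)$ is immediate since $\Re z>0$ forces $z\neq 0$.

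For $(2)\Rightarrow(1)$, I would argue the contrapositive. Assume $s(x)\neq 1_{A^{**}}$, so that $q:=1_{A^{**}}-s(x)$ is a nonzero idempotent in $A^{**}$. Left-multiplying the contraction $1_{(A^1)^{**}}-s(x)$ from Proposition~\ref{L_7Z17_ExistSupp}(\ref{L_7Z17_ExistSupp_Norm}) by the norm-one idempotent $1_{A^{**}}$ gives $\|q\|\le 1$; combined with $\|q\|\ge 1$ for every nonzero idempotent, $\|q\|=1$. Using Corollary~\ref{ddag}, realize $A^{**}$ as a weak*-closed unital subalgebra of some $B(L^p(X,\mu))$, pick a norm-one vector $\xi\in\Ran(q)$, and let $\omega\in L^q(X,\mu)$ be the smoothness functional of $\xi$. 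The vector functional $\Psi(a)=\langle a\xi,\omega\rangle$ is then a weak*-continuous state of $A^{**}$ with $\Psi(q)=1$, i.e.\ $\Psi(s(x))=0$. Applying Lemma~\ref{L_7Z18_LpVanishingState} to the unital $L^p$-operator algebra $A^{**}$ (which is an $L^p$-operator algebra by Lemma~\ref{L_7618_SecDual}(\ref{L_7618_SecDual_Lp})), with state $\Psi$ and real positive idempotent $s(x)$, yields $\Psi(a\cdot s(x))=0$ for every $a\in A^{**}$; setting $a=x$ and using $xs(x)=x$ from Proposition~\ref{L_7Z17_ExistSupp}(\ref{L_7Z17_ExistSupp_IdOnx}), I conclude $\Psi(x)=0$. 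By Lemma~\ref{bicr}(\ref{bicr_2}), $\varphi:=\Psi|_A\in S_{\mathfrak{e}}(A)\subseteq S(A)$, and $\varphi(x)=0$ contradicts~(2).

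For $(1)\Rightarrow(3)$, let $\varphi\in S(A)$. Since $x\in{\mathfrak{r}}_A$ we have $\Re\varphi(x)\ge 0$; suppose $\Re\varphi(x)=0$ and write $\varphi(x)=it$ with $t\in\Rdb$. Extending $\varphi$ to $\tilde\varphi\in S(A^1)$ and using $\|1-x\|\le 1$ (from $x\in{\mathfrak{F}}_A$), I obtain $1+t^2=|1-it|^2=|\tilde\varphi(1-x)|^2\le 1$, forcing $t=0$ and $\varphi(x)=0$. Corollary~\ref{supp1} then yields $\lim_n\varphi(x^{1/n})=0$, while the weak* convergence $x^{1/n}\to s(x)=1_{A^{**}}$ in $A^{**}$ combined with weak*-continuity of $\varphi^{**}$ gives this limit as $\varphi^{**}(1_{A^{**}})$; the contradiction will follow once I show $\varphi^{**}(1_{A^{**}})=1$, equivalently $\varphi\in S_{\mathfrak{e}}(A)$. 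This is the main obstacle, and it is handled as follows: using Lemma~\ref{da} and Theorem~2.1 of~\cite{PisCb}, write $\tilde\varphi(\cdot)=\langle\pi(\cdot)\xi,\omega\rangle$ for a unital contractive representation $\pi$ of $A^1$ on an ${\mathrm{SQ}}_p$-space~$F$; the conditions $\tilde\varphi(1)=1$ and $\|\varphi\|=1$, together with Lemma~\ref{2deg} (adapted via Remark~\ref{sqprep}) applied to $\pi|_A$, force $\|f\xi\|=\|\xi\|=1$ where $f$ is the associated contractive idempotent with $\pi(e_t)\to f$ strongly, and strict convexity (Lemma~\ref{scp}) then gives $f\xi=\xi$, whence $\varphi(e_t)=\langle\pi(e_t)\xi,\omega\rangle\to\langle f\xi,\omega\rangle=1$ for any cai $(e_t)$.
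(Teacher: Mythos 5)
Your proof is correct and follows essentially the same route as the paper's: the implication (\ref{Linfa2})$\Rightarrow$(\ref{Linfa1}) uses the same vector state supported on $\Ran(1-s(x))$ killed by Corollary~\ref{supp1} (you route through Lemma~\ref{L_7Z18_LpVanishingState} directly, which is what that corollary is built on), and your combined (\ref{Linfa1})$\Rightarrow$(\ref{Linfa3}) step is the paper's appeal to Corollary~\ref{supp1} fused with its elementary observation that $|1-\varphi(x)|\le 1$ and $\Re(\varphi(x))\le 0$ force $\varphi(x)=0$. The only genuine difference is that you explicitly verify $\varphi^{**}(1_{A^{**}})=1$ for states $\varphi$ by reproving in effect Proposition~\ref{2extension}~(\ref{2extension_1}); the paper leaves this implicit, so your extra care is clarifying but not a different method.
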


\begin{proof}
Let $x \in {\mathfrak{r}}_A$.
Then (\ref{Linfa3}) implies~(\ref{Linfa2}) trivially.
To show that (\ref{Linfa2}) implies~(\ref{Linfa1}),
suppose (\ref{Linfa1}) fails.
Represent $A^{**}$ 
as a unital subalgebra of $B (L^p (X))$
for some~$X$ by Corollary~\ref{ddag}.
Choose $\xi \in \Ball (L^p (X))$ in the range of the
idempotent $1 - s (x)$,
and choose $\eta \in \Ball (L^p (X)^*)$ with
$\langle \xi, \eta \rangle = 1$.
Then $\varphi (x) = \langle x \xi, \eta \rangle$
defines a state on $A$ with $\varphi (1 - s (x)) = 1$.
Since $\varphi (s (x)) = 0$,
Corollary \ref{supp1} implies
$\varphi (x) = 0$.

If $x \in {\mathfrak{F}}_A$ then
(\ref{Linfa1}) implies~(\ref{Linfa2}) by Corollary \ref{supp1}.
For (\ref{Linfa2}) implies~(\ref{Linfa3}),
follow part of the proof of \cite[Lemma 2.10]{BRI}:
$| 1 - \ph (x) | \leq 1$
is not compatible with both $\ph (x) \neq 0$
and $\Re ( \varphi (x) ) \leq 0$.
\end{proof}

\subsection{Hahn-Banach smoothness of $L^p$-operator algebras}

\begin{definition}\label{D_7Z20_HBS}
Let $E$ be a Banach space and let $M \subseteq E$
be a closed subspace.
We say that $M$ is {\emph{Hahn-Banach smooth}} in~$E$
if for every $\om_0 \in M^*$
there is a unique $\om \in E^*$ with $\| \om \| = \| \om_0 \|$
and $\om |_M = \om_0$,
\end{definition}

Existence of $\om$ is just the Hahn-Banach Theorem.
When verifying this property,
we need only consider the case $\| \om_0 \| = 1$.

Proposition 2.1.18 in \cite{BLM} works for $L^p$-operator algebras.

\begin{proposition}\label{2extension}
Let $p \in (1, \infty)$.
Let $A$ be an approximately unital
$L^p$-operator algebra
and denote the identity
of $A^1$ by~$1$.
\begin{enumerate}
\item\label{2extension_1}
Let $(e_t)_{t \in \Lambda}$ be a cai in~$A$.
If $\psi \colon A^1 \to \Cdb$ is a functional on $A^1$,
then $\lim_t \psi (e_t) = \psi (1)$
if and only if $\| \psi \| = \| \psi |_A \|$.
\item\label{2extension_2}
$A$ is Hahn-Banach smooth in~$A^1$
(Definition~{\rm{\ref{D_7Z20_HBS}}}).
\end{enumerate}
\end{proposition}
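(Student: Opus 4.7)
The plan is to prove~(\ref{2extension_1}) first and then deduce~(\ref{2extension_2}) as an immediate uniqueness consequence. Set $\ph = \psi|_A$; throughout we use the formula $\|a + \lambda 1\|_{A^1} = \sup_{c \in \Ball(A)} \|(a+\lambda 1)c\|$ from Definition~\ref{D_7702_Unitization}. The forward direction of~(\ref{2extension_1}) is a short computation: if $\psi(e_t) \to \psi(1)$, then for $b = a + \lambda 1 \in A^1$,
\[
\psi(b) = \lim_t \psi(ae_t) + \lim_t \lambda\psi(e_t) = \lim_t \psi\bigl((a + \lambda 1)e_t\bigr),
\]
and each $(a + \lambda 1)e_t$ lies in $A$ with norm at most $\|b\|_{A^1}$, so $|\psi(b)| \leq \|\ph\| \|b\|_{A^1}$ and hence $\|\psi\| \leq \|\ph\|$, giving equality.

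The reverse direction is the main point, and this is where the $L^p$ structure and strict convexity enter. Assume $\|\psi\| = \|\ph\| = 1$ (after scaling). Since $A^1$ is again an $L^p$-operator algebra (Proposition~\ref{muau}), Lemma~\ref{da} shows that $\psi$ is $p$-completely contractive on $A^1$. The Pisier-type vector representation used in the proof of Lemma~\ref{L_7Z18_LpVanishingState} then produces an ${\mathrm{SQ}}_p$-space $E$, a unital contractive homomorphism $\pi \colon A^1 \to B(E)$, and vectors $\xi \in \Ball(E)$, $\om \in \Ball(E^*)$ with $\psi(\cdot) = \om(\pi(\cdot)\xi)$; in particular $\psi(1) = \om(\xi)$. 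Applying Lemma~\ref{2deg} (in the ${\mathrm{SQ}}_p$-form noted in Remark~\ref{sqprep}) to the non-unital compression $\pi|_A$, we obtain a contractive idempotent $f \in B(E)$ with $\pi(e_t) \to f$ in the strong operator topology and $\pi(a) = \pi(a) f = f \pi(a) f$ for all $a \in A$. Consequently, for $a \in \Ball(A)$, $|\psi(a)| = |\om(\pi(a) f \xi)| \leq \|f\xi\|$, and taking the supremum,
\[
1 = \|\ph\| \leq \|f\xi\| \leq \|\xi\| \leq 1.
\]
Hence $\|f\xi\| = \|\xi\| = 1$. Because $E$ is strictly convex (Remark~\ref{sqprep}), Lemma~\ref{scp} forces $f\xi = \xi$, and therefore $\psi(e_t) = \om(\pi(e_t)\xi) \to \om(f\xi) = \om(\xi) = \psi(1)$, as required.

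Part~(\ref{2extension_2}) is then immediate from~(\ref{2extension_1}): given $\om_0 \in A^*$, the Hahn-Banach theorem furnishes at least one norm-preserving extension to $A^1$, and any such extension $\psi$ satisfies $\|\psi\| = \|\psi|_A\|$, so part~(\ref{2extension_1}) forces $\psi(1) = \lim_t \om_0(e_t)$. Since this value depends only on $\om_0$, the extension is unique, and $A$ is Hahn-Banach smooth in $A^1$.

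The main obstacle in the whole argument is the reverse direction of~(\ref{2extension_1}), specifically the step that promotes the norm identity $\|f\xi\| = \|\xi\|$ to the vector identity $f\xi = \xi$ via strict convexity. This is precisely the point at which the $L^p$ (or ${\mathrm{SQ}}_p$) hypothesis is essential; without smoothness and strict convexity of the representing space there is no reason to expect a unique norm-preserving extension, and accordingly no reason for $\lim_t \psi(e_t)$ to equal $\psi(1)$ in a general approximately unital Banach algebra.
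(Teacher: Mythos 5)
Your proof is correct and follows essentially the same route as the paper's: the reverse direction of~(\ref{2extension_1}) is handled by the $p$-complete contractivity of $\psi$ (Lemma~\ref{da}), the Pisier-type factorization through an ${\mathrm{SQ}}_p$-space, the idempotent $f$ from the ${\mathrm{SQ}}_p$ version of Lemma~\ref{2deg}, and strict convexity via Lemma~\ref{scp} to get $f\xi = \xi$. Your deduction of~(\ref{2extension_2}) — that part~(\ref{2extension_1}) pins down $\psi(1) = \lim_t \om_0(e_t)$ for any norm-preserving extension, hence determines the extension — is exactly the content of the paper's appeal to the argument of \cite[Proposition 2.1.18]{BLM}, just stated more directly.
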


\begin{proof}
We may assume that $A$ is nonunital (the case
of unital algebras being easy).

The forward direction of (\ref{2extension_1}) is
just as in the proof of \cite[Proposition 2.1.18]{BLM}.

For the other direction
suppose that  $\psi \colon A^1 \to \Cdb$
with $\| \psi \| = \| \psi |_A \| = 1$.
As in the proof of
Lemma~\ref{L_7Z18_LpVanishingState},
there are an ${\mathrm{SQ}}_p$-space $F$,
a contractive unital homomorphism $\pi \colon A^1 \to B (F)$,
$\xi \in \Ball (F)$, and  $\eta \in \Ball  (F^*)$,
such that $\psi = \langle \pi (\cdot) \xi, \eta \rangle$
for all $a \in A^1$.

Apply the extension of Lemma~\ref{2deg}
given in Remark~\ref{sqprep} to the representation $\pi |_A$.
Let $E \subseteq F$ and the idempotent $f \in B (F)$
be as there.
The extensions of parts (\ref{2deg_AI}) and~(\ref{2deg_Compr})
of Lemma \ref{2deg}
imply that
$\pi (e_{t}) \to f$ weak* in $B (F)$
and $\pi (a) = \pi (a) f$ for all $a \in A$.
Thus, for all $a \in A$,
\[
\bigl| \langle \pi (a) \xi, \eta \rangle \bigr|
 = \bigl| \langle \pi (a) f \xi,  \eta  \rangle \bigr|
 \leq \| a \| \| f \xi \|.
\]

This shows that $\| \psi |_{A} \| \leq \| f \xi \|$.
Hence, by hypothesis, $\| f \xi \| = \| \xi \| = 1$.
Since $E$ is strictly convex
(see Remark~\ref{sqprep}),
Lemma \ref{scp} implies $f \xi = \xi$,
that is, $\xi \in {\overline{\spn}} (\pi (A) E)$.
Now,
since $\pi (e_t) \to f$ weak* we have
\[
\langle \pi (e_t) \xi, \eta \rangle
 \longrightarrow \langle f \xi, \eta \rangle
 = \langle \xi, \eta \rangle,
\]
which says that
$\psi (e_t) \to \psi (1)$.

For the deduction of (\ref{2extension_2})
from (\ref{2extension_1}),
let $\ph \in A^*$ satisfy $\| \ph \| = 1$.
Proceed as in
the proof of \cite[Proposition 2.1.18]{BLM},
but beginning by writing $\varphi \in A^*$ as
$\varphi = \langle \pi (\cdot) \zeta, \eta \rangle$ for $E$ as above,
and for a contractive homomorphism $\pi \colon A \to B (E)$,
$\zeta \in \Ball  (E)$, and  $\eta \in \Ball  (E^*)$.
This may be done for example by considering a Hahn-Banach extension
of $\varphi$ to $A^1$ and using the unital case above.
\end{proof}

\begin{corollary}\label{HBsm}
Let  $p \in (1, \infty)$,
and let $A$ be a nonunital approximately unital
$L^p$-operator algebra.
\begin{enumerate}
\item\label{HBsm_EqState}
Let $\ph \in A^*$ satisfy $\| \ph \| = 1$.
Then the following are equivalent:
\begin{enumerate}
\item\label{HBsm_EqState_State}
$\ph$ is a state on~$A$,
that is (see Definition~{\rm{\ref{Dstate}}}),
$\ph$ extends to a state on~$A^1$.
\item\label{HBsm_EqState_All}
$\ph (e_t) \to 1$ for every cai $(e_t)_{t \in \Lambda}$ for~$A$.
\item\label{HBsm_EqState_Some}
$\ph (e_t) \to 1$ for some cai $(e_t)_{t \in \Lambda}$ for~$A$.
\item\label{HBsm_EqState_1}
$\ph (1_{A^{**}}) = 1$.
\end{enumerate}
\item\label{HBsm_Uniq}
Every state on $A$ has a unique
extension to a state on~$A^1$.
\end{enumerate}
\end{corollary}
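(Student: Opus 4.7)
The plan is to obtain both parts as straightforward consequences of Proposition \ref{2extension} combined with the fact (Lemma \ref{L_7917_AppIdConverge}) that any cai converges weak* to $1_{A^{**}}$. Throughout, let $\widetilde{\varphi} \in (A^{**})^*$ denote the canonical weak* continuous extension of $\varphi \in A^*$.

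For part (\ref{HBsm_EqState}), I would first dispose of the easy equivalences (\ref{HBsm_EqState_All}) $\Leftrightarrow$ (\ref{HBsm_EqState_Some}) $\Leftrightarrow$ (\ref{HBsm_EqState_1}): the implication (\ref{HBsm_EqState_All}) $\Rightarrow$ (\ref{HBsm_EqState_Some}) is trivial, and for either direction between (\ref{HBsm_EqState_Some}) and (\ref{HBsm_EqState_1}) one uses that $e_t \to 1_{A^{**}}$ weak* for every cai, so that $\varphi(e_t) = \widetilde{\varphi}(e_t) \to \widetilde{\varphi}(1_{A^{**}})$, which simultaneously gives (\ref{HBsm_EqState_Some}) $\Rightarrow$ (\ref{HBsm_EqState_1}) and (\ref{HBsm_EqState_1}) $\Rightarrow$ (\ref{HBsm_EqState_All}). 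For (\ref{HBsm_EqState_State}) $\Rightarrow$ (\ref{HBsm_EqState_All}), let $\psi$ be a state extension of $\varphi$ to $A^{1}$. Then $\|\psi\| = \psi(1) = 1 = \|\varphi\| = \|\psi|_A\|$, so Proposition \ref{2extension}(\ref{2extension_1}) applies and gives $\lim_t \psi(e_t) = \psi(1) = 1$, whence $\lim_t \varphi(e_t) = 1$.

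The main point is the converse (\ref{HBsm_EqState_Some}) $\Rightarrow$ (\ref{HBsm_EqState_State}). Using the Hahn-Banach Theorem, extend $\varphi$ to some $\psi \in (A^{1})^*$ with $\|\psi\| = 1$. Then $\|\psi\| = 1 = \|\varphi\| = \|\psi|_A\|$, so Proposition \ref{2extension}(\ref{2extension_1}) yields $\psi(1) = \lim_t \psi(e_t) = \lim_t \varphi(e_t) = 1$. Hence $\psi$ is a state on $A^1$, which by definition (Definition \ref{Dstate}) means $\varphi$ is a state on~$A$.

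For part (\ref{HBsm_Uniq}), suppose $\psi_1, \psi_2$ are two state extensions of a state $\varphi$ on $A$. Each satisfies $\|\psi_i\| = \psi_i(1) = 1$, so both are norm-preserving extensions of $\varphi \in A^*$. Since $A$ is Hahn-Banach smooth in $A^1$ by Proposition \ref{2extension}(\ref{2extension_2}), the norm-preserving extension is unique, hence $\psi_1 = \psi_2$. I expect no real obstacle here: the whole of Corollary \ref{HBsm} is essentially a bookkeeping exercise once Proposition \ref{2extension} is in hand, with the only subtlety being the correct invocation of part (\ref{2extension_1}) in both directions to get $\psi(1)$ from the limit along a cai.
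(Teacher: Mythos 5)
Your proposal is correct and is exactly the unwinding the paper intends: its proof of Corollary~\ref{HBsm} reads ``Everything is immediate from Proposition~\ref{2extension},'' and the paper's remark immediately after the corollary confirms that condition~(\ref{HBsm_EqState_1}) is meant to be handled via Lemma~\ref{L_7917_AppIdConverge}, just as you do. Both directions of your use of Proposition~\ref{2extension}~(\ref{2extension_1}) and the appeal to Hahn--Banach smoothness in part~(\ref{HBsm_Uniq}) are precisely the intended steps.
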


\begin{proof}
Everything is immediate from Proposition~\ref{2extension}.
\end{proof}

Part~(\ref{HBsm_EqState})
says that states on such algebras may be defined
by any one of the equivalent conditions
in Lemma~2.2 of \cite{BOZ}.
The change in the statement of the last condition
is justified by Lemma~\ref{L_7917_AppIdConverge}.

In the notation of Definition~\ref{Dstate}
(taken from~\cite{BOZ}),
for any cai ${\mathfrak{e}} = (e_t)_{t \in \Lambda}$ of $A$
we have $S_{\mathfrak{e}} (A) = S (A)$.
That is, states on an approximately unital $L^p$-operator algebra
are the contractive functionals $\varphi$ with
$\varphi (e_t) \to 1$,
or equivalently have norm $1$ and extend
to a state on $A^1$ (or on $A^{**}$).

We remark that the last several results hold (beginning with
Lemma \ref{L_7Z18_LpVanishingState}) if $A$ is an
approximately unital ${\mathrm{SQ}}_p$-operator algebra.
The proofs are almost
identical, but with the kinds of emendations
prescribed in the proof of Lemma \ref{L_7618_SecDual}
for ${\mathrm{SQ}}_p$-spaces,
Remark \ref{tsres}, and Remark~\ref{sqprep}.

The definition of a scaled Banach algebra,
used in the next proposition,
is stated in the Introduction
(see also the beginning of Section~\ref{Sec_6} below).

\begin{proposition}\label{scas}
Suppose that $A$ is an approximately unital scaled
Banach algebra,
that $A$ is Hahn-Banach smooth in~$A^1$
(Definition~{\rm{\ref{D_7Z20_HBS}}}), and that $A^{**}$ is unital.
Then  ${\mathfrak{r}}_{A^{**}}$ as defined in~\cite{BOZ}
(after Lemma~{\rm{2.5}} there)
agrees with the set of accretive elements
of the unital Banach algebra $A^{**}$.
\end{proposition}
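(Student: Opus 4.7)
The plan is to prove the two containments, using throughout that since $A$ is a codimension-one closed ideal in $A^1$, the inclusion $A^{**}\hookrightarrow (A^1)^{**}$ is isometric and $A^{**}$ is a closed ideal in $(A^1)^{**}$ for the Arens product; write $e=1_{A^{**}}$, an idempotent in $(A^1)^{**}$ of norm $1$. For the inclusion ${\mathfrak{r}}_{A^{**}}\subseteq\{x\in A^{**}:x \text{ is accretive in } A^{**}\}$, given $\sigma\in S(A^{**})$ the formula $\tilde\sigma(y)=\sigma(ey)$ for $y\in (A^1)^{**}$ defines a functional $\tilde\sigma\colon (A^1)^{**}\to \Cdb$ which is well-defined by the ideal property, has norm at most $1$, and satisfies $\tilde\sigma(1_{(A^1)^{**}})=\sigma(e)=1$, so $\tilde\sigma\in S((A^1)^{**})$ and extends $\sigma$. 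Hence $\Re\sigma(x)=\Re\tilde\sigma(x)\geq 0$ for every $x\in{\mathfrak{r}}_{A^{**}}$.

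The substantive reverse inclusion reduces to showing that the restriction $\tau|_{A^{**}}$ lies in the quasistate cone $Q(A^{**})=[0,1]\cdot S(A^{**})$ for every $\tau\in S((A^1)^{**})$; then $\tau|_{A^{**}}=\mu\sigma$ for some $\mu\in[0,1]$ and $\sigma\in S(A^{**})$, giving $\Re\tau(x)=\mu\Re\sigma(x)\geq 0$ whenever $x$ is accretive. I would first handle weak*-continuous $\tau=f^{**}$ with $f\in S(A^1)$: scaledness of $A$ forces $f|_A=\mu\varphi$ with $\mu\in[0,1]$ and $\varphi\in S(A)$, and since restriction commutes with taking biduals of functionals, $\tau|_{A^{**}}=\mu\varphi^{**}$. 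The main obstacle is to verify $\varphi^{**}(1_{A^{**}})=1$, so that $\varphi^{**}\in S(A^{**})$ and $\mu\varphi^{**}\in Q(A^{**})$. Using the isometric embedding $A^1\hookrightarrow A^{**}$ sending $a+\lambda 1_{A^1}$ to $a+\lambda 1_{A^{**}}$ (established exactly as in Lemma~\ref{bicr}(\ref{bicr_1})), the functional $\varphi^{**}|_{A^1}$ is a norm-$1$ extension of $\varphi$ to $A^1$; by Hahn-Banach smoothness it must coincide with the unique state-extension $\tilde\varphi\in S(A^1)$ guaranteed by Definition~\ref{Dstate}, whence $\varphi^{**}(1_{A^{**}})=\tilde\varphi(1_{A^1})=1$.

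For a general $\tau\in S((A^1)^{**})$, I would invoke weak*-density of the normal states of $(A^1)^{**}$ inside $S((A^1)^{**})$---the same application of Theorem~2.2 of \cite{Mag} used in the proof of Lemma~\ref{bicr}(\ref{bicr_3}), now applied to the unital Banach algebra $A^1$. Approximating $\tau$ weak* by normal states $\tau_\alpha$, each $\tau_\alpha|_{A^{**}}$ lies in $Q(A^{**})$ by the previous paragraph, and one checks that $Q(A^{**})=\{\rho\in (A^{**})^*:\rho(1_{A^{**}})=\|\rho\|\in[0,1]\}$ is weak*-closed (since $\rho\mapsto\rho(1_{A^{**}})$ is weak*-continuous and $\|\cdot\|$ is weak*-lower semicontinuous). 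Hence $\tau|_{A^{**}}\in Q(A^{**})$, completing the proof.
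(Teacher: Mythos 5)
Your proof is correct. The reverse (substantive) inclusion is essentially the paper's own argument in different packaging: the paper also approximates a state on $(A^1)^{**}$ weak* by normal states $\psi_t \in S(A^1)$, uses scaledness to write $\psi_t|_A = \ld \om$, and uses Hahn--Banach smoothness to see that the weak* continuous extension of $\om$ is a state on $A^{**}$ (the paper gets this last point by citing \cite[Lemma 2.2]{BOZ}, which is exactly the fact you rederive by hand via the isometric copy of $A^1$ inside $A^{**}$); the paper then passes to the limit directly in the inequality $\Re(\psi_t(a)) \geq 0$, whereas you pass to the limit inside the weak*-closed body $Q(A^{**})$ --- equivalent, and both fine. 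Where you genuinely diverge is the forward inclusion ${\mathfrak{r}}_{A^{**}} \subseteq R_0$: the paper routes this through \cite[Proposition 2.11]{BOZ} (the weak* closure of ${\mathfrak{r}}_A$ is $R_1 \cap A^{**}$) together with weak* closedness of the accretive cone in $A^{**}$ from \cite[Theorem 2.2]{Mag}, while you instead extend an arbitrary $\sm \in S(A^{**})$ directly to a state on $(A^1)^{**}$ by $y \mapsto \sm(e y)$, using that $A^{\perp\perp}$ is an ideal in $(A^1)^{**}$ and $\| e \| = 1$. Your version is more elementary and self-contained, and it exposes that this containment uses neither scaledness nor Hahn--Banach smoothness, only that $A^{**}$ is unital with a norm-one identity; the paper's version buys brevity by leaning on the established machinery of \cite{BOZ}. (Both your argument and the paper's are equally casual about which Arens product is in play on $(A^1)^{**}$ when $A$ is not Arens regular; this is harmless in the intended applications.)
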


We are ignoring the statement in~\cite{BOZ}
that the definition there is only to be applied when $A^{**}$
is not unital.

To be explicit,
let $R_0$ be the set of accretive elements of $A^{**}$,
where $A^{**}$ is
thought of as a unital Banach algebra in its own right,
and let $R_1$ be the analogous subset of $(A^1)^{**}$.
Then the assertion of the proposition is that
$R_1 \cap A^{**} = R_0$.

\begin{proof}[Proof of Proposition~{\rm{\ref{scas}}}]
We may assume that $A$ is nonunital (the case
of unital algebras being easy).
To avoid confusion,
we use the notation $R_0$ and $R_1$ above.

We show $R_1 \cap A^{**} \subseteq R_0$.
Proposition 2.11 of~\cite{BOZ}
(which works also when $A^{**}$ is unital)
implies that the weak* closure of ${\mathfrak{r}}_A$
is~$R_1 \cap A^{**}$.
So we need to show that ${\mathfrak{r}}_{A} \subseteq R_0$
and that $R_0$ is weak* closed.
The second part
is e.g.\ Theorem~2.2 of~\cite{Mag};
the set $D_{A^{**}}$
(following the notation there) is $\{ - a \colon a \in R_0 \}$.
One way to see the first part is that
part of the proof of
Lemma~\ref{L_7917_AppIdConverge}
shows that every cai for~$A$ converges weak* to~$1_{A^{**}}$.
Given this,
the argument for
Lemma \ref{bicr}~(\ref{bicr_1})
shows that the subalgebra
$A + \Cdb \cdot 1_{A^{**}} \subseteq A^{**}$
is isometrically isomorphic to~$A^1$.
Thus, if $a \in {\mathfrak{r}}_A$,
then $a \in {\mathfrak{r}}_{A^1}$
by Definition~\ref{D_7702_RealPos},
so $a \in R_0$
by Lemma~\ref{L_7Z14_RPosSubalg}.

It remains to show that $R_0 \subseteq R_1$.
Let $a \in R_0$,
and let $\varphi$ be a state on $(A^1)^{**}$.
By weak* density of the normal states in $S ((A^1)^{**})$
(which follows from Theorem~2.2 of~\cite{Mag})
there is a net $(\psi_t)_{t \in \Ld}$ in $S (A^1)$
such that $\psi_t \to \ph$ weak*.
For $t \in \Ld$,
since $A$ is scaled
there are $\ld \in [0, 1]$ and $\om \in S (A)$
such that $\psi_t = \ld \om$.
Since $A$ is Hahn-Banach smooth in~$A^1$,
\cite[Lemma 2.2]{BOZ} implies that
the canonical weak* continuous extension of $\om$
is a state on $A^{**}$.
So $\Re (\om (a)) \geq 0$,
whence $\Re (\psi_t (a)) \geq 0$.
Then $\Re ( \varphi (a) ) = \lim_t  \Re ( \psi_t (a) ) \geq 0$.
So $a \in R_1$.
\end{proof}

\section{$M$-ideals}\label{Sec_5}

We recall the definitions of $M$-ideals and $M$-summands,
together with some elementary facts.
See, for example, Definition I.1.1 of~\cite{MIBS}
and the discussion afterwards.
If $E$ is a Banach space and $P \in B (E)$
is an idempotent,
then $P$ is called an {\emph{$L$-projection}}
if $\| \xi \| = \| P \xi \| + \| (1 - P) \xi \|$
for all $\xi \in E$,
and an {\emph{$M$-projection}}
if $\| \xi \| = \max (\| P \xi \|, \, \| (1 - P) \xi \| )$
for all $\xi \in E$.
The ranges of $L$-projections and $M$-projections
are called {\emph{$L$-summands}} and {\emph{$M$-summands}}.
The idempotent $P$ is an $M$-projection
if and only if $P^*$ is an $L$-projection,
and is an $L$-projection
if and only if $P^*$ is an $M$-projection.
Finally,
a subspace $J \subseteq E$ is an {\emph{$M$-ideal}}
if $J^{\perp}$ is an $L$-summand in~$E^*$,
equivalently
(using \cite[Theorem I.1.9]{MIBS}),
$J^{\perp \perp}$ is an $M$-summand in~$E^{**}$.
By \cite[Proposition I.1.2 (a)]{MIBS},
if $J$ is an $M$-summand,
then there is exactly one contractive idempotent with range~$J$,
namely the $M$-projection used in the definition.

Smith and Ward show in~\cite{SmithWard}
that the $M$-ideals in a $C^*$-algebra are exactly
the closed ideals in the usual sense (Theorem~5.3),
that an $M$-ideal in a unital Banach algebra
must be a subalgebra (Theorem~3.6),
and that $M$-ideals in Banach algebras
are often ideals (see, for example, Theorem~3.8).
Example~4.1 of~\cite{SmithWard}
shows that there are $M$-ideals in $B (l^1_2)$
which are subalgebras but not ideals and do not have cais.

The following definition is from the introduction to~\cite{BOZ}.

\begin{definition}\label{D_7X14_MAppUnital}
Let $A$ be a Banach algebra.
We say that $A$ is {\emph{$M$-approximately unital}}
if $A$ is an $M$-ideal in the multiplier unitization~$A^1$.
\end{definition}

As in the introduction to~\cite{BOZ},
an $M$-approximately unital Banach algebra is approximately unital.
The papers \cite{BOZ, BSan} give a number of
properties of $M$-approximately unital Banach algebras.
For example,
an $M$-approximately unital Banach algebra
having a real positive cai $(e_t)_{t \in \Ld}$
satisfying $\| 1 - 2 e_t \| \leq 1$ for all $t \in \Ld$
(\cite[Theorem 5.2]{BOZ}),
is Hahn-Banach smooth in its multiplier unitization
(Proposition I.1.12 of  \cite{MIBS}),
and has the Kaplansky density properties given
in \cite[Theorem 5.2]{BOZ}
and \cite[Proposition 6.4]{BOZ}.

\begin{proposition}\label{P_7X14_KEMAppU}
Let $p \in (1, \infty) \setminus \{ 2 \}$
and let $(X, \mu)$ be a measure space.
Then $\Kdb (L^p (X, \mu))$ is $M$-approximately unital
if and only if $\mu$ is purely atomic.
\end{proposition}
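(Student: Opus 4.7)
My plan is to apply the characterization from \cite[Theorem 5.2]{BOZ}---that an approximately unital Banach algebra is $M$-approximately unital if and only if it has a cai $(e_t)$ with $\|1 - 2 e_t\| \leq 1$---combined with Proposition~\ref{knsca}.

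For the sufficiency direction, suppose $\mu$ is purely atomic, so $L^p(X, \mu) \cong l^p(I)$ for some index set~$I$. The finite-rank hermitian diagonal projections $e_F = M_{\chi_F}$, for finite $F \subseteq I$, form a cai of $A = \Kdb(l^p(I))$. By Lemma~\ref{L_7719_BicId}~(\ref{L_7719_BicId_HToIso}), each $e_F$ satisfies $\|1 - 2 e_F\| = 1$, so by the characterization $A$ is $M$-approximately unital.

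For the necessity direction, suppose $\mu$ is not purely atomic, so $X$ has a non-atomic part $Y$ with $\mu(Y) > 0$. Using decomposability (Definition~\ref{D_locmeasdecomp}) and iterative halving within $Y$, extract a subset $Y_0 \subseteq Y$ with $0 < \mu(Y_0) < \infty$ whose inherited $\sigma$-algebra is countably generated and non-atomic; by the Lebesgue isomorphism theorem, $L^p(Y_0, \mu|_{Y_0}) \cong L^p([0, 1])$ isometrically and hence $\Kdb(L^p(Y_0)) \cong \Kdb(L^p([0, 1]))$ isometrically. Suppose for contradiction that $A = \Kdb(L^p(X, \mu))$ is $M$-approximately unital, and let $(e_t)$ be a cai in $A$ with $\|1 - 2 e_t\| \leq 1$, so $\|1 - e_t\| \leq 1$. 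Let $P = M_{\chi_{Y_0}}$ be the hermitian bicontractive projection on $L^p(X, \mu)$ with range $L^p(Y_0)$, and define $\Phi \colon A \to \Kdb(L^p(Y_0))$ by $\Phi(a) = PaP|_{L^p(Y_0)}$. A routine extension-by-zero argument shows $(\Phi(e_t))$ is a cai of $\Kdb(L^p(Y_0))$, and
\[
\|1 - \Phi(e_t)\| = \|P(1 - e_t)P|_{L^p(Y_0)}\| \leq \|1 - e_t\| \leq 1.
\]
Passing to the weak-* limit in $(\Kdb(L^p(Y_0))^1)^{**}$ and using lower semicontinuity of the norm yields $\|1 - \tilde e\| \leq 1$, with $\tilde e$ the identity of $\Kdb(L^p(Y_0))^{**}$; this contradicts Proposition~\ref{knsca} applied to $\Kdb(L^p([0, 1])) \cong \Kdb(L^p(Y_0))$.

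The main obstacle is the measure-theoretic step of choosing $Y_0 \subseteq X$ with $L^p(Y_0)$ isometric to $L^p([0, 1])$: for non-standard $(X, \mu)$ the inherited $\sigma$-algebra on $Y_0$ may not be countably generated, so one must build $Y_0$ inside a suitable sub-$\sigma$-algebra of~$X$ on which $\mu$ remains non-atomic and finite. The verification that $(\Phi(e_t))$ is a cai of $\Kdb(L^p(Y_0))$ is routine from the hermitian bicontractivity of~$P$ and the extension-by-zero embedding of compacts on $L^p(Y_0)$ into compacts on $L^p(X, \mu)$.
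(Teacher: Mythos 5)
Your argument hinges on the claim that Theorem 5.2 of \cite{BOZ} \emph{characterizes} $M$-approximately unital algebras as those possessing a cai $(e_t)$ with $\| 1 - 2 e_t \| \leq 1$. That theorem gives only one implication: if $A$ is $M$-approximately unital then such a cai exists. The converse is not established there or in this paper; a cai with $\| 1 - 2 e_t \| \leq 1$ only forces the weak* limit $e$ to be bicontractive, i.e.\ $A$ to be bi-approximately unital (a $u$-ideal in $A^1$), and whether that implies $A$ is an $M$-ideal in $A^1$ is exactly what Remark~\ref{wssi} and open question~(\ref{Item_7Z21_RPcai}) leave unresolved. This is a genuine gap in your sufficiency direction: exhibiting the diagonal cai with $\| 1 - 2 e_F \| = 1$ does not by itself yield $M$-approximate unitality. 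The direction is salvageable precisely because your cai is \emph{hermitian}, not merely bicontractive: Proposition~\ref{hau} then shows $\Kdb (l^p (I))$ is scaled and Corollary~\ref{C_7X15_mlem2_Lp} (whose proof does not use the present proposition) upgrades scaled to $M$-approximately unital; alternatively, the weak* limit of the hermitian cai is a hermitian idempotent in $(A^1)^{**}$ and the displayed computation in the proof of Theorem \ref{mth}~(\ref{mth_U}) shows directly that multiplication by it is an $M$-projection onto $A^{\perp \perp}$.

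Your necessity direction uses only the valid (forward) implication of Theorem 5.2 of \cite{BOZ} together with Proposition~\ref{knsca}, and is a legitimate, if much longer, alternative to the paper's proof, which simply quotes Theorem~11 of \cite{Lima} ($\Kdb (L^p (X, \mu))$ is an $M$-ideal in $B (L^p (X, \mu))$ if and only if $\mu$ is purely atomic), Theorem VI.4.17 of \cite{MIBS} to pass from $B (L^p (X, \mu))$ to $\Kdb (L^p (X, \mu)) + \Cdb 1$, and Lemma~\ref{mund} to identify the latter with the multiplier unitization. Two repairs are needed in your version. First, as you acknowledge, a subset $Y_0$ with $L^p (Y_0, \mu |_{Y_0})$ separable need not exist; the clean fix is to take a countably generated non-atomic sub-$\sigma$-algebra of a finite-measure non-atomic piece and let $P$ be the conditional expectation, a contractive projection onto a subspace isometric to $L^p ([0, 1])$ --- your estimate $\| P (1 - e_t) P |_{\Ran (P)} \| \leq \| 1 - e_t \|$ and the cai verification use only contractivity of $P$, not that $P$ be a multiplication operator. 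Second, to invoke Proposition~\ref{knsca} you should note that $\Kdb (\Ran (P)) + \Cdb 1_{\Ran (P)}$ is the multiplier unitization by Lemma~\ref{mund}, so that the weak* lower semicontinuity argument takes place in $(\Kdb (\Ran (P))^1)^{**}$, where that proposition applies.
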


\begin{proof}
Theorem~11 of~\cite{Lima} states that
$\Kdb (L^p (X, \mu))$ is an
$M$-ideal in $B (L^p (X, \mu))$
if and only if $\mu$ is purely atomic.
By Theorem VI.4.17 in \cite{MIBS},
$\Kdb (L^p (X, \mu))$ is an
$M$-ideal in $B (L^p (X, \mu))$
if and only if it is an
$M$-ideal in $\Kdb (L^p (X, \mu)) + \Cdb 1$,
where $1$ is the identity operator on $L^p (X, \mu)$.
By Lemma \ref{mund}, $\Kdb (L^p (X, \mu)) + \Cdb 1$ is
the multiplier unitization of $\Kdb(L^p (X, \mu))$.
\end{proof}

\begin{lemma}\label{mlem}
Let $A$ be an approximately unital
Arens regular Banach algebra,
and let $J \subseteq A$ be an $M$-ideal in $A$,
with associated $M$-projection $P \colon A^{**} \to J^{\perp \perp}$.
Then $J$ is an approximately unital closed ideal
if and only if $P (1)$ is central in $A^{**}$.
\end{lemma}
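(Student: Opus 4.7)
The plan is to prove the two directions separately, using the uniqueness of the contractive projection onto an $M$-summand (\cite[Proposition I.1.2(a)]{MIBS}, cited just before the lemma) to identify $P$ with an explicit multiplication operator in each case.

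For the forward direction, suppose $J$ is an approximately unital closed two-sided ideal with cai $(e_t)_{t \in \Ld}$. As a closed subalgebra of the Arens regular algebra $A$, $J$ is itself Arens regular by \cite[2.5.2]{BLM}, so Lemma~\ref{L_7917_AppIdConverge} applied to $J$ gives that $(e_t)$ converges weak* to the identity $e$ of $J^{**}$; since the canonical map $J^{**} \to A^{**}$ is weak*-continuous and identifies $J^{**}$ with $J^{\perp\perp}$, we may view $e \in J^{\perp\perp} \subseteq A^{**}$. I will then prove $P(1) = e$ and the centrality of $e$ together by verifying that both $L_e$ (left multiplication by $e$) and $R_e$ (right multiplication) are contractive projections from $A^{**}$ onto $J^{\perp\perp}$. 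For $L_e$: contractivity follows from $\| e \| \leq 1$; the range lies in $J^{\perp\perp}$ because, for $a \in A$, we have $e_t a \in J$ (since $J$ is a left ideal) and $e_t a \to ea$ weak* by separate weak* continuity of multiplication (Lemma~\ref{L_7618_SecDual}), and this extends to all $a \in A^{**}$ by weak* continuity; $L_e$ restricts to the identity on $J^{\perp\perp}$ because $e_t j \to j$ in norm for $j \in J$ (cai property) forces $e j = j$, extended by weak* density; idempotency is then automatic. The argument for $R_e$ is symmetric, using that $J$ is a right ideal. By uniqueness of contractive projections onto an $M$-summand, $L_e = R_e = P$, whence $P(1) = L_e(1) = e$ and $e a = a e$ for all $a \in A^{**}$.

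For the reverse direction, suppose $e := P(1)$ is central in $A^{**}$. By \cite[Theorem VI.4.17]{MIBS} (cited in the proof of Proposition~\ref{P_7X14_KEMAppU}), $J$ is an $M$-ideal in the unital algebra $A^1$, so by the Smith-Ward result \cite[Theorem 3.6]{SmithWard}, $J$ is a subalgebra of $A^1$; hence $J^{\perp\perp}$ is a closed subalgebra of $A^{**}$ by separate weak* continuity of multiplication, and $e \in J^{\perp\perp}$. The plan is to show that $L_e$ is a contractive projection from $A^{**}$ onto $J^{\perp\perp}$, which by uniqueness forces $L_e = P$; this gives $e A^{**} = J^{\perp\perp}$, and centrality then gives $A^{**} e = J^{\perp\perp}$ as well, so $J^{\perp\perp}$ is a two-sided ideal in $A^{**}$ with identity $e$ (in particular $e^2 = e$). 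Pulling back, $J = A \cap J^{\perp\perp}$ is a two-sided ideal in $A$. Approximate unitality of $J$ then follows from a Goldstine-type approximation of the central idempotent $e$ by norm-one elements of $J$ in the weak* topology, together with the convexity argument in the proof of Lemma~\ref{havec}.

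The main obstacle is the structural step in the reverse direction: showing that left multiplication by the central element $e$ preserves the complementary $M$-summand $(1 - P) A^{**}$, so that $L_e$ commutes with $P$ and has range in $J^{\perp\perp}$. I expect this invariance to follow from the general fact that $M$-projections lie in the center of the centralizer algebra of the underlying Banach space (cf.\ Chapter I of \cite{MIBS}), combined with the centrality of $e$ in $A^{**}$. Once $P L_e = L_e P$ is established, evaluating at $a = 1$ yields $e = P(e) = L_e (P(1)) = L_e (e) = e^2$, so $e$ is an idempotent, and then $L_e$ is a contractive projection onto $J^{\perp\perp}$ and $L_e = P$ by uniqueness. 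Rigorously deriving the invariance of both $M$-summands under $L_e$ from the centrality hypothesis, without circularity, is the delicate step.
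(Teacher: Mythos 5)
Your forward direction is correct and is essentially the paper's argument: the paper constructs the weak* limit $e$ of a cai for $J$ as in the proof of Lemma~\ref{quo}, checks there that $J^{\perp\perp}=eA^{**}=A^{**}e$ and that $e$ is central, and then invokes uniqueness of the contractive projection onto an $M$-summand to conclude $P=L_e$, so $P(1)=e$. Your variant, which deduces centrality from $L_e=R_e=P$ rather than from the direct computation $ea=(ea)e=e(ae)=ae$, is a harmless repackaging of the same ingredients.

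The reverse direction, however, contains a genuine gap, and you have in effect flagged it yourself. Your entire strategy rests on showing that $L_e$ (for $e=P(1)$) is a contractive projection onto $J^{\perp\perp}$, but this requires knowing that $eA^{**}\subseteq J^{\perp\perp}$ and that $e^2=e$ --- and both of these are exactly what you defer to the ``expected'' commutation $PL_e=L_eP$, which you never establish. Centrality of $e$ in $A^{**}$ alone does not obviously force $L_e$ to preserve the two $M$-summands; the appeal to the centralizer algebra of the underlying Banach space is a hope, not an argument, and as you note the obvious route (showing $eA^{**}\subseteq J^{\perp\perp}$ because $J^{\perp\perp}$ is a left ideal) is circular. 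In addition, the auxiliary citation of \cite[Theorem VI.4.17]{MIBS} is misapplied: that theorem is a statement specific to the compact operators $\Kdb(E)$ inside $B(E)$ versus inside $\Kdb(E)+\Cdb 1$, not a general principle letting you promote an $M$-ideal of $A$ to an $M$-ideal of the larger algebra $A^1$ (and $M$-ideals do not in general pass to superspaces --- indeed, whether $A$ itself is an $M$-ideal in $A^1$ is precisely the nontrivial ``$M$-approximately unital'' condition studied in Sections \ref{Sec_5}--\ref{Sec_6}). The paper avoids all of this by simply citing the discussion before Proposition 8.1 of \cite{BOZ}, where the requisite structure theory is carried out: by Smith--Ward, $P(1)$ is a hermitian idempotent in the unital algebra $A^{**}$, and when $P(1)$ is central one identifies $P$ with multiplication by $P(1)$, whence $J^{\perp\perp}$ is a two-sided ideal with an identity of norm one and $J$ is an approximately unital closed ideal. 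To make your proof complete you would need to supply that input (or reprove it), rather than assume the invariance of the summands under $L_e$.
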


\begin{proof}
By the discussion before Proposition 8.1 of~\cite{BOZ},
centrality of $P (1)$
implies that $J$ is an approximately unital closed ideal.

If $J$ is an approximately unital closed ideal
then, as in the proof of Lemma \ref{quo},
there is a central idempotent $e$
such that
$J^{\perp \perp} = e A^{**} = A^{**} e$.
The uniqueness of projections onto an $M$-summand
implies that $P$ is multiplication by~$e$.
So $P (1) = e$ is central.
\end{proof}

\begin{theorem}\label{mth}
Let  $p \in (1, \infty)$
and let $A$ be an $L^p$-operator algebra.
\begin{enumerate}
\item\label{mth_AppU}
Suppose that $A$ is approximately unital.
Then every $M$-ideal in~$A$ is an approximately unital closed ideal.
\item\label{mth_U}
Suppose that $A$ is unital.
Then $J \subseteq A$ is an $M$-summand
if and only if there is a central hermitian idempotent $z \in A$
such that $J = A z$.
In this case,
multiplication by $z$ is an $M$-projection with range~$J$.
\item\label{mth_Mapp}
Suppose that $A$ is $M$-approximately unital
(Definition~{\rm{\ref{D_7X14_MAppUnital}}}).
Then:
\begin{enumerate}
\item\label{mth_Mapp_Down}
Every $M$-ideal in~$A$ is $M$-approximately unital.
\item\label{mth_Mapp_Inter}
The intersection of finitely many $M$-ideals in~$A$
is an $M$-ideal in~$A$.
\item\label{mth_Mapp_Sum}
The closed ideal generated by any collection of $M$-ideals in~$A$
is an $M$-ideal in~$A$.
\end{enumerate}
\end{enumerate}
\end{theorem}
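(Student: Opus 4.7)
My plan is to establish part (2) first, since it provides the correspondence between $M$-summands and central hermitian idempotents that drives the rest. For the easy direction, representing $A$ unitally in some $B(L^p(X,\mu))$ and invoking Proposition \ref{P_7702_HermIsMult} identifies the central hermitian idempotent $z$ with multiplication by a characteristic function $\chi_Y$; this gives $L^p(X) = L^p(Y) \oplus^p L^p(X \setminus Y)$, and centrality splits each $a \in A$ as $az + a(1-z)$ acting on the two summands. Since $\|\diag(b_1, b_2)\| = \max(\|b_1\|, \|b_2\|)$ on $L^p$-direct sums, we get $\|a\| = \max(\|az\|, \|a(1-z)\|)$, exhibiting multiplication by $z$ as an $M$-projection onto $J = Az$. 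For the converse, I invoke the classical theory of $M$-summands in unital Banach algebras (\cite[Chapter I]{MIBS}), which produces a central bicontractive idempotent $z \in A$ with $P(a) = az$. Hermiticity of $z$ then follows from the identity $\exp(i\lambda z) = (1-z) + e^{i\lambda}z$: the $M$-decomposition forces $\|\exp(i\lambda z)\| = \max(|e^{i\lambda}|\|z\|, \|1-z\|) \leq 1$, and the analogous bound on $\exp(-i\lambda z)$ combined with $\exp(i\lambda z)\exp(-i\lambda z) = 1$ forces equality.

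Part (1) follows immediately: \cite[Theorem I.1.9]{MIBS} makes $J^{\perp\perp}$ an $M$-summand in $A^{**}$, which is a unital $L^p$-operator algebra by Lemmas \ref{L_7917_AppIdConverge} and \ref{L_7618_SecDual}(\ref{L_7618_SecDual_Lp}), so part (2) provides a central hermitian idempotent $z \in A^{**}$, and Lemma \ref{mlem} concludes. For parts (3)(b) and (3)(c), each $M$-ideal $J_\alpha$ in the $M$-approximately unital $A$ gives a central hermitian idempotent $z_\alpha \in (A^1)^{**}$ via (2); representing $(A^1)^{**} \subseteq B(L^p(X))$ by Corollary \ref{ddag}, each $z_\alpha$ is multiplication by a characteristic function $\chi_{Y_\alpha}$. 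Finite intersections correspond to products $z_1 \cdots z_n = \chi_{Y_1 \cap \cdots \cap Y_n}$, which are still central hermitian idempotents, and $(J_1 \cap \cdots \cap J_n)^{\perp\perp}$ equals $\bigcap J_i^{\perp\perp} = (A^1)^{**}(z_1 \cdots z_n)$, giving (3)(b). For (3)(c), $\chi_{\bigcup Y_\alpha}$ is the weak*-limit of finite joins (polynomials in finitely many $z_\alpha$) and therefore lies in $(A^1)^{**}$, since the latter is weak*-closed in $B(L^p(X))$ by Corollary \ref{C_7620_2Dual}; a short weak*-closure argument identifies its associated $M$-summand with the biannihilator of the closed ideal generated by the $J_\alpha$.

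Part (3)(a) is the most delicate. After using transitivity of $M$-ideals (\cite[Proposition I.1.17]{MIBS}) to make $J$ an $M$-ideal in $A^1$, part (2) gives that $e_J$ is a central hermitian idempotent in $(A^1)^{**}$. To show $J$ is $M$-approximately unital, it suffices that the identity $\tilde e$ of $J^{**} \subseteq (J^1)^{**}$ be a central hermitian idempotent in $(J^1)^{**}$, so that $J^{**}$ is an $M$-summand by (2) applied to the unital $L^p$-operator algebra $(J^1)^{**}$. Centrality of $\tilde e$ in $(J^1)^{**}$ is immediate from separate weak*-continuity of multiplication in the Arens regular $(J^1)^{**}$ (Lemma \ref{L_7618_SecDual}(\ref{L_7618_SecDual_WkSt})) combined with the defining cai property. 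The main obstacle will be hermiticity of $\tilde e$: I would represent $(J^1)^{**}$ unitally and isometrically in some $B(L^p(X'))$ via Corollary \ref{ddag}, identify the image of $\tilde e$ through Lemma \ref{2deg} as the contractive idempotent onto $\overline{\spn}(JL^p(X'))$, and then leverage the already-known hermiticity of $e_J$ in the $B(L^p(X))$-representation of $(A^1)^{**}$ together with the isometric embedding $J^1 \hookrightarrow J^{**} \subseteq (A^1)^{**}$ from Lemma \ref{bicr}(\ref{bicr_1}) to transfer the multiplication-operator structure. The difficulty is that the natural map $(J^1)^{**} \to (A^1)^{**}$ is generally not injective---its kernel contains $1_{(J^1)^{**}} - \tilde e$ because the character on $J^1$ vanishing on $J$ disappears in $(A^1)^{**}$---so the argument must proceed by comparing actions on dense subalgebras rather than by direct pullback.
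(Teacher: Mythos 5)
The central step of your argument --- the converse direction of part~(\ref{mth_U}) --- contains a genuine gap. You assert that the classical theory of $M$-summands in unital Banach algebras ``produces a central bicontractive idempotent $z \in A$ with $P(a) = az$.'' No such general result exists: what the general theory supplies (\cite[Proposition 3.1]{SmithWard}) is only that $z = P(1)$ is a \emph{hermitian} idempotent. Centrality of $z$ and the identification $P(a) = az = za$ fail for general unital Banach algebras; Example~4.1 of~\cite{SmithWard} exhibits $M$-ideals in the finite dimensional algebra $B(l^1_2)$ (hence $M$-summands) which are subalgebras but not ideals, so they cannot be of the form $Az$ with $z$ central. Proving centrality is exactly where the restriction $1 < p < \infty$ and the $L^p$ geometry enter, and it is the heart of the theorem. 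The paper's route is: take $z = P(1)$, hermitian by Smith--Ward; for a state $\varphi$ with $P^*(\varphi) \neq 0$, normalize $P^*(\varphi)$ to a state $\psi$ with $\psi(1 - z) = 0$; the strict-convexity result Lemma~\ref{L_7Z18_LpVanishingState} then gives $\psi((1-z)A) = \psi(A(1-z)) = 0$; since states span $A^*$ (Lemma~\ref{bicr}(\ref{bicr_2b})), this forces $P((1-z)A) = P(A(1-z)) = 0$ and, symmetrically, $(1-P)(zA) = (1-P)(Az) = 0$, whence $P(a) = za = az$. Your subsequent $\exp(i\lambda z)$ computation is superfluous (hermiticity is already free from Smith--Ward) and in any case presupposes the missing centrality. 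Your forward direction of~(\ref{mth_U}) and your deduction of~(\ref{mth_AppU}) from~(\ref{mth_U}) together with Lemma~\ref{mlem} do match the paper.

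A second, smaller defect: part~(\ref{mth_Mapp_Down}) is not actually proved. You reduce it to hermiticity of the identity of $J^{**}$ inside $(J^1)^{**}$ and then explicitly flag an unresolved ``difficulty'' about transferring structure through the non-injective map $(J^1)^{**} \to (A^1)^{**}$; as written this is a sketch of an obstacle, not an argument. (The relevant contractive unital homomorphism in fact goes from $J + \Cdb 1_{A^1}$, with the $A^1$-norm, onto $J^1$, by Remark~\ref{R_7702_UnitizationFacts}(\ref{R_7702_UnitizationFacts_ToMult}), and hermitian elements are preserved by contractive unital homomorphisms, so the transfer goes in the direction you need --- but you would have to say this.) The paper avoids the issue entirely: part~(\ref{mth_AppU}) shows $J$ is an approximately unital closed ideal, and then \cite[Proposition 3.2 (3)]{BOZ} gives~(\ref{mth_Mapp_Down}) directly, with (\ref{mth_Mapp_Inter}) and~(\ref{mth_Mapp_Sum}) quoted from \cite[Theorem 8.3]{BOZ} rather than obtained by your explicit (and essentially workable) characteristic-function computation.
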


\begin{proof}
We prove~(\ref{mth_U}).
We may assume
(by the discussion above Proposition~\ref{P_7702_HermIsMult},
or the corollary on
p.\  136 in \cite{Lacey})
that there is a decomposable measure space $X$ such that
$A$ is a unital subalgebra of $B (L^p (X, \mu))$.

Suppose that $z \in A$ is a central hermitian idempotent.
Then $z$ is a hermitian idempotent in $B (L^p (X, \mu))$.
It follows from Proposition \ref{P_7702_HermIsMult}
that $z$ is multiplication by
the characteristic function of a locally measurable subset $E$ of $X$.
Thus for $x, y \in A$
(with suitable interpretation of the integrals below
if $E$ is only locally measurable),
\begin{align*}
& \| z x z + (1 - z) y (1 - z) \|^p
\\
& \hspace*{3em} {\mbox{}}
 = \sup \bigg(  \bigg\{ \int_E \, |x z \xi|^p \, d \mu
           + \int_{X \SM E} |y (1 - z) \xi|^p \, d \mu \colon
      \xi \in \Ball (L^p (X)) \bigg\} \bigg)
\\
& \hspace*{3em} {\mbox{}}
  \leq \max ( \| x \|, \| y \| )^p \,
   \sup \bigg(  \bigg\{ \int_E \, |\xi|^p \, d \mu
          + \int_{X \SM E} | \xi|^p \, d \mu \colon
     \xi \in \Ball (L^p (X))  \bigg\} \bigg)
\\
& \hspace*{3em} {\mbox{}}
 = \max ( \| x \|, \| y \| )^p.
\end{align*}
So multiplication by $z$ is an $M$-projection on $A$
and $z A$ is an $M$-summand.

Conversely,
let $P$ be an $M$-projection on~$A$,
and let $z = P (1)$.
By \cite[Proposition 3.1]{SmithWard},
$z$ is a hermitian idempotent.
Also, $P^*$ is an $L$-projection, so for any state $\varphi$ on $A$,
if $P^* (\varphi) \neq 0$ then
$\psi = \| P^* (\varphi) \|^{-1} P^* (\varphi)$
is a state with $\psi (z) = 1$
as in the proof of 4.8.5 in \cite{BLM}.
It follows from Lemma~\ref{L_7Z18_LpVanishingState}
that
\[
\psi ( (1 - z) A) = \psi (A (1 - z)) = 0.
\]
So
\[
\varphi (P ( (1 - z) A)) = \varphi (P (A (1 - z))) = 0
\]
for any state $\varphi$ on $A$.
Thus
\[
P ( (1 - z) A) = P (A (1 - z)) = 0
\]
by Lemma \ref{bicr}~(\ref{bicr_2b}).
A similar argument applied to $1 - P$
shows that
\[
(1 - P) (z A) = (1 - P) (A z) = 0.
\]
So $z A + A z \subseteq P (A)$.
Thus
\[
P (a) = P (z a + (1 - z) a) = P (z a) = z a,
\]
for all $a \in A$, and similarly $P (a) = a z$.
So $z$ is central and $P (A) = A z$.
This completes the proof of~(\ref{mth_U}).

We prove~(\ref{mth_AppU}).
Let $J \subseteq A$ be an $M$-ideal.
Since $J^{\perp \perp}$ is an $M$-ideal in $A^{**}$,
since $A$ is Arens regular
(Lemma~\ref{L_7618_SecDual}~(\ref{L_7618_SecDual_AReg})),
and since $A^{**}$ is an $L^p$-operator algebra
(Lemma~\ref{L_7618_SecDual}~(\ref{L_7618_SecDual_Lp})),
we can apply part~(\ref{mth_U})
and Lemma~\ref{mlem}.

Part~(\ref{mth_Mapp_Down})
follows from \cite[Proposition 3.2 (3)]{BOZ}
and part~(\ref{mth_AppU}),
and (\ref{mth_Mapp_Inter}) and (\ref{mth_Mapp_Sum})
now follow from \cite[Theorem 8.3]{BOZ}.
\end{proof}

\begin{remark}\label{wssi}
Let $A$ be an approximately unital $L^p$-operator algebra.
The proof of Theorem~\ref{mth} shows that the $h$-ideals,
as defined at the beginning of Section~3 of \cite{GKS},
are exactly the $M$-ideals.
One may ask if these are also the $u$-ideals
as defined before our Lemma \ref{havec2}.
This is not true: the idempotent $e_2$ in Example  \ref{mp}
gives a $u$-projection which is not an $M$-projection,
since as we said there $e_2$ is not hermitian.
Suppose that $A$ is a $u$-ideal
in its multiplier unitization $A^1$,
equivalently, as pointed out before Lemma~\ref{havec2},
that $A$ is bi-approximately unital.
One may ask whether it follows that $A$ is an $M$-ideal
in $A^1$.
As we will see in Corollary~\ref{mlem2},
the latter is equivalent to being scaled.
Recall from Lemma \ref{havec2}
that an approximately unital
$L^p$-operator algebra $A$ with a real positive bounded approximate
identity is bi-approximately unital.
(We conjectured after Lemma  \ref{havec2}
that the converse is true.)
More drastically, one may ask
if an approximately unital
$L^p$-operator algebra
with a real positive bounded approximate
identity is necessarily scaled.
We believe that this is unlikely to be true.
\end{remark}

\section{Scaled $L^p$-operator algebras}\label{Sec_6}

In the Introduction we said that an approximately unital
Banach algebra $A$ is {\emph{scaled}} if the set of restrictions to $A$
of states on $A^1$ equals the quasistate space $Q (A)$ of $A$.
Equivalently (see \cite{BOZ}, before Lemma 2.7 there)
an approximately unital Banach algebra is scaled
if every real positive functional (see Definition~\ref{D_7702_RealPos})
is a nonnegative multiple of a state.
That is, in the notation of
Definition~\ref{D_7702_RealPos}
and Definition~\ref{Dstate},
we have ${\mathfrak{c}}_{A^*} = \Rdb_{+} S (A)$, or
equivalently, ${\mathfrak{c}}_{A^*} \cap \Ball (A^*) = Q (A)$. 

Unital Banach algebras are scaled
(this is a special case of \cite[Proposition 6.2]{BOZ}),
and all $C^*$-algebras are well known to be scaled.

If $A$ is a nonunital
approximately unital Arens regular Banach algebra,
then the support idempotent of $A$ in $(A^1)^{**}$
is the weak* limit in $(A^1)^{**}$ of any cai in~$A$.
This exists and is an identity for $A^{**}$
by the argument of Lemma~\ref{L_7917_AppIdConverge}.
Clearly it is central in $(A^1)^{**}$.

\begin{lemma}\label{mlem1}
Suppose that $A$ is a nonunital
scaled approximately unital Arens regular Banach algebra.
Then the support idempotent of $A$ in $(A^1)^{**}$ is hermitian.
\end{lemma}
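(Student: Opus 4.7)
The plan is to reduce the question, via Lemma~\ref{L_7702_SAH} applied to the unital Banach algebra $(A^1)^{**}$, to showing that $\varphi(e) \in \Rdb$ for every state $\varphi$ of $(A^1)^{**}$. By weak*-density of the normal state space in $S((A^1)^{**})$ (Theorem~2.2 of~\cite{Mag}), it suffices to verify this when $\varphi = \psi^{**}$ is the canonical weak*-continuous extension of some $\psi \in S(A^1)$. Since $e$ is the weak*-limit in $(A^1)^{**}$ of any cai $(e_t)$ of $A$ (by the discussion preceding the lemma), we have $\psi^{**}(e) = \lim_t \psi(e_t)$, and the task becomes to show this limit is real.

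The scaled hypothesis enters through the decomposition $\psi = (1-\nu)\chi_0 + \nu\tilde\omega$, where $\chi_0 \in S(A^1)$ is the canonical character with $\chi_0|_A = 0$, where $\nu = \|\psi|_A\| \in [0,1]$, and where $\tilde\omega \in S(A^1)$ is the (unique) state extension of the $\omega \in S(A)$ furnished by the decomposition $\psi|_A = \nu\omega \in Q(A)$ (with $\omega$ arbitrary when $\nu = 0$). Both sides agree on $A$ and send $1_{A^1}$ to $1$, so they agree on all of $A^1 = A + \Cdb \cdot 1$. Since $\chi_0(e_t) = 0$ for all $t$, this yields $\psi^{**}(e) = \nu \lim_t \omega(e_t)$.

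The main step will then be to show that $\lim_t \omega(e_t) = 1$ for every $\omega \in S(A)$; granting this, $\psi^{**}(e) = \nu \in [0,1] \subseteq \Rdb$, which completes the argument. This claim is one of the equivalent formulations of the scaled property assembled in Lemma~2.7 of~\cite{BOZ}; in the notation of Definition~\ref{Dstate}, it is the identity $S_{\mathfrak{e}}(A) = S(A)$ for scaled~$A$.

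The hard part is this last step. The natural route, paralleling Proposition~\ref{2extension}~(\ref{2extension_1}), is to deduce a form of Hahn--Banach smoothness of $A$ in $A^1$ from scaled plus Arens regularity: for $\omega \in S(A)$ one has $\|\tilde\omega\| = \|\tilde\omega|_A\| = 1$, so Hahn--Banach smoothness, combined with the standard identity $\lim_t \tilde\omega(a e_t + \mu e_t) = \tilde\omega(a + \mu 1)$ valid for any cai, would force $\tilde\omega(e_t) \to \tilde\omega(1) = 1$, i.e.\ $\omega(e_t) \to 1$. Establishing this smoothness under the bare scaled plus Arens regular hypotheses is the substantive obstacle; once it is in hand, the proof is immediate from the reduction above.
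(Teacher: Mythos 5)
Your proof follows the same route as the paper's, and your first three paragraphs are essentially its proof: reduce via Theorem~2.2 of \cite{Mag} and Lemma~\ref{L_7702_SAH} to showing that $\varphi(e)$ is real for every $\varphi \in S(A^1)$ (identified with the normal states of $(A^1)^{**}$), then use the scaled hypothesis to write $\varphi|_A = \nu\omega$ with $\nu \in [0,1]$ and $\omega \in S(A)$, so that $\varphi(e) = \lim_t \varphi(e_t) = \nu \lim_t \omega(e_t)$. At exactly this point the paper writes $\lim_t \varphi(e_t) = \nu \geq 0$ and stops; that is, it treats $\lim_t \omega(e_t) = 1$ for every $\omega \in S(A)$ --- the identification $S(A) = S_{\mathfrak{e}}(A)$ for scaled algebras, which is precisely the \cite{BOZ} fact you appeal to in your third paragraph --- as part of what scaledness delivers, and concludes that every normal state of $(A^1)^{**}$ is nonnegative on $e$.

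The difficulty is your fourth paragraph, where you withdraw that citation and declare the step an unresolved ``substantive obstacle,'' to be attacked by first proving that $A$ is Hahn--Banach smooth in $A^1$. That detour is not the paper's route and you are right to doubt it is available here: the paper establishes Hahn--Banach smoothness only for $L^p$- and ${\mathrm{SQ}}_p$-operator algebras (Proposition~\ref{2extension}), using strict convexity of the underlying spaces, and in Proposition~\ref{scas} it imposes Hahn--Banach smoothness as a hypothesis \emph{in addition} to scaledness, so one should not expect to extract it from ``scaled plus Arens regular'' alone. But it is also far more than you need: the only input required is the weaker statement $S(A) = S_{\mathfrak{e}}(A)$ for scaled $A$, which is exactly what you already invoked from \cite{BOZ}. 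So either you stand by that citation, in which case your proof is complete at the end of the third paragraph and the final paragraph should simply be deleted, or you do not, in which case the argument as submitted is unfinished at the one step that carries the content of the lemma --- a step the paper's own proof disposes of in half a sentence by reading it off from the scaled decomposition.
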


\begin{proof}
Suppose that $A$ is scaled and $(e_t)_{t \in \Ld}$ is a cai for $A$.
Then, as above, $(e_t)_{t \in \Ld}$ converges weak* to
a central idempotent $e \in (A^1)^{**}$
which is an identity for $A^{**}$.

If $\varphi$ is a state
on $A^1$ then $\varphi |_A$ is a nonnegative multiple, $r$ say,
of a state
on $A$, so that $\varphi (e) = \lim_t \varphi (e_t) = r \geq 0$.
So every
weak* continuous state on $(A^1)^{**}$ is nonnegative on $e$.
Since the weak* continuous states on a dual Banach algebra
are weak* dense in the states by \cite[Theorem 2.2]{Mag},
it follows from Lemma~\ref{L_7702_SAH} that
$e$ is hermitian.
\end{proof}

The last result says that
scaled
approximately unital Arens regular
Banach algebra are $h$-ideals
in their multiplier unitizations
as defined at the beginning of Section~3 of \cite{GKS}.

\begin{corollary}\label{mlem2}
Suppose that $A$ is an
approximately unital Arens regular
Banach algebra with the property that
whenever $e \in (A^1)^{**}$
is a hermitian idempotent
and $x, y \in A$,
then
$\| e x e + (1 - e) y (1 - e) \| \leq \max ( \|x \|, \| y \| )$.
Then $A$ is scaled if and only if $A$ is $M$-approximately unital.
\end{corollary}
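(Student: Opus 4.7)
The unital case is trivial, so I may assume throughout that $A$ is nonunital, and I prove the two directions separately.

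For the forward direction, Lemma~\ref{mlem1} supplies the key fact: the support idempotent $e \in (A^1)^{**}$, namely the identity of $A^{**}$ and the weak* limit of any cai, is hermitian. By Lemma~\ref{L_7719_BicId}~(\ref{L_7719_BicId_HToBicr}) it is bicontractive, and nonunitality of $A$ together with the standard argument that an idempotent of norm strictly less than $1$ must be zero gives $\|1-e\| = 1$. One checks directly that $e$ is central on $A$ and on $1$, and extends by separate weak* continuity of multiplication (which requires Arens regularity of $A^1$, a consequence of that of $A$). Dualizing the exact sequence $0 \to A \to A^1 \to \Cdb \to 0$ yields a Banach space decomposition $(A^1)^{**} = A^{**} \oplus \Cdb(1-e)$, with candidate $M$-projection $P \colon \xi \mapsto e\xi$. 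The lower bound in the $M$-summand equality $\|\eta + \lambda(1-e)\| = \max(\|\eta\|, |\lambda|)$ for $\eta \in A^{**}$ and $\lambda \in \Cdb$ is immediate from the contractivity of $e$ and $1-e$. For the upper bound, the plan is to extend the hypothesis by a Goldstine and weak* continuity argument so that $x$ may be taken in $A^{**}$ and $y$ in $A^1$, then apply it to $x = \eta$ and $y = \lambda \cdot 1$, computing $exe = \eta$ and $(1-e)y(1-e) = \lambda(1-e)$ to conclude.

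For the reverse direction no extra hypothesis is needed, and the argument is purely from general $M$-ideal theory. If $P \colon (A^1)^{**} \to A^{\perp\perp}$ is the $M$-projection, then $P^*$ is an $L$-projection on $(A^1)^*$ with kernel $A^\perp$; since $A$ has codimension one in $A^1$, we have $A^\perp = \Cdb\chi_0$ with $\|\chi_0\| = 1$ (Remark~\ref{R_7702_UnitizationFacts}~(\ref{R_7702_UnitizationFacts_Small})). Given $\varphi \in S(A^1)$, set $\alpha = \|P^*\varphi\|$; the $L$-decomposition gives $(1-P^*)\varphi = c\chi_0$ with $|c| = 1-\alpha$, and $\varphi(1) = 1$ forces $c = 1 - P^*\varphi(1)$. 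The constraints $|P^*\varphi(1)| \leq \alpha$ and $|1 - P^*\varphi(1)| = 1-\alpha$ intersect in $\Cdb$ at exactly one point (the circle of radius $1-\alpha$ about $1$ meets the disk of radius $\alpha$ about $0$ only at $\alpha$), so $P^*\varphi(1) = \alpha$. When $\alpha > 0$, $P^*\varphi/\alpha$ is then a state on $A^1$ whose restriction to $A$ equals $\varphi|_A/\alpha$ (since $(1-P^*)\varphi \in A^\perp$ vanishes on $A$), giving $\varphi|_A \in \alpha \cdot S(A) \subseteq Q(A)$. The case $\alpha = 0$ is trivial, and the reverse containment $Q(A) \subseteq \{\varphi|_A \colon \varphi \in S(A^1)\}$ is routine via convex combinations with $\chi_0$, so $A$ is scaled.

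The main obstacle will be the upper bound step in the forward direction. Interpreted literally with $x, y \in A$ and $e$ the identity of $A^{**}$, the hypothesis reduces to a tautology (since $exe = x$ and $(1-e)y(1-e) = 0$), so the real work consists in extending it by weak* continuity to allow $y$ to have nontrivial scalar part in $A^1 \setminus A$. This extension exploits Arens regularity of $A^1$ and Goldstine's theorem, and is the single place where the assumption on $A$ actually enters the argument.
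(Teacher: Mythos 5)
Your reverse direction is correct, and is in fact more self-contained than the paper's, which simply quotes \cite[Proposition 6.2]{BOZ} for the fact that $M$-approximately unital algebras are scaled; your direct argument with the $L$-decomposition of $(A^1)^*$ along $A^\perp = \Cdb \chi_0$, and the observation that the circle $|1 - z| = 1 - \alpha$ meets the disk $|z| \leq \alpha$ only at $z = \alpha$, is a valid substitute (modulo the small slip that the relevant $L$-projection lives on $(A^1)^*$ by the definition of an $M$-ideal, rather than arising as an adjoint of $P$). Your forward direction has the same architecture as the paper's: Lemma~\ref{mlem1} makes the support idempotent $e$ hermitian and central, and everything reduces to the upper estimate $\| \eta + \lambda (1 - e) \| \leq \max ( \| \eta \|, |\lambda| )$ for $\eta \in A^{\perp\perp}$ and $\lambda \in \Cdb$.

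The gap is exactly at the step you flag as the main obstacle, and the mechanism you propose there does not work. You want to extend the hypothesis ``by a Goldstine and weak* continuity argument'' so that $y$ may be taken in $A^1$, and then set $y = \lambda 1$. But Goldstine only produces weak* limits of bounded nets from $A$, and the weak* closure of $A$ in $(A^1)^{**}$ is $A^{\perp\perp}$, which contains no nonzero multiple of $1$ (the weak* continuous extension of $\chi_0$ annihilates $A^{\perp\perp}$ but not $1$). Moreover $(1 - e) y (1 - e) = 0$ for every $y \in A^{\perp\perp}$, so any inequality obtainable by weak* approximation of $y$ from within $A$ is exactly as vacuous as the one you start from: no limiting argument in the $y$ variable can manufacture the term $\lambda (1 - e)$ with $\lambda \neq 0$. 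What is really needed is the hypothesis for $y$ ranging over $\Cdb 1 \subseteq A^1$, with Goldstine then applied only in the $x$ variable (which is legitimate); that stronger form is what is actually available in the intended application, since the computation in the proof of Theorem~\ref{mth}~(\ref{mth_U}) gives the inequality for all $x, y$ in the ambient $B (L^p (X, \mu))$, hence for all $x, y \in (A^1)^{**}$. The paper's own proof is equally laconic here, extending the inequality only to $x, y \in A^{**}$ before asserting the $M$-projection property, so you have correctly located the one genuinely delicate point of the argument; but your proposal does not close it, and as written the closing step would fail.
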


\begin{proof}
Since unital algebras
are both scaled and approximately unital,
we may assume that $A$ is nonunital.
If $A$ is $M$-approximately unital then $A$ is scaled
by \cite[Proposition 6.2]{BOZ}.
For the other direction, by Lemma \ref{mlem1} and the hypothesis,
the support idempotent $e$ of $A$ in $(A^1)^{**}$ satisfies
$\| e x + (1 - e) y  \| \leq \max ( \|x \|, \| y \| )$ for $x, y \in A$.
By Goldstine's Theorem
and separate weak* continuity of multiplication
(\cite[2.5.3]{BLM}),
this inequality holds for all $x, y \in A^{**}$.
So multiplication by $e$ is an $M$-projection on $(A^1)^{**}$.
Therefore $A$ is an $M$-ideal in~$A^1$.
\end{proof}

\begin{corollary}\label{C_7X15_mlem2_Lp}
Let $A$ be an
approximately unital  $L^p$-operator algebra.
Then $A$ is scaled if and only if $A$ is $M$-approximately unital.
\end{corollary}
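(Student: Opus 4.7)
The plan is to reduce the statement to Corollary~\ref{mlem2}. Since $L^p$-operator algebras are automatically Arens regular by Lemma~\ref{L_7618_SecDual}~(\ref{L_7618_SecDual_AReg}), it suffices to verify the remaining hypothesis of that corollary: for every hermitian idempotent $e \in (A^1)^{**}$ and all $x, y \in A$,
\[
\| e x e + (1 - e) y (1 - e) \| \leq \max ( \| x \|, \| y \| ).
\]
This should be viewed as an upgraded, bidual version of the ``central hermitian idempotent = multiplication by a characteristic function'' principle that was already used in the proof of Theorem~\ref{mth}~(\ref{mth_U}).

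By Proposition~\ref{muau}, $A^1$ is an $L^p$-operator algebra, and by Lemma~\ref{L_7618_SecDual}~(\ref{L_7618_SecDual_Lp}) so is $(A^1)^{**}$. Applying Corollary~\ref{ddag} to $A^1$ (which is trivially approximately unital), and using the standing convention that all measure spaces may be taken decomposable, we obtain a decomposable measure space $(X, \mu)$ together with a unital isometric representation $\theta \colon (A^1)^{**} \to B (L^p (X, \mu))$. Since $\theta$ is a unital isometric algebra homomorphism, $\theta(e)$ is a hermitian idempotent in $B(L^p(X, \mu))$: for every $\lambda \in \Rdb$ we have $\|\exp(i \lambda \theta(e))\| = \|\theta(\exp(i \lambda e))\| = \|\exp(i \lambda e)\| = 1$. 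Proposition~\ref{P_7702_HermIsMult} then supplies a bounded real valued $f \in L^{\infty}_{\mathrm{loc}} (X, \mu)$ with $\theta(e)$ equal to multiplication by~$f$, and $\theta(e)^2 = \theta(e)$ forces $f = \chi_E$ locally a.e.\ for some locally measurable $E \subseteq X$.

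Set $P = \theta(e)$, which is then multiplication by $\chi_E$. For any $\xi \in L^p (X, \mu)$ and $x, y \in A$, the vectors $P \theta(x) P \xi = \chi_E \cdot \theta(x)(\chi_E \xi)$ and $(1 - P) \theta(y) (1 - P) \xi = \chi_{X \SM E} \cdot \theta(y)(\chi_{X \SM E} \xi)$ are supported on the disjoint sets $E$ and $X \SM E$, so
\[
\| P \theta(x) P \xi + (1 - P) \theta(y) (1 - P) \xi \|_p^p
 = \| P \theta(x) P \xi \|_p^p + \| (1 - P) \theta(y) (1 - P) \xi \|_p^p.
\]
Using $\|P\|, \|1-P\| \leq 1$, the summands are bounded by $\|x\|^p \|P \xi\|_p^p$ and $\|y\|^p \|(1 - P)\xi\|_p^p$; their sum is at most $\max(\| x \|, \| y \|)^p ( \| P \xi \|_p^p + \| (1 - P) \xi \|_p^p ) = \max(\|x\|, \|y\|)^p \|\xi\|_p^p$. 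Isometry of $\theta$ then yields the required inequality for $\| exe + (1-e)y(1-e) \|$, and Corollary~\ref{mlem2} delivers the equivalence. The only real content lies in routing the hermitian idempotent of $(A^1)^{**}$ through the decomposable representation provided by Corollary~\ref{ddag} so that Proposition~\ref{P_7702_HermIsMult} applies; everything else is an assembly of previously established facts.
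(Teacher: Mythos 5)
Your proposal is correct and takes essentially the same route as the paper: the paper's proof of this corollary simply cites Corollary~\ref{mlem2} together with ``a computation in the proof of Theorem~\ref{mth}~(\ref{mth_U})'', and that computation is precisely the disjoint-support estimate you carry out after representing $(A^1)^{**}$ isometrically and unitally on a decomposable $L^p$ space and invoking Proposition~\ref{P_7702_HermIsMult} to turn the hermitian idempotent into multiplication by a characteristic function. You have merely made explicit the routing through Corollary~\ref{ddag} that the paper leaves implicit.
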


\begin{proof}
Use Corollary~\ref{mlem2}
and a computation in
the proof of Theorem \ref{mth}~(\ref{mth_U}).
\end{proof}

\begin{corollary}\label{scif}
Let  $p \in (1, \infty)$
and let $A$ be a nonunital approximately unital $L^p$-operator algebra.
Then the following are equivalent:
\begin{enumerate}
\item\label{7X15_scif_Scaled}
$A$ is scaled.
\item\label{7X15_scif_Herm}
The support idempotent $e$ of $A$ in $(A^1)^{**}$
(as defined at the beginning of the section) is hermitian.
\item\label{7X15_scif_Cpt}
The quasistate space
$Q (A)$ is weak* compact.
\end{enumerate}
\end{corollary}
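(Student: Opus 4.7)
The plan is to prove $(1)\Leftrightarrow(3)$ via the weak*-closure identification from Lemma~\ref{bicr}\,(\ref{bicr_4}), and to deduce $(1)\Leftrightarrow(2)$ from Corollary~\ref{C_7X15_mlem2_Lp} (scaled $\Leftrightarrow$ $M$-approximately unital in this setting) combined with the classification of $M$-summands in unital $L^p$-operator algebras in Theorem~\ref{mth}\,(\ref{mth_U}), applied inside the unital $L^p$-operator algebra $(A^1)^{**}$.

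For $(1)\Leftrightarrow(3)$, first I would observe that $Q(A) \subseteq \{\varphi|_A \colon \varphi \in S(A^1)\}$ always, with equality precisely when $A$ is scaled. The larger set is the image of the weak*-compact state space $S(A^1)$ under the weak*-continuous restriction map $(A^1)^{*} \to A^*$, hence it is weak*-compact, and by Lemma~\ref{bicr}\,(\ref{bicr_4}) it is the weak*-closure of $Q(A)$. Since $Q(A) \subseteq \Ball(A^*)$, weak*-closedness of $Q(A)$ is equivalent to weak*-compactness of $Q(A)$. So $A$ is scaled iff $Q(A)$ is weak*-closed iff $Q(A)$ is weak*-compact.

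For $(1)\Leftrightarrow(2)$, the plan is to reroute through the $M$-summand description. By Proposition~\ref{muau} and Lemma~\ref{L_7618_SecDual}\,(\ref{L_7618_SecDual_Lp}), the bidual $(A^1)^{**}$ is a unital $L^p$-operator algebra. The support idempotent $e$ is the identity of $A^{**}$ and, I would argue, is central in $(A^1)^{**}$: it commutes with $1_{A^1}$, and for $a \in A$ any cai $(e_t)$ satisfies $a e_t, e_t a \to a$, so $ea = ae = a$; separate weak* continuity of multiplication (Lemma~\ref{L_7618_SecDual}\,(\ref{L_7618_SecDual_WkSt})) together with weak* density of $A + \Cdb 1_{A^1}$ in $(A^1)^{**}$ then propagates centrality to all of $(A^1)^{**}$. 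In particular $A^{\perp\perp} = e(A^1)^{**} = (A^1)^{**} e$. By Corollary~\ref{C_7X15_mlem2_Lp}, $A$ is scaled iff $A^{\perp\perp}$ is an $M$-summand in $(A^1)^{**}$. By Theorem~\ref{mth}\,(\ref{mth_U}) applied to $(A^1)^{**}$, the latter holds iff there is a central hermitian idempotent $z$ with $A^{\perp\perp} = (A^1)^{**} z$. Since $z \in A^{\perp\perp}$ forces $z = ez$ and $e \in A^{\perp\perp}$ forces $e = ez$, we conclude $z = e$. Thus the $M$-summand condition is exactly that $e$ itself is hermitian, which is~(2).

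I do not expect a real obstacle here: essentially all the work is done in the cited results. The only points that need care are the forced identification $z = e$ (from both being identities for $A^{\perp\perp}$) and the centrality of $e$ in $(A^1)^{**}$, both of which are routine consequences of Arens regularity and separate weak* continuity.
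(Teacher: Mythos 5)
Your proposal is correct; all the steps go through. The $(1)\Leftrightarrow(2)$ half is essentially the paper's argument, lightly repackaged: the paper proves $(1)\Rightarrow(2)$ by citing Lemma~\ref{mlem1} directly and $(2)\Rightarrow(1)$ via Theorem~\ref{mth}\,(\ref{mth_U}) plus Corollary~\ref{C_7X15_mlem2_Lp}, whereas you run both directions through the $M$-summand characterization in $(A^1)^{**}$ and the forced identification $z=e$. Since Corollary~\ref{C_7X15_mlem2_Lp} is itself proved via Lemma~\ref{mlem1}, the underlying content is the same; your version makes the uniqueness of the $M$-projection (hence of the hermitian idempotent $z$) explicit, which the paper leaves implicit. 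The genuine difference is in $(1)\Leftrightarrow(3)$: the paper first notes that $Q(A)$ is convex (via Corollary~\ref{HBsm}\,(\ref{HBsm_EqState}) and Remark~\ref{sconv}) and then invokes Lemma~2.7\,(2) of~\cite{BOZ}, while you bypass convexity entirely by using Lemma~\ref{bicr}\,(\ref{bicr_4}), which identifies $\{\varphi|_A\colon \varphi\in S(A^1)\}$ as the weak* closure of $Q(A)$; scaledness is by definition equality of these two sets, so it is equivalent to weak* closedness of $Q(A)$, hence (inside $\Ball(A^*)$) to weak* compactness. Your route is more self-contained relative to this paper and avoids the appeal to the external lemma; the paper's route is shorter on the page but leans on \cite{BOZ} and on the convexity of $Q(A)$, which is itself a nontrivial consequence of Hahn--Banach smoothness.
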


If these hold then,
by Corollary~\ref{C_7X15_mlem2_Lp},
$A$ has all the properties of $M$-approximately unital algebras
described after Definition~\ref{D_7X14_MAppUnital}.

\begin{proof}[Proof of Corollary~{\rm{\ref{scif}}}]
The implication from (\ref{7X15_scif_Scaled})
to~(\ref{7X15_scif_Herm}) is
Lemma \ref{mlem1}.
For the reverse, if $e$ is hermitian then
multiplication by $e$ is an $M$-projection from $(A^1)^{**}$
to $A^{**}$ by Theorem \ref{mth} (\ref{mth_U}),
so $A$ is $M$-approximately unital.
Apply Corollary~\ref{C_7X15_mlem2_Lp}.

For the equivalence with~(\ref{7X15_scif_Cpt}),
first, $Q (A)$ is weak* compact if and only if it is weak* closed.
Also, Corollary \ref{HBsm}~(\ref{HBsm_EqState})
implies convexity of $Q (A)$,
as explained in Remark~\ref{sconv}.
Apply Lemma 2.7~(2) in \cite{BOZ}.
\end{proof}

The following answers the open question from \cite{BOZ}
as to whether all approximately unital Banach algebras are scaled.

\begin{corollary}\label{nots}
If $p \in (1, \infty) \setminus \{ 2 \}$
then $\Kdb (L^p ([0, 1]))$
is an approximately unital $L^p$-operator algebra which is
not scaled.
\end{corollary}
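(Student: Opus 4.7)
The plan is to assemble ingredients already established in the paper, applying them to $A = \Kdb(L^p([0,1]))$. Example \ref{kl01} already gives the first half: $A$ is an approximately unital $L^p$-operator algebra (with a cai of contractive finite rank idempotents). So only the non-scaledness remains.

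The cleanest route is via Corollary \ref{scif}. Suppose for contradiction that $A$ is scaled. Since $A$ is nonunital, Corollary \ref{scif} (equivalence of (\ref{7X15_scif_Scaled}) and (\ref{7X15_scif_Herm})) says the support idempotent $e$ of $A$ in $(A^1)^{**}$, which coincides with the identity of $A^{**}$ as an element of $(A^1)^{**}$, must be hermitian. By Lemma \ref{L_7719_BicId}(\ref{L_7719_BicId_HToBicr}), every hermitian idempotent is bicontractive; in particular $\|1 - e\| \leq 1$. But this directly contradicts Proposition \ref{knsca}, which asserts $\|1 - e\| > 1$ precisely for this algebra. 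Hence $A$ cannot be scaled.

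An alternative route, which I would mention as a remark, avoids Corollary \ref{scif} and instead uses the known consequence of scaledness from \cite{BOZ} recorded in the Introduction: if an approximately unital Banach algebra is scaled, then every element is a difference of two real positive elements. Since Example \ref{kl01} establishes ${\mathfrak{r}}_A = \{0\}$ for $p \in (1,\infty) \setminus \{2\}$, this would force $A = \{0\}$, a contradiction. Either route works, but the first is more self-contained within this paper since Proposition \ref{knsca} has already done the real work.

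The main obstacle is not in the proof of the corollary itself, which is a short contradiction, but in the supporting results: specifically Proposition \ref{knsca} (which required the estimate of Benyamini--Lin on distance of elements of $\Kdb(L^p([0,1]))$ from the identity) and the equivalence in Corollary \ref{scif} (whose nontrivial direction relies on hermitian idempotents inducing $M$-projections, via Theorem \ref{mth}(\ref{mth_U})). Once those are in hand, the present corollary is a two-line deduction.
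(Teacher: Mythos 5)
Your proof is correct, but it takes a genuinely different route from the paper's. The paper deduces non-scaledness from Proposition~\ref{P_7X14_KEMAppU} (the algebra is not $M$-approximately unital because Lebesgue measure is not purely atomic, which ultimately rests on Lima's theorem on $M$-ideals of compact operators) combined with Corollary~\ref{C_7X15_mlem2_Lp} (scaled $\Leftrightarrow$ $M$-approximately unital). You instead go through Corollary~\ref{scif}: scaledness would force the support idempotent $e$ of $A$ in $(A^1)^{**}$ to be hermitian, hence bicontractive by Lemma~\ref{L_7719_BicId}~(\ref{L_7719_BicId_HToBicr}), contradicting $\| 1 - e \| > 1$ from Proposition~\ref{knsca}, whose real content is the Benyamini--Lin estimate. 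Both deductions are two lines once the supporting machinery is in place; the difference is which external ingredient carries the weight (Lima versus Benyamini--Lin). The paper's route has the advantage of yielding the full dichotomy for $\Kdb (L^p (X, \mu))$ over arbitrary measure spaces in Proposition~\ref{P_7X14_KEMAppU}, while yours is tied to $[0,1]$ but arguably more self-contained, since Proposition~\ref{knsca} is proved in the paper. Your alternative remark (scaled implies every element is a difference of real positives, but ${\mathfrak{r}}_A = \{ 0 \}$ by Example~\ref{kl01}) is also valid and perhaps the most illuminating, though it leans on a fact from \cite{BOZ} that is only quoted informally in the introduction rather than restated as a numbered result.
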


\begin{proof}
That $\Kdb (L^p ([0, 1]))$ has a cai is observed
in Example~\ref{kl01}.
This algebra is not $M$-approximately unital
by Proposition~\ref{P_7X14_KEMAppU},
and hence not scaled by Corollary~\ref{C_7X15_mlem2_Lp}.
\end{proof}

\begin{corollary}\label{C_7X15_KlpScaled}
The algebra $\Kdb (l^p)$ is scaled.
\end{corollary}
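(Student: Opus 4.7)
The plan is to obtain this as an immediate combination of two results already established in the paper. Recall that $l^p = L^p(\Ndb,\mu)$ where $\mu$ is counting measure on $\Ndb$, which is a purely atomic measure. Hence by Proposition~\ref{P_7X14_KEMAppU} the algebra $\Kdb(l^p)$ is $M$-approximately unital. Applying Corollary~\ref{C_7X15_mlem2_Lp} (since $\Kdb(l^p)$ is an approximately unital $L^p$-operator algebra, as noted for instance in the discussion of Example~\ref{kl01}), being $M$-approximately unital is equivalent to being scaled, so $\Kdb(l^p)$ is scaled.

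There is essentially no obstacle; the work was done in Proposition~\ref{P_7X14_KEMAppU} (which invokes Lima's theorem on $M$-ideals of compact operators for purely atomic measures) and Corollary~\ref{C_7X15_mlem2_Lp} (which uses the hermitian idempotent computation from Theorem~\ref{mth}~(\ref{mth_U}) together with Lemma~\ref{mlem1}). Thus the entire proof is a single sentence citing these two results, with the observation that counting measure on $\Ndb$ is purely atomic.
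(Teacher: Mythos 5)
Your argument is exactly the paper's proof: apply Proposition~\ref{P_7X14_KEMAppU} (counting measure on $\Ndb$ is purely atomic, so $\Kdb(l^p)$ is $M$-approximately unital) and then Corollary~\ref{C_7X15_mlem2_Lp} to conclude it is scaled. This is correct and identical in approach to the paper's one-line proof.
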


\begin{proof}
This algebra is $M$-approximately unital
by Proposition~\ref{P_7X14_KEMAppU},
hence scaled by Corollary~\ref{C_7X15_mlem2_Lp}.
\end{proof}

The last result can also be deduced from Proposition~\ref{hau}.

\begin{proposition}\label{hau}
Let  $p \in (1, \infty)$.
Suppose that an $L^p$-operator algebra $A$ has a cai $(e_t)_{t \in \Ld}$
consisting of hermitian elements of $A^1$.
Then $A$ is scaled.
\end{proposition}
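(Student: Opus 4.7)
The plan is to reduce to Corollary \ref{scif} by showing that the support idempotent of $A$ in $(A^1)^{**}$ is hermitian, and then invoke the equivalence of (\ref{7X15_scif_Scaled}) and~(\ref{7X15_scif_Herm}) in that corollary.

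First, I would set up the right object. Since $A$ is an $L^p$-operator algebra with a cai, Proposition \ref{muau} says $A^1$ is an $L^p$-operator algebra, hence Arens regular by Lemma \ref{L_7618_SecDual}~(\ref{L_7618_SecDual_AReg}). As noted in the beginning of Section~\ref{Sec_6}, the support idempotent $e$ of $A$ in $(A^1)^{**}$ is then the weak* limit in $(A^1)^{**}$ of any cai in~$A$; in particular, $e_t \to e$ weak* in $(A^1)^{**}$ for our given hermitian cai. (If $A$ is unital, there is nothing to prove since unital Banach algebras are scaled, so we may assume $A$ is nonunital and appeal to Corollary \ref{scif}.)

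Next I would show that $e$ is hermitian in $(A^1)^{**}$, using Lemma~\ref{L_7702_SAH}: it suffices to check that $\varphi(e) \in \Rdb$ for every state $\varphi$ on $(A^1)^{**}$. By Theorem~2.2 of~\cite{Mag}, the weak* continuous (normal) states on $(A^1)^{**}$ are weak* dense in $S((A^1)^{**})$, so by weak* continuity of the value at a fixed element it is enough to verify $\varphi(e) \in \Rdb$ for normal $\varphi$. For such a $\varphi$, its restriction to $A^1 \subseteq (A^1)^{**}$ is a state on~$A^1$ (it is contractive and sends $1$ to $1$), and since each $e_t$ is hermitian in $A^1$, Lemma~\ref{L_7702_SAH} applied in $A^1$ gives $\varphi(e_t) \in \Rdb$ for every $t$. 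By weak* continuity,
\[
\varphi(e) = \lim_t \varphi(e_t) \in \Rdb.
\]
Hence $e$ is hermitian in $(A^1)^{**}$.

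Finally, the implication (\ref{7X15_scif_Herm})$\Rightarrow$(\ref{7X15_scif_Scaled}) of Corollary~\ref{scif} gives that $A$ is scaled. There is no serious obstacle here: the only point that deserves care is the identification of the support idempotent as the weak* limit of the cai, which relies on Arens regularity of $A^1$ (available via Proposition \ref{muau} and Lemma \ref{L_7618_SecDual}), and the passage from hermitian cai elements to a hermitian weak* limit, which is handled by the normal-state density argument above rather than by any attempt to pass to the limit in $\|\exp(i\lambda e_t)\|$, which would not be weak* continuous.
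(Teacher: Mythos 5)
Your proof is correct and follows essentially the same route as the paper's: identify the support idempotent $e$ as the weak* limit of the hermitian cai, use weak* density of the normal states on $(A^1)^{**}$ together with Lemma~\ref{L_7702_SAH} to conclude that $e$ is hermitian, and then invoke Corollary~\ref{scif}. The only cosmetic difference is that the paper also records centrality of $e$ and cites Theorem~\ref{mth} as an alternative, but your appeal to the equivalence of (\ref{7X15_scif_Scaled}) and (\ref{7X15_scif_Herm}) in Corollary~\ref{scif} suffices.
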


\begin{proof}
Since unital algebras are scaled,
we may assume that $A$ is nonunital.
With $e_t \to e$ as usual, it follows as in the
proof of Lemma~\ref{bicr}~(\ref{bicr_3})
(using the fact that
normal states are weak* dense)
that $e$ is hermitian and central in $(A^1)^{**}$.
It follows from Theorem \ref{mth} or Corollary \ref{scif} that
$A$ is $M$-approximately unital and scaled.
\end{proof}

Proposition~\ref{hau} may suggest that
one requirement for a nonunital $L^p$-operator algebra
to be ``$C^*$-like'' is that it have
a hermitian cai.
The canonical cai for $\Kdb (l^p)$ is a real positive hermitian cai
as we said in Example \ref{kl01}.
On the other hand the cai for $\Kdb (L^p ([0, 1]))$
in Example~\ref{kl01}
seems perhaps surprisingly to have no good ``positivity'' properties.
Indeed as we said in Example \ref{kl01},
$A = \Kdb (L^p ([0, 1]))$ has no real positive cai.
Of course for any approximately unital $L^p$-operator algebra
the identity $e$ of $A^{**}$ is real positive in $A^{**}$.
However $e$ need not be real positive (accretive) in $(A^1)^{**}$,
and certainly is not hermitian,
as we said after the proof of Lemma~\ref{bicr}.

\begin{proposition}\label{au2}
Let  $p \in (1, \infty)$.
Suppose that $A$ is a closed subalgebra
of a scaled $L^p$-operator algebra $B$,
with a common cai.
Then $A$ is scaled.
\end{proposition}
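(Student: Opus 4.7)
The plan is to use the characterization from Corollary \ref{scif}: a nonunital approximately unital $L^p$-operator algebra is scaled if and only if the support idempotent in the bidual of its multiplier unitization is hermitian. Note first that $A$ is an approximately unital $L^p$-operator algebra, being a closed subalgebra of $B$ that contains a cai. If $A$ is unital then $A$ is scaled (see \cite[Proposition 6.2]{BOZ} or the opening remarks of Section \ref{Sec_6}), so I may assume $A$ is nonunital. Under this assumption $B$ must also be nonunital: otherwise the common cai $(e_t)_{t \in \Ld}$, being a cai for the unital algebra $B$, would converge in norm to $1_B$, forcing $1_B \in A$ and contradicting nonunitality of $A$.

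Next, by Lemma \ref{repuni} (equivalently Proposition \ref{uninc}(\ref{uninc_Incl})), the common cai yields an isometric unital inclusion $\iota \colon A^1 \hookrightarrow B^1$. Its second adjoint $\iota^{**} \colon (A^1)^{**} \to (B^1)^{**}$ is again an isometric unital algebra homomorphism (the bidual of an isometric embedding is isometric because, by the Hahn-Banach theorem, $\iota^{*}$ is a metric surjection onto $(A^1)^*$), and it is weak*-to-weak* continuous as the adjoint of $\iota^*$. In particular, $\iota^{**}$ preserves the identity, the Banach algebra norm, and the holomorphic functional calculus.

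Now let $e_A \in (A^1)^{**}$ and $e_B \in (B^1)^{**}$ be the weak* limits of the common cai in the respective second duals; these exist by Lemma \ref{L_7917_AppIdConverge} and are the support idempotents appearing in Corollary \ref{scif}. By weak*-to-weak* continuity of $\iota^{**}$, we have $\iota^{**}(e_A) = e_B$. Since $B$ is scaled, Corollary \ref{scif} gives that $e_B$ is hermitian in $(B^1)^{**}$. As $\iota^{**}$ is isometric and unital, $\iota^{**}(\exp(i \ld e_A)) = \exp(i \ld e_B)$, and hence $\|\exp(i \ld e_A)\|_{(A^1)^{**}} = \|\exp(i \ld e_B)\|_{(B^1)^{**}} = 1$ for every $\ld \in \Rdb$. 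Thus $e_A$ is hermitian in $(A^1)^{**}$, and a second application of Corollary \ref{scif} yields that $A$ is scaled. I do not anticipate any serious obstacle; the main routine verifications are that the bidual of the isometric inclusion $A^1 \hookrightarrow B^1$ remains isometric and weak*-to-weak* continuous, and that hermitian-ness transfers along an isometric unital embedding.
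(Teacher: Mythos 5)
Your proof is correct, but it takes a genuinely different route from the paper's. The paper argues directly with states: given a state $\varphi$ on $A^1$, extend it by Hahn--Banach (via the isometric inclusion $A^1 \subseteq B^1$ from Lemma~\ref{repuni}) to a state on $B^1$, restrict to $B$, use scaledness of $B$ to write that restriction as $\lambda \psi$ with $\psi \in S(B)$, and then observe that $\psi|_A \in S(A)$ because $\psi(e_t) \to 1$ along the common cai (Corollary~\ref{HBsm}~(\ref{HBsm_EqState})); hence $\varphi|_A = \lambda \psi|_A \in Q(A)$. You instead route everything through the characterization of Corollary~\ref{scif} in terms of hermitian support idempotents, pushing the support idempotent of $A$ forward along the isometric, weak*-continuous, unital homomorphism $\iota^{**} \colon (A^1)^{**} \to (B^1)^{**}$ and pulling hermitian-ness back via the exponential criterion. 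All of your intermediate claims check out: the reduction to the case where both $A$ and $B$ are nonunital is valid (a cai in a unital algebra converges in norm to the identity), $\iota^{**}$ is indeed an isometric unital algebra homomorphism since $\iota^*$ is a metric surjection and both algebras are Arens regular, the common cai forces $\iota^{**}(e_A) = e_B$, and hermitian-ness transfers back along an isometric unital embedding. What the paper's argument buys is brevity and lighter machinery (only Hahn--Banach and the state characterization of Corollary~\ref{HBsm}); what yours buys is a structural explanation -- the support idempotents of $A$ and $B$ literally correspond under the bidual embedding, so the hermitian property is visibly inherited -- at the cost of invoking Corollary~\ref{scif}, which rests on the $M$-ideal results of Section~\ref{Sec_5}. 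Both proofs hinge on the same essential input, namely that the cai is common to $A$ and $B$.
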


\begin{proof}
We may assume that $A$ is nonunital (unital algebras
are scaled).
We may view $A^1 \subseteq B^1$.
Any state $\varphi$ of $A^1$ extends to a state of $B^1$,
and the restriction of the latter to $B$
equals $\lambda \psi$
for some $\lambda \in [0, 1]$ and $\psi \in S (B)$.
However $\psi |_A \in S (A)$ since
Corollary \ref{HBsm}~(\ref{HBsm_EqState})
implies that $\psi (e_t) \to 1$,
where $(e_t)_{t \in \Ld}$ is the common cai.
Since $\varphi |_A = \lambda \psi |_A$ we are done.
\end{proof}

\begin{remark}\label{auiias}
Approximately unital ideals in a scaled $L^p$-operator algebra $A$
need not be scaled,
for example $\Kdb (L^p([0,1]))$ in $B (L^p([0,1]))$.
(The latter
is scaled as is any unital Banach algebra,
and we showed above that $\Kdb (L^p([0,1]))$
is not scaled.)
However if the approximately unital ideal
is also an $M$-ideal in $A$, then it is scaled
by Theorem \ref{mth}.
\end{remark}

\section{Kaplansky density}\label{Kaps}

One may ask if in an approximately unital $L^p$-operator algebra
there are
Kaplansky Density Theorems analogous to the ones established by
the first author and Read
for approximately unital $L^2$-operator algebras.
See
e.g.\  \cite[Theorem 5.2 and Proposition 6.4]{BOZ}
for a more general variant of the latter.   As we said in the introduction,
 the usual Kaplansky density theorem variants
for $C^*$-algebras can be shown
to follow easily from the weak* density of the subset of interest
in  $A$ within the matching set in $A^{**}$; and our Kaplansky density theorems have this 
 flavor.

In the following result,
for an approximately unital $L^p$-operator algebra $A$
we take ${\mathfrak{r}}_{A^{**}}$
to be the accretive elements in the unital Banach algebra $A^{**}$.
This is different from the definition
after Lemma~{\rm{2.5}} in~\cite{BOZ}.
The two definitions do coincide
if also $A$ is scaled, 
by Proposition~\ref{scas}
and Proposition \ref{2extension}~(\ref{2extension_2}).

\begin{proposition}\label{L_7618_KapScal}
Let $p \in (1, \infty)$
and let $A$ be an approximately unital $L^p$-operator algebra. 
The following are equivalent:
\begin{enumerate}
\item\label{Item_Kap_rposA}
${\mathfrak{r}}_A$ is weak* dense in ${\mathfrak{r}}_{A^{**}}$.
\item\label{Item_Kap_Ball}
${\mathfrak{r}}_A \cap \Ball (A)$
is weak* dense in ${\mathfrak{r}}_{A^{**}} \cap \Ball (A^{**})$.
\item\label{Item_KpD_FA}
${\mathfrak{F}}_A$ is weak* dense in ${\mathfrak{F}}_{A^{**}}$.
\item\label{Item_Kap_scal}
$A$ is scaled.
\end{enumerate}
\end{proposition}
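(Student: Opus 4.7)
The plan is to establish the cycle (4) $\Rightarrow$ (3) $\Rightarrow$ (1) $\Rightarrow$ (4), together with the side implications (4) $\Rightarrow$ (2) $\Rightarrow$ (1). The formal observation driving (2) and (3) into (1) is that by Proposition~\ref{P_7702_rAAndFA}, both ${\mathfrak{r}}_A$ and ${\mathfrak{r}}_{A^{**}}$ are norm closures of $\Rdb_+ {\mathfrak{F}}_A$ and $\Rdb_+ {\mathfrak{F}}_{A^{**}}$ respectively, and that norm convergence implies weak* convergence. The substantive direction is (1) $\Rightarrow$ (4), which requires translating weak* density into a statement about real positive functionals.

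For (4) $\Rightarrow$ (2), (3): since $A$ is scaled iff $A$ is $M$-approximately unital (Corollary~\ref{C_7X15_mlem2_Lp}), these follow from the Kaplansky density results in \cite[Theorem 5.2 and Proposition 6.4]{BOZ}, which were proved there in the $M$-approximately unital setting.

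For (2) $\Rightarrow$ (1): given $y \in {\mathfrak{r}}_{A^{**}} \setminus \{0\}$, apply the hypothesis to $y/\|y\| \in {\mathfrak{r}}_{A^{**}} \cap \Ball(A^{**})$ to obtain a net $(x_t)$ in ${\mathfrak{r}}_A \cap \Ball(A)$ weak* converging to $y/\|y\|$, and observe that $\|y\| x_t \in {\mathfrak{r}}_A$ (since $\Rdb_+ \cdot {\mathfrak{r}}_A \subseteq {\mathfrak{r}}_A$) converges weak* to $y$. For (3) $\Rightarrow$ (1): every $y \in {\mathfrak{r}}_{A^{**}}$ is a norm limit of elements of the form $\ld f$ with $\ld \geq 0$ and $f \in {\mathfrak{F}}_{A^{**}}$; each such $\ld f$ is a weak* limit of $\ld f_t \in \Rdb_+ {\mathfrak{F}}_A \subseteq {\mathfrak{r}}_A$ by (3), so $\ld f$ lies in the weak* closure of ${\mathfrak{r}}_A$, and norm convergence of the $\ld f$'s to $y$ then forces $y$ into the same weak* closure.

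The main step is (1) $\Rightarrow$ (4). Let $\varphi \in {\mathfrak{c}}_{A^*}$ be a nonzero real positive functional, and let $\widetilde{\varphi} \in (A^{**})^*$ denote its canonical weak* continuous extension, with $\|\widetilde{\varphi}\| = \|\varphi\|$. The set $\{x \in A^{**} \colon \Re \widetilde{\varphi}(x) \geq 0\}$ is weak* closed and contains ${\mathfrak{r}}_A$, so by (1) it contains all of ${\mathfrak{r}}_{A^{**}}$; hence $\widetilde{\varphi}$ is real positive on the unital Banach algebra~$A^{**}$. Since unital Banach algebras are scaled (a special case of \cite[Proposition 6.2]{BOZ}, as noted at the start of Section~\ref{Sec_6}), we can write $\widetilde{\varphi} = \mu \sigma$ with $\mu \geq 0$ and $\sigma$ a state on~$A^{**}$. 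Taking norms gives $\mu = \|\widetilde{\varphi}\| = \|\varphi\|$, and evaluating at the identity yields $\widetilde{\varphi}(1_{A^{**}}) = \|\varphi\|$. By the equivalence (\ref{HBsm_EqState_1}) $\Leftrightarrow$ (\ref{HBsm_EqState_State}) of Corollary~\ref{HBsm}, the functional $\varphi/\|\varphi\|$ is then a state on~$A$, so $\varphi \in \Rdb_+ S(A)$; this is the definition of scaled.

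The only potential subtlety is ensuring that $\widetilde{\varphi}$ really sees all of ${\mathfrak{r}}_{A^{**}}$ via (1); this is where the weak* continuity of $\widetilde{\varphi}$ and weak* closedness of the half-plane $\{\Re \widetilde{\varphi} \geq 0\}$ are essential, and they combine cleanly with hypothesis~(1). Once that is in hand, the rest is bookkeeping between states on~$A$, states on~$A^1$, and weak* continuous states on~$A^{**}$, which is already packaged into Corollary~\ref{HBsm}.
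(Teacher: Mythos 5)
Your proof is correct and follows essentially the same route as the paper: the density implications from scaledness are quoted from the same results of \cite{BOZ}, and the substantive converse rests on the same idea of extending a real positive functional weak* continuously to the unital (hence scaled) algebra $A^{**}$ and pulling back via Corollary~\ref{HBsm}. The only difference is cosmetic bookkeeping: the paper runs the cycle as (\ref{Item_Kap_rposA})$\Rightarrow$(\ref{Item_Kap_Ball})$\Rightarrow$(\ref{Item_Kap_scal}), deducing (\ref{Item_Kap_rposA})$\Rightarrow$(\ref{Item_Kap_Ball}) from the proof of \cite[Lemma 6.4]{BOZ}, whereas you prove (\ref{Item_Kap_Ball})$\Rightarrow$(\ref{Item_Kap_rposA}) by an elementary rescaling and run the main argument from (\ref{Item_Kap_rposA}) directly.
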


\begin{proof}
Since the definition of ${\mathfrak{r}}_{A^{**}}$
in~\cite{BOZ}
coincides with ours when $A$ is scaled
(as pointed out above),
that (\ref{Item_Kap_scal}) implies~(\ref{Item_Kap_rposA})
follows from Proposition 2.11 of~\cite{BOZ}.
The proof of Lemma 6.4 of~\cite{BOZ}
works just as well for our version of ${\mathfrak{r}}_{A^{**}}$
as for the one there,
and thus shows that
(\ref{Item_Kap_rposA}) implies~(\ref{Item_Kap_Ball}).
By our Corollary \ref{C_7X15_mlem2_Lp}
and by Theorem 5.2 of~\cite{BOZ}
it follows that (\ref{Item_Kap_scal}) implies~(\ref{Item_KpD_FA}).
That (\ref{Item_KpD_FA}) implies (\ref{Item_Kap_rposA}) follows easily
from Proposition~\ref{P_7702_rAAndFA}.

Assuming (\ref{Item_Kap_Ball}) we will prove~(\ref{Item_Kap_scal})
by showing that every nontrivial
real positive functional $\varphi$
(see Definition~\ref{D_7702_RealPos})
is a nonnegative multiple of a state.
We may assume that $A$ is nonunital.
The canonical weak* continuous extension
$\widetilde{\varphi}$ of $\varphi$ to  $A^{**}$ is
real positive by our assumption (\ref{Item_Kap_Ball})
and a standard approximation argument.
Since $A^{**}$ is unital it is scaled,
so that $\widetilde{\varphi} = t \psi$
for a state $\psi$ on $A^{**}$ and some $t > 0$.
Thus $\varphi = t \psi |_{A}$.
The span of $A$ and the identity of $A^{**}$
is the multiplier unitization of $A$
by the last paragraph of Section 1 of~\cite{BOZ}.
Hence $\psi |_{A}$ is a state on~$A$.
\end{proof}

These hold in particular if $A$ is unital.
Such results also hold if $A$ has the following property:
with $1$ being the identity of some unitization of~$A$,
given $\varepsilon > 0$ there exists $\delta > 0$
such that if  $y \in A$ with $\| 1 - y \| < 1 + \delta$
then there is $z \in A$
with $\| 1 - z \| = 1$ and $\| y - z \| < \varepsilon$.
This follows from \cite[Proposition 6.4]{BOZ}
and the proof of \cite[Theorem 5.2]{BOZ}.
It may be interesting to ascertain
which $L^p$-operator algebras have this property.

We end by mentioning some of what seem to us
to be the most important open questions related to the approach of
our paper.
\begin{enumerate}
\item\label{Item_7Z21_Kap}
Is there a Kaplansky density type theorem
for a non-scaled 
approximately unital $L^p$-operator algebra $A$?
(See Proposition \ref{L_7618_KapScal} for the scaled case.)
For example, one may ask if
${\mathfrak{r}}_A$ is weak* dense
in ${\mathfrak{r}}_{(A^1)^{**}} \cap A^{**}$.
\item\label{Item_7Z21_Scaled}
Is every approximately unital subalgebra of $B (l^p)$ scaled?

\item\label{Item_7Z21_Sub}
Let $A$ be an approximately unital $L^p$-operator algebra.
Is ${\mathfrak{r}}_A - {\mathfrak{r}}_A$ always a subalgebra?
\item\label{Item_7Z21_RPcai}
Is every bi-approximately unital  $L^p$-operator algebra scaled?
Does it have a real positive cai?
More drastically, if an  $L^p$-operator algebra
possesses a real positive cai then is it scaled?
\end{enumerate}

\section{Index}\label{Sec_Index}

For the readers' convenience we list, alphabetically but compactly,
some of the main definitions
in this paper and where they may be found 
(the Definition number, which is usually their first occurrence).  

Accretive: \ref{D_7702_RealPos};
approximately unital Banach algebra: \ref{D_7618_cai};
Arens products, Arens regular: \ref{D_Arens};
bi-approximately unital ideal: \ref{biau_Lp};
bi-approximately unital algebra: \ref{biau_AReg};
bicontractive idempotent: \ref{D_bicontrinvisom};
$\mathfrak{c}_{A^*}$: \ref{D_7702_RealPos};
cai: \ref{D_7618_cai};
decomposable: \ref{D_locmeasdecomp};
dual $L^p$-operator algebra: \ref{D_7620_DualLpOpAlg};
${\mathfrak{F}}_A$: \ref{D_7702_FA};
Hahn-Banach smooth: \ref{D_7Z20_HBS};
hermitian: \ref{DHerm};
invertible isometry: \ref{D_bicontrinvisom};
locally measurable, locally a.e.: \ref{D_locmeasdecomp}; 
$L^p$-operator algebra: \ref{D_7618_LpOpAlg};
$M$-approximately unital: \ref{D_7X14_MAppUnital};
$M$-ideal, $M$-projection,
$M$-summand: beginning of Section \ref{Sec_5};
multiplier unitization~$A^1$: \ref{D_7702_Unitization};
order on idempotents $e \leq_{\mathrm{r}} f$, $e \leq f$: \ref{Dorder};
powers and roots $b^t$: \ref{N_7919_nthroot};
quasistate space $Q(A)$: \ref{Dstate};
real positive: \ref{D_7702_RealPos};
${\mathfrak{r}}_A$: \ref{D_7702_RealPos};
scaled: beginning of Section \ref{Sec_6}; 
smooth: \ref{D_smoothconv}; 
$SQ_p$-algebra, $SQ_p$ space: the introduction;
state space $S (A)$: \ref{Dstate};
strictly convex: \ref{D_smoothconv};
support idempotent $s (x)$: \ref{D_7Z17_SuppId};
unital Banach algebra: \ref{D_7618_UBA};
unitization: \ref{D_7720_Unitization}.

Other definitions may be found in the introduction, or
in the sections where they first appear (often at the
start of the section), but are not
specifically numbered.

\section{Acknowledgements}\label{Sec_Ackx}

The second author wishes to
thank the Centre de Recerca Matematica,
Universitat Aut\`{o}noma de Barcelona
for support during a visit in summer 2017.
We also thank Eusebio Gardella for several conversations
after some of the material from this paper
and the sequel was presented at a conference in Houston
in August 2017, and for several comments on a draft of our paper. 
Finally we thank the referee for
some suggestions which improved the exposition.

\end{document}